\numberwithin{equation}{section}
\newcommand{\N}{\mathbb{N}}
\newcommand{\R}{\mathbb{R}}
\newcommand{\Q}{\mathbb{Q}}
\newcommand{\Z}{\mathbb{Z}}
\newcommand{\F}{\mathcal{F}}
\newcommand{\G}{\mathcal{G}}
\newcommand{\Y}{\mathcal{Y}}
\newcommand{\C}{\mathcal{C}}
\newcommand{\D}{\mathcal{D}}
\newcommand{\E}{\mathbb{E}}
\newcommand{\Hh}{\mathcal{H}}
\newcommand{\ve}{\varepsilon}
\newcommand{\Ll}{\mathcal{L}}
\newcommand{\Pp}{\mathbb P}
\newcommand{\f}{\frac}
\newcommand{\deq}{\overset{d}{=}}
\newcommand{\mbf}{\mathbf}
\newcommand{\wt}{\widetilde}
\newcommand{\Rup}{\R_{{\scaleobj{0.7}{\uparrow}}}^4}
\newcommand{\wh}{\widehat}
\newcommand{\li}{\;{\le}_{\rm inc}\;}
\newcommand{\Nor}{\mathcal N}
\newcommand{\Exp}{\operatorname{Exp}}
\newcommand{\W}{W}
\newcommand{\NU}{\operatorname{NU}}
\newcommand{\DLBusedc}{\Xi}
\newcommand{\dir}{\xi}
\newcommand{\Split}{\mathfrak S}
\newcommand{\h}{\mathfrak h}
\newcommand{\graph}{\mathcal G}
\def\shdif{J}
\def\ind{\mathbf{1}}
\newcommand{\UC}{\operatorname{UC}}
\newcommand{\sig}{{\scaleobj{0.8}{\boxempty}}} 
\newcommand{\sigg}{{\scaleobj{0.9}{\boxempty}}}
\newcommand{\be}{\begin{equation}}
\newcommand{\ee}{\end{equation}}
\def\tsp{\hspace{0.5pt}}  
\def\tspa{\hspace{0.7pt}}
\def\tspb{\hspace{0.9pt}}
\def\tspc{\hspace{1.1pt}}
\newcommand\abullet{{\raisebox{1.5pt}{\scaleobj{0.5}{\bullet}}}}  
\newcommand\aabullet{{\tspc\raisebox{1.5pt}{\scaleobj{0.5}{\bullet}}\tspc}}  
\def\bck#1{{\overleftarrow{#1}}}
\DeclareMathOperator*{\argmax}{arg\,max}   
\def\SHpp{\Gamma} 
\def\SHHpp{\Lambda} 
\def\XiSH{\Xi_G}  
\newtheorem{theorem}{Theorem}[section]
\newtheorem{corollary}[theorem]{Corollary}
\newtheorem{lemma}[theorem]{Lemma}
\theoremstyle{definition}
\newtheorem{definition}[theorem]{Definition}
\theoremstyle{remark}
\newtheorem{remark}[theorem]{Remark}
\title[Geodesics in the directed landscape]{The stationary horizon and semi-infinite geodesics in the directed landscape}
\author{Ofer Busani}
\address{Ofer Busani, Universit\"at Bonn,
Endenicher Allee 60,
Bonn, Germany}
\email{busani@iam.uni-bonn.de}
\author{Timo Sepp{\"a}l{\"a}inen}
\address{Timo Sepp{\"a}l{\"a}inen, University of Wisconsin-Madison, Mathematics Department, Van Vleck Hall, 480
Lincoln Dr., Madison WI 53706-1388, USA.}
\email{seppalai@math.wisc.edu}
\author{Evan Sorensen}
\address{Evan Sorensen, University of Wisconsin-Madison, Mathematics Department, Van Vleck Hall, 480
Lincoln Dr., Madison WI 53706-1388, USA.}
\email{es4203@columbia.edu}
\subjclass[2020]{60K35,60K37}
\keywords{Brownian motion, Busemann function, coalescence, directed landscape, Hausdorff dimension, KPZ fixed point, Palm kernel, semi-infinite geodesic, stationary horizon}
\date{\today}
\begin{document}

\maketitle 
\begin{abstract}
     The stationary horizon (SH) is a stochastic process of coupled Brownian motions  indexed by their real-valued drifts. It was first introduced by the first author as the  diffusive scaling limit of the Busemann process of exponential last-passage percolation. It was independently discovered as the Busemann process of Brownian last-passage percolation by the second and third authors. We show that SH is the unique invariant distribution and an attractor of the KPZ fixed point under conditions on the asymptotic spatial slopes. It follows that SH describes the Busemann process of the directed landscape.  This gives control of semi-infinite geodesics simultaneously across all initial points and directions. The countable dense set $\DLBusedc$ of directions of discontinuity of the Busemann process is the set of directions in which not all geodesics coalesce and in which there exist at least two distinct geodesics from each initial point. This creates two distinct families of coalescing geodesics in each $\DLBusedc$ direction. In $\DLBusedc$ directions, the Busemann difference profile is distributed like Brownian local time. We describe the point process of directions   $\dir\in\DLBusedc$ and spatial locations where the $\dir\pm$ Busemann functions separate. 
\end{abstract}

\tableofcontents


\section{Introduction}
\subsection{KPZ fixed point and directed landscape}


The study of the Kardar-Parisi-Zhang (KPZ) class of 1+1 dimensional stochastic models of growth and interacting particles  has  advanced to the point where  the first  conjectured universal scaling limits  have been rigorously constructed. These two interrelated objects are the {\it KPZ fixed point}, initially derived as the limit of the totally asymmetric simple exclusion process (TASEP)~\cite{KPZfixed}, and  the {\it directed landscape} (DL), initially derived as the limit of Brownian last-passage percolation (BLPP)~\cite{Directed_Landscape}.
 The KPZ fixed point describes the height of a growing interface, while the directed landscape describes the random environment through which growth propagates. These two objects are related by a variational formula, recorded in \eqref{eqn:KPZ_DL_rep} below.  Evidence for the universality claim comes from  rigorous scaling limits of exactly solvable models~\cite{reflected_KPZfixed,heat_and_landscape,KPZ_equation_convergence,Dauvergne-Virag-21}.
 
 Our paper studies the global geometry of the directed landscape, through the analytic and probabilistic properties of its  Busemann process. Our construction of the Busemann process begins with the recent construction of individual Busemann functions by Rahman and Vir\'ag~\cite{Rahman-Virag-21}. The remainder of this introduction describes the context  and gives   previews of some results. The organization of the paper is  in Section \ref{sec:org}.  
 

\subsection{Semi-infinite geodesics and Busemann functions}
In growth models of first- and last-passage type,   \textit{semi-infinite geodesics} trace the paths of infection all the way to infinity and hence are central to the large-scale structure of the evolution.  Their study was initiated by Licea and Newman in first-passage percolation in the 1990s~\cite{licea1996,Newman} with  the first results on existence, uniqueness and coalescence.   Since the work of Hoffman~\cite{Hoffman-2005,hoffman2008}, Busemann functions have been a key tool for studying semi-infinite geodesics (see, for example~\cite{Damron_Hanson2012,Hanson-2018,Georgiou-Rassoul-Seppalainen-17a,Timo_Coalescence,Seppalainen-Sorensen-21a,Seppalainen-Sorensen-21b,Rahman-Virag-21,Ganguly-Zhang-2022a}, and Chapter 5 of~\cite{50years}). 


Closer to the present work, the study of semi-infinite geodesics began in directed last-passage percolation with the application of the Licea-Newman techniques  to the exactly solvable exponential model  by Ferrari and Pimentel~\cite{ferr-pime-05}. 
Georgiou, Rassoul-Agha, and the second author~\cite{Georgiou-Rassoul-Seppalainen-17a,Georgiou-Rassoul-Seppalainen-17b} showed the existence of semi-infinite geodesics in directed  last-passage percolation with general weights under mild moment conditions. Using this, Janjigian, Rassoul-Agha, and the second author~\cite{Janjigian-Rassoul-Seppalainen-19} showed that geometric properties of the semi-infinite geodesics can be found by studying analytic properties of the Busemann process. In the special case of exponential weights,   the distribution of the Busemann process from~\cite{Fan-Seppalainen-20} 
was used to show that all geodesics in a given direction coalesce if and only if that direction is not a discontinuity of the Busemann process. 


In~\cite{Seppalainen-Sorensen-21b} the second and third author extended this work to the semi-discrete setting, by deriving the distribution of the Busemann process and analogous results for semi-infinite geodesics in BLPP. Again, all semi-infinite geodesics in a given direction coalesce if and only if that direction is not a discontinuity of the Busemann process. In each direction of discontinuity there are two coalescing families of semi-infinite geodesics  and from each initial point  \textit{at least} two semi-infinite geodesics. Compared to LPP on the discrete lattice, the semi-discrete setting of BLPP gives rise to additional non-uniqueness. In particular, \cite{Seppalainen-Sorensen-21b} developed a new coalescence proof   to handle the non-discrete setting. 

In the directed landscape, Rahman and Vir\'ag~\cite{Rahman-Virag-21} showed the existence of semi-infinite geodesics, almost surely in a fixed direction across all initial points, as well as almost surely from a fixed initial point across all directions. Furthermore,  all semi-infinite geodesics in a fixed  direction coalesce almost surely. This allowed \cite{Rahman-Virag-21} to construct a Busemann function for a fixed direction.
After the first version of our present paper was posted, Ganguly and Zhang~\cite{Ganguly-Zhang-2022a} gave an independent construction of a Busemann function and semi-infinite geodesics, again for a fixed direction. They   defined a notion of ``geodesic local time'' which was key to understanding the global fractal geometry of geodesics in DL. Later in~\cite{Ganguly-Zhang-2022b}, the same authors showed that the discrete analogue of geodesic local time in exponential LPP converges to geodesic local time for the DL. 

Starting from the definition in~\cite{Rahman-Virag-21}, we construct the full  Busemann process across all directions. Through the properties of this process, we establish a classification of uniqueness and coalescence of semi-infinite geodesics in  the directed landscape. Similar constructions of the Busemann process and classifications for discrete and semi-discrete models have previously been achieved  \cite{Sepp_lecture_notes,Janjigian-Rassoul-2020b,Janjigian-Rassoul-Seppalainen-19,Seppalainen-Sorensen-21b}, but the procedure in the directed landscape is more delicate. One reason is that the  space is fully continuous.  Another difficulty is that Busemann functions in DL possess monotonicity only in horizontal directions, while    discrete and semi-discrete models  exhibit   monotonicity   in both  horizontal and vertical directions. A new perspective is needed to construct the Busemann process for arbitrary initial points. 

The full Busemann process is necessary for a complete understanding of the geometry of semi-infinite geodesics.  In particular, countable dense sets of initial points or directions cannot capture non-uniqueness of geodesics or the singularities of the Busemann process. 



\subsection{Stationary horizon as the Busemann process of the directed landscape}
 The {\it stationary horizon} (SH) 
 is a cadlag process indexed by the real line  whose states are Brownian motions with drift (Definition~\ref{def:SH} in Appendix~\ref{sec:stat_horiz}). SH  was first introduced by the first author~\cite{Busani-2021} as the diffusive scaling limit of the Busemann process of exponential last-passage percolation from~\cite{Fan-Seppalainen-20}, and was conjectured to be the universal scaling limit of the Busemann process of models in the KPZ universality class. Shortly afterward, the paper~\cite{Seppalainen-Sorensen-21b} of the last two authors was posted. To derive the aforementioned results about semi-infinite geodesics, they constructed the Busemann process in BLPP and made several explicit distributional calculations. Remarkably, after discussions with the first author, the second and third authors discovered that the Busemann process of BLPP has the same distribution as the SH, restricted to nonnegative drifts. Furthermore, due to a rescaling property of the stationary horizon, when the direction is perturbed on order $n^{-1/3}$ from the diagonal, this process also converges to the SH, in the sense of finite-dimensional distributions. These results were added to the second version of~\cite{Seppalainen-Sorensen-21b}. 
 
 The convergence of the full Busemann process of  exponential LPP to SH under the KPZ scaling, proven  in~\cite{Busani-2021}, is currently the only example of what we expect  to be a universal phenomenon: namely,  that  SH is the universal limit of the Busemann processes  of models in the KPZ class.  The present paper takes a step towards this universality,  by establishing that the stationary horizon is the Busemann process of the directed landscape, which itself is the conjectured  universal scaling limit of metric-like objects in the KPZ class. This is the central result that gives access to properties of the Busemann process.  In addition to  giving strong evidence towards the universality of SH conjectured by \cite{Busani-2021}, it provides us with computational tools for studying the geometric features of DL.  
 
 The characterization of the Busemann process of DL comes from a combination of two results. (i) The Busemann process evolves as a backward KPZ fixed point. (ii)  The stationary horizon is the unique invariant distribution of  the KPZ fixed point, subject to  an asymptotic slope condition  satisfied by the Busemann process (Theorem~\ref{thm:invariance_of_SH}). 
 Our invariance result is an infinite-dimensional extension of the previously proved invariance of Brownian motion with  drift  \cite{KPZfixed,Pimentel-21a,Pimentel-21b}. 
    For the invariance of a single Brownian motion, we have  a strengthened uniqueness statement   (Remark~\ref{rmk:k = 1 uniqueness}).
 Furthermore, under asymptotic slope conditions   on the   initial data, the stationary horizon is an attractor. 
 This  is analogous to the results of~\cite{Bakhtin-Cator-Konstantin-2014,Bakhtin-Li-19,Bakhtin-16,Bakhtin-16chapter,Bakhtin-2013} for stationary solutions of the Burgers equation with random Poisson and kick forcing.

\subsection{Non-uniqueness of geodesics and random fractals} Among the key questions   is the uniqueness of semi-infinite geodesics in the directed landscape. We show the existence of a countably infinite, dense  random  set $\DLBusedc$ of directions $\dir$ such that, from each initial point in $\R^2$,    two semi-infinite geodesics in direction $\dir$ emanate,   separate immediately or after some time, and never return back together. It is interesting to relate this result and its proof to earlier work on disjoint finite geodesics.  

The set of exceptional pairs of points between which there is a non-unique geodesic in DL was studied in~\cite{Bates-Ganguly-Hammond-22}. Their approach relied on~\cite{Basu-Ganguly-Hammond-21} which studied the random nondecreasing function $z \mapsto \Ll(y,s;z,t) - \Ll(x,s;z,t)$ for fixed $x < y$ and $s < t$. This process is locally constant except on an exceptional set of Hausdorff dimension $\f{1}{2}$. From here~\cite{Bates-Ganguly-Hammond-22} showed that for fixed $s < t$ and $x < y$, the set of $z \in \R$ such that there exist disjoint geodesics from $(x,s)$ to $(z,t)$ and from $(y,s)$ to $(z,t)$ is exactly the set of local variation of the function $z \mapsto \Ll(x,s;z,t) - \Ll(y,s;z,t)$, and therefore has Hausdorff dimension $\f{1}{2}$. Going further, they showed that for fixed $s < t$, the set of pairs $(x,y) \in \R^2$ such that there exist two disjoint geodesics from $(x,s)$ to $(y,t)$ also has Hausdorff dimension $\f{1}{2}$, almost surely. Later, this exceptional set in the time direction was studied in~\cite{Ganguly-Zhang-2022a},and was shown to have Hausdorff dimension $2/3$. Across the entire plane, this set has Hausdorff dimension $\f{5}{3}$.
In a similar spirit, Dauvergne~\cite{Dauvergne-23} recently posted a paper detailing all the possible configurations of non-unique point-to-point geodesics, along with the Hausdorff dimensions--with respect to a particular metric--of the sets of points with those configurations.

Our focus is on the  limit  of the measure studied in~\cite{Basu-Ganguly-Hammond-21},  namely, the nondecreasing function $\dir \mapsto \W_{\dir}(y,s;x,s)= \lim_{t \to \infty}[\Ll(y,s;t\dir,t) - \Ll(x,s;t\dir,t)]$, which is exactly the Busemann function in direction $\dir$. 
  The support of its Lebesgue-Stieltjes measure corresponds to the existence of disjoint geodesics (Theorem~\ref{thm:Buse_pm_equiv}), but in contrast to~\cite{Bates-Ganguly-Hammond-22},  the measure is supported on a countable discrete set instead of on a set of Hausdorff dimension $\f{1}{2}$ (Theorem~\ref{thm:DLBusedc_description}\ref{itm:DL_Buse_no_limit_pts} and Remark~\ref{rmk:shock_measure}). 

We encounter a Hausdorff dimension $\f{1}{2}$ set if we look along a fixed time level  $s$ for those space-time points $(x,s)$ out of  which there are disjoint semi-infinite geodesics in a {\it random, exceptional} direction (Theorem~\ref{thm:Split_pts}\ref{itm:Hasudorff1/2}). Up to the removal of an at-most countable set, this Hausdorff dimension $\f{1}{2}$ set is the support of the random measure defined by the function
\[
x \mapsto f_{s,\dir}(x) = \W_{\dir +}(x,s;0,s) - \W_{\dir -}(x,s;0,s),
\]
where $\W_{\dir\pm}$ 
are the right and left-continuous Busemann processes (Theorem~\ref{thm:random_supp}). This is a semi-infinite analogue of the result in~\cite{Bates-Ganguly-Hammond-22}.  

The distribution of $f_{s,\dir}$ is  delicate.  The set of directions $\dir$ such that $\W_{\dir -} \neq \W_{\dir +}$, or equivalently such that 
$\tau_\xi=\inf\{x>0: f_{s,\dir}(x)>0\}<\infty $,
is the set   $\DLBusedc$ mentioned above. A fixed direction $\dir$ lies in $\DLBusedc$ with probability $0$.   Theorem~\ref{thm:BusePalm} shows that the law of $f_{s,\dir}(\tau_\xi+\aabullet)$ on $\R_{\ge0}$, conditioned on $\dir \in \DLBusedc$ in the appropriate Palm sense,  is exactly that of the running maximum  of a Brownian motion, or equivalently, that of Brownian local time. This complements the fact that the function $z \mapsto \Ll(y,s;z,t) - \Ll(x,s;z,t)$ is locally absolutely continuous with respect to Brownian local time \cite{Ganguly-Hegde-2021}.   Furthermore, the point process $\{(\tau_\dir, \dir):\dir\in\DLBusedc\}$ has an explicit mean measure (Lemma~\ref{lm:ac} in Section \ref{sec:Palm}). 

Since the first version of the present article has appeared, Bhatia~\cite{Bhatia-22,Bhatia-23} has posted two papers that use our results as inputs. The first,~\cite{Bhatia-22} studies the Hausdorff dimension of the set of splitting points of geodesics, along a geodesic itself. The second,~\cite{Bhatia-23} answers an open problem presented in this paper. Namely, the sets $\NU_0^{\dir \sig}$ and $\NU_1^{\dir \sig}$ defined in~\eqref{NU0}--\eqref{NU1} are almost surely equal, and for a fixed direction $\dir$, this set almost surely has Hausdorff dimension $\f{4}{3}$ in the plane.




\subsection{Inputs} We summarize the inputs to this paper, besides the   basic   \cite{Directed_Landscape, KPZfixed,reflected_KPZfixed}. Four ingredients go into the invariance of SH under the KPZ fixed point:  (i)  
The invariance of the Busemann process of  the exponential corner growth model  under the LPP dynamics \cite{Fan-Seppalainen-20}. (ii) Convergence of this Busemann process to SH  \cite{Busani-2021}. Here,  the emergence of SH as a scaling limit  in the KPZ universality class plays a fundamental role. (iii) Exit point bounds for stationary exponential LPP  \cite{bala-busa-sepp-20, Balazs-Cator-Seppalainen-2006, Emrah-Janjigian-Seppalainen-20,Seppalainen-Shen-2020, Sepp_lecture_notes}. 
(iv)   Convergence of exponential LPP to  DL \cite{Dauvergne-Virag-21}.    
For the uniqueness, we use 
Lemma~\ref{lem:DL_crossing_facts}\ref{itm:KPZ_crossing_lemma}, originally from~\cite{Pimentel-21b}.

To construct the global Busemann process, we start from the results in~\cite{Rahman-Virag-21},   summarized in Section~\ref{sec:RV_summ}. After  the first version of our paper appeared,~\cite{Ganguly-Zhang-2022a} gave an independent construction of the Busemann function in a fixed direction.  Our results do not rely on~\cite{Ganguly-Zhang-2022a}. After characterizing the distribution of the Busemann process, we use the regularity of SH from~\cite{Busani-2021,Seppalainen-Sorensen-21b} to prove results about the regularity of the Busemann process and semi-infinite geodesics.

To describe the size of the  exceptional sets of points with non-unique geodesics (Theorems~\ref{thm:Split_pts} and ~\ref{thm:DLNU}\ref{itm:DL_NU_count}), we use results about point-to-point geodesics from~\cite{Bates-Ganguly-Hammond-22} and~\cite{Dauvergne-Sarkar-Virag-2020}. 
A result from~\cite{Dauvergne-22} implies  Lemma~\ref{lm:horiz_shift_mix} and the mixing in Theorem~\ref{thm:Buse_dist_intro}\ref{itm:stationarity}.

Our techniques  are probabilistic rather than integrable, but some results we use come from  integrable inputs. We use results of~\cite{Directed_Landscape,Dauvergne-Virag-21,Busani-2021}, which each utilized the continuous RSK correspondence 
\cite{O'Connell-2003,rep_non_colliding}. We also use results on  point-to-point geodesics in~\cite{Bates-Ganguly-Hammond-22,Dauvergne-Sarkar-Virag-2020} that rely on \cite{Hammond2}, who studied the   number of disjoint geodesics in BLPP using integrable inputs. For more about the connections between RSK and the directed landscape, we refer the reader to \cite{Dauvergne-Nica-Virag-2021,Dauvergne-Zhang-2021}.

\subsection{Organization of the paper}\label{sec:org} 
Section~\ref{sec:model_main_results}  defines the models and states three results   accessible without further definitions: Theorem~\ref{thm:invariance_of_SH} (proved in Section~\ref{sec:invariance}) on the  unique invariance and attractiveness  of SH under the KPZ fixed point, Theorem~\ref{thm:DLSIG_main} (proved in Section~\ref{sec:Buseextraproofs}) on the global structure of semi-infinite geodesics   in DL, and Theorem~\ref{thm:Split_pts} (proved in Section~\ref{sec:last_proofs}) on the fractal properties of the set of initial points with disjoint semi-infinite geodesics in the same direction. Section~\ref{sec:invariance} proves Theorem~\ref{thm:invariance_of_SH}.  Section~\ref{sec:RV_summ} summarizes the results of~\cite{Rahman-Virag-21} that we use as the starting point for constructing the Busemann process. 

The remainder of the paper covers finer results on the Busemann process and semi-infinite geodesics. Sections~\ref{sec:Buse_geod_results}--\ref{sec:meas_supp} each start with several theorems that are then proved later in the paper. 
The theorems can be read independently of the proofs. 
Each section depends on the sections that came before. Section~\ref{sec:Buse_geod_results} describes the construction of the Busemann process and infinite geodesics in all directions. Section~\ref{sec:LR_sig} gives a detailed discussion of non-uniqueness of geodesics. Section~\ref{sec:geometry_sec} is concerned with coalescence and connects the regularity of the Busemann process to the geometry of geodesics. This culminates in the proof of Theorem~\ref{thm:DLSIG_main}. Section~\ref{sec:meas_supp} develops the theory of random measures for the Busemann process, culminating in the proof of Theorem~\ref{thm:Split_pts}.    Section~\ref{sec:op} collects  open problems.   The appendices contain material from the literature. 

\subsection{Acknowledgements}
Duncan Dauvergne explained the mixing of the directed landscape, recorded as Lemma~\ref{lm:horiz_shift_mix}. E.S.\ thanks also Erik Bates, Shirshendu Ganguly, Jeremy Quastel, Firas Rassoul-Agha, and Daniel Remenik for helpful discussions. We also thank the two anonymous referees for incredibly helpful comments that have greatly improved the organization and exposition of this paper.  

The work of O.~Busani was funded by the Deutsche Forschungsgemeinschaft (DFG, German Research Foundation) under Germany’s Excellence Strategy--GZ 2047/1, projekt-id 390685813, and partly performed at  University of Bristol. T.\ Sepp\"al\"ainen was partially supported by National Science Foundation grants DMS-1854619 and DMS-2152362 and by the Wisconsin Alumni Research Foundation. E.~Sorensen was partially supported by T.~Sepp{\"a}l{\"a}inen under National Science Foundation grants DMS-1854619 and DMS-2152362.

\section{Model and main theorems} \label{sec:model_main_results}
\subsection{Notation}
\label{sec:notat} 
\begin{enumerate}
[label={\rm(\roman*)}, ref={\rm\roman*}]   \itemsep=2pt 
    \item  $\Z$, $\Q$ and $\R$ are restricted by subscripts, as in for example $\Z_{> 0}=\{1,2,3,\dotsc\}$.
    \item $\mbf e_1= (1,0)$ and $\mbf e_2 = (0,1)$ denote the standard basis vectors in $\R^2$.
    \item Equality in distribution is   $\tspb\deq\tspb$ and convergence in distribution $\tspb\Longrightarrow$.
    \item   $X \sim \Exp(\rho)$ means that 
    $\Pp(X>t)=e^{-\rho t}$ for $t>0$. 
    \item The increments of a function $f:\R \to \R$ are denoted by $f(x,y) = f(y) - f(x)$.
    \item Increment ordering of $f,g:\R \to \R$:   $f \li g$ means that  $f(x,y) \le g(x,y)$ for all $x < y$.
    \item For $s \in \R$,   $\Hh_s=\{(x,s): x \in \R\}$ is  the set of space-time points at time level $s$.
    \item A two-sided standard Brownian motion is a continuous random process $\{B(x): x \in \R\}$ such that $B(0) = 0$ almost surely and   $\{B(x):x \ge 0\}$ and $\{B(-x):x \ge 0\}$ are two independent standard Brownian motions on $[0,\infty)$.
    \item\label{def:2BMcmu} If $B$ is a two-sided standard Brownian motion, then 
    $\{c B(x) + \mu x: x \in \R\}$ is a two-sided Brownian motion with diffusivity $c>0$ and drift $\mu\in\R$. 
    \item The parameter domain of the directed landscape is  $\Rup = \{(x,s;y,t) \in \R^4: s < t\}$.
    \item The Hausdorff dimension of a set $A$ is denoted by $\dim_H(A)$. 
\end{enumerate}
\subsection{Geodesics in the directed landscape} \label{sec:DL_geod}
  The directed landscape, originally constructed in~\cite{Directed_Landscape}, is a random continuous function $\Ll:\Rup \to \R$ that arises as the scaling limit of a large class of models in the KPZ universality class, and is expected to be a universal limit of such models. We cite the theorem for convergence of exponential last-passage percolation in Theorem~\ref{thm:conv_to_DL} in Appendix~\ref{sec:LPP}, and summarize some key points from~\cite{Directed_Landscape} here. The directed landscape satisfies the metric composition law: for $(x,s;y,u) \in \Rup$ and $t \in (s,u)$,
\be \label{eqn:metric_comp}
\Ll(x,s;y,u) = \sup_{z \in \R}\{\Ll(x,s;z,t) + \Ll(z,t;y,u)\}.
\ee
This implies the reverse triangle inequality:  for $s < t < u$ and $(x,y,z) \in \R^3$, $\Ll(x,s;z,t) + \Ll(z,t;y,u) \le \Ll(x,s;y,u)$. 
Furthermore, over disjoint time intervals $(s_i,t_i)$, $1 \le i \le n$, the processes $(x,y) \mapsto \Ll(x,s_i;y,t_i)$ are independent.

Under the directed landscape, the length of  a continuous path $g:[s,t] \to \R$  is
\[
\Ll(g) = \inf_{k \in \Z_{>0}} \; \inf_{s = t_0 < t_1 < \cdots < t_k = t} \sum_{i = 1}^k \Ll(g(t_{i - 1}),t_{i - 1};g(t_i),t_i),
\]
where the second infimum is over all partitions $s = t_0 < t_1 < \cdots < t_k < t$.
By the reverse triangle inequality, $\Ll(g) \le \Ll(g(s),s;g(t),t)$. We call $g$ a \textit{geodesic} if equality holds. When this occurs, every  partition $s = t_0 < t_1 < \cdots < t_k = t$ satisfies 
\[
\Ll(g(s),s;g(t),t) = \sum_{i = 1}^k \Ll(g(t_{i - 1}),t_{i - 1};g(t_i),t_i).
\]
  For  fixed $(x,s;y,t) \in \Rup$, there exists almost surely a unique geodesic between $
(x,s)$ and $(y,t)$ \cite[Sect.~12--13]{Directed_Landscape}. Across all points, there exist leftmost and rightmost geodesics. The leftmost geodesic $g$ is such that, for each $u \in (t,s)$, $g(u)$ is the leftmost maximizer of $\Ll(x,s;z,u) + \Ll(z,u;y,t)$ over $z \in \R$.  The analogous fact holds for the rightmost geodesic. Geodesics in the directed landscape   have H\"older regularity  $\f{2}{3} - \ve$  but not  $\f{2}{3}$ \cite{Directed_Landscape,Dauvergne-Sarkar-Virag-2020}.

A \textit{semi-infinite geodesic}   from $(x,s) \in \R^2$ is a continuous path $g:[s,\infty) \to \R$ such that $g(s) = x$ and   the restriction of $g$ to each  domain $[s,t]\subseteq[s,\infty)$ is a geodesic between $(x,s)$ and $(g(t),t)$. Such an infinite path $g$ has {\it direction} $\dir \in \R$ if $\lim_{t \to \infty} g(t)/t=\dir$.  Two semi-infinite geodesics $g_1$ and $g_2$ \textit{coalesce} if there exists $t$ such that $g_1(u) = g_2(u)$ for all $u\ge t$. If $t$ is the minimal such time, then  $(g_1(t),t)$ is the \textit{coalescence point}.
Two semi-infinite geodesics $g_1,g_2:[s,\infty) \to \R$ are \textit{distinct} if $g_1(t) \neq g_2(t)$ for at least some  $t\in(s,\infty)$ and \textit{disjoint} if $g_1(t) \neq g_2(t)$ for all $t\in(s,\infty)$.


\subsection{KPZ fixed point}
The KPZ fixed point $h_t(\aabullet;\mathfrak h)$ started from initial state $\h$  is a Markov process on the space of upper semi-continuous functions. More precisely, its state space is defined as 
\be \label{UCdef}
\begin{aligned}
\UC &= \{\text{ upper semi-continuous functions }\h:\R \to \R \cup \{-\infty\}: \\\  &\quad \text{ there exist }a,b > 0 \text{ such that } \quad \h(x) \le a + b|x| \text{ for all }x \in \R, \\ &\quad \text{ and }\h(x) > -\infty \text{ for some }x \in \R\}.
\end{aligned}
\ee
The topology on this space is that of local Hausdorff convergence of hypographs. When restricted to continuous functions, this convergence is equivalent to uniform convergence on compact sets (Section 3.1 in~\cite{KPZfixed}). This subspace of continuous functions is preserved under the KPZ fixed point (\cite{KPZfixed}, Lemma~\ref{lem:max_restrict}). The process $\{h_t(\aabullet;\h)\}_{t \ge 0}$ can be  represented as  \cite{reflected_KPZfixed}  
\be \label{eqn:KPZ_DL_rep}
h_t(y;\h) = \sup_{x \in \R}\{\h(x) + \Ll(x,0;y,t)\}, \quad y\in\R, 
\ee
where $\Ll$ is the directed landscape. 
If $\h$ is a two-sided Brownian motion with diffusivity  $\sqrt 2$ and arbitrary drift, then $ h_t(\aabullet;\h) - h_t(0;\h) \deq \h(\aabullet)$  for each $t > 0$  \cite{KPZfixed,Pimentel-21a,Pimentel-21b}.

\subsection{Stationary horizon} \label{sec:SHintro} The stationary horizon (SH) is a process $G = \{G_\dir\}_{\dir \in \R}$ with values $G_\dir$ in the space $C(\R)$ of continuous $\R\to\R$ functions. $C(\R)$ has its Polish topology of uniform convergence on compact sets. The paths $\dir\mapsto G_\dir$ lie  in the Skorokhod space $D(\R,C(\R))$ of cadlag functions   $\R \to C(\R)$. This means that for each $\dir \in \R$, $\lim_{\beta \searrow \dir} G_\beta = G_\dir$, where convergence holds uniformly on compact sets. The limit $\lim_{\alpha \nearrow} G_\alpha$ also exists in the same sense, but is not necessarily equal to $G_\dir$. We use $G_{\dir -}$ to denote this limit. For each $\dir \in \R$, $G_{\dir}$ is a two-sided Brownian motion with diffusivity $\sqrt 2$ and drift $2\dir$. The distribution of a $k$-tuple  $(G_{\dir_1},\dotsc,G_{\dir_k})$ can be realized as an image of $k$  independent Brownian motions with drift, given in Definition~\ref{def:SH}. See  Appendix~\ref{sec:stat_horiz} for further properties of SH.  

For a compact set $K \subseteq \R$, the process $\dir \mapsto G_\dir |_K$ of functions restricted to $K$ is a jump process. Figure~\ref{fig:SH_sim} shows a simulation of   $G_\dir$. Each pair of  trajectories remains together in a  neighborhood of the origin before separating for good, both forward and backward on $\R$. 
\begin{figure}[t]
    \centering
    \includegraphics[width = 4in]{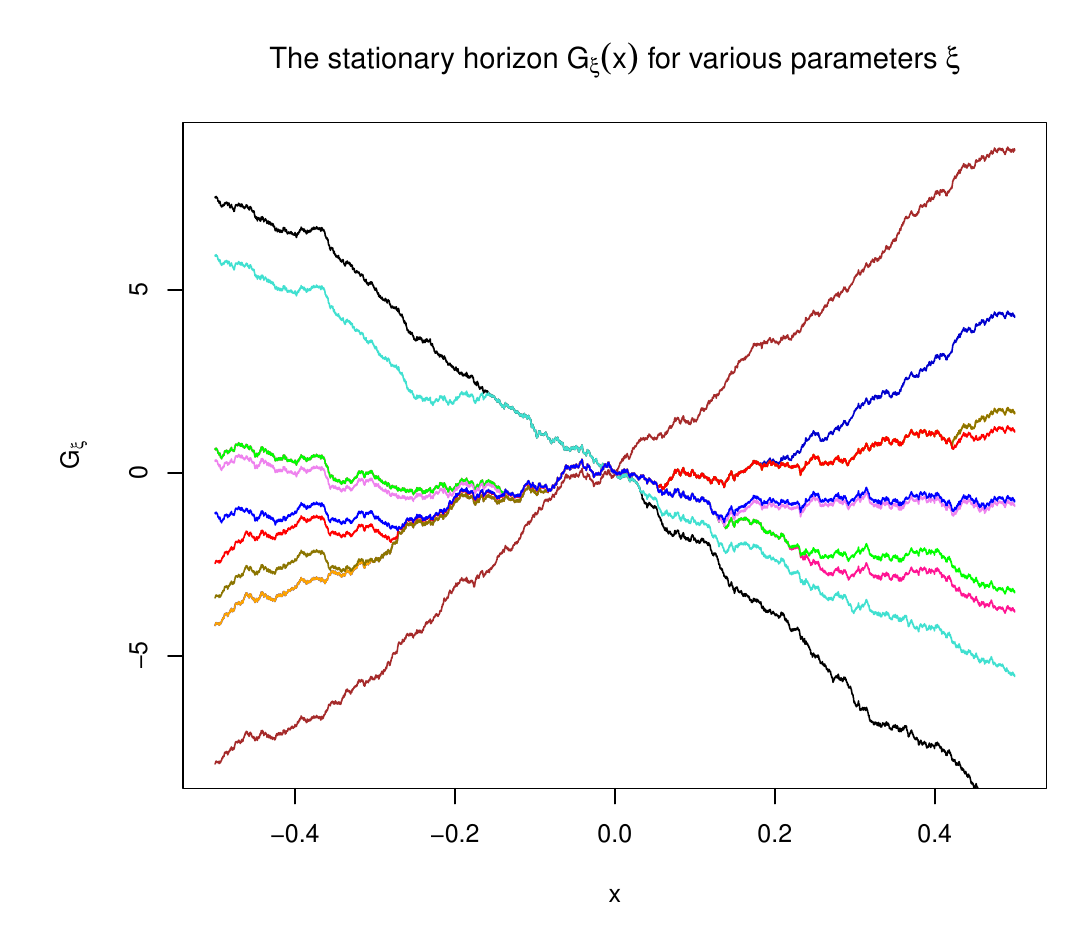}
    \caption{The stationary horizon. Each color represents a different parameter $\dir \in \{0,\pm 1,\pm 2,\pm 3,\pm 5,\pm 10\}$}
    \label{fig:SH_sim}
\end{figure}

Our first result is the unique  invariance and  attractiveness of SH  under the KPZ fixed point. This generalizes the invariance of a single Brownian motion with drift and provides a new uniqueness statement  (Remark~\ref{rmk:k = 1 uniqueness} below). Attractiveness is proved under these assumptions on the asymptotic drift $\dir \in \R$ of the initial function  $\h \in \UC$:
\begin{equation} \label{eqn:drift_assumptions}
    \begin{aligned}
    &\text{If } \dir = 0, \quad &\limsup_{x \to +\infty} \f{\h(x)}{x} \in [-\infty,0] \qquad &\text{and}\quad &\liminf_{x \to -\infty} \f{\h(x)}{x} \in [0,+\infty], \\
    &\text{if } \dir > 0,\quad &\lim_{x \to +\infty} \f{\h(x)}{x} = 2\dir\qquad&\text{and}\quad &\liminf_{x \to -\infty} \f{\h(x)}{x} \in (-2\dir,+\infty], \\
    &\text{and if } \dir < 0,\quad &\lim_{x \to -\infty} \f{\h(x)}{x} = 2 \dir\qquad&\text{and}\quad &\limsup_{x \to +\infty} \f{\h(x)}{x} \in [-\infty, -2\dir).
    \end{aligned}
\end{equation}
As spelled out in the theorem below, these conditions describe the basins of attraction for the KPZ fixed point. When $\dir > 0$ and $x>0$ is large, this condition forces $\h(x)$  to be approximated by $2\dir x$. The directed landscape $\Ll(x,s;y,t)$ can be approximated by $-\f{(x - y)^2}{t - s}$ (Lemma~\ref{lem:Landscape_global_bound}), so that $\h(x) + \Ll(x,0;y,t) \approx 2\dir x - \f{(y - x)^2}{t}$, which has its maximum at $x = y + \dir t$. Once we can control the maximizers, Lemma~\ref{lem:DL_crossing_facts} allows us to compare the KPZ fixed point from different initial conditions. This, of course, must be made precise. In the $\dir > 0$ case of the proof of Lemma~\ref{lem:unq}, the $\liminf$ condition as $x \to -\infty$ forces the maximizer to be positive, and an analogous statement holds for $\dir < 0$, although the condition is different.  These drift conditions are analogous to the conditions on the drift studied in~\cite{Bakhtin-Cator-Konstantin-2014} for stationary solutions of the Burgers equation with random Poisson forcing.

\begin{theorem} \label{thm:invariance_of_SH}
Let $(\Omega,\F,\Pp)$ be a  probability space on which the stationary horizon $G=\{G_\dir\}_{\dir \in \R}$ and directed landscape $\Ll$ are defined, and such that  the processes $\{\Ll(x,0;y,t):x,y \in \R, t > 0\}$ and $G$ are independent. For each $\dir \in \R$, let $G_\dir$ evolve under the KPZ fixed point in the same environment $\Ll$, i.e., for each $\dir \in \R$,
\[
h_t(y;G_\dir) = \sup_{x \in \R}\{G_\dir(x) + \Ll(x,0;y,t)\},\qquad\text{for all } y\in\R \text{ and } t > 0.
\]

{\rm(Invariance)} 
For each $t > 0$,   the equality in distribution  $\{h_t(\aabullet;G_\dir) - h_t(0;G_\dir)\}_{\dir \in \R}$ $\deq G$  holds between  random elements of $D(\R,C(\R))$.

\smallskip 

{\rm(Attractiveness)}
Let $k \in \Z_{>0}$ and $\dir_1 < \cdots < \dir_k$ in $\R$. Let $(\h^1,\ldots,\h^k)$ be a $k$-tuple of functions in $\UC$,  coupled with $(G,\Ll)$ {\rm arbitrarily},  and that almost surely satisfy \eqref{eqn:drift_assumptions} for $(\h, \dir) = (\h^i, \dir_i)$  for each  $i\in\{1,\dotsc,k\}$.  Then if $(\h^1,\ldots,\h^k)$ evolves   in the same environment $\Ll$,  for any $a > 0$, 
\[
\lim_{t \to \infty} \Pp\bigl\{   h_t(x;\h^i)-h_t(0;\h^i)= h_t(x;G_{\xi_i})-h_t(0;G_{\xi_i}) \ \, \forall x \in [-a,a],1 \le i \le k\bigr\} = 1.
\]
Consequently,  as $t \to \infty$, the distributional limit  
\[
\bigl(h_t(\aabullet;\h^1) -h_t(0;\h^1) ,\ldots,h_t(\aabullet;\h^k) - h_t(0;\h^k)\bigr)
\Longrightarrow
\bigl(G_{\dir_1}(\aabullet),\ldots,G_{\dir_k}(\aabullet)\bigr)
\]
holds in $\UC^k$ (or in $\C(\R)^k$ if the $\h^i$ are continuous). 

\smallskip 

{\rm(Uniqueness)}
In particular, on the space $\UC^k$,  $\bigl(G_{\dir_1}, \dotsc, G_{\dir_k})$ is the unique invariant distribution of the KPZ fixed point  such that for each  $i\in\{1,\dotsc,k\}$  the condition~\eqref{eqn:drift_assumptions} holds for $(\h, \dir) = (\h^i, \dir_i)$ almost surely. 
\end{theorem}
\begin{remark}
Theorem~\ref{thm:DL_Buse_summ}\ref{itm:global_attract} in  Section \ref{sec:Buse_geod_results} states that the Busemann process   is a global attractor of the backward KPZ fixed point. Namely, start the KPZ fixed point at time $t$ with initial data $\h$ satisfying \eqref{eqn:drift_assumptions} and run it backward in time to a fixed final time $s$.  Then, in a given a compact set,  for large enough $t$  the increments of the backwards KPZ fixed point at time $s$, started from initial data $\h$ at time $t$, match those of the Busemann function in direction $\dir$.  To prove Theorem~\ref{thm:DL_Buse_summ}\ref{itm:global_attract}, we first independently prove the attractiveness (and therefore uniqueness) of Theorem \ref{thm:invariance_of_SH}, then use this to characterize the Busemann process of the DL, which gives its regularity. This regularity is used in the proof of Theorem~\ref{thm:DL_Buse_summ}\ref{itm:global_attract}.   
\end{remark}
\begin{remark}
 The process $t \mapsto \{h_t(\aabullet;\h^\dir) - h_t(0;\h^\dir) \}_{\dir \in \R}$ is a well-defined Markov process on a state space which is a  Borel subset of  $D(\R,C(\R))$ (Lemma~\ref{lem:KPZ_preserve_Y}). By the uniqueness result for finite-dimensional distributions, $G$ is the unique invariant distribution on this space of $C(\R)$-valued cadlag paths. 
\end{remark}
\begin{remark} \label{rmk:k = 1 uniqueness}
In the above strength, the attractiveness result was previously unknown even in the case $k = 1$ (a single initial function).  Pimentel \cite{Pimentel-21a,Pimentel-21b} proved attractiveness for $k =1$ and $\dir = 0$  under the following condition on the initial data $\h$: there exist $\gamma_0 > 0$ and $\psi(r)$ such that for all $\gamma > \gamma_0$ and $r \ge 1$, 
\be \label{eqn:ergodicity_assumption}
\Pp(\gamma^{-1}\h(\gamma^2 x) \le r|x| \, \forall x \ge 1) \ge 1 - \psi(r), \  \text{ where } \lim_{r \to \infty} \psi(r) = 0.
\ee
\end{remark}

\subsection{Semi-infinite geodesics}
A significant consequence of Theorem~\ref{thm:invariance_of_SH} is that the stationary horizon characterizes the distribution of the  Busemann process of the directed landscape (Theorem \ref{thm:Buse_dist_intro}). 
The Busemann  process in turn is used to construct  semi-infinite geodesics called \textit{Busemann geodesics} simultaneously from all initial points and in all directions  (Theorem~\ref{thm:DL_SIG_cons_intro}).  The definition of Busemann geodesics, along with a detailed study, comes in Section \ref{sec:Buse_geod_results}. 

The next theorem    states  our conclusions for general semi-infinite geodesics.  The random countably infinite dense set $\DLBusedc$   of directions  is later characterized in \eqref{eqn:DLBuseDC_def} as the discontinuity set of the Busemann process, and its properties stated in  Theorem~\ref{thm:DLBusedc_description}. 

 We assume the probability space $(\Omega,\F,\Pp)$ of  the directed landscape $\Ll$  complete. All statements about semi-infinite geodesics are with respect to $\Ll$. Two geodesics are {\it disjoint} if they do not share any space-time points,  except possibly their common initial and/or final point.



\begin{theorem} \label{thm:DLSIG_main} The following statements hold on a single event of full probability. There exists a random countably infinite dense subset $\DLBusedc$ of $\R$ such that parts \ref{itm:good_dir_coal}--\ref{itm:bad_dir_split} below hold. 
\begin{enumerate} [label=\rm(\roman{*}), ref=\rm(\roman{*})]  \itemsep=3pt
    \item \label{itm:all_dir} Every semi-infinite geodesic has a direction $\dir \in \R$. From each initial point $p \in \R^2$ and in each direction $\dir \in \R$, there exists at least one semi-infinite geodesic from $p$ in direction $\dir$.
    \item \label{itm:good_dir_coal} When $\dir \notin \DLBusedc$, all semi-infinite geodesics in direction $\dir$ coalesce. There exists a random set of initial points, of zero planar Lebesgue measure, outside of which the semi-infinite geodesic in each direction $\dir \notin \DLBusedc$ is unique. 
    \item \label{itm:bad_dir_split} When $\dir \in \DLBusedc$, there exist at least two families of semi-infinite geodesics in direction $\dir$, called the $\dir -$ and $\dir +$ geodesics. From every initial point $p \in \R^2$ there exists both a $\dir -$ geodesic and a $\dir +$ geodesic which  eventually separate and never come back together. All $\dir -$ geodesics coalesce, and all $\dir +$ geodesics coalesce.
    \end{enumerate}
    \end{theorem}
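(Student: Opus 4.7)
My plan is to derive Theorem~\ref{thm:DLSIG_main} from the full Busemann process $\{\W_{\dir\pm}(p;q)\}$, which exists on a single event of full probability as a consequence of Theorem~\ref{thm:invariance_of_SH} identifying its joint distribution (as a process in $\dir$) with the stationary horizon $G$. The random direction set is defined by
\[
\DLBusedc=\{\dir\in\R:\W_{\dir-}(\aabullet;\aabullet)\not\equiv\W_{\dir+}(\aabullet;\aabullet)\},
\]
and the regularity of $G$ forces $\DLBusedc$ to be almost surely countable and dense. The key structural input is the increment-ordering $\W_\alpha\li\W_\beta$ for $\alpha<\beta$, inherited from the monotone coupling of SH in the drift parameter.

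Next I would construct the Busemann geodesics. For each $\dir\in\R$, sign $\pm$, and initial point $p=(x,s)\in\R^2$, define $g^{\dir\pm}_p$ so that for each $t>s$, $g^{\dir\pm}_p(t)$ is the leftmost ($-$) or rightmost ($+$) maximizer over $z$ of $\W_{\dir\pm}(z,t;x,s)+\Ll(x,s;z,t)$. The metric composition law \eqref{eqn:metric_comp} then implies that each such path is a semi-infinite geodesic, and the linear growth of $\W_{\dir\pm}$ with asymptotic slope $2\dir$ (inherited from SH's Brownian drift) forces $g^{\dir\pm}_p(t)/t\to\dir$. This yields the existence half of \ref{itm:all_dir} and the two families in \ref{itm:bad_dir_split}. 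Coalescence within each $\dir\pm$ family is proved by comparing the maximizers from two different starting points using the shared function $\W_{\dir\pm}$ and a planar single-crossing lemma; at $\dir\in\DLBusedc$, the strict inequality $\W_{\dir-}\lneq\W_{\dir+}$ on an interval produces distinct leftmost/rightmost maximizers at large $t$, and the same comparison argument prevents a $\dir-$ geodesic and a $\dir+$ geodesic from rejoining after separating.

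The decisive step is to show that \emph{every} semi-infinite geodesic $g$ from $p$ is sandwiched between $g^{\alpha+}_p$ and $g^{\beta-}_p$ for every rational envelope $\alpha<\beta$ in $\R\setminus\DLBusedc$ with $g(t)/t$ eventually in $(\alpha,\beta)$. The sandwich is obtained from the monotone ordering of Busemann geodesics in $\dir$ combined with the crossing fact of Lemma~\ref{lem:DL_crossing_facts}, which forbids two distinct geodesics from the same point to cross more than once. Taking $\alpha,\beta\to\dir$ along continuity directions collapses the sandwich, identifying a well-defined direction for $g$ and showing $g$ is a Busemann geodesic; coalescence of all $\dir\notin\DLBusedc$ geodesics then follows from coalescence within each family. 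Finally, for uniqueness off a Lebesgue-null planar set when $\dir\notin\DLBusedc$, one observes that $g^{\dir-}_p=g^{\dir+}_p$ exactly when the variational maximizer is unique, and a Fubini argument over a countable dense set of directions bounds the exceptional set by a zero-measure set on each time-level simultaneously.

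The main obstacle is the directional statement in \ref{itm:all_dir}: a priori a semi-infinite geodesic need not be Busemann, so the existence of $\lim_{t\to\infty}g(t)/t$ must be extracted purely from geometric sandwiching and the strict asymptotic slopes of the Busemann geodesics. Ruling out oscillation of $g(t)/t$ between rational envelopes requires the single-crossing property plus the strict separation $g^{\alpha+}_p(t)-g^{\beta-}_p(t)\to-\infty$, and it is here that the distributional identification with SH, which guarantees distinct asymptotic drifts at distinct $\dir$, plays its essential role.
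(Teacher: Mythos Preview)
Your overall plan matches the paper's: build the global Busemann process, construct Busemann geodesics as maximizers, sandwich every semi-infinite geodesic between the extreme Busemann geodesics $g_p^{\dir-,L}$ and $g_p^{\dir+,R}$, and read off directedness, coalescence, and the $\dir\pm$ splitting. The directedness argument you sketch for Item~\ref{itm:all_dir} is essentially the paper's: an arbitrary geodesic from $p$ cannot oscillate across a Busemann geodesic $g_p^{\dir+,R}$ because the latter is the rightmost geodesic between any two of its points (Theorem~\ref{thm:DL_SIG_cons_intro}\ref{itm:DL_LRmost_geod}), and this together with $g_p^{\dir\sig,S}(t)\to\pm\infty$ as $\dir\to\pm\infty$ forces finite $\liminf$ and $\limsup$ and then their equality.

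The genuine gap is in coalescence within each $\dir\sigg$ family. You assert this follows from ``the shared function $\W_{\dir\pm}$ and a planar single-crossing lemma,'' but two $\dir\sigg$ Busemann geodesics have the \emph{same} asymptotic direction, so planarity alone does not force them to meet; once they meet they do coalesce (Theorem~\ref{thm:g_basic_prop}\ref{itm:DL_SIG_mont_x}), but the meeting itself is the issue, and you need it for all $\dir$ simultaneously on one event, not just for fixed $\dir$ where Rahman--Vir\'ag already gives it. The paper's mechanism (Lemmas~\ref{lem:Buse_equality_coal}--\ref{lem:DL_LR_coal}, Theorem~\ref{thm:DL_all_coal}) uses the \emph{local constancy} of $\dir\mapsto\W_{\dir\sig}$ on compacts, a consequence of the stationary-horizon jump-process structure (Theorem~\ref{thm:DL_Buse_summ}\ref{itm:DL_unif_Buse_stick}), to find $\alpha\ne\dir$ with $\W_{\alpha\sig}(b,t;a,t)=\W_{\dir\sig}(b,t;a,t)$; the $\alpha$- and $\dir$-geodesics then have distinct asymptotic slopes and must cross, and the Busemann equality converts the crossing into coalescence for the $\dir\sigg$ pair. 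Your plan never invokes local constancy, and without it this step does not close. Two smaller points: you conflate the sign $\pm$ (choice of $\W_{\dir-}$ versus $\W_{\dir+}$) with the $L/R$ maximizer choice, whereas the paper keeps them separate and proves ``never come back together'' in \ref{itm:bad_dir_split} for the pair $g_p^{\dir-,L},\,g_p^{\dir+,L}$ (both leftmost between their points, so a reunion would give two leftmost finite geodesics), which is cleaner than arguing with your $g_p^{\dir-,L}$ versus $g_p^{\dir+,R}$; and your claim that the sandwich ``shows $g$ is a Busemann geodesic'' is established in the paper only for $\dir\notin\DLBusedc$ (Theorem~\ref{thm:DL_good_dir_classification}\ref{itm:DL_allBuse}) and remains open for $\dir\in\DLBusedc$, though fortunately the theorem does not require it.
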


 
    \begin{remark}[Busemann geodesics and general geodesics] 
Theorem \ref{thm:DLSIG_main} is proved   by controlling all semi-infinite geodesics with  Busemann geodesics. Namely,  from each initial point $p$ and in each direction $\dir$, all  semi-infinite geodesics lie between  the  leftmost and rightmost  Busemann geodesics (Theorem~\ref{thm:all_SIG_thm_intro}\ref{itm:DL_LRmost_SIG}). Furthermore, for  all   $p$ outside a random set of Lebesgue measure zero and all   $\dir \notin \DLBusedc$, the two extreme  Busemann geodesics coincide and thereby imply the uniqueness of the  semi-infinite geodesic from $p$ in direction $\dir$ (Theorem~\ref{thm:DLSIG_main}\ref{itm:good_dir_coal}). 
   Even more generally, whenever $\dir \notin \DLBusedc$,      all semi-infinite geodesics in direction $\dir$ are Busemann geodesics (Theorem~\ref{thm:DL_good_dir_classification}\ref{itm:DL_allBuse}).  This is presently unknown for $\dir \in \DLBusedc$, but may be expected   by virtue of what is known about  exponential LPP  \cite{Janjigian-Rassoul-Seppalainen-19}. 
   
   Our work therefore gives a nearly complete  description of the global behavior of semi-infinite geodesics in the directed landscape. The conjecture that all semi-infinite geodesics are Busemann geodesics is equivalent to the following statement: In Item~\ref{itm:bad_dir_split}, for $\dir \in \DLBusedc$, there are \textit{exactly} two families of coalescing semi-infinite geodesics in direction $\dir$. That is,  each $\dir$-directed semi-infinite geodesic  coalesces either with the $\dir-$ or the $\dir +$ geodesics.
\end{remark}

\begin{remark}[Non-uniqueness of geodesics]
The non-uniqueness of geodesics from initial points in a Lebesgue null set  in Theorem~\ref{thm:DLSIG_main}\ref{itm:good_dir_coal} is temporary in the sense that these geodesics eventually coalesce. This forms a  ``bubble."  The first point of intersection after the split is the coalescence point (Theorem~\ref{thm:DL_all_coal}\ref{itm:DL_split_return}). Hence, these particular geodesics form at most one bubble. This contrasts with  the non-uniqueness of  Theorem~\ref{thm:DLSIG_main}\ref{itm:bad_dir_split}, where geodesics do not return together (Figure~\ref{fig:non_unique_comp}).  Non-uniqueness is discussed in detail in Section~\ref{sec:LR_sig}.
\end{remark}

\begin{remark}
The authors of~\cite{Rahman-Virag-21} alluded to  non-uniqueness of geodesics. They showed that for a fixed initial point, with probability one, there are at most countably many directions with a non-unique geodesic. On page 23 of~\cite{Rahman-Virag-21}, they note that the set of directions with a non-unique geodesic ``should be dense over the real line.'' Our result is that this set is   dense and, furthermore, it is the set $\DLBusedc$ of discontinuities of the Busemann process. 
\end{remark}

    \begin{figure}[t]
    \centering
    \includegraphics[height = 1.5in]{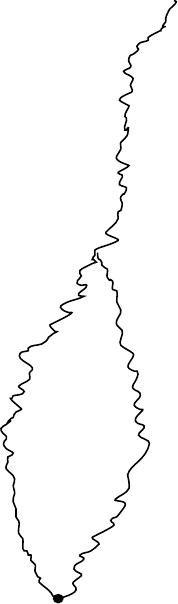} \qquad\qquad  
    \includegraphics[height = 1.5in]{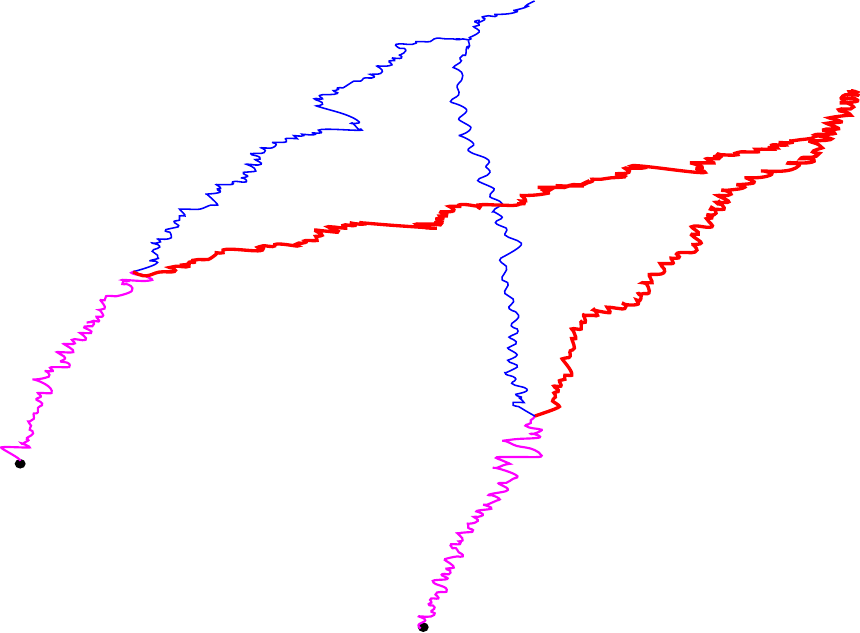}
    \caption{\small On the left, a depiction of the non-uniqueness in Theorem~\ref{thm:DLSIG_main}\ref{itm:good_dir_coal}: geodesics separate and coalesce back together, forming a bubble. After the first version of the present article was posted, Bhatia~\cite{Bhatia-23} and Dauvergne \cite{Dauvergne-23} proved that this is the only possible configuration for this type of non-uniqueness--that is, geodesics which split and later coalesce can only split at the initial point. On the right, $\dir\in \DLBusedc$. The  blue/thin paths depict the $\dir -$ geodesics, while the red/thick paths  depict the $\dir +$ geodesics. From each point, the  $\dir -$ and $\dir +$ geodesics separate at points of $\Split$. The $\dir -$ and $\dir +$ families each have a coalescing structure.}
    \label{fig:non_unique_comp}
\end{figure}

    The last theorem of this section describes   the  set of initial  points with disjoint geodesics in the same direction. Let $\DLBusedc$ be the random  set from Theorem~\ref{thm:DLSIG_main} (precisely characterized in~\eqref{eqn:DLBuseDC_def}).  Define the following random sets of splitting points.
    \begin{align}
    \Split_{s,\dir} &:= \{x \in \R:  \exists \text{ 
    \textbf{disjoint}}   \text{   }\text{semi-infinite  geodesics from  }(x,s) \text{ in direction }\dir\} \label{Split_sdir} \\
    \Split &:= \bigcup_{s \tspb\in\tspb \R, \, \dir\tspb \in\tspb \DLBusedc} \Split_{s,\dir} \times \{s\}. \label{eqn:gen_split_set}
    \end{align}
    \begin{remark}
    From Theorem~\ref{thm:DLSIG_main}\ref{itm:good_dir_coal}, $\Split_{s,\dir} = \varnothing$  whenever $\dir \notin \DLBusedc$.
    \end{remark}

    \newpage
    \begin{theorem} \label{thm:Split_pts} The following hold.
    \begin{enumerate} [label=\rm(\roman{*}), ref=\rm(\roman{*})]  \itemsep=3pt 
    \item \label{itm:split_dense}
    On a single event of full probability, the set $\Split$ is dense in $\R^2$.
    \item \label{itm:splitp0} For each fixed $p \in \R^2$, $\Pp(p \in \Split) = 0$.
    \item \label{itm:Hasudorff1/2} For each $s \in \R$, on an $s$-dependent full-probability event, for every $\dir \in \DLBusedc$, the set $\Split_{s,\dir}$ has Hausdorff dimension $\f{1}{2}$.
    \item \label{itm:nonempty} On a single event of full probability, simultaneously for every $s \in \R$ and $\dir \in \DLBusedc$, the set $\Split_{s,\dir}$ is nonempty and unbounded in both directions.
\end{enumerate}
\end{theorem}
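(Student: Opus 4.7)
The plan is built on two structural results cited in the introduction: Theorem~\ref{thm:random_supp}, which identifies $\Split_{s,\dir}$ up to an at-most-countable adjustment with the support of the Lebesgue--Stieltjes measure of
\[
f_{s,\dir}(x) = \W_{\dir+}(x,s;0,s) - \W_{\dir-}(x,s;0,s),
\]
and Theorem~\ref{thm:BusePalm}, which realizes $f_{s,\dir}(\tau_\dir + \cdot)$ on $\R_{\ge 0}$ as the local time at $0$ of a standard Brownian motion under the appropriate Palm kernel. These two ingredients reduce the study of $\Split_{s,\dir}$ to the Brownian zero set, whose support is nonempty, is unbounded, and has Hausdorff dimension $\f{1}{2}$ almost surely.

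First I would prove \ref{itm:Hasudorff1/2}. Fixing $s \in \R$, Theorem~\ref{thm:random_supp} gives on an $s$-dependent full-probability event that $\Split_{s,\dir}$ equals the support of $f_{s,\dir}$ up to countably many points, simultaneously for every $\dir \in \DLBusedc$. The Palm disintegration of Lemma~\ref{lm:ac} combined with Theorem~\ref{thm:BusePalm} then realizes this support, shifted by $\tau_\dir$, as the zero set of a Brownian motion, and Hausdorff dimension $\f{1}{2}$ follows since translation by $\tau_\dir$ and countable modification preserve it. Part \ref{itm:nonempty} uses the same Palm description: the support of Brownian local time is nonempty and unbounded above, and an analogous construction of the backward object on $(-\infty,0]$ provides unboundedness below. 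To promote the claim to a single event valid for every $s \in \R$, I would first obtain it on a countable dense set of times via translation invariance of $\Ll$ in $s$, and then extend to all $s$ by the regularity in the time variable of the Busemann process established earlier in the paper.

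For \ref{itm:splitp0}, fix $p = (x_0,s_0) \in \R^2$. If $p \in \Split$, there is some $\dir \in \DLBusedc$ and two semi-infinite geodesics from $p$ in direction $\dir$ that differ at every $t > s_0$. Restricting to any horizon $t > s_0$ yields two finite geodesics from $p$ to distinct endpoints at time $t$ that share only the initial point $p$. Remark~1.12 of~\cite{Bates-Ganguly-Hammond-22} asserts that for a fixed initial point, the event that such a disjoint pair exists to any endpoint at any later time has probability zero, and hence $\Pp(p \in \Split) = 0$. Finally, for \ref{itm:split_dense}, fix an open rectangle $U = (a,b) \times (c,d)$ and pick $s_0 \in (c,d)$. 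The marked point process $\{(\tau_\dir,\dir): \dir \in \DLBusedc\}$ at level $s_0$ has an explicit, strictly positive, absolutely continuous mean intensity by Lemma~\ref{lm:ac}, so the event $\{\Split \cap (U \cap \Hh_{s_0}) \neq \varnothing\}$ has positive probability. Translation invariance of $\Ll$ in the spatial variable together with a Kolmogorov zero-one argument on the tail generated by spatial shifts upgrades this to probability one; applying the conclusion to a countable basis of open rectangles gives density of $\Split$ on a single full-probability event.

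The main obstacle will be part \ref{itm:split_dense}: upgrading the positive-probability intensity statement to an almost-sure statement requires a delicate zero-one argument that exploits both the Palm intensity description and translation invariance of the directed landscape, and then the almost-sure conclusion must be preserved across a countable basis. Handling the Palm disintegration simultaneously for all $\dir \in \DLBusedc$ on a single event in parts \ref{itm:Hasudorff1/2} and \ref{itm:nonempty} is also subtle, but is supplied directly by Lemma~\ref{lm:ac}.
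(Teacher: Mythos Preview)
Your treatments of \ref{itm:splitp0} and \ref{itm:Hasudorff1/2} match the paper's. The difficulties are in \ref{itm:split_dense} and \ref{itm:nonempty}.

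For \ref{itm:split_dense}, your zero-one argument has a real gap. The event $\{\Split\cap U\neq\varnothing\}$ for a bounded rectangle $U$ is not shift-invariant, so neither ergodicity nor a Kolmogorov tail argument applies directly; knowing that a point process has strictly positive intensity does not by itself force every bounded window to be occupied almost surely. The paper sidesteps this entirely with a direct geometric argument: Lemma~\ref{lem:rm_geod} shows that on every leftmost finite geodesic $g:[s,u]\to\R$ there is a point $(g(t),t)\in\Split^L$ for some $t\in[s,u)$. Taking $g$ the leftmost geodesic from $(x,s)$ to $(x,s+1)$ and restricting to $[s,s+n^{-1}]$ produces splitting points converging to $(x,s)$ by continuity of $g$. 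This avoids any appeal to ergodicity or the Palm intensity.

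For \ref{itm:nonempty}, your plan is workable but unnecessarily indirect, and the step ``extend from a countable dense set of $s$ to all $s$ by regularity'' is not obviously justified, since the Palm description is distributional. The paper instead observes that Theorem~\ref{thm:DLBusedc_description}\ref{itm:Busedc_t} already gives, on a \emph{single} event $\Omega_3$, the deterministic statement that $f_{s,\dir}(x)\to\pm\infty$ as $x\to\pm\infty$ simultaneously for every $s\in\R$ and every $\dir\in\DLBusedc$. Since $f_{s,\dir}$ is continuous, its set of local variation $\D_{s,\dir}$ is then automatically unbounded in both directions, and $\D_{s,\dir}\subseteq\Split_{s,\dir}$ by Theorem~\ref{thm:random_supp}. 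No Palm conditioning or time-regularity argument is needed.
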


\begin{remark} \label{rmk:supports}
For each $s \in \R$ and $\dir \in \DLBusedc$, the set 
$\Split_{s,\dir}$
has an interpretation as the support of a random measure, up to the removal of a countable set. Thus, since $\DLBusedc$ is countable, for each $s \in \R$, the set $\{x \in \R: (x,s) \in \Split\}$ is the countable union of supports of random measures, up to the removal of an at most countable set. By Item~\ref{itm:Hasudorff1/2}, this set also has Hausdorff dimension $\f{1}{2}$. 
Conditioning in the appropriate Palm sense on $\dir \in \DLBusedc$, the random measure whose support is ``almost'' $\Split_{s,\dir}$ is   equal to the local time of a Brownian motion  (Theorems~\ref{thm:random_supp},~\ref{thm:BusePalm},and~\ref{thm:indep_loc}). 
We expect that, simultaneously for all $s\in\R$, the set $\Split_{s,\dir}$ has Hausdorff dimension $\f{1}{2}$, but currently lack  a global result stronger than Item~\ref{itm:nonempty}.  
\end{remark}

\section{ Invariance and uniqueness of the stationary horizon under the KPZ fixed point} \label{sec:invariance}
In this section, we prove Theorem~\ref{thm:invariance_of_SH}. Take $\{G_\dir\}_{\dir \in \R}$ 
as the initial data of the KPZ fixed point, where $G$ is the stationary horizon, independent of $\{\Ll(x,0;y,t):x,y \in \R, t > 0\}$. For $\dir \in \R$, set
\[
h_t(y;G_\dir) = \sup_{x \in \R}\{G_\dir(x) + \Ll(x,0;y,t)\},\qquad\text{for all }y\in\R \text{ and } t > 0.
\]
Define the following state space: 
\be\label{Y}
\begin{aligned}
\Y &:= \bigl\{\{\h^\dir\}_{\dir \in \R} \in D(\R,C(\R)): \h^{\dir_1} \li \h^{\dir_2} \text{ for }\dir_1 < \dir_2,\\
&\qquad\qquad \text{ and  for all }\dir \in \R, \; \h^\dir(0) = 0 \text{ and } \h^\dir \text{ satisfies condition~\eqref{eqn:drift_assumptions}} \\
&\qquad\qquad\qquad  \text{ with all $\limsup$ and $\liminf$ terms finite}\bigr\}. 
\end{aligned}
\ee
\begin{lemma} \label{lem:KPZ_preserve_Y}
 The space $\Y$   defined in~\eqref{Y}  is a Borel subset of $D(\R,C(\R))$. Let  $\Ll$ be  the directed landscape, $\{\h^\dir\}_{\dir \in \R} \in \Y$, $h_0(\aabullet;\h^\dir)=\h^\dir$, and 
 \[
h_t(y;\h^\dir) = \sup_{x \in \R}\{\h^\dir(x) + \Ll(x,0,y;t)\} \qquad \text{for } \ t>0, \  y\in\R \text{ and } \dir \in \R. 
\]
Then 
$t \mapsto \{h_t(\aabullet;\h^\dir) - h_t(0;\h^\dir) \}_{\dir \in \R}$ is a Markov process on $\Y$. Specifically, on the event of full probability from Lemma~\ref{lem:Landscape_global_bound},    $\{h_t(\aabullet;\h^\dir) - h_t(0;\h^\dir) \}_{\dir \in \R} \in \Y$ for each $t > 0$.
\end{lemma}
\begin{proof}  Borel measurability of $\Y$ is standard and left to the reader. 
We show that $\{h_t(\aabullet;\h)-h_t(0;\h^\dir)\}_{\dir \in \R} \in \Y$ for all $t > 0$. Lemmas~\ref{lem:DL_crossing_facts}\ref{itm:KPZ_crossing_lemma} shows the preservation of the ordering of functions, Lemma~\ref{lem:KPZ_preserve_lim} shows the preservation of limits, and Lemma~\ref{lem:max_restrict}\ref{itm:KPZcont} shows that $h_t(\abullet;\h^\dir) \in C(\R)$ for all $\dir$. It remains to show that $\{h_t(\aabullet;\h^\dir)\}_{\dir \in \R} \in D(\R,C(\R))$ for each $t > 0$. Since $\h^{\dir_1} \li \h^{\dir_2}$,   Lemma~\ref{lemma:max_monotonicity} and the global bounds of Lemma~\ref{lem:Landscape_global_bound} imply  that, for each compact $K \subseteq \R$ and $\dir \in \R$, there exists a random $M = M(\dir,t,K) > 0$ such that for all $y \in K$, $\alpha \in (\dir - 1,\dir + 1)$,
\[
\sup_{x \in \R}\{\h^\alpha(x) + \Ll(x,0;y,t)\} = \sup_{x \in [-M,M]}\{\h^\alpha(x) + \Ll(x,0;y,t)\}.
\]
Then, it follows that $\{h_t(\aabullet;\h^\dir)\}_{\dir \in \R}$, as an $\R \to C(\R)$ function of $\dir$, is right-continuous with left limits because this is true of $\{\h^\dir\}_{\dir \in \R}$.

By the metric composition~\eqref{eqn:metric_comp}  of the directed landscape $\Ll$,  for $0 < s < t$,
\begin{align*} 
h_t(y;\h^\dir) - h_t(0;\h^\dir) 
&=\sup_{x \in \R}\{h_s(x;\h^\dir) -h_s(0;\h^\dir)+ \Ll(x,s;y,t)\}\\
&\qquad\qquad 
- \sup_{x \in \R}\{h_s(x;\h^\dir)- h_s(0;\h^\dir) + \Ll(x,s;0,t)\}.
\end{align*}  
The process $t \mapsto \{h_t(\aabullet;\h^\dir) - h_t(0;\h^\dir)\}_{\dir \in \R}$ is Markovian by the  independent temporal increments of   $\Ll$. 
\end{proof}

\begin{proof}[Proof of Theorem~\ref{thm:invariance_of_SH}]

\textbf{Invariance:} 
For the invariance of SH $G$, it suffices to prove the invariance of a  finite-dimensional marginal  $(G_{ \dir_1},\ldots,G_{ \dir_k})$ for given  $-\infty < \dir_1 < \cdots < \dir_k < \infty$. So for 
\be \label{605}
h_t(y;G_{\dir_i}) = \sup_{x \in \R}\{G_{\dir_i}(x) + \Ll(x,0;y,t)\}, \quad 1 \le i \le k, 
\ee
the goal is to show that for each $t > 0$, 
\be \label{606}
\bigl(h_t(\aabullet;G_{\dir_1})-h_t(0;G_{\dir_1}),\ldots, h_t(\aabullet;G_{\dir_k})-h_t(0;G_{\dir_k})\bigr) \deq (G_{\dir_1},\ldots,G_{\dir_k}).
\ee

We prove \eqref{606} via a limit using stability of discrete queues. For $N \in \Z_{>0}$ and $1 \le i \le k$, set $\rho_i = \f{1}{2} - 2^{-4/3}\dir_i N^{-1/3}$ and $\boldsymbol \rho^k = (\rho_1,\ldots,\rho_k)$.  Let $\mu^{\boldsymbol \rho^k}$ be the probability distribution on $(\R_{>0}^\Z)^k$  defined in~\eqref{mu_def} in Appendix \ref{sec:cgm-bus}. It is   the joint distribution of $k$ horizontal Busemann functions of the exponential corner growth model by Theorem \ref{thm:exp_Buse_dist}. Let $(I^{N,1},\ldots,I^{N,k})$ be a $\mu^{\boldsymbol \rho^k}$-distributed $k$-tuple of random, positive bi-infinite sequences $I^{N,i}=(I^{N,i}_j)_{j\in\Z}$.

 
 For $1 \le i \le k$, let $F^N_{i}:\R \to \R$ be the linear interpolation of the function  defined by 
 \[   F^N_i(0)=0  \ \ \text{ and } \ \     F^N_i(m)-F^N_i(k)= \textstyle\sum_{j = k+1}^m I^{N,i}_j  \quad\text{ for integers } k<m. 
 \]
  Its scaled and centered version is defined by 
\be \label{eqn:GN}
G_{i}^N(x) = 2^{-4/3}N^{-1/3}\bigl[F_{i}^N(2^{5/3} N^{2/3}x ) - 2^{8/3}N^{2/3} x\bigr]
\qquad\text{for } \ x\in\R.
\ee
Theorems~\ref{thm:exp_Buse_dist} and~\ref{thm:conv_to_SH} give the distributional limit  
\be \label{eqn:conv_of_finite_GN}
(G_{1}^N,\ldots,G_{k}^N) \Longrightarrow (G_{\dir_1},\ldots,G_{\dir_k}),
\ee
on the space  $C(\R,\R^k)$,  
under the Polish topology of uniform convergence of functions on compact sets.

For $N \in \N$ sufficiently large and $1 \le i \le k$, we consider discrete LPP with initial data $F_i^N$ and exponential weights, as in \eqref{eqn:LPP_bd} in Appendix~\ref{sec:LPP}. For $m \in \Z$ and $n \in \Z_{>0}$ let 
\[
d_i^N(m,n) = \sup_{\ell\,: \,\ell \le m}\{F_i^N(\ell) + d((\ell,1),(m,n))\}.
\]
The scaled and centered version is given by   $H_{i,0}^N = G_i^N$ and for $t > 0$ by letting  $H_{i,t}^N:\R \to \R$ be the linear interpolation of
\be \label{600}
H_{i,t}^N(y) = 2^{-4/3}N^{-1/3}\bigl[d^N_i(tN + 2^{5/3}N^{2/3} y,tN) - 4Nt - 2^{8/3}N^{2/3} y \bigr].
\ee
By Lemma~\ref{lem:D_and_LPP_bd} and  Theorem~\ref{thm:mu_invariant},   $\forall N \in \Z_{>0}$ and $t > 0$ such that $tN \in \Z$, 
\[
\bigl(H_{1,t}^N(\aabullet) - H_{1,t}^N(0),\ldots, H_{k,t}^N(\aabullet) - H_{k,t}^N(0)\bigr) \deq (G_1^N,\ldots,G_k^N).
\]
Then, using~\eqref{eqn:conv_of_finite_GN}, the proof of~\eqref{606} is completed by the following lemma. 
\begin{lemma}  Let $(G_{\dir_1},\ldots,G_{\dir_k})$ be 
independent of $\{\Ll(x,0;y,t):x,y \in \R, t > 0\}$ and  $h_t(y;G_{\dir_i})$ defined by \eqref{605}. 
Then for $t > 0$, as $N \to \infty$, in the topology of uniform convergence on compact sets of functions $\R \to \R^k$, 
we have the distributional limit 
\be\label{goal46}
(H_{1,t}^N(\aabullet),\ldots, H_{k,t}^N(\aabullet)) \Longrightarrow (h_t(\aabullet;G_{\dir_1}),\ldots,h_t(\aabullet;G_{\dir_k})).  
\ee

\end{lemma}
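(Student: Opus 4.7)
The plan is to recognize $H_{i,t}^N(y)$ as the output of a discrete variational formula in which the initial data $G_i^N$ and a rescaled LPP kernel $L_N$ play the roles that $G_{\dir_i}$ and $\Ll$ play in $h_t(y;G_{\dir_i})$, and then pass to the limit via joint convergence of these ingredients. First, from the LPP recursion $d_i^N(m,n) = \sup_{\ell \le m}\{F_i^N(\ell) + d((\ell,1),(m,n))\}$ together with the scalings~\eqref{eqn:GN} and~\eqref{600}, one obtains
\[
H_{i,t}^N(y) \;=\; \sup_{x \in \R}\bigl\{G_i^N(x) + L_N(x,0;y,t)\bigr\},
\]
where $L_N(x,0;y,t)$ is the scaled and centered bulk LPP kernel. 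Crucially, $L_N$ is built from the \emph{same} weights for every $i \in \{1,\dots,k\}$; only the boundary data $G_i^N$ depends on $i$. A routine error bound handles the discreteness of admissible $x$ versus the continuous sup using the linear interpolations and uniform modulus-of-continuity estimates for $L_N$ inherited from \cite{Dauvergne-Virag-21}.

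Second, I would establish the joint convergence
\[
\bigl(G_1^N,\dots,G_k^N;\, L_N\bigr) \;\Longrightarrow\; \bigl(G_{\dir_1},\dots,G_{\dir_k};\, \Ll\bigr)
\]
in the topology of uniform convergence on compacts. The marginal convergence $L_N \Rightarrow \Ll$ is Theorem~\ref{thm:conv_to_DL}, and the joint convergence of the boundary data is~\eqref{eqn:conv_of_finite_GN}. The two pieces are asymptotically independent because $G_i^N$ is measurable with respect to boundary weights on row $0$ while $L_N$ is measurable with respect to disjoint bulk weights above row $1$; this aligns with the independence hypothesis in the theorem statement. I would then pass to a Skorokhod coupling on which all convergences hold almost surely.

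The main obstacle, and the place requiring integrable inputs, is showing that the supremum in $x$ commutes with the limit. For each compact $K \subseteq \R$ the goal is the uniform tightness of the argmax:
\[
\limsup_{N \to \infty} \Pp\Bigl( \exists\, i \le k, \, y \in K : \sup_{|x| > M}\{G_i^N(x) + L_N(x,0;y,t)\} \ge H_{i,t}^N(y) \Bigr) \;\xrightarrow{M \to \infty}\; 0.
\]
The ingredients are the exit-point bounds for the stationary exponential model from~\cite{Emrah-Janjigian-Seppalainen-20,Seppalainen-Shen-2020} together with the quadrant-to-half-plane comparison of~\cite{bala-busa-sepp-20,Sepp_lecture_notes} recorded in Appendix~\ref{sec:LPP}: these force the maximizer of the stationary model to live on the $N^{2/3}$ scale, i.e.\ on an $O(1)$ scale in $H$-coordinates, uniformly in $N$. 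The drift asymptotics~\eqref{eqn:drift_assumptions} for $G_{\dir_i}$ control the limiting object analogously through Lemma~\ref{lemma:max_monotonicity} applied with the growth bounds of Lemma~\ref{lem:Landscape_global_bound}.

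Once the supremum is restricted to a fixed window $|x|\le M$, the almost sure uniform convergence of $(G_i^N, L_N)$ to $(G_{\dir_i},\Ll)$ on compacts yields the almost sure convergence
\[
\sup_{|x|\le M}\{G_i^N(x) + L_N(x,0;y,t)\} \;\longrightarrow\; \sup_{|x|\le M}\{G_{\dir_i}(x) + \Ll(x,0;y,t)\}
\]
uniformly in $y \in K$. Joint convergence across $i = 1,\dots,k$ comes for free because the same kernel $L_N$ drives every coordinate. Sending $M \to \infty$ using the uniform argmax tightness delivers~\eqref{goal46}.
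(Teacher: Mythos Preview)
Your proposal is correct and follows essentially the same approach as the paper: rewrite $H_{i,t}^N$ as a variational formula in $G_i^N$ and a rescaled LPP kernel $\Ll_N$, use independence plus Skorokhod representation to get almost sure joint convergence $(G_i^N,\Ll_N)\to(G_{\dir_i},\Ll)$, and then justify interchanging the supremum with the limit by the exit-point bounds of Lemma~\ref{lemma:line_exit_pt} (restricting to $|x|\le M$, applying uniform convergence on compacts, then sending $M\to\infty$). The paper's proof is exactly this three-term decomposition (compact-window comparison, limiting argmax control via Lemma~\ref{lem:Landscape_global_bound}, prelimit exit-point control), so there is nothing materially different to compare.
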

\begin{proof}
Replace the integer $\ell$ with a continuous variable $x$: 
\begin{align}
   \label{601}  &H_{i,t}^N(y) \; = \sup_{-\infty < \;\ell \;\le  tN + 2^{5/3} N^{2/3}y} \;
    2^{-4/3}N^{-1/3}\Bigl[F_i^N(\ell)    \\
     \nonumber  &\qquad\qquad\qquad  + \;  d\bigl((\ell,1),(tN + 2^{5/3}  N^{2/3}y,tN)\bigr) - 4Nt - 2^{8/3} N^{2/3} y\Bigr]  \\
    &= \sup_{-\infty \tspb<\tspb 2^{5/3}N^{2/3} x \;\le\; tN + 2^{5/3}N^{2/3}y}
    \; 2^{-4/3}N^{-1/3}\Bigl[F_i^N(2^{5/3} xN^{2/3}) - 2^{8/3} N^{2/3}x\nonumber \\
    &\qquad + d\bigl((2^{5/3} xN^{2/3},1),(tN + 2^{5/3}  N^{2/3}y,tN)\bigr) - 4Nt - 2^{8/3} N^{2/3} (y - x)\Bigr] \nonumber \\
    &= \; \sup_{x \in \R} \bigl\{G_i^N(x) + \Ll_N(x,0;y,t)\bigr\}, \label{602}
\end{align}
where $G_i^N$ is defined in~\eqref{eqn:GN} and 
\[
\Ll_N(x,0;y,t) =  
\f{d((2^{5/3} xN^{2/3},1),(tN + 2^{5/3}  N^{2/3}y,tN)) - 4Nt - 2^{8/3} N^{2/3} (y - x)}{2^{4/3}N^{1/3}}  \]
when $\tspb x \le y + 2^{-5/3} N^{1/3} t\,$ 
and $-\infty$ otherwise.

 Let $Z_i^N(y)$ denote  the largest maximizer of~\eqref{601}. It is precisely the exit point defined in Equation~\eqref{eqn:exit_pt}. These satisfy $Z_i^N(x) \le Z_i^N(y)$ for $x < y$. If there exists some $M > 0$ such that $|Z_i^N(y)| \le M 2^{5/3}N^{2/3}$, then 
\[  \text{\rm line \eqref{602}} 
=  \sup_{x \in [-M,M]}\{G_i^N(x) + \Ll_N(x,0;y,t)\}. \]

By the weak limit \eqref{eqn:conv_of_finite_GN},   Theorem~\ref{thm:conv_to_DL}, and independence, Skorokhod representation (\cite[Thm.~11.7.2]{dudl}, \cite[Thm.~3.1.8]{ethi-kurt}) gives  a coupling of copies of $\{(G_i^N)_{1 \le i \le k},\Ll_N\}$ and $\{(G_{\dir_i})_{1 \le i \le k},\Ll\}$ such that  $G_i^N \to G_{\dir_i}$   for $1 \le i \le k$ and  $\Ll_N \to \Ll$, almost surely and uniformly on compacts.   Then, for  $a<b$,  $M > 0$ and $\ve > 0$, in this coupling we have 
\begin{align}
    &\wh \Pp\Big(\max_{1 \le i \le k} \sup_{y \in [a,b]} |H_{i,t}^N(y) - h_t(y;G_{\dir_i})| > \ve \Big) \nonumber \\
    \label{605a}  &\le \  \wh\Pp\Big(\max_{1 \le i \le k} \sup_{y \in [a,b]} |\sup_{x \in [-M,M]}\{G_i^N(x) + \Ll_N(x,0;y,t)\}\\
    &\qquad\qquad\qquad - \sup_{x \in [-M,M]}\{G_{\dir_i}(x) + \Ll(x,0;y,t)\}| > \ve \Big) \nonumber \\
   \label{606a}  &\qquad + \; \wh \Pp\Bigl(\;\sup_{x \in \R}\{G_{\dir_i}(x) + \Ll(x,0;a,t)\} >  \sup_{x \in [-M,M]}\{G_{\dir_i}(x) + \Ll(x,0;a,t)\} \Bigr) \\
   \label{606b} &\qquad + \; \wh \Pp\Bigl(\;\sup_{x \in \R}\{G_{\dir_i}(x) + \Ll(x,0;b,t)\} >  \sup_{x \in [-M,M]}\{G_{\dir_i}(x) + \Ll(x,0;b,t)\} \Bigr) \\
    &\qquad  + \; \sum_{i = 1}^k \bigl[ \, \wh\Pp(Z_i^N(a) < -M2^{5/3} N^{2/3}) + \wh \Pp(Z_i^N(b) > M 2^{5/3}N^{2/3}) \,\bigr] . \label{607a}
\end{align}
Above, \eqref{605a} vanishes as $N\to\infty$ by the coupling.  \eqref{606a}--\eqref{606b} vanishes as $M\to\infty$ by Lemma~\ref{lem:Landscape_global_bound} because 
$G_{\dir_i}$ is a Brownian motion with drift, independent of $\{\Ll(x,0;y,t): x,y\in\R, t > 0\}$, which has leading order $-\f{(x- y)^2}{t}$ (Lemma~\ref{lem:Landscape_global_bound}).   Lemma~\ref{lemma:line_exit_pt} controls \eqref{607a}.  This combination verifies the goal \eqref{goal46}.  
\end{proof}

\noindent \textbf{Attractiveness and uniqueness:} The proof idea is similar to that of Theorem 3.3 in~\cite{Bakhtin-Cator-Konstantin-2014}. 
Let $k\in\N$ and let $\bar{\dir} = (\dir_1,\ldots,\dir_k) \in \R^k$ be a strictly increasing vector. Let $\bar{\h}=(\h^1,...,\h^k) \in \UC^k$ satisfy~\eqref{eqn:drift_assumptions} with $\h = \h^i$ and $\dir = \dir_i$ for $1 \le i \le k$.  
	Let $\ve > 0$. By Theorem~\ref{thm:SH10}\ref{itm:SH_j}, there exists $\delta > 0$ such that 
	\[
	\Pp\bigl\{G_{\dir_i \pm \delta}(x) = G_{\dir_i}(x) \ \,\forall x \in [-a,a], \, 1 \le i \le k\bigr\} \ge 1 - {\ve}/{2}.
	\]
	Then, by invariance of the stationary horizon under the KPZ fixed point, for all $t > 0$, 
	\be\label{lb1}\begin{aligned}
	\Pp\bigl\{h_t(x;G_{\xi_i\pm \delta})-h_t(0;G_{\xi_i\pm\delta})&= h_t(x;G_{\xi_i})-h_t(0;G_{\xi_i}) \\ &\qquad   \forall  x\in[-a,a], \, 1 \le i \le k\bigr\} \ge 1 - {\ve}/{2}.
\end{aligned}\ee
Recall the  sets $Z^{a,0,t}_f$ of exit points from~\eqref{exitpt}. Because $G_{\xi_i\pm\delta}$ is a Brownian motion with drift $2(\xi_i \pm \delta)$ (Theorem~\ref{thm:SH10}\ref{itm:SHpm}), it satisfies~\eqref{eqn:drift_assumptions} with drift $\xi_i\pm\delta$. By 
the temporal reflection symmetry of Lemma~\ref{lm:landscape_symm},  Lemma~\ref{lem:unq} implies that for   all $t$ sufficiently large,
\be \label{largp}
\Pp\bigl(Z^{a,0,t}_{G_{\xi_i-\delta}} \leq Z^{a,0,t}_{\h^i}\leq Z^{a,0,t}_{G_{\xi_i+\delta}} \ \, \forall \tspa 1 \le i \le k\bigr) > 1 - {\ve}/{2},
\ee
where    for $A,B\subseteq \R$ we say $A\leq  B$ if $\sup A\leq \inf B$.  By Lemma~\ref{lem:DL_crossing_facts}\ref{itm:KPZ_crossing_lemma},   on the event in~\eqref{largp} the following holds for all $x \in [0,a]$ and $1 \le i \le k$: 
\begin{equation}\label{lb2}
	h_t(x;G_{\xi_i-\delta})-h_t(0;G_{\xi_i-\delta})\leq h_t(x;\h^i)-h_t(0;\h^i)\leq h_t(x;G_{\xi_i+\delta})-h_t(0;G_{\xi_i+\delta}).  
\end{equation}
The reverse inequalities hold for $x \in [-a,0]$.

Combining \eqref{lb1}--\eqref{lb2}, we have that for sufficiently large $t$,
\[
	\Pp\bigl\{h_t(x;G_{\xi_i})-h_t(0;G_{\xi_i})= h_t(x;\h^i)-h_t(0;\h^i)\ \, \forall  x\in[-a,a],1 \le i \le k\bigr\} \ge 1 - \ve. 
\]
The proof of Theorem~\ref{thm:invariance_of_SH} is complete.  
\end{proof}

\section{Summary of the Rahman--Vir\'ag results}
\label{sec:RV_summ}
The paper~\cite{Rahman-Virag-21} shows existence of the Busemann function for a fixed direction.
Below is a summary of their results that we use.

\begin{theorem}[\cite{Rahman-Virag-21}]\label{thm:RV-SIG-thm}
The following hold.
\begin{enumerate}  [label=\rm(\roman{*}), ref=\rm(\roman{*})]  \itemsep=3pt
    \item \label{itm:p_fixed} For fixed initial point $p$, there exist almost surely leftmost and rightmost semi-infinite geodesics $g_p^{\dir,\ell}$ and $g_p^{\dir,r}$ from $p$ in every direction $\dir$ simultaneously. There are at most countably many directions $\dir$ such that $g_p^{\dir,\ell}\neq g_p^{\dir,r}$ 
    \item \label{itm:d_fixed} For fixed direction $\dir$, there exist almost surely leftmost and rightmost geodesics $g_p^{\dir,\ell}$ and $g_p^{\dir,r}$ in direction $\dir$  from every initial point $p$.
    \item \label{itm:pd_fixed} For fixed $p =(x,s) \in \R^2$ and $\dir \in \R$, $g := g_p^{\dir,\ell}= g_p^{\dir,r}$ with probability one. 
    \item \label{itm:fixed_coal} Given  $\dir\in\R$, all semi-infinite geodesics in direction $\dir$ coalesce with probability one.
\end{enumerate}
\end{theorem}
\begin{remark}

Article  \cite{Rahman-Virag-21} used $-$ and $+$ in place of the superscripts $\ell$ and $r$ used above.  We replaced    $-/+$   with $\ell/r$ to avoid confusion with our $\pm$ notation that links with the left- and right-continuous Busemann processes.  As demonstrated in Section~\ref{sec:LR_sig}, non-uniqueness of geodesics is properly characterized by two parameters $\sigg \in \{-,+\}$ and $S \in \{L,R\}$. 
\end{remark}

For fixed direction $\dir$,~\cite{Rahman-Virag-21}  defines $\kappa^\dir(p,q)$ as the coalescence point of the rightmost geodesics in direction $\dir$ from initial points  $p$ and $q$. Then, they define the Busemann function
\be\label{RVW-def}
\W_\dir(p;q) = \Ll(p;\kappa^\dir(p,q)) - \Ll(q;\kappa^\dir(p,q)).
\ee


\begin{theorem}[\cite{Rahman-Virag-21}, Corollary 3.3, Theorem 3.5, Remark 3.1] \label{thm:RV-Buse}
$ $ 
\begin{enumerate}[label=\rm(\roman{*}), ref=\rm(\roman{*})]  \itemsep=3pt
    \item \label{itm:DL_Buse_BM} For each $t \in \R$, the process $x \mapsto \W_\dir(x,t;0,t)$ is a two-sided Brownian motion with diffusivity $\sqrt 2$ and drift $2\xi$.
\end{enumerate} 
Given a direction $\dir$,  the following hold on a $\dir$-dependent event of  probability one.
\begin{enumerate} [resume, label=\rm(\roman{*}), ref=\rm(\roman{*})]  \itemsep=3pt
\item \label{itm:fixed_additive} 
Additivity: $\W_\dir(p;q) + \W_\dir(q;r) = \W_\dir(p;r)$ for all $p,q,r \in \R^2$. 

    \item \label{itm:DL_Buse_var} For all $s < t$ and $x,y \in \R$,
    \[
    \W_\dir(x,s;y,t) = \sup_{z \in \R}\{\Ll(x,s;z,t) + \W_\dir(z,t;y,t)\}.
    \]
    The supremum is attained exactly at those $z$ such that $(z,t)$ lies on a semi-infinite geodesic from $(x,s)$ in direction $\dir$. 
    \item \label{itm:DL_Buse_cont} The function $\W_\dir:\R^4 \to \R$ is continuous. 
\end{enumerate}
Moreover:  
\begin{enumerate} [resume, label=\rm(\roman{*}), ref=\rm(\roman{*})]  \itemsep=3pt
\item \label{itm:DL_Buse_mont} For a pair of fixed directions $\dir_1 < \dir_2$, with probability one, for every  $t \in \R$ and 
      $x < y$, $\W_{\dir_1}(y,t;x,t) \le \W_{\dir_2}(y,t;x,t)$.
      \end{enumerate}
\end{theorem}


\section{Busemann process and Busemann geodesics} \label{sec:Buse_geod_results}
With the intention of being accessible to a large audience, in this section, we first present a list of theorems regarding the Busemann process in Section~\ref{sec:Buse_results}. Section~\ref{sec:DL_SIG_intro} defines Busemann geodesics and states their main properties.  The proofs are found in Section~\ref{sec:DL_Buse_cons}, except for the proofs of Theorem~\ref{thm:DL_Buse_summ}\ref{itm:BuseLim1}-\ref{itm:global_attract} and the mixing in Theorem~\ref{thm:Buse_dist_intro}\ref{itm:stationarity}, which are proved in Section~\ref{sec:Buseextraproofs}, and Theorem~\ref{thm:DLBusedc_description}\ref{itm:Busedc_t}, which is proved in Section~\ref{sec:last_proofs}.
\subsection{The Busemann process} \label{sec:Buse_results}

The Busemann process $\{\W_{\dir \sig}(p;q)\}$ is indexed by points $p,q \in \R^2$, a direction $\dir \in \R$, and a sign $\sigg \in \{-,+\}$. 
The following theorems describe this global process. The parameter $\sigg \in \{-,+\}$ denotes the left- and right-continuous versions of this process as a function of $\dir$. 

\begin{theorem} \label{thm:DL_Buse_summ}
 On $(\Omega,\F,\Pp)$, there exists a process
\[
\{\W_{\dir \sig}(p;q): \dir \in \R, \,  \sigg \in \{-,+\}, \, p,q \in \R^2\}
\]
satisfying the following properties.  All the  properties below hold on a single event of probability one, simultaneously for all directions $\dir \in \R$, signs $\sigg \in \{-,+\}$, and points $p,q \in \R^2$, unless otherwise specified. Below, for $p,q \in \R^2$, we define the sets
\be \label{eqn:DLBuseDC_def}
\DLBusedc(p;q) = \{\dir \in \R: \W_{\dir -}(p;q) \neq \W_{\dir +}(p;q)\}\qquad\text{and}\qquad\DLBusedc = \textstyle\bigcup_{p,q \,\in\, \R^2} \DLBusedc(p;q).
\ee
\begin{enumerate} [label=\rm(\roman{*}), ref=\rm(\roman{*})]  \itemsep=3pt
\item{\rm(Continuity)} \label{itm:general_cts}  As an $\R^4 \to \R$ function,  $(x,s;y,t) \mapsto \W_{\dir \sig}(x,s;y,t)$ is  continuous. 
 \item {\rm(Additivity)} \label{itm:DL_Buse_add} For all $p,q,r \in \R^2$, 
    $\W_{\dir \sig}(p;q) + \W_{\dir \sig}(q;r) = \W_{\dir \sig}(p;r)$.   In particular, $\W_{\dir \sig}(p;q) = -\W_{\dir \sig}(q;p)$ and $\W_{\dir \sig}(p;p) = 0$.
    \item {\rm(Monotonicity along a horizontal line)}
    \label{itm:DL_Buse_gen_mont} Whenever $\dir_1< \dir_2$, $x < y$, and $t \in \R$,
    \[
    \W_{\dir_1 -}(y,t;x,t) \le \W_{\dir_1 +}(y,t;x,t) \le \W_{\dir_2 -}(y,t;x,t) \le \W_{\dir_2 +}(y,t;x,t).
    \]
    \item {\rm(Backwards evolution as the KPZ fixed point)}\label{itm:Buse_KPZ_description} For 
    all $x,y \in \R$ and $s < t$,
    \be\label{W_var}
    \W_{\dir \sig}(x,s;y,t) = \sup_{z \in \R}\{\Ll(x,s;z,t) + \W_{\dir \sig}(z,t;y,t)\}.
    \ee
    \item {\rm(Regularity in the direction parameter)}
    \label{itm:DL_unif_Buse_stick}
    The process $\dir\mapsto\W_{\dir +}$ is right-continuous in the sense of uniform convergence on compact sets of functions $\R^4 \to \R$, and $\dir\mapsto\W_{\dir -}$ is left-continuous in the same sense. The restrictions to compact sets are locally constant in the parameter $\dir$:  for each $\dir \in \R$ and compact set $K \subseteq \R^4$ there exists a random $\ve =\ve(\dir,K)>0$ such that, whenever $\dir - \ve < \alpha < \dir < \beta < \dir + \ve$ and $\sigg \in \{-,+\}$,   we have these  equalities  for all $(x,s;y,t) \in K$: 
    \be \label{208}
    \W_{\alpha \sig}(x,s;y,t) = \W_{\dir -}(x,s;y,t)\qquad\text{and}\qquad\W_{\beta \sig}(x,s;y,t) = \W_{\dir +}(x,s;y,t). 
    \ee
   
    \item \rm{(Busemann limits I)} \label{itm:BuseLim1}  If $\dir \notin \DLBusedc$, then, for any compact set $K \subseteq \R^2$ and any net $r_t= (z_t,u_t)_{t \in \R_{\ge 0}}$ with $u_t \to \infty$ and $z_t/u_t \to \dir$ as $t \to \infty$, there exists $R \in \R_{>0}$ such that, for all $p,q \in K$ and $t \ge R$, 
    \[
    \W_{\dir}(p;q) = \Ll(p;r_t) - \Ll(q;r_t).
    \]
    \item {\rm(Busemann limits II)} \label{itm:BuseLim2} For all $\dir \in \R$, $s \in \R$, $x < y \in \R$, and any net $(z_t,u_t)_{t \in \R_{\ge 0}}$ in $\R^2$ such that   $u_t \to \infty$ and $z_t/u_t \to \dir$ as $t \to \infty$,
    \begin{align*}
    \W_{\dir -}(y,s;x,s) &\le \liminf_{t \to \infty} \Ll(y,s;z_t,u_t) - \Ll(x,s;z_t,u_t) \\  &\le \limsup_{t \to \infty} \Ll(y,s;z_t,u_t) - \Ll(x,s;z_t,u_t) \le \W_{\dir +}(y,s;x,s).
    \end{align*}
    \item \rm{(Global attractiveness)} \label{itm:global_attract} Assume that $\dir \notin \DLBusedc$, and let $\h \in \UC$ satisfy condition~\eqref{eqn:drift_assumptions} for the parameter $\dir$. For $s < t$, let 
    \[
        h_{s,t}(x;\h) = \sup_{z \in \R}\{\Ll(x,s;z,t) + \h(z)\}.   
    \]
    Then, for any $s \in \R$ and $a > 0$, there exists a random $t_0  = t_0(a,\dir,s)<\infty$ such that for all $t > t_0$ and $x \in [-a,a]$, $h_{s,t}(x;\h) - h_{s,t}(0;\h) = \W_{\dir}(x,s;0,s)$.
    \end{enumerate}
\end{theorem}   
\begin{remark}
 Item~\ref{itm:BuseLim1} is novel in   that it shows the limits simultaneously for all $\dir \notin \DLBusedc$, uniformly over  compact subsets of $\R^2$.  The existence of Busemann limits in fixed directions
 is shown in~\cite{Rahman-Virag-21} and~\cite{Ganguly-Zhang-2022a}.
Item~\ref{itm:global_attract} is analogous to  Theorem 3.3 in~\cite{Bakhtin-Cator-Konstantin-2014} and Theorem 3.3 in~\cite{Bakhtin-Li-19} on the global solutions of the Burgers equation with random forcing.  
When comparing with ~\cite{Bakhtin-Cator-Konstantin-2014,Bakhtin-Li-19},  note  that our geodesics travel north while theirs head south. 

\end{remark}

We describe the distribution of the Busemann process. The key to   Item~\ref{itm:SH_Buse_process} is Theorem~\ref{thm:invariance_of_SH}.

\newpage
\begin{theorem} \label{thm:Buse_dist_intro}
The following hold.
\begin{enumerate} [label=\rm(\roman{*}), ref=\rm(\roman{*})] \itemsep=3pt
\item {\rm(Independence)} \label{itm:indep_of_landscape} For each $T \in \R$, these processes are independent: 
    \begin{align*}
    &\{\W_{\dir \sig}(x,s;y,t): \dir \in \R, \,\sigg \in \{-,+\}, \, x,y \in \R, \, s,t \ge T \} \\[4pt] 
    &\qquad\qquad\qquad \text{and } \ \ 
    \{\Ll(x,s;y,t): x,y \in \R,\, s < t \le T\}. 
    \end{align*}
    
 \item {\rm(Stationarity and mixing)} \label{itm:stationarity} The process
 \be \label{eqn:stat}
 \{\Ll(v),\W_{\dir \sig}(p;q):v \in \Rup, \, p,q \in \R^2, \,\dir \in \R, \,\sigg \in \{-,+\} \}
 \ee
 is stationary and mixing under shifts in any space-time direction. More precisely,  let $a,b \in \R$ not both $0$, and $z > 0$. Set $r_z = (az,bz)$.  Then, the process~\eqref{eqn:stat} is stationary and mixing {\rm(}for fixed $a,b$ as $z \to +\infty${\rm)} under the transformation
 \begin{align*}
 \bigl\{\Ll(v), \W_{\dir \sig}(p;q )\bigr\} \mapsto T_{z;a,b}\{\Ll,\W \} := \{\Ll(v + (r_z;r_z)), \W_{\dir \sig}(p + r_z;q +r_z)\},
 \end{align*}
 where, on each side, the process is understood as a function of $(v,(p,q))\in\Rup \times \R^4$.  Mixing means that, for all $k\in\Z_{>0}$, $\dir_1,\dotsc,\dir_k\in\R$,  and Borel subsets $A,B \subseteq C(\Rup,\R)\times C(\R^4,\R)^k$,
 \begin{align*}
&\lim_{z \to \infty}\Pp\Bigl(\{\Ll, \W_{\dir_{1:k}}\} \in A, \{T_{z;a,b} \Ll, T_{z;a,b}\W_{\dir_{1:k}}\} \in B\Bigr)  \\
     &\qquad\qquad\qquad =\Pp\bigl( \{\Ll, \W_{\dir_{1:k}}\} \in A\bigr) \Pp\bigl(\{\Ll, \W_{\dir_{1:k}}\} \in B \bigr) .
 \end{align*}
 Above $\W_{\dir_{1:k}}=(\W_{\dir_{1}},\dotsc,\W_{\dir_{k}})\in C(\R^4,\R)^k$. 
    \item  {\rm(Distribution along a time level)}\label{itm:SH_Buse_process} For each $t \in \R$,  the following equality in distribution holds between random elements of the Skorokhod space $D(\R,C(\R))$:
\[
\{\W_{\dir +}(\aabullet,t;0,t)\}_{\dir \in \R} \deq \bigl\{G_{\dir}(\aabullet) \bigr\}_{\dir \in \R},
\]
where $G$ is the stationary horizon   in Section \ref{sec:SHintro},   with diffusivity $\sqrt 2$ and drifts $2\xi$.
\end{enumerate}
\end{theorem}
\begin{remark}
 Combining  Items   \ref{itm:indep_of_landscape} and \ref{itm:SH_Buse_process} with  Theorem~\ref{thm:DL_Buse_summ}\ref{itm:Buse_KPZ_description} gives a description of the   Busemann process on the full plane $\R^2$. 
\end{remark}

We describe the random sets of Busemann  discontinuities defined in~\eqref{eqn:DLBuseDC_def}.

\begin{theorem} \label{thm:DLBusedc_description}
The following hold on a single event of probability one.
    \begin{enumerate} [label=\rm(\roman{*}), ref=\rm(\roman{*})]  \itemsep=3pt
    \item \label{itm:Busedc_horiz_mont} For each $t \in \R$, the set    $\DLBusedc(x,t;-x,t)$ is nondecreasing as a function of $x \in \R_{\ge 0}$.
    \item \label{itm:Busedc_t} For $s,\dir \in \R$, define the function
    \be \label{fsdir}
x \mapsto f_{s,\dir}(x) := \W_{\dir +}(x,s;0,s) - \W_{\dir -}(x,s;0,s).
\ee
Then, $\dir \in \DLBusedc$ if and only if, for all $s \in \R$,
\be \label{bad_ub}
\lim_{x \to \pm \infty} f_{s,\dir}(x) = \pm \infty.
\ee
In particular, simultaneously for all $s,x \in \R$ and all sequences  $|x_k|\to\infty$,
    \be \label{eqn:dcset_union1}
    \DLBusedc = \bigcup_k \tspb\DLBusedc(x_k,s;x,s).
    \ee
    \item \label{itm:DL_dc_set_count} The set $\DLBusedc$ is countably infinite and dense in $\R$, while for each fixed $\dir \in \R$, $\Pp(\dir \in \DLBusedc) = 0$. In particular, the full-probability event of the theorem can be chosen so that $\DLBusedc$ contains no directions $\dir \in \Q$.
    \item \label{itm:DL_Buse_no_limit_pts} For each $p \neq q$ in $\R^2$, the set $\DLBusedc(p;q)$ is discrete, that is, has no limit points in $\R$. The function $\dir \mapsto \W_{\dir -}(p;q) = \W_{\dir +}(p;q)$ is constant on each open interval $I \subseteq (\R \setminus \DLBusedc(p;q))$. For $t \in \R$, on a $t$-dependent full-probability event, for all $x < y$, $\DLBusedc(y,t;x,t)$ is infinite and unbounded, for both positive and negative $\dir$.    
\end{enumerate}
Furthermore, 
\begin{enumerate} [resume, label=\rm(\roman{*}), ref=\rm(\roman{*})]  \itemsep=3pt
\item \label{itm:DLBusedcinvar} For $x,y,t,\nu \in \R$ and $c > 0$, the sets $\DLBusedc(y,t;x,t)$ satisfy the following distributional invariances:
\[
\DLBusedc(y,t;x,t) \deq \DLBusedc(y,0;x,0) \deq -\DLBusedc(-y,0;-x,0) 
\deq c^{-1}\DLBusedc(c^{-2}y,0;c^{-2}x,0) -\nu.  
\]
\end{enumerate}
\end{theorem}
\begin{remark} \label{rmk:shock_measure}
Item~\ref{itm:Busedc_t} states that all discontinuities of the Busemann process are present on each  horizontal ray. By Item~\ref{itm:DL_Buse_no_limit_pts}  $\dir \mapsto \W_{\dir \pm}(p;q)$ are the left- and right-continuous versions of a jump process. This function defines a random signed measure  supported on a discrete set. When $p$ and $q$ lie on the same horizontal line, this function is monotone  (Theorem~\ref{thm:DL_Buse_summ}\ref{itm:DL_Buse_gen_mont}) and the support of the measure is exactly the set of directions   at which the properly chosen coalescence point of semi-infinite geodesics  jumps (see Definition~\ref{def:coal_pt} and Theorems~\ref{thm:DL_eq_Buse_cpt_paths}--\ref{thm:Buse_pm_equiv}).

The discreteness of Item~\ref{itm:DL_Buse_no_limit_pts} allows us to view the sets $\DLBusedc(y,t;x,t)$ as well-defined point processes, and gives the statements in Item~\ref{itm:DLBusedcinvar} meaning. The set $\DLBusedc$ itself is dense, and it is not easy, a priori, to interpret as a random object. However, By Items~\ref{itm:Busedc_horiz_mont} and~\ref{itm:Busedc_t}, $\DLBusedc$ is the increasing union of the sets $\DLBusedc(x_k,0;x,0)$, where $x_k$ is a monotone sequence converging to $+\infty$ or $-\infty$.
\end{remark}

\subsection{Busemann geodesics} \label{sec:DL_SIG_intro}
The study of Busemann geodesics starts with this definition. 
\begin{definition} \label{def:LR_maxes}
For $\dir  \in \R$, $\sigg \in \{-,+\}$,  $(x,s) \in \R^2$ and $t\in(s,\infty)$, let $g_{(x,s)}^{\dir \sig,L}(t)$ and $g_{(x,s)}^{\dir \sig,R}(t)$ denote, respectively, the leftmost and rightmost maximizer of $\Ll(x,s;y,t) + \W_{\dir \sig}(y,t;0,t)$ over $y \in \R$. For $t = s$, define $g_{(x,s)}^{\dir \sig,L/R}(s) = x$.
\end{definition}
\begin{remark}
The modulus of continuity bounds of the directed landscape recorded in Lemma~\ref{lem:Landscape_global_bound}, along with continuity of $W_{\dir \sig}$, imply that $\lim_{t \searrow s} g_{(x,s)}^{\dir \sig,L/R}(t) = x$, so the definition $g_{(x,s)}^{\dir \sig,L/R}(s) = x$ makes $g_{(x,s)}^{\dir \sig,L/R}$ continuous at $t = s$. In fact, the path is continuous everywhere because it is the leftmost or rightmost geodesic between any pair of points along the path (Theorem \ref{thm:DL_SIG_cons_intro}\ref{itm:DL_LRmost_geod}), and geodesics are continuous. As is seen in the proofs, we are relying on the existence of leftmost and rightmost point-to-point geodesics from \cite[Lemma 13.2]{Directed_Landscape}.
\end{remark}

As noted earlier, Rahman and Vir\'ag \cite{Rahman-Virag-21} showed the existence of semi-infinite geodesics, almost surely for a fixed initial point  across all directions and almost surely for a fixed direction across all initial points. We  extend this  simultaneously across both all initial points and directions. 
Theorem~\ref{thm:RV-Buse}\ref{itm:DL_Buse_var}, quoted from~\cite{Rahman-Virag-21}, states that for a {\it fixed} direction $\dir$, with probability one at times $t > s$, 
the maximizers $z$ of the function $\Ll(x,s;z,t) + \W_{\dir}(z,t;0,t)$ are exactly the points on semi-infinite $\xi$-directed geodesics from $(x,s)$.
Theorem~\ref{thm:DL_SIG_cons_intro} clarifies this on a global scale: across all directions, initial points and signs, one can construct semi-infinite geodesics from the Busemann process.   
Furthermore,  $g_{(x,s)}^{\dir \sig,L}$ and $g_{(x,s)}^{\dir \sig,R}$ both define  semi-infinite geodesics in direction $\dir$  
and give  the leftmost (or rightmost) geodesic between any two of their points. We use this heavily in the present paper. 

\begin{theorem} \label{thm:DL_SIG_cons_intro}
 The following hold on a single event of probability one across all initial points $(x,s) \in \R^2$, times $t>s$, directions $\dir  \in \R$, and signs $\sigg \in \{-,+\}$.
 \begin{enumerate} [label=\rm(\roman{*}), ref=\rm(\roman{*})]  \itemsep=3pt
 \item \label{itm:intro_SIG_bd} 
 All maximizers of $z\mapsto\Ll(x,s;z,t) + \W_{\dir \sig}(z,t;0,t)$ are finite. Furthermore, as  $x,s,t$ vary over a compact set $K\subseteq \R$ with $s \le t$, the set of all maximizers is bounded.
    \item \label{itm:arb_geod_cons} 
    Let $s = t_0 < t_1 < t_2 < \cdots$ be an arbitrary increasing sequence with $t_n \to \infty$. Set $g(t_0) = x$, and for each $i \ge 1$, let $g(t_i)$ be \textit{any} maximizer of $\Ll(g(t_{i - 1}),t_{i - 1};z,t_i) + W_{\dir \sig}(z,t_i;0,t_i)$ over $z \in \R$. Then, pick \textit{any} geodesic of $\Ll$ from $(g(t_{i - 1}),t_{i - 1})$ to $(g(t_i),t_i)$, and for $t_{i - 1} < t < t_i$, let $g(t)$ be the location of this geodesic at time $t$. Then, regardless of the choices made at each step, the following hold.
    \begin{enumerate} [label=\rm(\alph{*}), ref=\rm(\alph{*})]
        \item \label{itm:g_is_geod} The path $g:[s,\infty)\to \R$ is a semi-infinite geodesic.
        \item \label{itm:weight_of_geod} For all  $ t < u$ in $[s,\infty)$,
    \be \label{eqn:SIG_weight}
    \Ll(g(t),t;g(u),u) = \W_{\dir \sig}(g(t),t;g(u),u).
    \ee
    \item \label{itm:maxes} For all  $ t < u$ in $[s,\infty)$, $g(u)$ maximizes $\Ll(g(t),t;z,u) + \W_{\dir \sig}(z,u;0,u)$ over $z \in \R$. 
    \item \label{itm:geo_dir} The geodesic $g$ has direction $\dir$, i.e., $g(t)/t \to \dir$ as $t \to \infty$. 
    \end{enumerate}
    \item \label{itm:DL_all_SIG} For 
    $S \in \{L,R\}$, $g_{(x,s)}^{\dir \sig,S}:[s,\infty) \to \R$ is a semi-infinite geodesic from $(x,s)$ in direction $\dir$. Moreover, for any $s \le t < u$, we have that 
    \[
    \Ll\bigl(g_{(x,s)}^{\dir \sig,S}(t),t;g_{(x,s)}^{\dir \sig,S}(u),u\bigr) = \W_{\dir \sig}\bigl(g_{(x,s)}^{\dir \sig,S}(t),t;g_{(x,s)}^{\dir \sig,S}(u),u\bigr),
    \]
    and $g_{(x,s)}^{\dir \sig,S}(u)$ is the leftmost/rightmost {\rm(}depending on $S${\rm)} maximizer of \\$\Ll(g_{(x,s)}^{\dir \sig,S}(t),t;z,u) + \W_{\dir \sig}(z,u;0,u)$ over $z \in \R$.
    \item \label{itm:DL_LRmost_geod} 
    The path $g_{(x,s)}^{\dir \sig,L}$ is the leftmost geodesic between any two of its points, and $g_{(x,s)}^{\dir \sig,R}$ is the rightmost geodesic between any two of its points.
    \end{enumerate}
    \end{theorem}
\begin{definition}
We refer to the  geodesics constructed  in Theorem~\ref{thm:DL_SIG_cons_intro}\ref{itm:arb_geod_cons} as $\dir \sig$ \textit{Busemann geodesics}, or simply {\it $\dir \sig$ geodesics}. 
\end{definition} 
\begin{remark}
The geodesics  $g_{(x,s)}^{\dir \sig,L}$ and $g_{(x,s)}^{\dir \sig,R}$ are special  Busemann geodesics. By   Theorem~\ref{thm:DL_SIG_cons_intro}\ref{itm:DL_all_SIG}--\ref{itm:DL_LRmost_geod}, for any sequence $s=t_0 < t_1< t_2 < \cdots$ with $t_n \to \infty$, the path $g = g_{(x,s)}^{\dir \sig,L}$ can be constructed by choosing $g(t_i)$ as the leftmost maximizer of $\Ll(g(t_{i - 1}),t_{i - 1};z,t_i) + \W_{\dir \sig}(z,t_i;0,t_i)$ over $z \in \R$, and   for $t \in (t_{i - 1},t_i)$, taking $g(t)$ to be     the leftmost geodesic from $(g(t_{i - 1}),t_{i - 1})$ to $(g(t_i),t_i)$. The analogous statement holds for $L$ replaced with $R$ and ``leftmost'' replaced with ``rightmost''.
\end{remark}

\subsection{Construction and proofs for the Busemann process and Busemann geodesics} \label{sec:DL_Buse_cons}
This section proves the results of Sections~\ref{sec:Buse_results} and~\ref{sec:DL_SIG_intro}. The order in which the items are proved is somewhat delicate, so we outline that here. After proving some lemmas, we prove Theorem~\ref{thm:DL_Buse_summ}\ref{itm:general_cts}--\ref{itm:Buse_KPZ_description} and Theorem~\ref{thm:Buse_dist_intro}. We then skip ahead to constructing the semi-infinite geodesics, culminating in the proof of Theorem~\ref{thm:DL_SIG_cons_intro}. Afterward, we turn to the proof of the regularity in Theorem~\ref{thm:DL_Buse_summ}\ref{itm:DL_unif_Buse_stick}, then prove Theorem~\ref{thm:DLBusedc_description}, except for Item~\ref{itm:Busedc_t}, which is proved in Section~\ref{sec:last_proofs}.

We construct a full-probability event $\Omega_1$.  Later in~\eqref{omega2} and~\eqref{omega3} follow full-probability events $\Omega_3 \subseteq \Omega_2 \subseteq \Omega_1$. For the rest of the proofs, we work  almost exclusively on these events. Once the events are constructed and shown to have full probability, the remaining proofs are deterministic statements that hold on those events.
\be \label{omega1}
\text{We define $\Omega_1 \subseteq \Omega$ to be the event of probability one on which the following hold.}
\ee
\begin{enumerate} [label=\rm(\roman{*}), ref=\rm(\roman{*})]  \itemsep=3pt
    \item \label{om1lrf} Simultaneously for all $(x,s;y,t) \in \Rup$ there exist leftmost and rightmost geodesics {\rm(}possibly in agreement{\rm)} between $(x,s)$ and $(y,t)$ (see Section~\ref{sec:DL_geod}).
    \item \label{om1rsi} For each rational direction $\dir \in \Q$ and each point $p \in \R^2$, there exist leftmost and rightmost semi-infinite geodesics {\rm(}possibly in agreement{\rm)} from $p$ in direction $\dir$, and all semi-infinite geodesics in direction $\dir$ coalesce {\rm(}see Theorem~\ref{thm:RV-SIG-thm}, Items~\ref{itm:d_fixed} and~\ref{itm:fixed_coal}{\rm)}.
    \item \label{om1pdf} For each rational direction $\dir \in \Q$ and each rational point $p \in \Q^2$, there is a unique semi-infinite geodesic from $p$ in direction $\dir$ (see Theorem~\ref{thm:RV-SIG-thm}\ref{itm:pd_fixed}).
    \item \label{om1dirrat} For each rational direction $\dir\in \Q$, the Busemann process defined by~\eqref{RVW-def} satisfies conditions~\ref{itm:fixed_additive}--\ref{itm:DL_Buse_cont} of Theorem~\ref{thm:RV-Buse}. For any pair $\dir_1 < \dir_2$ or rational directions, Item~\ref{itm:DL_Buse_mont} of Theorem~\ref{thm:RV-Buse} holds.
    \item \label{om1agree} For each $(x,t,y,\dir) \in \Q^4$,
    $
        \lim_{\Q \ni \alpha \to \dir} \W_\alpha(y,t;x,t) = \W_{\dir}(y,t;x,t).    
    $
    \item \label{om1asym} For every rational time $t \in \Q$ and rational direction $\dir \in \Q$, 
    \be \label{eqn:rat_asymp}
        \lim_{x \to \pm \infty} x^{-1}  {\W_\dir(x,t;0,t)} = 2\dir. 
    \ee
    This holds with probability one by properties of Brownian motion and Theorem~\ref{thm:RV-Buse}\ref{itm:DL_Buse_BM}.
    \item \label{om1appB} The conclusions of Lemmas~\ref{lem:Landscape_global_bound},~\ref{lm:BGH_disj}, and~\ref{lem:geod_pp} hold for  $\Ll$. Note that then  Lemma~\ref{lem:Landscape_global_bound} holds also for the reflected version  
    $
    \{\Ll(y;-t,x;-s):(x,s;y,t) \in \Rup\}. 
    $   
\end{enumerate}

To justify $\Pp(\Omega_1)=1$, it remains only to check Item~\ref{om1agree}. By   Theorem~\ref{thm:RV-Buse}\ref{itm:DL_Buse_mont},  for $y \ge x$,
\be \label{lrlimit}
\lim_{\Q \ni \alpha \nearrow \dir} \W_{\alpha}(y,t;x,t) \le \W_{\dir}(y,t;x,t) \le \lim_{\Q \ni \alpha \searrow \dir} \W_{\alpha}(y,t;x,t).
\ee
By Theorem~\ref{thm:RV-Buse}\ref{itm:DL_Buse_BM}, $\W_{\alpha}(y,t;x,t) \sim \Nor(2\alpha(y - x),2(y - x))$. Hence, 
all terms in~\eqref{lrlimit} have the same distribution and are almost surely equal. 

\smallskip 

Now, on the full-probability event $\Omega_1$, we have defined the process
 \be \label{rat_process}
 \{\W_{\alpha}(p;q):p,q \in \R^2,\alpha \in \Q \}.
 \ee
 On this event, for an arbitrary direction $\dir$, and $t,x,y \in \R$, define
\be \label{eqn:Buse_def}
\W_{\dir -}(y,t;x,t) = \lim_{\Q \ni \alpha \nearrow \dir}\W_{\alpha }(y,t;x,t)\;\;\text{and}\;\; \W_{\dir+}(y,t;x,t) = \lim_{\Q \ni \alpha \searrow \dir} \W_{\alpha }(y,t;x,t).
\ee
By Theorem~\ref{thm:RV-Buse}\ref{itm:DL_Buse_mont}, these limits exist for all $t \in \R$. Complete the definition by setting,  
\be \label{eqn:gen_Buse_var}\begin{aligned} 
\text{  for $s < t$, } \ 
\W_{\dir \sig}(x,s;y,t) &= \sup_{z \in \R}\{\Ll(x,s;z,t) + \W_{\dir \sig}(z,t;y,t)\},\\
\text{ and finally for $s > t$, } \ \W_{\dir \sig}(x,s;y,t) &= -\W_{\dir \sig}(y,t;x,s).
\end{aligned} \ee
With this construction in place, we prove an intermediate lemma.
\begin{lemma} \label{lem:DL_horiz_Buse}
The following hold on the event $\Omega_1$, across all points, directions and signs.
\begin{enumerate} [label=\rm(\roman{*}), ref=\rm(\roman{*})]  \itemsep=3pt
\item \label{itm:DL_agree_horiz} For all $x,y,t \in \R$, and $\dir \in \Q$, $\W_{\dir -}(y,t;x,t) = \W_{\dir +}(y,t;x,t) = \W_\dir(y,t;x,t)$, where $W_\dir$ is the originally defined Busemann function from~\eqref{rat_process}. 
    \item \label{itm:DL_h_add} Horizontal Busemann functions are additive: $\forall\,x,y,z,t \in \R$, $\dir \in \R$, and $\sigg \in \{-,+\}$,
    \[
    \W_{\dir \sig}(x,t;y,t) + \W_{\dir \sig}(y,t;z,t) = \W_{\dir \sig}(x,t;z,t).
    \]
    \item \label{itm:DL_h_unif_conv} For every $t,\dir \in \R$, the limits   \eqref{eqn:Buse_def} hold uniformly over $(x,y)$ on compact sets. Further, for each $t,\dir \in \R$ and $\sigg \in \{-,+\}$,
    these limits hold in the same sense: 
    \be \label{itm:horiz_lim}
    \lim_{\alpha \nearrow \dir} \W_{\alpha \sig}(y,t;x,t) = \W_{\dir -}(y,t;x,t)\ \text{ and }\  \lim_{\alpha \searrow \dir}\W_{\alpha \sig}(y,t;x,t) = \W_{\dir +}(y,t;x,t). 
    \ee
    
    \item \label{itm:DL_lim} For every $\dir \in \R$, $\sigg \in \{-,+\}$, $(p,q) \mapsto \W_{\dir \sig}(p;q)$ is continuous, and for each $t \in \R$,
    \be \label{eqn:horiz_asymptotics}
    \lim_{x \to \pm \infty}  x^{-1} {\W_{\dir \sig}(x,t;0,t)} = 2\dir.
    \ee
    \end{enumerate}

\end{lemma}
\begin{proof}
We prove Item~\ref{itm:DL_agree_horiz} last.

\smallskip\noindent
\textbf{Item~\ref{itm:DL_h_add}} follows from the same property in rational directions (Theorem~\ref{thm:RV-Buse}\ref{itm:fixed_additive}).

\smallskip\noindent \textbf{Item~\ref{itm:DL_h_unif_conv}:} The monotonicity of the horizontal Busemann process from Theorem~\ref{thm:RV-Buse}\ref{itm:DL_Buse_mont} extends to all directions by limits. That is, for any two rational directions $\dir_1 < \dir_2$ and any real $x < y$, and $t$,
    \be \label{eqn:Buse_mont}
    \W_{\dir_1 -}(y,t;x,t) \le \W_{\dir_1}(y,t;x,t) \le  \W_{\dir_1 +}(y,t;x,t) \le \W_{\dir_2 -}(y,t;x,t), 
    \ee
    and when $\dir_1 \notin \Q$, the same monotonicity holds, removing the middle term that does not distinguish between $\pm$. 
Hence, the limits as $\alpha \nearrow \dir$ and $\alpha \searrow \dir$ exist and agree with the limits from rational directions (without the $\sigg$). Without loss of generality, we take the compact set to be $[a,b]^2$. Then, by~\eqref{eqn:Buse_mont} and Lemma~\ref{lem:ext_mont}, for $\alpha < \dir$, $\sigg \in \{-,+\}$, and $a \le x \le y \le b$,
\be \label{156}
0 \le \W_{\dir-}(y,t;x,t)- \W_{\alpha \sig }(y,t;x,t)   \le   \W_{\dir-}(b,t;a,t)- \W_{\alpha \sig }(b,t;a,t),
\ee
and for general $(x,y) \in [a,b]^2$, 
\[
|\W_{\dir-}(y,t;x,t)- \W_{\alpha \sig }(y,t;x,t)|   \le   |\W_{\dir-}(b,t;a,t)- \W_{\alpha \sig }(b,t;a,t)|,
\]
so the limit as $\alpha \nearrow \dir$ is uniform on compacts. An analogous argument applies to   $\alpha \searrow\dir$.

\smallskip\noindent \textbf{Item~\ref{itm:DL_lim}:} 
For $t,\dir \in \R$ and $\sigg \in \{-,+\}$,  the continuity of  
$
(x,y) \mapsto \W_{\dir \sig}(y,t;x,t)
$
follows from Item~\ref{itm:DL_h_unif_conv} and the continuity for rational $\xi$ in Theorem~\ref{thm:RV-Buse}\ref{itm:DL_Buse_cont}. Before showing the general continuity, we show   the limits~\eqref{eqn:horiz_asymptotics}. For   $\dir, t \in \Q$, \eqref{eqn:rat_asymp} holds by definition of $\Omega_1$.
Keeping $\dir \in \Q$, let $s \in \R$, and let $t > s$ be rational. By Theorem~\ref{thm:RV-Buse}\ref{itm:fixed_additive}--\ref{itm:DL_Buse_var}, 
\begin{align*}
\W_{\dir}(x,s;0,s) 
&= \W_{\dir}(x,s;0,t) + \W_{\dir}(0,t;0,s) \\
&= \sup_{z \in \R}\{\Ll(x,s;z,t) + \W_{\dir}(z,t;0,t)\}+ \W_{\dir}(0,t;0,s).
\end{align*}
 Then, by Lemma~\ref{lem:KPZ_preserve_lim} (for the temporally reflected $\Ll$),
 $
 \lim_{x \to \pm \infty} x^{-1}  {\W_{\dir}(x,s;0,s)} = 2\dir.
 $
 Now, let $\dir \in \R$, $\sigg \in \{-,+\}$, and $t \in \R$ be arbitrary. Then, the monotonicity of~\eqref{eqn:Buse_mont} implies that for $\alpha < \dir < \beta$ with $\alpha,\beta \in \Q$,
 \[
 \alpha \le \liminf_{x \to \infty} x^{-1}{\W_{\dir \sig}(x,t;0,t)} \le \limsup_{x \to \infty} x^{-1} {\W_{\dir \sig}(x,t;0,t)}  \le \beta.
 \]
 Sending $\Q \ni \alpha \nearrow \dir$ and $\Q \ni \beta \searrow \dir$ implies~\eqref{eqn:horiz_asymptotics} for $+\infty$. The case    $x \to -\infty$ follows a symmetric argument.

Lastly, the continuity of $(x,y) \mapsto \W_{\dir \sig}(y,t;x,t)$ and~\eqref{eqn:horiz_asymptotics} imply that $\W_{\dir \sig}(x,t;0,t) \le a + b|x|$ for some constants $a,b$. The general continuity follows from~\eqref{eqn:gen_Buse_var} and  Lemma~\ref{lem:max_restrict}\ref{itm:KPZcont}.

\smallskip 
\noindent \textbf{Item~\ref{itm:DL_agree_horiz}:} 
The statement holds for all $x,y,t,\dir \in \Q$ by Item~\ref{om1agree} of $\Omega_1$. The continuity proved in Item~\ref{itm:DL_lim} extends this to all $x,y,t \in \R$. 
\end{proof}

\noindent Recall Definition~\ref{def:LR_maxes} of  the extreme maximizers $g_{(x,s)}^{\dir \sig,L/R}(t)$.

\begin{lemma} \label{lem:bounded_maxes}
For each $\omega \in \Omega_1$, $(x,s;y,t) \in \Rup$, $\dir \in \R$, and $\sigg \in \{-,+\}$,
    \be \label{eqn:zto_pminfty}
    \lim_{z \to \pm \infty} \Ll(x,s;z,t) + \W_{\dir \sig}(z,t;y,t) = -\infty
    \ee
    so that   $g_{(x,s)}^{\dir \sig,L/R}$ are well-defined.
    Let $K \subseteq \R$ be a compact set,   $\dir \in \R$ and $\sigg \in \{-,+\}$.  Then, there exists a random $Z = Z(\dir \sig,K)  \in (0,\infty)$ such that for all $x,s,t \in K$ with $s < t$ and $S \in \{L,R\}$, $|g_{(x,s)}^{\dir \sig,S}(t)| \le Z$.
\end{lemma}
\begin{proof}
By the continuity and asymptotics of Lemma~\ref{lem:DL_horiz_Buse}\ref{itm:DL_lim}, $\forall t \in \R$  $\exists a,b > 0$ such that $|\W_{\dir \sig}(x,t;0,t)| \le a + b|x|$ $\forall x \in \R$.   Lemma~\ref{lem:Landscape_global_bound} implies  $\Ll(x,s;z,t) \sim -\f{(z - x)^2}{t - s}$,  which gives   \eqref{eqn:zto_pminfty}. 
Next we observe that
\be \label{107}
\begin{aligned}
&\inf_{x,s,t \in K, s < t}\sup_{z \in \R}\{\Ll(x,s;z,t) + \W_{\dir \sig}(z,t;0,t)\} \\[-3pt] 
 &\qquad\qquad\qquad \ge \inf_{x,s,t \in K, s < t} \Ll(x,s;x,t) + \W_{\dir \sig}(x,t;0,t) > -\infty.
\end{aligned}
\ee
The last inequality is justified as follows. Since $\W_{\dir \sig}(x,t;0,t)$ evolves backwards in time as the KPZ fixed point~\eqref{eqn:gen_Buse_var}, Lemma~\ref{lem:max_restrict}\ref{itm:KPZ_unif_line} implies that   $a$ and $b$ can be chosen uniformly for $t \in K$.   Lemma~\ref{lem:Landscape_global_bound} states that $\forall x,s,t \in \R$ with $s < t$, there is a constant $C$ such that 
\[
\Ll(x,s;x,t) \ge - C(t - s)^{1/3}\log^2\bigl(\tfrac{2\sqrt{2x^2 + s^2 + t^2} + 4}{(t - s)\wedge 1}\bigr).
\]
Taking the infimum over $x,s,t \in K$ with $s < t$ yields the last inequality in~\eqref{107}.  

To  contradict the last statement of the lemma, assume   maximizers $z_n$ of $\Ll(x_n,s_n;z,t_n) + \W_{\dir \sig}(z,t_n;0,t_n)$ over $z \in \R$ such that  $x_n,s_n,t_n \in K$  but $|z_n| \to \infty$.
Then, by~\eqref{107},
\be \label{9}
\liminf_{n \to \infty} \Ll(x_n,s_n;z_n,t_n) + \W_{\dir \sig}(z_n,t_n;0,t_n) > -\infty,
\ee
but since $z_n \to \infty$ and $x_n,s_n,t_n \in K$ for all $n$, $\Ll(x_n,s_n;z_n;t_n) \sim -\f{(z_n - x_n)^2}{t_n - s_n}$ by Lemma~\ref{lem:Landscape_global_bound}. By the bound $|\W_{\dir \sig}(x,t;0,t)| \le a + b|x|$ that holds uniformly for $t \in K$ and $x \in \R$, the inequality~\eqref{9} cannot hold.
\end{proof}

\begin{proof}[Proof of Theorem~\ref{thm:DL_Buse_summ}, Items ~\ref{itm:general_cts}--\ref{itm:Buse_KPZ_description}]
The full-probability event of these items is $\Omega_1$. The remaining items are proved later.

\smallskip\noindent
\textbf{Item~\ref{itm:general_cts} (Continuity):} This was proved in Lemma~\ref{lem:DL_horiz_Buse}\ref{itm:DL_lim}. 

\smallskip\noindent \textbf{Item~\ref{itm:DL_Buse_add} (Additivity):} 
First, we show that on $\Omega_1$ for $s < t$, $x \in \R$, $\dir_1 < \dir_2$, and $S \in \{L,R\}$,
    \be \label{eqn:mont_maxes}
    -\infty < g_{(x,s)}^{\dir_1 -,S}(t) \le g_{(x,s)}^{\dir_1 +,S}(t) \le g_{(x,s)}^{\dir_2 -,S}(t) \le g_{(x,s)}^{\dir_2 +,S}(t)  < \infty. 
    \ee
    The finiteness of the maximizers comes from Lemma~\ref{lem:bounded_maxes}. The rest of~\eqref{eqn:mont_maxes} follows from the monotonicity of~\eqref{eqn:Buse_mont} and Lemma~\ref{lemma:max_monotonicity}. Next, we show that for $(x,s;y,t) \in \R^4$ and $\dir \in \R$, $\W_{\alpha }(x,s;y,t)$ converges pointwise to $\W_{\dir -}(x,s;y,t)$ as $\Q \ni \alpha \nearrow \dir$. The same holds for limits from the right, with $\dir -$ replaced by $\dir +$ (Later we prove that the convergence is locally uniform).   By~\eqref{eqn:gen_Buse_var}, it suffices to assume $s < t$. By~\eqref{eqn:mont_maxes} and the additivity of Lemma~\ref{lem:DL_horiz_Buse}\ref{itm:DL_h_add} when $s = t$, for all $\alpha \in [\dir - 1,\dir + 1] \cap \Q$ and $\sigg \in \{-,+\}$,
\begin{align*}
\W_{\alpha }(x,s;y,t) &= \sup_{z \in \R}\{\Ll(x,s;z,t) + \W_{\alpha }(z,t;y,t)\} \\
&= \sup_{z \in \R}\{\Ll(x,s;z,t) + \W_{\alpha}(z,t;0,t)\} + \W_{\alpha}(0,t;y,t) \\
&= \sup_{z \in [g_{(x,s)}^{(\dir - 1)-,L}(t),g_{(x,s)}^{(\dir + 1)+,R}(t)]}\{\Ll(x,s;z,t) + \W_\alpha(z,t;0,t)\} + \W_{\alpha}(0,t;y,t).
\end{align*}
By Lemma~\ref{lem:DL_horiz_Buse}\ref{itm:DL_h_unif_conv}, $\W_{\alpha}(z,t;y,t)$ converges uniformly on compact sets to $\W_{\dir -}(x,t;y,t)$ as $\Q \ni \alpha \nearrow \dir$ and to $\W_{\dir +}(x,t;y,t)$ as $\Q \ni \alpha \searrow \dir$. This implies the desired pointwise convergence. The additivity follows from the additivity  for rational $\dir$  (Theorem~\ref{thm:RV-Buse}\ref{itm:fixed_additive}). 

\smallskip\noindent \textbf{Item~\ref{itm:DL_Buse_gen_mont} (Monotonicity along a horizontal line):} This was previously proven as Equation~\eqref{eqn:Buse_mont}.

\smallskip\noindent \textbf{Item~\ref{itm:Buse_KPZ_description} (Backwards evolution as the KPZ fixed point):} This follows directly from the construction~\eqref{eqn:gen_Buse_var}.

\smallskip\noindent  We postpone the proofs of Items~\ref{itm:DL_unif_Buse_stick}--\ref{itm:global_attract}. Item~\ref{itm:DL_unif_Buse_stick} is proved after the proof of Theorem~\ref{thm:Buse_dist_intro}, and Items~\ref{itm:BuseLim2}--\ref{itm:global_attract} are proved after the proof of Theorem~\ref{thm:DL_good_dir_classification}. No subsequent results  depend on Items~\ref{itm:BuseLim2}--\ref{itm:global_attract}, except  the mixing in Theorem~\ref{thm:Buse_dist_intro}\ref{itm:stationarity}, which is proven later.
\end{proof}

\begin{proof}[Proof of Theorem~\ref{thm:Buse_dist_intro} (Distributional properties of Busemann process)]
\smallskip\noindent \textbf{Item~\ref{itm:indep_of_landscape} (Independence):} We know that $\{\Ll(x,s;y,t):s,y \in \R, s < t \le T\}$ is independent of $\{\Ll(x,s;y,t):s,y \in \R, T \le s < t\}$ for $T \in \R$. From the definition of the Busemann process from geodesics and the extension \eqref{eqn:Buse_def}--\eqref{eqn:gen_Buse_var}, the process
\[
\{\W_{\dir \sig}(x,s;y,t): \dir \in \R, \,\sigg \in \{-,+\}, \, x,y \in \R, \, s,t \ge T \} 
\]
is a function of 
$\{\Ll(x,s;y,t):s,y \in \R, T \le s < t\}$, and independence follows.

\smallskip\noindent \textbf{Item~\ref{itm:stationarity} (Stationarity):}
Similarly as the previous item, the stationarity of the process follows from the stationarity of the directed landscape from Lemma~\ref{lm:landscape_symm}\ref{itm:time_stat}. The mixing properties  will be proven in Section~\ref{sec:Buseextraproofs}, along with Items~\ref{itm:BuseLim2}--\ref{itm:global_attract} of Theorem~\ref{thm:DL_Buse_summ}.

\smallskip\noindent \textbf{Item~\ref{itm:SH_Buse_process} (Distribution along a time level):} By the additivity of Theorem~\ref{thm:DL_Buse_summ}\ref{itm:DL_Buse_add} and the variational definition~\eqref{eqn:gen_Buse_var}, for $x \in \R$, $s < t$, and $\sigg \in \{-,+\}$, on the full-probability event $\Omega_1$,
\begin{align*}
&\W_{\dir \sig}(x,s;0,s) = \W_{\dir \sig}(x,s;0,t) - \W_{\dir \sig}(0,s;0,t) \\
&\qquad 
=\sup_{y \in \R}\{\Ll(x,s;y,t) + \W_{\dir \sig}(y,t;0,t)\} - \sup_{y \in \R} \{\Ll(0,s;y,t) + \W_{\dir \sig}(y,t;0,t)\}.
\end{align*}
By Item~\ref{itm:indep_of_landscape}, Theorem~\ref{thm:DL_Buse_summ}\ref{itm:DL_Buse_gen_mont},  and  Items~\ref{itm:DL_h_unif_conv} and~\ref{itm:DL_lim} of Lemma~\ref{lem:DL_horiz_Buse}, $\{\W_{\dir +}(\abullet,t;0,t):\dir \in \R\}_{t \in \R}$ is a reverse-time Markov process that almost surely lies in the state space $\Y$ defined in~\eqref{Y}. By the stationarity of Item~\ref{itm:stationarity}, the law of $\{\W_{\dir +}(\abullet,t;0,t):\dir \in \R\}$ must be invariant for this process.  By the temporal reflection invariance of the directed landscape  (Lemma~\ref{lm:landscape_symm}\ref{itm:DL_reflect}),  $\{\W_{\dir +}(\abullet,t;0,t):\dir \in \R\}$ is also invariant for the KPZ fixed point, forward in time. The uniqueness part of Theorem~\ref{thm:invariance_of_SH} completes the proof. 
\end{proof}

\begin{lemma} \label{lem:L_and_Buse_ineq}
For every $\omega \in \Omega_1$ and $(x,s;y,t) \in \Rup$, $\Ll(x,s;y,t) \le \W_{\dir \sig}(x,s;y,t)$, and equality occurs if and only if $y$ maximizes $\Ll(x,s;z,t) + \W_{\dir \sig}(z,t;0,t)$ over $z \in \R$. 
\end{lemma}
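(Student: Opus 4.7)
The plan is to derive the inequality directly from the variational representation (Theorem~\ref{thm:DL_Buse_summ}\ref{itm:Buse_KPZ_description}) by plugging in the admissible choice $z = y$, and then to use additivity (Theorem~\ref{thm:DL_Buse_summ}\ref{itm:DL_Buse_add}) to identify the supremum defining $\W_{\dir \sig}(x,s;y,t)$ with (a constant shift of) the supremum in the statement involving $\W_{\dir \sig}(z,t;0,t)$.

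Concretely, I would start by writing
\[
\W_{\dir \sig}(x,s;y,t) \;=\; \sup_{z \in \R}\bigl\{\Ll(x,s;z,t) + \W_{\dir \sig}(z,t;y,t)\bigr\},
\]
and then observe that $\W_{\dir \sig}(y,t;y,t) = 0$ by the additivity item of Theorem~\ref{thm:DL_Buse_summ}. Choosing $z = y$ in the supremum immediately yields $\W_{\dir \sig}(x,s;y,t) \ge \Ll(x,s;y,t)$, which is the desired inequality. Moreover, equality holds iff $y$ is itself a maximizer of $z \mapsto \Ll(x,s;z,t) + \W_{\dir \sig}(z,t;y,t)$.

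For the second half, I would use additivity of the horizontal Busemann function to write
\[
\W_{\dir \sig}(z,t;y,t) \;=\; \W_{\dir \sig}(z,t;0,t) + \W_{\dir \sig}(0,t;y,t),
\]
and note that the last term is independent of $z$. Consequently the set of maximizers of $z \mapsto \Ll(x,s;z,t) + \W_{\dir \sig}(z,t;y,t)$ coincides with the set of maximizers of $z \mapsto \Ll(x,s;z,t) + \W_{\dir \sig}(z,t;0,t)$, which gives the stated characterization of equality. Lemma~\ref{lem:bounded_maxes} guarantees that the suprema are actually attained and finite, so there is no issue with interpreting ``$y$ is a maximizer.''

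There is essentially no obstacle here: the lemma is really a bookkeeping statement tying together the variational formula, the convention $\W_{\dir \sig}(p;p) = 0$, and the additivity property, all of which are already available on $\Omega_1$ via Theorem~\ref{thm:DL_Buse_summ}. The only thing worth being careful about is that the variational formula is only stated for $s < t$, but the case $s = t$ (where $\Ll(x,s;y,s) = -\infty$ for $y \neq x$ and $0$ for $y = x$ by Lemma~\ref{lem:Landscape_global_bound}) can be checked directly from the additivity and the convention $\W_{\dir \sig}(x,s;x,s) = 0$.
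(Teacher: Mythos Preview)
Your proposal is correct and follows essentially the same route as the paper: apply the variational formula from Theorem~\ref{thm:DL_Buse_summ}\ref{itm:Buse_KPZ_description}, use additivity to rewrite $\W_{\dir \sig}(z,t;y,t) = \W_{\dir \sig}(z,t;0,t) + \W_{\dir \sig}(0,t;y,t)$, and then set $z=y$. One small remark: since $(x,s;y,t)\in\Rup$ already forces $s<t$, your separate treatment of the $s=t$ case is unnecessary (though harmless).
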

\begin{proof}
For $s < t$, Theorem~\ref{thm:DL_Buse_summ}\ref{itm:DL_Buse_add},\ref{itm:Buse_KPZ_description} gives
\begin{align} \label{106}
\W_{\dir \sig}(x,s;y,t) &= \sup_{z \in \R}\{\Ll(x,s;z,t) + \W_{\dir \sig}(z,t;y,t)\} \nonumber
\\ &= \sup_{z \in \R}\{\Ll(x,s;z,t) + \W_{\dir \sig}(z,t;0,t)\} + \W_{\dir \sig}(0,t;y,t).
\end{align}
Setting $z = y$ on the right-hand side of~\eqref{106}, it follows that $\W_{\dir \sig}(x,s;y,t) \ge \Ll(x,s;y,t)$, and equality holds if and only if $y$ is a maximizer.
\end{proof}

\begin{proof}[Proof of Theorem~\ref{thm:DL_SIG_cons_intro} (Construction of the Busemann geodesics)]
The full-probability event of this theorem is $\Omega_1$~\eqref{omega1}.

\smallskip\noindent
\textbf{Item~\ref{itm:intro_SIG_bd} (Finiteness of the maximizers):} This follows immediately from Lemma~\ref{lem:bounded_maxes}.

\smallskip\noindent 
 We prove \textbf{Items~\ref{itm:arb_geod_cons}--\ref{itm:DL_LRmost_geod}} together. By Lemma~\ref{lem:L_and_Buse_ineq}, for any such construction of a path from the sequence of times $s = t_0 < t_1 < \cdots$ and any $i \ge 1$,
\[
\Ll(g(t_{i - 1}),t_{i - 1};g(t_i),t_i) = \W_{\dir \sig}(g(t_{i - 1}),t_{i - 1};g(t_i),t_i).
\]
Furthermore, for any $t_{i - 1} \le t < u \le t_i$, it must hold that  
\[
\Ll(g(t),t;g(u),u) = \W_{\dir \sig}(g(t),t;g(u),u),
\]
for otherwise, by additivity of the Busemann functions (Theorem~\ref{thm:DL_Buse_summ}\ref{itm:DL_Buse_add}),
\begin{align*}
&\quad \Ll(g(t_{i - 1}),t_{i - 1};g(t_i),t_i)\\
&= \Ll(g(t_{i - 1}),t_{i - 1};g(t),t) +\Ll(g(t),t;g(u),u) + \Ll(g(u),u;g(t_i),t_i)\\
&<  \W_{\dir \sig}(g(t_{i - 1}),t_{i - 1};g(t),t) +\W_{\dir \sig}(g(t),t;g(u),u) + \W_{\dir \sig}(g(u),u;g(t_i),t_i) \\
&= \W_{\dir \sig}(g(t_{i - 1}),t_{i - 1};g(t_i),t_i),
\end{align*}
a contradiction. Additivity extends~\eqref{eqn:SIG_weight} to all $s \le t < u$. Therefore, the path is a semi-infinite geodesic because the weight of the path in between any two points is optimal by Lemma~\ref{lem:L_and_Buse_ineq}. From the equality~\eqref{eqn:SIG_weight} and Lemma~\ref{lem:L_and_Buse_ineq}, for \textit{every} $t \ge s$, $g(t)$ maximizes $\Ll(x,s;z,t) + \W_{\dir \sig}(z,t;0,t)$ over $z \in \R$.

\begin{figure}
    \centering
    \includegraphics[height = 2in]{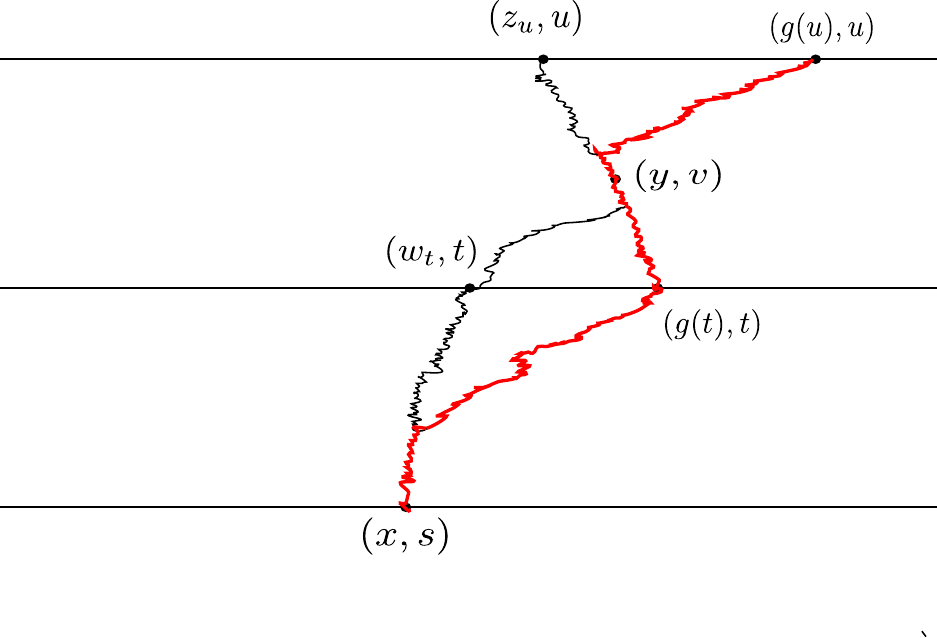}
    \caption{\small Illustration of the proof of Lemma~\ref{lem:RM_geod_SIG}. Here, the red/thick path denotes the path $\hat \gamma$ in the case $w_t < g(t)$, which is to the right of the rightmost geodesic between $(x,s)$ and $(g(u),u)$, which passes through $(w_t,t)$ by assumption. This gives the contradiction. }
    \label{fig:RM_geod}
\end{figure}

Before global directedness of all  geodesics, we show that $g_{(x,s)}^{\dir \sig,S}$ are semi-infinite geodesics and the leftmost/rightmost geodesics between any two of their points. Take $S = R$, and the result for $S = L$ follows similarly. Omit $x,s,\dir$, and $\sigg$ from the notation temporarily, and write $g(t) = g_{(x,s)}^{\dir \sig,R}(t)$. 
By what was just proved, it is sufficient to prove the following lemma.
\begin{lemma} \label{lem:RM_geod_SIG}
Let $g$ be as defined above. For $s < t < u$, let $z_u$ be the rightmost maximizer of $\Ll(g(t),t;z,u) + \W_{\dir \sig}(z,u;0,u)$ over $z \in \R$, and let $w_t$ be the rightmost maximizer of $\Ll(x,s;w,t) + \Ll(w,t;g(u),u)$ over $w \in \R$ {\rm(}Equivalently, the proof of \cite[Lemma 13.2]{Directed_Landscape} shows that $(w_t,t)$ is the point at level $t$ on the rightmost geodesic between $(x,s)$ and $(g(u),u)${\rm)}. Then, $g(t) = w_t$ and $g(u) = z_u$.
\end{lemma}
\begin{proof}
 By Lemma~\ref{lem:L_and_Buse_ineq} and Items~\ref{itm:arb_geod_cons}\ref{itm:weight_of_geod}--\ref{itm:maxes}, $w_t$ maximizes $\Ll(x,s;z,t)+ \W_{\dir \sig}(z,t;0,t)$ over $z \in \R$, and $z_u$ maximizes $\Ll(x,s;z,u) + \W_{\dir \sig}(z,u;0,u)$ over $z \in \R$. By definition of $g(u)$ and $g(t)$ as the rightmost maximizers, we have $w_t \le g(t)$ and $z_u \le g(u)$ in general. Assume, to the contrary, that $g(t) \neq w_t$ or $g(u) \neq z_u$. We first prove a contradiction in the case $w_t < g(t)$. For the proof, refer to Figure~\ref{fig:RM_geod} for clarity. Let $\gamma_1:[s,u]\to \R$ be the rightmost geodesic from $(x,s)$ to $(g(u),u)$ (which passes through $(w_t,t)$), and let $\gamma_2$ be the concatenation of the rightmost geodesic from $(x,s)$ to $(g(t),t)$ followed by the rightmost geodesic from $(g(t),t)$ to $(z_u,u)$.  By Item~\ref{itm:arb_geod_cons}\ref{itm:weight_of_geod} for $i = 1,2$, the weight of the portion of any part of $\gamma_i$ is equal to the Busemann function between the points. Since $w_t < g(t)$ and $z_u \le g(u)$, $\gamma_1$ and $\gamma_2$ must split before time $t$, and then meet again before or at time $u$. Let $(y,v)$ be a crossing point, where $t < v \le u$. Let $\hat \gamma: [s,u] \to \R$ be defined by $\hat \gamma(r) = \gamma_2(r)$ for $r \in [s,v]$ and $\hat \gamma(r) = \gamma_1(r)$ from $(y,v)$ to $(g(u),u)$. Then, by the additivity of Busemann functions, the weight $\Ll$ of any portion of the path $\hat \gamma$ is equal to the Busemann function between the two points. By Lemma~\ref{lem:L_and_Buse_ineq}, $\wh \gamma$ is then a geodesic between $(x,s)$ and $(g(u),u)$, which is to the right of $\gamma_1$, which was defined to be the rightmost geodesic between the points, a contradiction.

Now, we consider the case $z_u < g(u)$. Define $\gamma_1$ and $\gamma_2$ as in the previous case. Since $z_u < g(u)$, there is some point $(y,v)$ with $t \le v < u$ such that $\gamma_1$ splits from or crosses  $\gamma_2$ at $(y,v)$. Then, define $\hat \gamma$ as in the previous case. Again, the weight   $\Ll$ of any portion of the path $\hat \gamma$ is equal to the Busemann function between the two points. Specifically, $\Ll(g(t),t;g(u),u) = \W_{\dir \sig}(g(t),t;g(u),u)$, and by Item~\ref{lem:L_and_Buse_ineq}, $g(u)$ maximizes $\Ll(g(t),t;z,u) + \W_{\dir \sig}(z,u;0,u)$ over $z \in \R$. This contradicts the definition of $z_u$ as the rightmost such maximizer.
\end{proof}

Returning to the proof of Theorem~\ref{thm:DL_SIG_cons_intro}, we show the global directedness of all Busemann geodesics constructed in the manner described in Item~\ref{itm:arb_geod_cons}. By~\eqref{eqn:mont_maxes}, for $t \ge s$ and $\alpha < \dir < \beta$ with $\alpha,\beta \in \Q$,
\be \label{sig_sand}
g_{(x,s)}^{\alpha,L}(t) \le g_{(x,s)}^{\dir \sig,L}(t) \le g(t) \le g_{(x,s)}^{\dir \sig,R}(t) \le g_{(x,s)}^{\beta,R}(t).
\ee
Note that on $\Omega_1$ the $\pm$ distinction is absent for $\alpha,\beta \in \Q$ (Lemma~\ref{lem:DL_horiz_Buse}\ref{itm:DL_agree_horiz}).
By definition~\eqref{omega1} of the event $\Omega_1$ and Theorem~\ref{thm:RV-Buse}\ref{itm:DL_Buse_var}, $\forall\alpha\in\Q$, the maximizers of $\Ll(x,s;z,t) + W_{\alpha}(z,t;0,t)$ over $z \in \R$ are exactly the locations $z$ where an $\alpha$-directed geodesic goes through $(z,t)$. Therefore, $g_{(x,s)}^{\alpha,L}(t)/t \to \alpha$ and $g_{(x,s)}^{\beta,R}(t)/t \to \beta$ when $\alpha, \beta\in\Q$. By~\eqref{sig_sand},
\[
\alpha  \le \liminf_{t \to \infty} t^{-1}{g(t)} \le \limsup_{t \to \infty} t^{-1}{g(t)} \le \beta.
\]
Sending $\Q \ni \alpha \nearrow \dir$ and $\Q \ni \beta \searrow \dir$ completes the proof of   Theorem~\ref{thm:DL_SIG_cons_intro}.
\end{proof}

We now define the next   full-probability event. 
\be \label{omega2}
\text{Let $\Omega_2$ be the subset of $\Omega_1$ on which the following hold.} 
\ee
\begin{enumerate} [label=\rm(\roman{*}), ref=\rm(\roman{*})] \itemsep=3pt
    \item \label{itm:stickT} For each integer $T \in \Z$ and each compact set $K \subseteq \R^2$, there exists $\ve =\ve(\dir,T,K) > 0$ such that for $\dir - \ve < \alpha < \dir < \beta < \dir + \ve$ and $(x,y) \in K$,
    \be \label{465}
\W_{\alpha \sig}(y,T;x,T) = \W_{\dir -}(y,T;x,T)\ \ \text{ and }\ \  \W_{\beta \sig}(y,T;x,T) = \W_{\dir +}(y,T;x,T).
\ee
\item For each integer $T \in \Z$, the set
\be \label{infinit_dense}
\{\dir \in \R: \W_{\dir -}(x,T;0,T) \neq \W_{\dir +}(x,T;0,T) \text{ for some }x \in \R\}
\ee
is countably infinite and dense in $\R$.
\item For each $s < t \in \R$, $x,\dir \in \R$, $\sigg \in \{-,+\}$, and $S \in \{L,R\}$,
\be \label{eqn:dirtoinf}
    \lim_{\xi \to \pm \infty} g_{(x,s)}^{\xi \sig,S}(t) = \pm \infty.
\ee
\end{enumerate}

\begin{lemma}
$\Pp(\Omega_2) = 1.$
\end{lemma}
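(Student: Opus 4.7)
The plan is to verify each of the three defining properties of $\Omega_2$ on a full probability subset of $\Omega_1$ and intersect. The pivot is the distributional identification in Theorem~\ref{thm:Buse_dist_intro}\ref{itm:SH_Buse_process}, which identifies, for each time $T$, the horizontal-slice Busemann process $\{\W_{\dir +}(\,\cdot\,, T; 0, T)\}_{\dir \in \R}$ with the stationary horizon $G$. This lets me transport the structural properties of $G$ recorded in Theorem~\ref{thm:SH10} directly to the Busemann process.

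First I would handle properties (i) and (ii) simultaneously. Fix $T \in \Z$. From Theorem~\ref{thm:SH10}, there is a full probability event on which (a) the restriction of $\dir \mapsto G_\dir$ to each spatial compact $K' \subseteq \R$ is a jump process that is locally constant in punctured one-sided neighborhoods of every direction, and (b) the set of jump directions of $G$ is countably infinite and dense in $\R$. Via the distributional identification, (b) transfers directly to~\eqref{infinit_dense}. For property (i), I would reduce the planar compact $K \subseteq \R^2$ to the spatial compact $K' = \pi_1(K) \cup \pi_2(K) \cup \{0\}$ using the additivity $\W_{\dir\sig}(y, T; x, T) = \W_{\dir\sig}(y, T; 0, T) - \W_{\dir\sig}(x, T; 0, T)$ from Lemma~\ref{lem:DL_horiz_Buse}\ref{itm:DL_h_add}, and then apply (a). The extension from $\sigg = +$ to $\sigg = -$ follows from the one-sided convergences in Lemma~\ref{lem:DL_horiz_Buse}\ref{itm:DL_h_unif_conv}. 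Intersecting the resulting events over the countable set $T \in \Z$ preserves full probability and delivers both (i) and (ii) on a common event.

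For property (iii), fix $s < t$, $x \in \R$, $\sigg \in \{-,+\}$, $S \in \{L, R\}$. The monotonicity~\eqref{eqn:mont_maxes} shows $\xi \mapsto g_{(x,s)}^{\xi \sig, S}(t)$ is nondecreasing, so the limits at $\pm\infty$ exist in $[-\infty, +\infty]$. Treating the limit at $+\infty$ (the case $-\infty$ is symmetric), the sandwich~\eqref{sig_sand} with rational $\alpha < \xi$ reduces matters to showing $\lim_{\Q \ni \alpha \to +\infty} g_{(x, s)}^{\alpha, L}(t) = +\infty$. Assume for contradiction that this limit is some $M < \infty$. Comparing the objective $z \mapsto \Ll(x, s; z, t) + \W_{\alpha}(z, t; 0, t)$ at the test point $z = \alpha(t - s)$ versus at $z \le M$, and using the quadratic shape estimates on $\Ll$ from Lemma~\ref{lem:Landscape_global_bound} together with the asymptotic slope $\W_\alpha(z, t; 0, t) \sim 2\alpha z$ at large $z$ from Lemma~\ref{lem:DL_horiz_Buse}\ref{itm:DL_lim}, one sees the functional value at $z = \alpha(t - s)$ is $\alpha^2 (t - s) + o(\alpha^2)$, while its value at any $z \le M$ is $O(\alpha)$. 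This produces the desired contradiction for $\alpha$ sufficiently large.

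The main obstacle is property (iii): turning the heuristic ``the maximizer sits near $\alpha(t - s)$'' into a path-wise contradiction. The delicate point is extracting a path-wise, uniform-in-$\alpha$ estimate on $\W_\alpha(\alpha(t - s), t; 0, t)/\alpha^2 \to 2(t - s)$ along a countable set of rational $\alpha$. I expect to obtain this from a Borel--Cantelli argument based on the Gaussian marginals of $G$ supplied by Theorem~\ref{thm:Buse_dist_intro}\ref{itm:SH_Buse_process}, combined with the monotonicity of $\xi \mapsto \W_{\xi\sig}(z, t; 0, t)$ from Theorem~\ref{thm:DL_Buse_summ}\ref{itm:DL_Buse_gen_mont} to control the error terms. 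Properties (i) and (ii) are, by contrast, routine consequences of the stationary horizon identification and the additivity of the horizontal Busemann process.
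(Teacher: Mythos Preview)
Your treatment of properties (i) and (ii) is correct and matches the paper's: both follow from the distributional identification with the stationary horizon (Theorem~\ref{thm:Buse_dist_intro}\ref{itm:SH_Buse_process}) together with Theorem~\ref{thm:SH10}\ref{itm:SH_j},\ref{itm:bad_dir_contained}, intersected over $T\in\Z$.

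For property (iii), however, there is a genuine gap. Condition~\eqref{eqn:dirtoinf} must hold on a \emph{single} full-probability event simultaneously for \emph{all} $(x,s,t)$ with $s<t$; this is how it is used later (e.g.\ Theorem~\ref{thm:g_basic_prop}\ref{itm:limits_to_inf}, Theorem~\ref{thm:DLSIG_main}\ref{itm:all_dir}, Lemma~\ref{lem:rm_geod}). Your argument fixes $(x,s,t)$ from the outset and never addresses how to pass to uncountably many triples. The Borel--Cantelli estimate you propose for $\W_\alpha(\alpha(t-s),t;0,t)$ can indeed be made to work for fixed $(x,s,t)$, but the bound deteriorates as $t-s\downarrow 0$ (the relevant deviation is of order $\sqrt{\alpha^3(t-s)}$ standard deviations), so it does not extend uniformly in $t-s$; and the time level $t$ enters through a different Brownian motion for each $t$, so uniformity in $t$ is not automatic either.

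The paper's proof of (iii) has two steps you are missing. First, for a fixed triple (reduced to $(0,0,t)$), it avoids any Borel--Cantelli by using the skew stationarity of $\Ll$ (Lemma~\ref{lm:landscape_symm}\ref{itm:skew_stat}) to get the exact distributional identity $g_{(0,0)}^{\dir}(t)\deq g_{(0,0)}^{0}(t)+\dir t$, from which the $\pm\infty$ limits are immediate. Second---and this is the key idea you are missing---it upgrades from rational triples to all triples via a geodesic crossing argument: if $\sup_\dir g_{(x,s)}^{\dir-,L}(t)$ were finite, one plants a rational starting point $(w,q_1)$ below the bounded path and a rational level $q_2$ just under $t$; for large $\dir$ the geodesic from $(w,q_1)$ shoots far right at level $q_2$ while the geodesic from $(x,s)$ stays bounded, forcing the two to cross, after which Theorem~\ref{thm:DL_SIG_cons_intro}\ref{itm:DL_all_SIG} says both continue as the same leftmost maximizer, a contradiction (Figure~\ref{fig:fanning_proof}). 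Your direct comparison of objective values does not supply a substitute for this second step.
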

\begin{proof}
The fact that~\ref{itm:stickT} holds with probability one is a direct consequence of  Theorems~\ref{thm:Buse_dist_intro}\ref{itm:SH_Buse_process} and~\ref{thm:SH10}\ref{itm:SH_j}. The set~\eqref{infinit_dense} is countably infinite and dense for all $T \in \Z$ by  the distributional equality
$ 
\{W_{\dir+}(\abullet,T;0,T)\}_{\dir \in \R} \deq \{G_\dir\}_{\dir \in \R}
$ 
from Theorem~\ref{thm:Buse_dist_intro}\ref{itm:SH_Buse_process} and the properties of $G$ from Theorem~\ref{thm:SH10}\ref{itm:SH_j},\ref{itm:bad_dir_contained}.

Now, we prove that~\eqref{eqn:dirtoinf} holds with probability one. By the monotonicity of~\eqref{eqn:mont_maxes}, the limits $\lim_{\xi \to \infty} g_{(x,s)}^{\xi \sig,S}(t)$ and $\lim_{\xi \to -\infty} g_{(x,s)}^{\xi \sig,S}(t)$ exist in $\R \cup \{-\infty,\infty\}$. Furthermore, by this monotonicity, it is sufficient to show that 
\be \label{678}
\lim_{\xi \to \infty} g_{(x,s)}^{\xi -,L}(t) = \sup_{\dir \in \R}g_{(x,s)}^{\xi -,L}(t) = \infty\ \ \text{ and }\ \  \lim_{\dir \to -\infty} g_{(x,s)}^{\dir +,R}(t) = \inf_{\dir \in \R} g_{(x,s)}^{\dir +,R}(t) = -\infty.
\ee
First, we show that~\eqref{678} holds with probability one for a fixed initial point $(x,s)$ and fixed $t > s$. It is therefore sufficient to take $(x,s) = (0,0)$ and then $t > 0$. By the monotonicity, it suffices to take limits over $\dir \in \Q$ so that by Theorem~\ref{thm:RV-SIG-thm}\ref{itm:pd_fixed}, the $\pm$ and $L/R$ distinctions are unnecessary.   $\W_{\xi \sig}(z,t;0,t)$ is a two-sided Brownian motion with drift $2\xi$ and diffusivity $\sqrt 2$, independent of the random function $(x,y) \mapsto \Ll(x,0;y,t)$ (Theorem~\ref{thm:Buse_dist_intro}\ref{itm:indep_of_landscape}). Let $B$ be a standard Brownian motion, independent of $\Ll$. Using skew stationarity with $c = -\dir$ in the third equality below  and time stationarity in the fifth equality (Lemma~\ref{lm:landscape_symm}), we obtain, for $\dir \in \Q$,
\begin{align*}
   g_{(x,s)}^\dir(t) &= \argmax_{z \in \R} \{\Ll(x,s;z,t)  + \W_{\xi}(z,t;0,t) \} \\
   &\deq \argmax_{z \in \R} \{\Ll(x,s;z,t) + \sqrt 2 B(z) + 2\xi z\} \\
   &\deq \argmax_{z \in \R} \{\Ll(x - \dir s,s;z - \dir t,t) + 2\dir(x -z) + (t - s)\dir^2  + \sqrt 2 B(z) +2\dir z\}\\
   &= \argmax_{z \in \R} \{\Ll(x - \dir s,s;z - \dir t,t) + \sqrt 2(B(z) - B(\dir (t-s))) \} \\
    &\deq  \argmax_{z \in \R} \{\Ll(x,s;z - \dir (t-s),t) + \sqrt 2 B(z - \dir (t - s)) \} \\
    &= \argmax_{z \in \R}\{\Ll(x,s;z,t) + \sqrt 2 B(z) \} + \dir (t-s)  \deq  g_{(x,s)}^0(t) + \dir(t-s).
\end{align*}
Therefore, $\forall\dir \in \Q$, the distribution of $g_{(x,s)}^{\dir}(t)$ is that of a fixed, almost surely finite, random variable plus $\dir (t-s)$. Since we know  $\lim_{\Q \ni \dir \to \pm \infty} g_{(x,s)}^{\dir}(t)$ exists, the limit must be $\pm \infty$ a.s.

Now, consider the intersection of $\Omega_1$ with event of probability one on which for each triple $(w,q_1,q_2) \in \Q^3$ with $q_1 < q_2$, 
\be \label{692}
\lim_{\dir \to +\infty} g_{(w,q_1)}^{\dir-,L}(q_2) = + \infty\qquad\text{and}\qquad \lim_{\dir \to -\infty} g_{(w,q_1)}^{\dir+,R}(q_2) = - \infty.
\ee
On this event, let $(x,s,t) \in \R^3$ with $s < t$ be arbitrary. Assume, by way of contradiction, that 
\be \label{792}
z := \sup_{\dir \in \R} g_{(x,s)}^{\dir-,L}(t) < \infty,
\ee
and let $g:[s,t]$ denote the leftmost geodesic from $(x,s)$ to $(z,t)$. For this proof, refer to Figure~\ref{fig:fanning_proof} for clarity. By the assumption~\eqref{792} and the fact that $g_{(x,s)}^{\dir-,L}$ is the leftmost geodesic between any two of its points (Theorem~\ref{thm:DL_SIG_cons_intro}\ref{itm:DL_LRmost_geod}),  $g_{(x,s)}^{\dir-,L}(t) \le g(t)$ for all $\dir \in \R$ and $t > s$.
Let $q_1 \in (s,t)$ be rational. Choose  $w \in \Q$ such that $w < g(q_1)$. By continuity of geodesics, we may choose $q_2 \in (q_1,t) \cap \Q$ to be sufficiently close to $t$ so that $|g(q_2) - z| < 1$. Next, by~\eqref{692}, we may choose positive $\dir$ sufficiently large so that 
\be \label{892}
g_{(w,q_1)}^{\dir-,L}(q_2) > z + 1 > g(q_2) \ge g_{(x,s)}^{\dir -,L}(q_2).
\ee
Since $w < g(q_1)$,  $g_{(w,q_1)}^{\dir-,L}$ and $g_{(x,s)}^{\dir -,L}$ cross at some  $(\hat z,\hat t)$ with $\hat t \in (q_1,q_2)$.  By Theorem~\ref{thm:DL_SIG_cons_intro}\ref{itm:DL_all_SIG}, both $g_{(w,q_1)}^{\dir-,L}(q_2)$ and $g_{(x,s)}^{\dir -,L}(q_2)$ are the leftmost maximizer of $\Ll(\hat z,\hat t; y,q_2) + \W_{\dir -}(y,q_2;0,q_2)$ over $y \in \R$. This contradicts~\eqref{892}. The proof for   $\dir \to -\infty$ is analogous.
\end{proof}
\begin{figure}[t]
    \centering
    \includegraphics[height = 2 in]{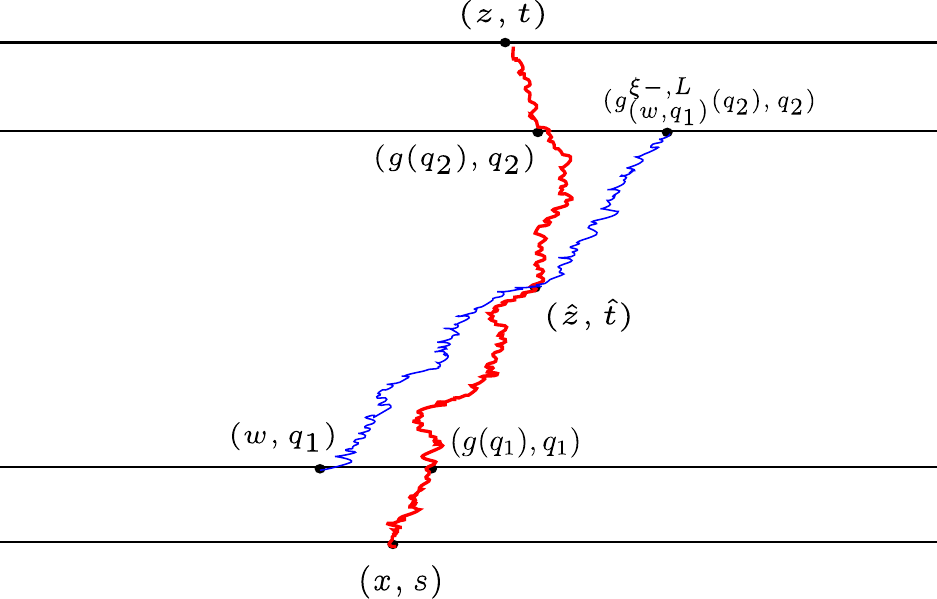}
    \caption{\small The blue/thin path represents $g_{(w,q_1)}^{\dir -,L}$ and the red/thick path represents $g$.}
    \label{fig:fanning_proof}
\end{figure}

\begin{proof}[Proof of Theorem~\ref{thm:DL_Buse_summ}\ref{itm:DL_unif_Buse_stick} (Regularity of the Busemann process)]

 By definition of the event $\Omega_2$~\eqref{omega2}, for each $\dir \in \R$, each integer $T$ and compact set $K\subseteq \R^2$, there is a $\ve > 0$ so that~\eqref{465} holds for all $(x,y) \in K$.

Now, let $\dir \in \R$, let $K$ be a compact subset of $\R^4$, and let $T$ be an integer greater than $\sup\{t \vee s: (x,s;y,t) \in K\}$. Let 
\begin{align*}
A &:= \inf\{g_{(x,s)}^{(\dir - 1)-,L}(T)\wedge g_{(y,t)}^{(\dir - 1)-,L}(T) : (x,s;y,t) \in K\}, 
\qquad\text{and} \\
B &:= \sup\{g_{(x,s)}^{(\dir + 1)+,R}(T)\vee g_{(y,t)}^{(\dir + 1)+,R}(T) : (x,s;y,t) \in K\}.
\end{align*}
By~\eqref{eqn:mont_maxes} and Lemma~\ref{lem:bounded_maxes}, $-\infty < A < B < \infty$.
By~\eqref{eqn:mont_maxes} and the additivity of Theorem~\ref{thm:DL_Buse_summ}\ref{itm:DL_Buse_add}, for all $(x,s;y,t) \in K$ and $\alpha \in (\dir - 1,\dir + 1)$, 
\be \label{Tdiff}
\begin{aligned}
&\W_{\alpha \sig}(x,s;y,t) = \W_{\alpha \sig}(x,s;0,T) - \W_{\alpha \sig}(y,t;0,T) \\
&
= \sup_{z \in \R}\{\Ll(x,s;z,T) + \W_{\alpha\sig}(z,T;0,T)\} - \sup_{z \in \R}\{\Ll(y,t;z,T) + \W_{\alpha \sig}(z,T;0,T)\} \\
&= \sup_{z \in [A,B]}\{\Ll(x,s;z,T) + \W_{\alpha \sig}(z,T;0,T)\} - \sup_{z \in [A,B]}\{\Ll(y,t;z,T) + \W_{\alpha \sig}(z,T;0,T)\}.
\end{aligned}
\ee
By~\eqref{465}, the conclusion follows. 
\end{proof}

\begin{proof}[Proof of Theorem~\ref{thm:DLBusedc_description} (Description of the discontinuity set)]
The full-probability event of this theorem is $\Omega_2$, except for Item~\ref{itm:Busedc_t} whose proof is postponed until Section~\ref{sec:last_proofs}. Proofs of  results that rely on Item~\ref{itm:Busedc_t} come afterwards.

\smallskip\noindent \textbf{Item~\ref{itm:Busedc_horiz_mont} (Monotonicity):}
 By the monotonicity of Theorem~\ref{thm:DL_Buse_summ}\ref{itm:DL_Buse_mont}, and by Lemma~\ref{lem:ext_mont}, for $a \le x \le y \le b$,
 \be \label{801}
 0 \le \W_{\dir +}(y,t;x,t) - \W_{\dir -}(y,t;x,t) \le \W_{\dir +}(b,t;a,t) - \W_{\dir -}(b,t;a,t).
 \ee
Thus, discontinuities of $\dir \mapsto \W_{\dir \sig}(y,t;x,t)$ are also discontinuities for $\dir \mapsto \W_{\dir \sig}(b,t;a,t)$. 

\smallskip\noindent \textbf{Item~\ref{itm:DL_dc_set_count} ($\DLBusedc$ is a countable dense set):}
Similarly as in~\eqref{Tdiff}, if $(x,s;y,t) \in \R^4$, then for $\dir \in \R$, $\sigg \in \{-,+\}$, and any integer $T > s\vee t$,
\be \label{880}\begin{aligned} 
\W_{\dir \sig}(x,s;y,t) &= \sup_{z \in \R}\{\Ll(x,s;z,T) + \W_{\dir \sig}(z,T;0,T)\}\\
&\qquad - \sup_{z \in \R} \{\Ll(y,t;z,T) + \W_{\dir \sig}(z,T;0,T)\}.
\end{aligned} \ee
So if $\W_{\dir -}(z,T;0,T) = \W_{\dir+}(z,T;0,T) \; \forall z \in \R$, then  $\W_{\dir -}(x,s;y,t) = \W_{\dir +}(x,s;y,t)$, and 
\be \label{881}
\DLBusedc = \bigcup_{T \in \Z} \{\dir \in \R: \W_{\dir - }(x,T;0,T) \neq \W_{\dir +}(x,T;0,T) \text{ for some }x \in \R\}. 
\ee
On $\Omega_2$,  $\DLBusedc$   is countably infinite and dense  by  \eqref{omega2}.  Lemma~\ref{lem:DL_horiz_Buse}\ref{itm:DL_agree_horiz}, along with~\eqref{881} imply that $\DLBusedc$ contains no rational directions $\dir$. For an arbitrary $\dir \in \R$, $\W_{\dir -}(\abullet,T;0,T)$ and $\W_{\dir +}(\abullet,T;0,T)$ are both Brownian motions with the same diffusivity and drift, and $\W_{\dir -}(y,T;x,T) \le \W_{\dir +}(y,T;x,T)$ for $x < y$ by Theorem~\ref{thm:DL_Buse_summ}\ref{itm:DL_Buse_gen_mont}. By~\eqref{881} and continuity,
\[
\Pp(\dir \in \DLBusedc) \le \sum_{T \in \Z, x \in \Q} \Pp(\W_{\dir - }(x,T;0,T) \neq \W_{\dir +}(x,T;0,T)) = 0,
\]
where $\Pp(\W_{\dir - }(x,T;0,T) \neq \W_{\dir +}(x,T;0,T)) = 0$ because the two random variables have the same law and are ordered.

\noindent \textbf{Item~\ref{itm:DL_Buse_no_limit_pts} ($\DLBusedc(p;q)$ is discrete):} The discreteness is a direct consequence of the regularity of 
 the Busemann process from Theorem~\ref{thm:DL_Buse_summ}\ref{itm:DL_unif_Buse_stick}.  The discreteness is a direct consequence of the regularity of 
 the Busemann process from Theorem~\ref{thm:DL_Buse_summ}\ref{itm:DL_unif_Buse_stick}. By Theorem \ref{thm:SH10}\ref{itm:SH_mont}, on a $t$-dependent full probability event, for each $x < y$, $\W_{\dir \sig}(y,t;x,t) \to \pm \infty$ as $\dir \to \pm \infty$.  Since the jumps are discrete, $\DLBusedc(y,t;x,t)$ is infinite and unbounded for both positive and negative $\dir$.

 \noindent \textbf{Item~\ref{itm:DLBusedcinvar} (Distributional invariances of $\DLBusedc$:)} The discreteness of Item~\ref{itm:DL_Buse_no_limit_pts} allows us to view the sets $\DLBusedc(y,t;x,t)$ as well-defined point processes.
 We recall that $\dir \in \DLBusedc$ if and only if $\W_{\dir -}(y,t;x,t) \neq \W_{\dir +}(y,t;x,t)$. Start with the distributional equality $\{W_{\dir +}(\abullet,t;0,t)\}_{\dir \in \R} \deq \{G_\dir\}_{\dir \in \R}$, which holds for all $t$ (Theorem~\ref{thm:Buse_dist_intro}\ref{itm:SH_Buse_process}). Furthermore, the additivity of the Busemann process (Theorem~\ref{thm:DL_Buse_summ}\ref{itm:DL_Buse_add}) implies 
\[
\{\W_{\dir +}(y,t;x,t):x,y \in \R\}_{\dir \in \R} \deq \{G_{\dir}(y) - G_{\dir}(x):x,y \in \R\}_{\dir \in \R}.
\]
This gives the first distributional equality $\DLBusedc(y,t;x,t) \deq \DLBusedc(y,0;x,0)$. The invariance $\DLBusedc(y,0;x,0) \deq -\DLBusedc(-y,0;-x,0)$ follows from the reflection invariance of $G$ (Corollary~\ref{cor:SH_reflect}). The invariance \\$\DLBusedc(y,0;x,0) \deq c^{-1} \DLBusedc(c^{-2}y,0;c^{-2}x,0) -\nu$ follows from the corresponding invariance for $G$ in Theorem~\ref{thm:SH10}\ref{itm:SH_sc}.
\end{proof}

\section{Non-uniqueness of semi-infinite geodesics} \label{sec:LR_sig} 
Theorem~\ref{thm:DL_SIG_cons_intro} established global existence of semi-infinite geodesics from each initial point and into each direction. 
We know from Theorem 3.3 of~\cite{Rahman-Virag-21}, recorded earlier in  Theorem~\ref{thm:RV-SIG-thm}\ref{itm:pd_fixed},  that for a fixed initial point and a fixed direction, there almost surely is a unique semi-infinite geodesic.  However, this uniqueness does not extend globally to all initial points and directions simultaneously.
 In fact, two qualitatively different types of non-uniqueness of    Busemann  geodesics from a given point into a given direction arise.  One is denoted by the $L/R$ distinction and the other by the  $\pm$ distinction. 
 All semi-infinite geodesics from $p$ in direction $\dir$ lie between the leftmost Busemann geodesic $g_{p}^{\dir -,L}$ and the rightmost Busemann geodesic   $g_p^{\dir +,R}$.  See Theorem~\ref{thm:all_SIG_thm_intro}\ref{itm:DL_LRmost_SIG}. We refer the reader back to Figure~\ref{fig:non_unique_comp} for the two types of non-uniqueness. The $L/R$ uniqueness is depicted on the left, where geodesics split and return to coalesce, while the $\pm$ non-uniqueness is depicted on the right in the figure, where geodesics split and stay apart, all the way to $\infty$.

The $L/R$ non-uniqueness is a feature of continuous space.  Only the $\pm$ non-uniqueness appears in the discrete corner growth model with exponential weights, while both $L/R$ and $\pm$  non-uniqueness are   present  in semi-discrete  BLPP~\cite{Seppalainen-Sorensen-21a,Seppalainen-Sorensen-21b}.  

To capture $L/R$ non-uniqueness, we  introduce  the following random sets of initial points.   For $\dir \in \R$ and $\sigg \in \{-,+\}$, let $\NU_0^{\dir \sig}$ be the set of points $p \in \R^2$ such that the $\dir \sigg$ geodesic from $p$ is not unique. Let $\NU_1^{\dir \sig}$ be the subset of $\NU_0^{\dir \sig}$ of those initial points at which   two $\dir \sig$ geodesics separate immediately. In notational terms, 
\begin{align}
\NU_0^{\dir \sig} &= \{(x,s) \in \R^2: g_{(x,s)}^{\dir \sig,L}(t) < g_{(x,s)}^{\dir \sig,R}(t) \text{ for some } t > s\}, \label{NU0}\qquad\text{and}\\
\label{NU1}
\NU_1^{\dir \sig} &= \{(x,s) \in \NU_0^{\dir \sig}: \exists\ve > 0 \text{ such that } 
g_{(x,s)}^{\dir \sig,L}(t) < g_{(x,s)}^{\dir \sig,R}(t) \  \forall t \in (s,s+\ve)\}. 
\end{align}
For $i = 0,1$, let 
\be \label{NU0_global}
\NU_i = \textstyle\bigcup_{\dir \tspb\in\tspb  \R,\,\sig \tspb\in\tspb \{-,+\}} \NU_i^{\dir \sig}.
\ee
Figure~\ref{fig:NU} illustrates   $\NU_0$ and $\NU_1$. 

\begin{figure}[t]
    \centering
    \includegraphics[height = 2in]{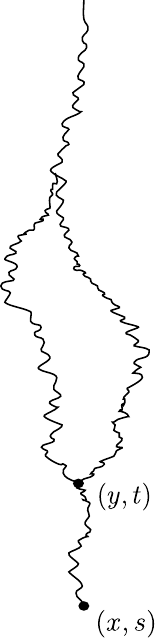}
    \caption{\small In this figure, $(x,s) \in \NU_0 \setminus \NU_1$ and $(y,t) \in \NU_1 \subseteq \NU_0$. It has since been shown by Bhatia~\cite{Bhatia-23} and Dauvergne~\cite{Dauvergne-23} that no such points $(x,s)$ exist. }
    \label{fig:NU}
\end{figure}

Theorem~\ref{thm:DLNU}\ref{itm:DL_NU_count} establishes that,   with probability one, for each $\dir \in \R$ and $\sigg \in \{-,+\}$, the restriction of $\NU_0^{\dir\sig}$ to each  time level $s$ is countably infinite. By Theorem~\ref{thm:DL_all_coal}\ref{itm:DL_allsigns_coal},  on a single event of probability one,  for each direction $\dir$ and sign $\sigg \in \{-,+\}$, all $\dir \sig$ geodesics coalesce. Therefore, from each $p \in \NU_0^{\dir \sig}$, two $\dir \sig$ geodesics separate but eventually   come back together. In particular,   the set of points $(x,s) \in \R^2$ such that $g_{(x,s)}^{\dir \sig,L}(t) < g_{(x,s)}^{\dir \sig,R}(t)$ for all $t\in(s,\infty)$ is empty and  the   $\ve > 0$ in the definition \eqref{NU1} of $\NU_1^{\dir\sig}$ is essential.

By definition  $\NU_1^{\dir \sig} \subseteq \NU_0^{\dir \sig}$. When this paper was first posted, we did not know whether $\NU_1^{\dir \sig}$ is a strict subset of  $\NU_0^{\dir \sig}$. Afterward, Bhatia~\cite{Bhatia-23} and Dauvergne \cite{Dauvergne-23} each independently proved that, in fact, $\NU_0^{\dir \sig} = \NU_1^{\dir \sig}$. In fact, something stronger is true: With probability one, there are no pairs of points $(x,s;y,t) \in \Rup$ and pairs of distinct geodesics $g_1,g_2$ from $(x,s)$ to $(y,t)$ satisfying, for some $\ve > 0$, $g_1(u) = g_2(u)$ for all $u \in (s,s + \ve)\cup (t - \ve,t)$ (\cite[Theorem 1]{Bhatia-23}, \cite[Lemma 3.3]{Dauvergne-23}).
 In BLPP, the set $\NU_1$ plays a significant role as the set of points from which the leftmost and rightmost competition interfaces have different directions (Theorem 4.32(ii) in~\cite{Seppalainen-Sorensen-21b}). Presently, we do not have  an analogous  characterization  in DL. 

Since   $\NU_0^{\dir -} \cup \NU_0^{\dir +}$  captures only the $L/R$ distinction and not the $\pm$ distinction, it does  \textit{not} in general contain all  the   initial points from which  the $\dir$-directed  semi-infinite geodesic  is not unique. However, when the $\dir\pm$ distinction is absent, Theorem~\ref{thm:all_SIG_thm_intro}\ref{itm:DL_LRmost_SIG} implies that $\NU_0^{\dir}=\NU_0^{\dir\pm}$ is exactly the set of points $p \in \R^2$ such that the semi-infinite geodesic from $p$ in direction $\dir$ is not unique. This happens under two scenarios: when $\dir \notin \DLBusedc$, and when we restrict attention to the $\dir$-dependent event of full probability on which $g_{p}^{\dir -,S} = g_p^{\dir +,S}$ for all $p \in \R^2$ and $S \in \{L,R\}$.

The failure to capture the $\pm$ non-uniqueness  is also evident from the size of $\NU_0$. Whenever $\dir \in \DLBusedc$, there are at least two semi-infinite geodesics with direction $\dir$ from {\it every} initial point. But along a fixed time level $\NU_0$ is  countable, and thereby  a strict subset of $\R^2$ (Theorem~\ref{thm:DLNU}\ref{itm:DL_NU_count} below). 




Recall that $\Hh_s=\{(x,s): x \in \R\}$ is the set of  space-time points at time level $s$. Theorem~\ref{thm:DLBusedc_description}\ref{itm:DL_dc_set_count} states that on a single event of full probability, $\DLBusedc \subseteq \R \setminus \Q$, so for $\dir \in \Q$,  we can drop the $\pm$ distinction and write $\NU_i^\dir =\NU_i^{\dir -} = \NU_i^{\dir +}$.
\begin{theorem} \label{thm:DLNU}
On a single event of probability one, for $i = 0,1$, the set $\NU_i$ satisfies  
    \be \label{109}
    \NU_i = \textstyle\bigcup_{\dir \in \Q}\NU_i^{\dir}.
    \ee In particular, the following hold.
\begin{enumerate} [label=\rm(\roman{*}), ref=\rm(\roman{*})]  \itemsep=3pt
    \item \label{itm:DL_NU_p0} 
     For each $p \in \R^2$, $\Pp(p \in \NU_0) = 0$, and the full-probability event of the theorem can be chosen so that $\NU_0$ contains no points of $\Q^2$.
    \item \label{itm:DL_NU_count} On a single event of full probability, simultaneously for every $s \in \R$, $\dir \in \R$ and $\sigg \in \{-,+\}$, the set $\NU_0^{\dir \sig} \cap\, \Hh_s$ is countably infinite and unbounded in both directions. Specifically, for each $s \in \R$, there exist  sequences $x_n \to -\infty$ and $y_n \to +\infty$ such that $(x_n,s),(y_n,s) \in \NU_0^{\dir \sig}$. 
    By~\eqref{109}, $\NU_0 \cap\, \Hh_s$ is also countably infinite.
\end{enumerate}
\end{theorem}
\begin{remark}
The set $\Q$ can be replaced by any countable dense subset of $\R$, by adjusting the full-probability event. In all applications in this paper, we use the set $\Q$. 
\end{remark}

The next theorem states properties of Busemann geodesics that involve the $L/R$ and $\pm$ distinctions.
\begin{theorem} \label{thm:g_basic_prop}
The following hold on a single event of full probability. 
\begin{enumerate} [label=\rm(\roman{*}), ref=\rm(\roman{*})]  \itemsep=3pt
    \item \label{itm:DL_mont_dir} For $s < t$, $x \in \R$, $\dir_1 < \dir_2$, and $S \in \{L,R\}$,
    \[
    g_{(x,s)}^{\dir_1 -,S}(t) \le g_{(x,s)}^{\dir_1 +,S}(t) \le g_{(x,s)}^{\dir_2 -,S}(t) \le g_{(x,s)}^{\dir_2 +,S}(t).  
    \]
     \item \label{itm:DL_SIG_unif} Let $\dir \in \R$, let $K \subseteq \R$ be a compact set, and let $T > \max K$. Then, there exists a random $\ve = \ve(\dir,T,K)>0$ such that, whenever $\dir - \ve < \alpha < \dir < \beta < \dir + \ve$, $\sigg \in \{-,+\}$, $S \in \{L,R\}$, and $x,s \in K$,
    \[
    g_{(x,s)}^{\alpha \sig,S}(t) = g_{(x,s)}^{\dir -,S}(t)\qquad\text{and}\qquad g_{(x,s)}^{\beta \sig,S}(t) = g_{(x,s)}^{\dir+,S}(t)\qquad\text{for all }t \in [s,T].
    \]
    \item \label{itm:limits_to_inf} For each $(x,s) \in \R^2$, $t > s$, $\sigg \in \{-,+\}$, and $S \in \{L,R\}$,
    $
    \lim_{\xi \to \pm \infty} g_{(x,s)}^{\xi \sig,S}(t) = \pm \infty.
    $
    \item \label{itm:DL_SIG_mont_x} For all $\dir \in \R$, $\sigg \in \{-,+\}$, $s < t$ and $x < y$, $g_{(x,s)}^{\dir \sig,R}(t) \le g_{(y,s)}^{\dir \sig,L}(t)$. More generally, if $x < y$, $s \in \R$, and  $g_1$ is a $\dir \sig$ geodesic from $(x,s)$ and $g_2$ is a $\dir \sig$ geodesic from $(y,s)$ such that $g_1(t) = g_2(t)$ for some $t > s$, then $g_1(u) = g_2(u)$ for all $u > t$. In other words, if   $g_1$ and $g_2$ intersect, they coalesce at their first point of intersection.
    \item \label{itm:DL_SIG_conv_x} For all $\dir \in \R$, $\sigg \in \{-,+\}$, $S \in \{L,R\}$, $x \in \R$, and $s < t$,
    \be \label{371}
    \lim_{w \nearrow x} g_{(w,s)}^{\dir \sig,S}(t) = g_{(x,s)}^{\dir \sig,L}(t),\qquad\text{and}\qquad \lim_{y \searrow x} g_{(y,s)}^{\dir \sig,S}(t) = g_{(x,s)}^{\dir \sig,R}(t),
    \ee
    and if \;  $g_{(x,s)}^{\dir \sig,L}(t) = g_{(x,s)}^{\dir \sig,R}(t) =: g_{(x,s)}^{\dir \sig}(t)$, then for $S \in \{L,R\}$,
    \be \label{372}
    \lim_{(w,u) \rightarrow (x,s)} g_{(w,u)}^{\dir \sig,S}(t) = g_{(x,s)}^{\dir \sig}(t).
    \ee
    Furthermore,
    \be \label{373}
    \lim_{x \to \pm \infty} g_{(x,s)}^{\xi \sig,S}(t) = \pm \infty.
    \ee
\end{enumerate}
\end{theorem}
\begin{remark} \label{rmk:mixing_LR_pm}
 In general, Theorem~\ref{thm:g_basic_prop}\ref{itm:DL_mont_dir} cannot be extended to mix $L$ with $R$. Pick a point $(x,s) \in \NU_0$, where $\NU_0$ is defined as in~\eqref{NU0_global}. Then, on the full-probability event of Theorem~\ref{thm:DLNU}, there exists a rational direction $\dir$ and $t > s$ such that 
 \[
 g_{(x,s)}^{\dir -,L}(t) = g_{(x,s)}^{\dir +,L}(t) < g_{(x,s)}^{\dir -,R}(t) = g_{(x,s)}^{\dir +,R}(t).
 \]
 By Theorem~\ref{thm:g_basic_prop}\ref{itm:DL_SIG_unif}, we may choose $\dir_1 < \dir < \dir_2$ sufficiently close to $\dir$ such that 
\[
g_{(x,s)}^{\dir_2 -,L}(t) = g_{(x,s)}^{\dir_2+,L}(t) = g_{(x,s)}^{\dir -,L}(t) < g_{(x,s)}^{\dir +,R}(t) = g_{(x,s)}^{\dir_1 -,R}(t) = g_{(x,s)}^{\dir_1+,R}(t).
\]

Item~\ref{itm:DL_SIG_mont_x} is an extension of Item 2 of Theorem 3.4 in~\cite{Rahman-Virag-21} to all directions and all pairs of initial points on the same horizontal level.
It is not true that for all $\dir \in \R$, $s < t$, and $x < y$, $g_{(x,s)}^{\xi +,R}(t) \le g_{(y,s)}^{\xi -,L}(t)$. This is discussed further in Remark~\ref{rmk:split_from_all_p} below.
\end{remark}

The  next theorem    controls   all  semi-infinite geodesics with Busemann geodesics.
    \begin{theorem} \label{thm:all_SIG_thm_intro}
    The following hold on a single event of probability one.  Let \\ $(x_r,t_r)_{r \in \R_{\ge 0}}$ be any net such that $t_r \to \infty$ and $x_r/t_r \to \dir$. 
    \begin{enumerate} [label=\rm(\roman{*}), ref=\rm(\roman{*})]  \itemsep=3pt
    \item  \label{itm:DL_LRmost_SIG} 
    Let $(x,s) \in \R^2$ and $\dir \in \R$. For each $r$ large enough so that $t_r > s$, let $g_r:[s,t_r] \to \R$ be a geodesic from $(x,s)$ to $(x_r,t_r)$.  Then, for each $t \ge s$, 
    \be \label{987}
    g_{(x,s)}^{\dir -,L}(t) \le \liminf_{r \to \infty} g_r(t) \le \limsup_{r \to \infty} g_r(t) \le g_{(x,s)}^{\dir +,R}(t).
    \ee
    In particular, $g_{(x,s)}^{\dir-,L}$ is the leftmost and $g_{(x,s)}^{\dir+,R}$ the rightmost among \textbf{all} semi-infinite geodesics from $(x,s)$ in direction $\dir$. 
   
    
    \item \label{itm:finite_geod_stick} Let $K \subseteq \R^2$ be compact.  Suppose that there is a level $t$ after which all semi-infinite geodesics from $(x,s) \in K$ in direction $\dir$ have coalesced. For $u \ge t$, let $g(u)$ be  this geodesic. Then, given $T > t$, there exists $R \in \R_{>0}$ such that for $r \ge R$ and all $(x,s) \in K$, if $g_r:[s,t_r]\to\R$ is a geodesic from $(x,s)$ to $(x_r,t_r)$, then
    \[
    g_r(u) = g(u)  \qquad\text{for all }u \in [t,T].
    \]
    In particular, suppose there is a unique semi-infinite geodesic from $(x,s)$ in direction $\dir$, denoted by  $g_{(x,s)}^\dir$. Then given  $T > s$, for sufficiently large $r$, we have 
    \[
   g_r(u) = g_{(x,s)}^\dir(u)   \qquad\text{for all }u \in [s,T].
    \]
\end{enumerate}
\end{theorem}
\begin{remark}
Theorem~\ref{thm:DL_all_coal}\ref{itm:DL_allsigns_coal} below states that the assumed coalescence in Item~\ref{itm:finite_geod_stick} occurs whenever $\dir \notin \DLBusedc$. The second statement of Item~\ref{itm:finite_geod_stick} is in Corollary 3.1 in~\cite{Rahman-Virag-21}. We provide a different proof that uses the regularity of the Busemann process.  
\end{remark}

\subsection{Proofs}
 In this section, we prove Theorems~\ref{thm:DLNU}, \ref{thm:g_basic_prop}, and~\ref{thm:all_SIG_thm_intro}. In each of these, the full-probability event is $\Omega_2$~\eqref{omega2}.
We start by proving parts of Theorem~\ref{thm:g_basic_prop}, then go to the proof of Theorem~\ref{thm:DLNU}.

\begin{proof}[Proof of Theorem~\ref{thm:g_basic_prop}, Items~\ref{itm:DL_mont_dir}--\ref{itm:limits_to_inf}]

\smallskip\noindent 
\textbf{Item~\ref{itm:DL_mont_dir} (monotonicity of geodesics in the direction parameter)} was already proven as Equation \eqref{eqn:mont_maxes}. In fact, this item holds on $\Omega_1$.

\smallskip\noindent \textbf{Item~\ref{itm:DL_SIG_unif} (geodesics agree locally for close directions): } This follows a similar proof as the proof of Theorem~\ref{thm:DL_Buse_summ}\ref{itm:DL_unif_Buse_stick}. Let $K$ be a compact subset of $\R$, and let $T$ be an integer greater than $\max K$. Set 
\[
A = \inf\{g_{(x,s)}^{(\dir - 1)-,L}(T):x,s \in K\},\qquad \text{and}\qquad B = \sup\{g_{(x,s)}^{(\dir + 1)+,R}(T):x,s \in K\}.
\]
By Lemma~\ref{lem:bounded_maxes} and Item~\ref{itm:DL_mont_dir}, $-\infty < A < B < \infty$. Then, for all $0 < \ve < 1$ sufficiently small, all $\dir- \ve < \alpha < \dir$, and all $x,s \in K$, the functions $z \mapsto \Ll(x,s;z,T) + \W_{\alpha \sig}(z,T;0,T)$ and  $z \mapsto \Ll(x,s;z,t) + \W_{\dir-}(z,T;0,T)$ agree on the set $[A,B]$, which contains all maximizers. Hence, for such $\alpha$ and $\sigg \in \{-,+\}$, and $S \in \{L,R\}$, $g_{(x,s)}^{\alpha \sig ,S}(T) = g_{(x,s)}^{\dir -,S}(T)$. 
Since $g_{(x,s)}^{\alpha \sig,L}:[s,\infty) \to \R$ and $g_{(x,s)}^{\alpha \sig,R}:[s,\infty) \to \R$ define semi-infinite geodesics that are, respectively, the leftmost and rightmost geodesics between any of their points (Theorem~\ref{thm:DL_SIG_cons_intro}\ref{itm:DL_all_SIG}-\ref{itm:DL_LRmost_geod}), it must also hold that for $S \in \{L,R\}$ and $t \in [t,T]$,
$g_{(x,s)}^{\alpha \sig,S}(t) = g_{(x,s)}^{\dir -,S}(t)
$.
Otherwise, taking $S = L$ without loss of generality, there would exist two distinct leftmost geodesics from $(x,s)$ to $(g_{(x,s)}^{\dir -,L}(T),T)$, a contradiction. The proof for the $\dir +$ geodesics where $\beta$ is sufficiently close to $\dir$ from the right is analogous. 

\smallskip\noindent \textbf{Item~\ref{itm:limits_to_inf} (limit of geodesics as direction goes to $\pm \infty)$:} This holds on $\Omega_2$ by definition~\eqref{omega2}. 

\smallskip\noindent We postpone the proof of Items~\ref{itm:DL_SIG_mont_x} and~\ref{itm:DL_SIG_conv_x} until after the following proof.
\end{proof}

\begin{proof}[Proof of Theorem~\ref{thm:DLNU} (Description of the sets $\NU_i$)]

By Theorem~\ref{thm:DLBusedc_description}\ref{itm:DL_NU_count}, on the event $\Omega_2$, $\alpha \notin \DLBusedc$ for all $\alpha \in \Q$, so we omit the $\pm$ distinction in this case. 
We first prove~\eqref{109}. 
If $(x,s) \in \NU_0^{\dir \sig}$ then 
$g_{(x,s)}^{\dir \sig,L}(t) < g_{(x,s)}^{\dir \sig,R}(t)$ for some $t > s$. By  Theorem~\ref{thm:g_basic_prop}\ref{itm:DL_SIG_unif}, there exists a rational direction $\alpha$ (greater than $\dir$ if $\sigg = +$ and less than $\dir$ if $\sigg = -$) such that
\[
g_{(x,s)}^{\alpha,L}(t) = g_{(x,s)}^{\dir \sig,L}(t) < g_{(x,s)}^{\dir \sig,R}(t) =g_{(x,s)}^{\alpha,R}(t).
\]
Hence, $(x,s) \in \NU_0^\alpha$. An analogous proof shows that $\NU_1 = \bigcup_{\dir \in \Q} \NU_1^\dir$.

\smallskip\noindent \textbf{Item~\ref{itm:DL_NU_p0}:} By
Theorem~\ref{thm:RV-SIG-thm}\ref{itm:pd_fixed}, for fixed direction $\dir$ and fixed initial point $p$, there is a unique semi-infinite geodesic from $p$ in direction $\dir$, implying $(x,s) \notin \NU_0^\dir$. The result now follows directly from~\eqref{109} and a union bound. In particular, by definition of the event $\Omega_1\supset \Omega_2$~\eqref{omega1}, for each $(q,r) \in \Q^2$ and $\dir \in \Q$, $(q,r) \notin \NU_0^\dir$. Then, by~\eqref{109}, on the event $\Omega_2$, \\$\NU_0 \subseteq \R^2 \setminus \Q^2$.

\smallskip\noindent 
We postpone the proof of Item~\ref{itm:DL_NU_count} until the end of this subsection.
\end{proof}


\begin{proof}[Remaining proofs of Theorem~\ref{thm:g_basic_prop}]

\smallskip\noindent 
 \textbf{Item~\ref{itm:DL_SIG_mont_x} (Spatial monotonicity of geodesics):} We first prove a weaker result. Namely, for $s \in \R$, $x < y$, $\dir \in \R$, $\sigg \in \{-,+\}$, and $S \in \{L,R\}$, 
\be \label{110}
g_{(x,s)}^{\dir \sig,S}(t) \le g_{(y,s)}^{\dir \sig,S}(t)\qquad\text{for all }t \ge s.
\ee
By continuity of geodesics, it suffices to assume that $z := g_{(x,s)}^{\dir \sig,L}(t) = g_{(y,s)}^{\dir \sig,L}(t)$ for some $t > s$, and then show that $g_{(x,s)}^{\dir \sig,L}(u) = g_{(y,s)}^{\dir \sig,L}(u)$ for all $u >  t$. 
By Theorem~\ref{thm:DL_SIG_cons_intro}\ref{itm:DL_all_SIG}, if $z := g_{(x,s)}^{\dir \sig,S}(t) = g_{(y,s)}^{\dir \sig,S}(t)$, then for $u > t$, both $g_{(x,s)}^{\dir \sig,L}(u)$ and $g_{(y,s)}^{\dir \sig,L}(u)$ are the leftmost maximizer of $\Ll(z,t;w,u) + \W_{\dir \sig}(w,u;0,u)$ over $w \in \R$, so they are equal.

Now, to prove the stated result, we follow a similar argument as Item 2 of Theorem 3.4 in~\cite{Rahman-Virag-21}, adapted to give a global result across all direction, signs, and pairs of points along the same horizontal line. Let $g_1$ be a $\dir \sig$ geodesic from $(x,s)$ and let $g_2$ be a $\dir \sig$ geodesic from $(y,s)$, and assume that $g_1(t) = g_2(t)$ for some $t > s$. By continuity of geodesics, we may take $t$ to be the minimal such time. Choose $r \in (s,t)\cap \Q$ and then choose $q \in (g_1(r),g_2(r)) \cap \Q$. See Figure~\ref{fig:choose_rational}. By Theorem~\ref{thm:DLNU}\ref{itm:DL_NU_p0}, on the event $\Omega_2$, there is a unique $\dir \sig$ Busemann geodesic from $(q,r)$, which we shall call $g = g_{(q,r)}^{\dir \sig,L} = g_{(q,r)}^{\dir \sig,R}$.
For $u \ge r$, 
\be \label{210}
 g_1(u)\le g_{(x,s)}^{\dir \sig,R}(u) \le g(u) \le  g_{(y,s)}^{\dir \sig,L}(u) \le g_2(u). 
\ee
The two middle inequalities come from~\eqref{110}.  The two outer inequalities come from the definition of $g_{(x,s)}^{\dir \sig,L/R}(u)$ as the left and rightmost maximizers.  

By assumption and~\eqref{210}, $z := g_1(t) = g(t) = g_2(t)$. By Theorem~\ref{thm:DL_SIG_cons_intro}\ref{itm:arb_geod_cons}\ref{itm:maxes}, for $u > t$, $g_1(u),g_2(u)$, and $g(u)$ are all maximizers of $\Ll(z,t;w,u) + \W_{\dir \sig}(w,u;0,u)$ over $w \in \R$. However, since there is a unique $\dir \sig$ geodesic from $(q,r)$, there can be only one such maximizer, so the inequalities in~\eqref{210} are equalities for $u \ge t$.  
\begin{figure}[t]
    \centering
    \includegraphics[height = 1.5in]{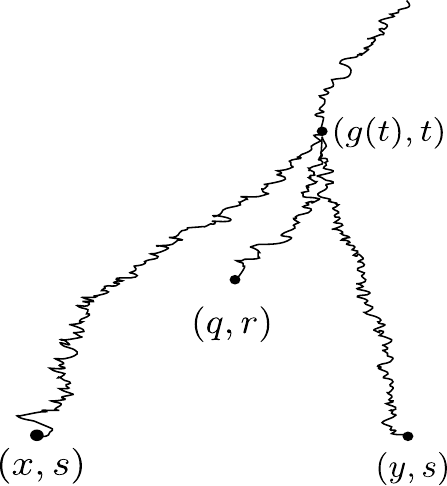}
    \caption{\small Choosing a point $(q,r) \in \Q^2$ whose $\dir \tiny{\boxempty}$ geodesic is unique}
    \label{fig:choose_rational}
\end{figure}

\smallskip\noindent \textbf{Item~\ref{itm:DL_SIG_conv_x} (limits of geodesics in the spatial parameter):} We start by proving~\eqref{371}. We prove the statement for the limits as $w \nearrow x$, and the limits as $w \searrow x$ follow analogously. By Item~\ref{itm:DL_SIG_mont_x}, $z := \lim_{w \nearrow x} g_{(w,s)}^{\dir \sig,S}(t)$ exists and is less than or equal to $g_{(x,s)}^{\dir \sig,L}(t)$. Further, by the same monotonicity, for all $w \in [x - 1,x]$, all maximizers of $\Ll(w,s;y,t) + \W_{\dir \sig}(y,t;0,t)$ over $y \in \R$ lie in the common compact set $[g_{(x - 1,s)}^{\dir \sig,L}(t),g_{(x,s)}^{\dir \sig,R}(t)]$. By continuity of the directed landscape (Lemma~\ref{lem:Landscape_global_bound}), as $w \nearrow x$, the function $y \mapsto \Ll(w,s;y,t) + \W_{\dir \sig}(y,t;0,t)$ converges uniformly on compact sets to the function $y \mapsto \Ll(x,s;y,t) + \W_{\dir \sig}(y,t;0,t)$. Hence, Lemma~\ref{lemma:convergence of maximizers from converging functions} implies that $z$ is a maximizer of $\Ll(x,s;y,t) + \W_{\dir \sig}(y,t;0,t)$ over $y \in \R$. Since $z \le g_{(x,s)}^{\dir \sig,L}(t)$, and $g_{(x,s)}^{\dir \sig,L}(t)$ is the leftmost such maximizer, equality holds. 

The proof of~\eqref{372} is similar: in this case, Lemma~\ref{lem:bounded_maxes} implies that for all $(w,u)$ sufficiently close to $(x,s)$, the maximizers of $y \mapsto\Ll(w,u;y,t) + \W_{\dir \sig}(y,t;0,t)$ lie in a common compact set. Then, by Lemma~\ref{lemma:convergence of maximizers from converging functions}, every subsequential limit of $g_{(w,u)}^{\dir \sig,S}(t)$ as $(w,u) \to (x,s)$ is a maximizer of $y \mapsto\Ll(x,s;y,t) + \W_{\dir \sig}(y,t;0,t)$. By assumption, there is only one such maximizer, so the desired convergence holds.  

Lastly, to show~\eqref{373}, we recall that the Busemann process evolves as the KPZ fixed point (Theorem~\ref{thm:DL_Buse_summ}\ref{itm:Buse_KPZ_description}).  The Busemann functions are continuous and satisfy the asymptotics prescribed in Lemma~\ref{lem:DL_horiz_Buse}\ref{itm:DL_lim}. Therefore, for each $t,\dir$, and $\sigg$, there exists constants $a,b > 0$ so that $|\W_{\dir \sig}(x,t;0,t)| \le a + b|x|$.  Lemma~\ref{lem:max_restrict}\ref{itm:KPZrestrict} applied to the temporally reflected version of $\Ll$ states that for sufficiently large $|x|$, $g_{(x,s)}^{\dir \sig,S}(t) \in (x - |x|^{2/3},x + |x|^{2/3})$. 
\end{proof}

\begin{proof}[Proof of Theorem~\ref{thm:all_SIG_thm_intro}]
We remind the reader that this theorem controls arbitrary geodesics via the Busemann geodesics. 
  
\smallskip\noindent \textbf{Item~\ref{itm:DL_LRmost_SIG}:} Let $\alpha < \dir < \beta$. By directedness of Busemann geodesics (Theorem~\ref{thm:DL_SIG_cons_intro}\ref{itm:DL_all_SIG}) and the assumption $x_r/r_r \to \dir$, for all sufficiently large $r$,
\[
g_{(x,s)}^{\alpha -,L}(t_r) < x_r < g_{(x,s)}^{\beta +,R}(t_r).
\]
Since $g_{(x,s)}^{\alpha -,L}$ is the leftmost geodesic between any of its points and $g_{(x,s)}^{\beta +,R}$ is the rightmost (Theorem~\ref{thm:DL_SIG_cons_intro}\ref{itm:DL_LRmost_geod}), it follows that for $u \in [s,t_r]$,
\be \label{504}
g_{(x,s)}^{\alpha -,L}(u) \le g_r(u) \le g_{(x,s)}^{\beta +,R}(u).
\ee
Hence, for all $t \ge s$,
\[
g_{(x,s)}^{\alpha -,L}(t) \le \liminf_{r \to \infty} g_r(t) \le \limsup_{r \to \infty} g_r(t) \le g_{(x,s)}^{\beta +,R}(t).
\]
By Theorem~\ref{thm:g_basic_prop}\ref{itm:DL_SIG_unif}, taking limits as $\alpha \nearrow \dir$ and $\beta \searrow \dir$ completes the proof. 

\smallskip\noindent \textbf{Item~\ref{itm:finite_geod_stick}:} Assume that all geodesics in direction $\dir$, starting from a point in the compact set $K$, have coalesced by time $t$, and for $u \ge t$, let $g(u)$ be the spatial location of this common geodesic. By Item~\ref{itm:DL_LRmost_SIG}, for all $p \in K$ and $u \ge t$,
\[
g(u) = g_p^{\dir -,L}(u) = g_p^{\dir +,R}(u).
\]
Let $T > t$ be arbitrary. By Theorem~\ref{thm:g_basic_prop}\ref{itm:DL_SIG_unif}, we may choose $\alpha < \dir < \beta$ such that, for all $p \in K$ and $u \in [t,T]$,
\be \label{301}
g_{(g(t),t)}^{\alpha-,L}(u) = g_p^{\alpha -,L}(u) = g(u) = g_p^{\beta +,R}(u) = g_{(g(t),t)}^{\beta +,R}(u).
\ee
The outer equalities hold because the geodesics pass through $(g(t),t)$. With this choice of $\alpha,\beta$, by the directedness of Theorem~\ref{thm:DL_SIG_cons_intro}\ref{itm:DL_all_SIG} and since $x_r/t_r \to \dir$, we may choose $r$ large enough so that $t_r \ge T$ and
$
g_{(g(t),t)}^{\alpha -,L}(t_r) < x_r < g_{(g(t),t)}^{\beta +,R}(t_r).
$
 Then,  as in the proof of Item~\ref{itm:DL_LRmost_SIG}, for all $u \in [t,t_r]$,
\[
g_{(g(t),t)}^{\alpha -,L}(u) \le g_r(u) \le g_{(g(t),t)}^{\beta +,R}(u).
\]
Combining this with~\eqref{301} completes the proof. 
\end{proof}

\noindent It remains to prove Theorem~\ref{thm:DLNU}\ref{itm:DL_NU_count}. We first prove a lemma.

\begin{lemma} \label{lem:NU_line}
Let $\omega \in \Omega_2$, $\dir \in \R$, $\sigg \in \{-,+\}$, $\Q \ni s < t \in \R$,  and assume that there is a nonempty interval $I = (a,b) \subseteq \R$ such that for all $x \in \Q$, $g_{(x,s)}^{\dir \sig}(t) \notin I$ {\rm(}By Theorem~\ref{thm:DLNU}\ref{itm:DL_NU_p0}, we may ignore the $L/R$ distinction when $(x,s) \in \Q^2${\rm)}. Then, there exists $\hat x \in \R$ such that 
\be \label{794}
g_{(\hat x,s)}^{\dir \sig,L}(t) \le a < b \le g_{(\hat x,s)}^{\dir \sig,R}(t). 
\ee
\end{lemma}
\begin{proof}
Choose some $y \in (a,b)$, and let 
\[
\hat x = \sup\{x \in \Q : g_{(x,s)}^{\dir \sig}(t) < y\}.
\]
By Equation~\eqref{373} of Theorem~\ref{thm:g_basic_prop}\ref{itm:DL_SIG_conv_x}, $\hat x \in \R$. By the monotonicity of Theorem~\ref{thm:g_basic_prop}\ref{itm:DL_SIG_mont_x}, for all $\Q \ni x < \hat x$, $g_{(x,s)}^{\dir \sig}(t) < y$, while for all $\Q \ni x > \hat x$, $g_{(x,s)}^{\dir \sig}(t) \ge y$. By assumption of the lemma,
this further implies that for $\Q \ni x < \hat x$, $g_{(x,s)}^{\dir \sig}(t) \le a$ while for $\Q \ni x > \hat x$, $g_{(x,s)}^{\dir \sig}(t) \ge b$.  By taking limits via Equation~\eqref{371} of Theorem~\ref{thm:g_basic_prop}\ref{itm:DL_SIG_conv_x}, we obtain~\eqref{794}.
\end{proof}

\begin{proof}[Proof of Theorem~\ref{thm:DLNU}\ref{itm:DL_NU_count} ($\NU_0^{\dir \sig} \cap \Hh_s$ is countably infinite and unbounded)] 
We prove the statement in three steps. First, we show that on $\Omega_2$, for all $s \in \Q$, $\dir \in \R$, $\sigg \in \{-,+\}$, the set $\NU_0^{\dir \sig} \cap\, \Hh_s$ is infinite and unbounded in both directions. Next, we show that, on $\Omega_2$, $\NU_0^{\dir \sig} \cap\, \Hh_s$ is in fact infinite and unbounded in both directions for all $s \in \R$. Lastly, we show that the set $\NU_0 \cap\, \Hh_s$ (the union over all directions and signs) is countable.

 For the first step, Theorem~\ref{thm:DLNU}\ref{itm:DL_NU_p0} states that, on the event $\Omega_2$, for each $(x,s) \in \Q^2$, $\dir \in \R$, and $\sigg \in \{-,+\}$, there is a unique $\dir \sig$ geodesic $g_{(x,s)}^{\dir \sig}$, and therefore this geodesic is both the leftmost and rightmost $\dir \sig$ geodesic from $(x,s)$. Since leftmost (resp. rightmost) Busemann geodesics are leftmost (rightmost) geodesics between any two of their points (Theorem~\ref{thm:DL_SIG_cons_intro}\ref{itm:DL_LRmost_geod}), it follows that  $g_{(x,s)}^{\dir \sig}$, restricted to times $t \in [s,s+2]$, is the unique geodesic from $(x,s)$ to $(g_{(x,s)}^{\dir \sig}(s + 2),s +2)$. By Lemma~\ref{lem:bounded_maxes}, for each compact set $K$, the set 
 \[
 \{g_{(x,s)}^{\dir \sig}(s + 1): x \in \Q \cap K\} 
 \]
is contained in some compact set $K'$.
Then, we have the following inclusion of sets:
\begin{align} 
&\quad \; \{g_{(x,s)}^{\dir \sig}(s + 1): x \in \Q \cap K \} \label{873}
\subseteq  \bigcup_{g \in \mathcal A_{K,K'}}\{g(s + 1) \} 
\end{align}
where 
\[
\mathcal A_{K,K'} = \{g: \text{$g$  is the unique geodesic from }(x,s) \text{ to }(y,s+2) \text{ for some } x\in K,y \in K'\}. 
\]
By Lemma~\ref{lem:geod_pp}, the set in the RHS of~\eqref{873} is finite, so the set on the LHS  is finite as well. Therefore, the set
\be \label{875}
\{g_{(x,s)}^{\dir \sig}(s + 1): x \in \Q \} = \bigcap_{k \in \Z_{>0}}\{g_{(x,s)}^{\dir \sig}(s + 1): x \in \Q \cap [-k,k] \}
\ee
is a union of finite nested sets. Further, by the ordering of geodesics from Theorem~\ref{thm:g_basic_prop}\ref{itm:DL_SIG_mont_x}, for each $k$, the difference 
\[
\{g_{(x,s)}^{\dir \sig}(s + 1): x \in \Q \cap [-(k + 1),k + 1] \} \setminus \{g_{(x,s)}^{\dir \sig}(s + 1): x \in \Q \cap [-k,k] \}
\]
lies entirely in the union of intervals
\[
\Bigl(-\infty, \inf \bigl\{g_{(x,s)}^{\dir \sig}(s + 1): x \in \Q \cap [-k,k] \bigr\}\Bigr] \cup \Bigl[\sup \bigl\{g_{(x,s)}^{\dir \sig}(s + 1): x \in \Q \cap [-k,k] \bigr\},\infty\Bigr).
\]
Therefore, the set~\eqref{875} has no limit points. 
Further, by Equation~\eqref{373} of Theorem~\ref{thm:g_basic_prop}\ref{itm:DL_SIG_conv_x}, the set~\eqref{875} is unbounded in both directions. These two facts imply that there exist infinitely many disjoint nonempty intervals whose intersection with the set~\eqref{875} is empty, and the set of endpoints of such intervals is unbounded. By Lemma~\ref{lem:NU_line}, for each $k > 0$, there exists $(x,s) \in \NU_0^{\dir \sig}$ such that $g_{(x,s)}^{\dir \sig,R}(s + 1) \ge k$, and there exists $(x,s) \in \NU_0^{\dir \sig}$ such that $g_{(x,s)}^{\dir \sig,L}(s + 1) \le -k$. Next, assume, by way of contradiction, that the set $\{x \in \R: (x,0) \in \NU_0^{\dir \sig}\}$ has an upper bound $b$. Then, by the monotonicity of Theorem~\ref{thm:g_basic_prop}\ref{itm:DL_SIG_mont_x}, for all $x \in \R$ with $(x,s) \in \NU_0^{\dir \sig}$, $g_{(x,s)}^{\dir \sig,R}(s + 1) \le g_{(b,s)}^{\dir \sig,R}(s +1)$. But this contradicts the fact we showed that $\{g_{(x,s)}^{\dir \sig,R}(s + 1): x \in \R\}$ is not bounded above. Hence, there exists a sequence $y_n \to \infty$ such that $(y_n,s) \in \NU_0^{\dir \sig}$ for all $n$. By a similar argument, there exists a sequence $x_n \to -\infty$ such that $(x_n,s) \in \NU_0^{\dir \sig}$ for all $n$.

Now, for arbitrary $s \in \R$, pick a rational number $T > s$. Pick $(z,T) \in \NU_0^{\dir \sig}$, and let
\begin{align*}
x_1 = \sup\{  x \in \R:   g_{(x,s)}^{\dir\sig, L}(T) \le  z  \}, \qquad \text{and}\qquad
x_2 = \inf\{  x \in \R:   g_{(x,s)}^{\dir\sig, R}(T) \ge  z  \}.
\end{align*}
By the limits in Equation~\eqref{373} of Theorem~\ref{thm:g_basic_prop}\ref{itm:DL_SIG_conv_x}, 
$x_1$ and $x_2$ lie in $\R$.

We first show that $x_2 \le x_1$. If not, then choose $x \in (x_1,x_2)$. Then, $g_{(x,s)}^{\dir\sig, R}(T) <   z <    g_{(x,s)}^{\dir\sig, L}(T)$, contradicting the meaning of L and R.  Hence  $x_2 \le x_1$. For any $x > x_2$, $g_{(x,s)}^{\dir \sig,R}(T) \ge z$, and by the limit in Equation~\eqref{371} of Theorem~\ref{thm:g_basic_prop}\ref{itm:DL_SIG_conv_x}, $g_{(x_2,s)}^{\dir \sig,R}(T) \ge z$ as well. By an analogous argument, for $x < x_1$, $g_{(x,s)}^{\dir \sig,L}(T) \le z$, and the inequality $g_{(x_1,s)}^{\dir \sig,L}(T) \le z$ holds by the same argument. Hence, for $x \in [x_2,x_1]$,
\[
g_{(x,s)}^{\dir \sig,L}(T) \le z,\qquad\text{and}\qquad g_{(x,s)}^{\dir \sig,R}(T) \ge z.
\]
Then, by the monotonicity of Theorem~\ref{thm:g_basic_prop}\ref{itm:DL_SIG_mont_x}, for $t \ge T$,
\be \label{1000}
g_{(x,s)}^{\dir \sig,L}(t) \le g_{(z,T)}^{\dir \sig,L}(t) \le g_{(z,T)}^{\dir \sig,R}(t) \le g_{(x,s)}^{\dir \sig,R}(t).
\ee
By assumption that $(z,T) \in \NU_0^{\dir \sig}$, there exists $t > T$ such that the middle inequality in~\eqref{1000} is strict, so $(x,s) \in \NU_0^{\dir \sig}$. Furthermore, by assumption, the set $\{z \in \R: (z,T) \in \NU_0\}$ has neither an upper or lower bound. Then, by the $t = T$ case of~\eqref{1000} and a similar argument as for the $s = 0$ case, the set $\{x \in \R: (x,s) \in \NU_0\}$ also has  neither an upper nor lower bound.

We lastly show countability of the sets. By~\eqref{109}, it suffices to show that for each $\dir \in \Q$ and $s \in \R$, $\NU_0^{\dir} \cap\, \Hh_s$ is countable. The proof is that of Theorem 3.4, Item 3 in~\cite{Rahman-Virag-21}, adapted to all horizontal lines simultaneously. For each $(x,s) \in \NU_0^\dir$, there exists $t > s$ such that $g_{(x,s)}^{\dir ,L}(t) < g_{(x,s)}^{\dir ,R}(t)$. By continuity of geodesics, the space between the two geodesics contains an open subset of $\R^2$. By the monotonicity of Theorem~\ref{thm:g_basic_prop}\ref{itm:DL_SIG_mont_x}, for $x < y$, $g_{(x,s)}^{\dir ,R}(t) \le g_{(y,s)}^{\dir ,L}(t)$ for all $t \ge s$. Hence, for $x < y$, with $(x,s),(y,s) \in \NU_0^\dir$, the associated open sets in $\R^2$ are disjoint, and $\NU_0^\dir \cap\, \Hh_s$ is at most countably infinite. 
\end{proof}

\section{Coalescence and the global geometry of geodesics} \label{sec:geometry_sec}
We can now describe the global structure of the semi-infinite geodesics, beginning with coalescence. 
\begin{theorem} \label{thm:DL_all_coal}
On a single event of full probability, the following hold across all directions $\dir \in \R$ and signs $\sigg \in \{-,+\}$. 
\begin{enumerate} [label=\rm(\roman{*}), ref=\rm(\roman{*})]  \itemsep=3pt
    \item \label{itm:DL_allsigns_coal} For all 
    $p,q \in \R^2$, if $g_1$ and $g_2$ are $\dir \sig$ Busemann geodesics from $p$ and $q$, respectively, then $g_1$ and $g_2$ coalesce. If the first point of intersection of the two geodesics is not $p$ or $q$, then the first point of intersection is the coalescence point of the two geodesics. 
    \item \label{itm:DL_split_return} 
    Let $g_1$ and $g_2$ be two distinct  $\dir \sig$ Busemann geodesics  from an initial point $(x,s)\in \NU_0^{\dir \sig}$.  Then, the set $\{t > s: g_1(t) \neq g_2(t)\}$ is a bounded open interval. That is, after the geodesics split, they coalesce exactly when they meet again. 
    \item \label{itm:unif_coal} For each 
    compact set $K \subseteq \R^2$, there exists a random $T = T(K,\dir,\sigg)<\infty$ such that for any two $\dir \sig$ geodesics $g_1$ and $g_2$ whose starting points lie in $K$, $g_1(t) = g_2(t)$ for all $t \ge T$. That is, there is a time level $T$ after which all semi-infinite geodesics started from points in $K$ have coalesced into a single path.
\end{enumerate} 
\end{theorem}
\begin{remark}
Theorem 1 of~\cite{Bhatia-23} and, independently, Lemma 3.3 of \cite{Dauvergne-23}, implies the following refinements of the results in this section. In Theorem~\ref{thm:DL_all_coal}\ref{itm:DL_split_return}, $\{t > s: g_1(t) \neq g_2(t)\}=(s, r)$ for some $r\in(s,\infty)$.  Under Condition~\ref{itm:DL_good_dir} of Theorem~\ref{thm:DL_good_dir_classification} below, the entire collection of semi-infinite geodesics in direction $\dir$ is a tree. 
\end{remark}

The following gives a full classification of the directions in which geodesics coalesce. We refer the reader to Theorems~\ref{thm:DL_eq_Buse_cpt_paths} and~\ref{thm:Buse_pm_equiv} below for the connection between coalescence and the regularity of the Busemann process.
 
\begin{theorem} \label{thm:DL_good_dir_classification}
On  a single event of probability one, the following are equivalent. 
\begin{enumerate} [label=\rm(\roman{*}), ref=\rm(\roman{*})]  \itemsep=3pt
    \item \label{itm:DL_good_dir} $\dir \notin \DLBusedc$.
    \item \label{itm:DL_LR_all_agree} $g_{p}^{\dir -,S} = g_{p}^{\dir +,S}$ for all $p \in \R^2$ and $S \in \{L,R\}$.
    \item \label{itm:DL_good_dir_coal} All semi-infinite geodesics in direction $\dir$ coalesce {\rm(}whether Busemann geodesics or not{\rm)}.
    \item \label{itm:DL_good_dir_unique_geod} For all $p \in \R^2 \setminus \NU_0$, there is a unique geodesic starting from $p$ with direction $\dir$.
    \item \label{itm:DL_good_dir_pt_unique}   There is a unique $\dir$-directed semi-infinite geodesic from some $p\in \R^2$.
    \item \label{itm:DL_good_dir_L_unique} There exists $p \in \R^2$ such that $g_{p}^{\dir -,L} = g_{p}^{\dir +,L}$.
    \item \label{itm:DL_good_dir_R_unique} There exists $p \in \R^2$ such that $g_{p}^{\dir -,R} = g_{p}^{\dir +,R}$
\end{enumerate}
Under these equivalent conditions, the following also holds.
\begin{enumerate}[resume, label=\rm(\roman{*}), ref=\rm(\roman{*})]  \itemsep=3pt
    \item \label{itm:DL_allBuse}  From any $p \in \R^2$, all semi-infinite geodesics in direction $\dir$ are Busemann geodesics. 
\end{enumerate}
\end{theorem}
\begin{remark} \label{rmk:split_from_all_p}
The equivalence~\ref{itm:DL_good_dir}$\Leftrightarrow$\ref{itm:DL_good_dir_L_unique} implies that $\forall\dir \in \DLBusedc$ and $p \in \R^2$,  geodesics $g_{p}^{\dir -,L}$ and $g_{p}^{\dir +,L}$ are distinct. The same is true when $L$ is replaced with $R$. Since $g_{p}^{\dir -,L}$ and $g_p^{\dir +,L}$ are both leftmost geodesics between any two of their points (Theorem~\ref{thm:DL_SIG_cons_intro}\ref{itm:DL_LRmost_geod}) 
then if $\dir\in\DLBusedc$,
these two geodesics must separate at some time $t\ge s$, and they cannot ever come back together. For each $\dir \in \DLBusedc$, there are two coalescing families of geodesics, namely the $\dir-$ and $\dir +$ geodesics. (See again Figure~\ref{fig:non_unique_comp}).  In particular, 
whenever $\xi \in \DLBusedc$, $s \in \R$,  and $x < y$, $g_{(x,s)}^{\xi +,L}(t) > g_{(y,s)}^{\xi -,R}(t)$ for sufficiently large $t$, as alluded to in Remark~\ref{rmk:mixing_LR_pm}.
\end{remark}

\subsection{Proofs}
In each of these theorems, the full-probability event is $\Omega_2$~\eqref{omega2}. We start by proving some lemmas that allow us to prove Theorem~\ref{thm:DL_all_coal}. The proof of Theorem~\ref{thm:DL_good_dir_classification} comes at the very end of this subsection. Section~\ref{sec:Buseextraproofs} proves Theorem~\ref{thm:DLSIG_main} as well as lingering results from Section~\ref{sec:Buse_geod_results}.
\begin{lemma} \label{lem:Buse_equality_coal}
Let $\omega \in \Omega_1$, $s \in \R$ and $x < y \in \R$. Assume, for some $\alpha < \dir$ and $\sigg_1,\sigg_2 \in \{-,+\}$, that $\W_{\alpha \sig_1}(y,s;x,s) = \W_{\dir \sig_2}(y,s;x,s)$. We also allow $\alpha = \dir$ if $\sigg_1 = -$ and $\sigg_2 = +$. If  $t > s$ and $g_{(x,s)}^{\dir \sig_2,R}(t) \le g_{(y,s)}^{\alpha \sig_1,L}(t)$, then for all $u \in [s,t]$,
\be \label{111}
g_{(x,s)}^{\alpha \sig_1,R}(u) = g_{(x,s)}^{\dir \sig_2,R}(u)\qquad\text{and}\qquad g_{(y,s)}^{\alpha \sig_1,L}(u) = g_{(y,s)}^{\dir \sig_2,L}(u).
\ee
\end{lemma}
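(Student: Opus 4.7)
The plan is to reduce the lemma to an endpoint identity at time $t$, namely
\[
g_{(x,s)}^{\alpha \sig_1, R}(t) = g_{(x,s)}^{\dir \sig_2, R}(t) \quad \text{and} \quad g_{(y,s)}^{\alpha \sig_1, L}(t) = g_{(y,s)}^{\dir \sig_2, L}(t),
\]
and then extend this endpoint coincidence to all of $[s,t]$ via the uniqueness of the rightmost (respectively leftmost) geodesic of $\Ll$ between two space-time points. Abbreviate
\[
z_x^\alpha = g_{(x,s)}^{\alpha \sig_1, R}(t), \ \ z_x^\dir = g_{(x,s)}^{\dir \sig_2, R}(t), \ \ z_y^\alpha = g_{(y,s)}^{\alpha \sig_1, L}(t), \ \ z_y^\dir = g_{(y,s)}^{\dir \sig_2, L}(t).
\]
The direction monotonicity of Theorem~\ref{thm:g_basic_prop}\ref{itm:DL_mont_dir}, which covers both $\alpha<\dir$ and $\alpha=\dir$ with $\sigg_1=-,\sigg_2=+$, combined with the hypothesis $z_x^\dir \le z_y^\alpha$, gives the chain $z_x^\alpha \le z_x^\dir \le z_y^\alpha \le z_y^\dir$.

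The heart of the argument is a comparison of two variational formulas. For $\sigg \in \{-,+\}$ set
\[
F_{\dir \sig}(z) = \Ll(x,s;z,t) + \W_{\dir \sig}(z,t;0,t),\qquad G_{\dir \sig}(z) = \Ll(y,s;z,t) + \W_{\dir \sig}(z,t;0,t).
\]
Theorem~\ref{thm:DL_Buse_summ}\ref{itm:DL_Buse_add},\ref{itm:Buse_KPZ_description} and the leftmost/rightmost selection from Definition~\ref{def:LR_maxes} yield
\[
\W_{\alpha \sig_1}(y,s;x,s) = G_{\alpha \sig_1}(z_y^\alpha) - F_{\alpha \sig_1}(z_x^\alpha),\qquad \W_{\dir \sig_2}(y,s;x,s) = G_{\dir \sig_2}(z_y^\dir) - F_{\dir \sig_2}(z_x^\dir).
\]
Evaluating $G_{\dir \sig_2}$ at the suboptimal point $z_y^\alpha$ and $F_{\alpha \sig_1}$ at the suboptimal point $z_x^\dir$, using the identity $G_{\dir \sig_2}(z)-G_{\alpha \sig_1}(z)=F_{\dir \sig_2}(z)-F_{\alpha \sig_1}(z)=\W_{\dir \sig_2}(z,t;0,t)-\W_{\alpha \sig_1}(z,t;0,t)$, and combining with the hypothesis $\W_{\alpha \sig_1}(y,s;x,s)=\W_{\dir \sig_2}(y,s;x,s)$ together with additivity, the two suboptimality inequalities collapse to
\[
\W_{\dir \sig_2}(z_y^\alpha,t;z_x^\dir,t) \le \W_{\alpha \sig_1}(z_y^\alpha,t;z_x^\dir,t).
\]
The horizontal monotonicity of Theorem~\ref{thm:DL_Buse_summ}\ref{itm:DL_Buse_gen_mont}, applied to $z_x^\dir \le z_y^\alpha$ and the ordering $(\alpha,\sigg_1)\le(\dir,\sigg_2)$ allowed by the hypothesis, forces the reverse inequality, hence equality throughout.

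Tracing the equalities back, $F_{\alpha \sig_1}(z_x^\dir) = F_{\alpha \sig_1}(z_x^\alpha)$, so $z_x^\dir$ is also a maximizer of $F_{\alpha \sig_1}$; since $z_x^\alpha$ is the rightmost maximizer and $z_x^\alpha \le z_x^\dir$, this forces $z_x^\alpha = z_x^\dir$. Symmetrically, $G_{\dir \sig_2}(z_y^\alpha) = G_{\dir \sig_2}(z_y^\dir)$ and leftmost selection give $z_y^\alpha = z_y^\dir$. To upgrade these endpoint equalities to equality of paths on $[s,t]$, I invoke Theorem~\ref{thm:DL_SIG_cons_intro}\ref{itm:DL_LRmost_geod}: both $g_{(x,s)}^{\alpha \sig_1, R}$ and $g_{(x,s)}^{\dir \sig_2, R}$, restricted to $[s,t]$, are rightmost geodesics of $\Ll$ from $(x,s)$ to the common endpoint $(z_x^\alpha, t)$, and the rightmost geodesic between two points is unique, so they coincide on $[s,t]$. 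The identical argument for leftmost geodesics handles $g_{(y,s)}^{\alpha \sig_1, L}$ and $g_{(y,s)}^{\dir \sig_2, L}$. The one subtle point is the sign bookkeeping in the variational comparison, and the use of Theorem~\ref{thm:DL_Buse_summ}\ref{itm:DL_Buse_gen_mont} in a form that uniformly covers the case $\alpha < \dir$ (with arbitrary signs) and the case $\alpha=\dir$, $\sigg_1=-$, $\sigg_2=+$.
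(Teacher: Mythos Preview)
Your proof is correct and follows essentially the same approach as the paper's: set up the variational identities for $\W_{\alpha\sig_1}(y,s;x,s)$ and $\W_{\dir\sig_2}(y,s;x,s)$, evaluate at the suboptimal points $z_x^\dir$ and $z_y^\alpha$, combine the resulting inequalities with the hypothesis to obtain $\W_{\dir\sig_2}(z_y^\alpha,t;z_x^\dir,t)\le\W_{\alpha\sig_1}(z_y^\alpha,t;z_x^\dir,t)$, force equality via horizontal monotonicity, deduce the endpoint identities from the rightmost/leftmost selection, and extend to $[s,t]$ via Theorem~\ref{thm:DL_SIG_cons_intro}\ref{itm:DL_LRmost_geod}. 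The paper writes the chain of inequalities \eqref{101}--\eqref{102} slightly differently but the logic is identical.
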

\begin{proof}
By assumption, whenever $w < z$ and $t \in \R$, Theorem~\ref{thm:DL_Buse_summ}\ref{itm:DL_Buse_gen_mont} gives 
\be \label{100}
\W_{\alpha \sig_1}(z,t;w,t) \le W_{\dir \sig_2}(z,t;w,t).
\ee
For the rest of the proof, we  suppress the $\sigg_1,\sigg_2$ notation.
By Theorem~\ref{thm:DL_Buse_summ}\ref{itm:DL_Buse_add},\ref{itm:Buse_KPZ_description},
\begin{align}
\W_\dir(y,s;x,s) &= \W_\dir(y,s;0,t) - \W_\dir(x,s;0,t) \nonumber \\
&= \sup_{z \in \R}\{\Ll(y,s;z,t) + \W_\dir(z,t;0,t)\} - \sup_{z \in \R}\{\Ll(x,s;z,t) + \W_\dir(z,t;0,t)\}, \label{eqn:W_queue}
\end{align}
and the same with $\dir$ replaced by $\alpha$. Recall that $g_{(x,s)}^{\dir \sig,L}(t)$ and $g_{(x,s)}^{\dir \sig,R}(t)$ are, respectively, the leftmost and rightmost maximizers of $\Ll(x,s;z,t) + \W_{\dir \sig}(z,t;0,t)$ over $z \in \R$. 
Understanding that these quantities depend on $s$ and $t$, we use the shorthand notation $g_x^{\dir,R} = g_{(x,s)}^{\dir \sig_1,R}(t)$, and similarly with the other quantities. Then, we have
\begin{align}
    \Ll(x,s;g_x^{\dir,R},t) + W_\dir(g_x^{\dir,R},t;0,t) &- (\Ll(x,s;g_x^{\dir,R},t) + W_\alpha(g_x^{\dir,R},t;0,t)) \nonumber \\
    \ge \sup_{z \in \R}\{\Ll(x,s;z,t) + \W_\dir(z,t;0,t)\} &- \sup_{z \in \R}\{\Ll(x,s;z,t) + \W_\alpha(z,t;0,t)\} \label{101}\\
    =\sup_{z \in \R}\{\Ll(y,s;z,t) + \W_\dir(z,t;0,t)\} &- \sup_{z \in \R}\{\Ll(y,s;z,t) + \W_\alpha(z,t;0,t)\} \nonumber \\
    \ge \Ll(y,s;g_y^{\alpha,L},t) + W_\dir(g_y^{\alpha,L},t;0,t) &- (\Ll(y,s;g_y^{\alpha,L},t) + W_\alpha(g_y^{\alpha,L},t;0,t)), \label{102} 
\end{align}
where the middle equality came from the assumption that $\W_{\dir}(y,s;x,s) = \W_{\alpha}(y,s;x,s)$ and Equation~\eqref{eqn:W_queue} applied to both $\dir$ and $\alpha$.
Rearranging the first and last lines yields 
\[
\W_\dir(g_y^{\alpha,L},t;g_{x}^{\dir,R},t) \le \W_\alpha(g_y^{\alpha,L},t;g_{x}^{\dir,R},t).
\]
However, the assumption $g_x^{\dir,R} \le g_{y}^{\alpha,L}$
combined with~\eqref{100} implies that this inequality is an equality. Hence, inequalities~\eqref{101} and~\eqref{102} are also equalities. From the equality~\eqref{101},  
\[
\Ll(x,s;g_x^{\dir,R},t) + W_\alpha(g_x^{\dir,R},t;0,t) = \sup_{z \in \R}\{\Ll(x,s;z,t) + \W_\alpha(z,t;0,t)\},
\]
so $z=g_x^{\dir,R}$ is a maximizer of $\Ll(x,s;z,t) + \W_\alpha(z,t;0,t)$. By definition, $g_{x}^{\alpha,R}$ is the rightmost maximizer, and by geodesic ordering (Theorem~\ref{thm:g_basic_prop}\ref{itm:DL_mont_dir}), $g_x^{\dir,R} \ge g_x^{\alpha,R}$, so  $g_x^{\dir,R} = g_x^{\alpha,R}$. An analogous argument applied to~\eqref{102} implies  $g_y^{\alpha,L} = g_y^{\dir,L}$. We have shown that 
\[
g_{(x,s)}^{\alpha \sig_1,R}(t) = g_{(x,s)}^{\dir \sig_2,R}(t),\qquad\text{and}\qquad g_{(y,s)}^{\alpha \sig_1,L}(t) = g_{(y,s)}^{\dir \sig_2,L}(t).
\]
Since $g_{(x,s)}^{\alpha \sig_1,R}$ and $g_{(x,s)}^{\dir \sig_2,R}$ are both the rightmost geodesics between any two of their points and similarly with the leftmost geodesics from $(y,s)$ (Theorem~\ref{thm:DL_SIG_cons_intro}\ref{itm:DL_LRmost_geod}), Equation~\eqref{111} holds for all $u \in [s,t]$, as desired. 
\end{proof}

\begin{lemma} \label{lem:DL_LR_coal}
Let $\omega \in \Omega_2$, $s \in \R$, and $x < y$. If, for some $\alpha < \dir$ and $\sigg_1,\sigg_2 \in \{-,+\}$ we have that $\W_{\alpha \sig_1}(y,s;x,s) = \W_{\dir \sig_2}(y,s;x,s)$, then $g_{(x,s)}^{\alpha \sig_1,R}$ coalesces with $g_{(y,s)}^{\alpha \sig_1,L}$, $g_{(x,s)}^{\dir \sig_2,R}$ coalesces with $g_{(y,s)}^{\dir \sig_2,L}$, and the coalescence points of the two pairs of geodesics are the same. 
\end{lemma}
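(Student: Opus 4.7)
My strategy is to pinpoint the first time at which the outer $\dir\sig_2$ and $\alpha\sig_1$ geodesics from $(x,s)$ and $(y,s)$ meet, and then lift the pointwise identification of Lemma~\ref{lem:Buse_equality_coal} to a joint coalescence statement for both pairs. To that end I would define
\[
t^* := \inf\bigl\{t > s:\, g_{(x,s)}^{\dir\sig_2,R}(t) \ge g_{(y,s)}^{\alpha\sig_1,L}(t)\bigr\}.
\]
At $t=s$ these paths start at $x$ and $y$ respectively, so the difference $g_{(x,s)}^{\dir\sig_2,R}(t) - g_{(y,s)}^{\alpha\sig_1,L}(t)$ begins negative. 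By the directedness of Busemann geodesics (Theorem~\ref{thm:DL_SIG_cons_intro}\ref{itm:arb_geod_cons}\ref{itm:geo_dir}) the first term grows at asymptotic rate $\dir$ while the second grows at asymptotic rate $\alpha<\dir$, so the difference becomes positive for large $t$. Continuity of the geodesic paths in time then yields $s<t^*<\infty$, the equality $g_{(x,s)}^{\dir\sig_2,R}(t^*)=g_{(y,s)}^{\alpha\sig_1,L}(t^*)=:z^*$, and strict inequality $g_{(x,s)}^{\dir\sig_2,R}(t)<g_{(y,s)}^{\alpha\sig_1,L}(t)$ throughout $(s,t^*)$.

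Next I would invoke Lemma~\ref{lem:Buse_equality_coal} with $t=t^*$ (whose hypothesis now holds with equality) to get
\[
g_{(x,s)}^{\alpha\sig_1,R}(u)=g_{(x,s)}^{\dir\sig_2,R}(u), \qquad g_{(y,s)}^{\alpha\sig_1,L}(u)=g_{(y,s)}^{\dir\sig_2,L}(u),\qquad u\in[s,t^*].
\]
Evaluating at $u=t^*$ and using the definition of $z^*$ shows that all four geodesics pass through $(z^*,t^*)$. Since $x<y$, Theorem~\ref{thm:g_basic_prop}\ref{itm:DL_SIG_mont_x} then forces the $\alpha\sig_1$ pair $\bigl(g_{(x,s)}^{\alpha\sig_1,R},g_{(y,s)}^{\alpha\sig_1,L}\bigr)$ to coalesce, with the coalescence point being their first intersection; the same argument applies verbatim to the $\dir\sig_2$ pair.

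\textbf{Main obstacle.} The only delicate point I expect is verifying that the two coalescence points actually coincide, rather than merely both lying weakly earlier than $(z^*,t^*)$. This will be settled by one more application of Lemma~\ref{lem:Buse_equality_coal}. If the $\alpha\sig_1$ pair had coalesced at some $(w,t_1)$ with $t_1 < t^*$, then the strict inequality $g_{(x,s)}^{\dir\sig_2,R}(t_1)<g_{(y,s)}^{\alpha\sig_1,L}(t_1)$ from the definition of $t^*$ together with the equalities produced by Lemma~\ref{lem:Buse_equality_coal} at $t_1$ would force $g_{(x,s)}^{\dir\sig_2,R}(t_1)=g_{(x,s)}^{\alpha\sig_1,R}(t_1)=w=g_{(y,s)}^{\alpha\sig_1,L}(t_1)$, a contradiction. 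A symmetric argument rules out the $\dir\sig_2$ pair meeting before $t^*$. Hence both pairs coalesce exactly at $(z^*,t^*)$, completing the proof.
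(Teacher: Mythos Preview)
Your proof is correct and follows essentially the same route as the paper's: define the minimal crossing time of $g_{(x,s)}^{\dir\sig_2,R}$ and $g_{(y,s)}^{\alpha\sig_1,L}$, use directedness to see it is finite, apply Lemma~\ref{lem:Buse_equality_coal} on $[s,t^*]$, and then invoke Theorem~\ref{thm:g_basic_prop}\ref{itm:DL_SIG_mont_x}. Your ``main obstacle'' paragraph spells out in detail what the paper dispatches in a single clause (``since $t$ was chosen to be minimal''); once the equalities from Lemma~\ref{lem:Buse_equality_coal} hold on all of $[s,t^*]$ and the strict inequality holds on $(s,t^*)$, neither pair can meet before $t^*$, so both coalescence points are $(z^*,t^*)$.
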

\begin{proof}
By Theorem~\ref{thm:DL_SIG_cons_intro}\ref{itm:DL_all_SIG}, $g_{(x,s)}^{\dir \sig_2,R}(t)/t \to \dir$ while $g_{(y,s)}^{\alpha \sig_1,L}(t)/t \to \alpha$ as $t \to \infty$. By this and continuity of geodesics, there exists a minimal time $t > s$ such that $z := g_{(x,s)}^{\dir \sig_2,R}(t) = g_{(y,s)}^{\alpha \sig_1,L}(t)$. By Lemma~\ref{lem:Buse_equality_coal}, 
\[
g_{(x,s)}^{\alpha \sig_1,R}(u) = g_{(x,s)}^{\dir \sig_2,R}(u)\qquad\text{and}\qquad g_{(y,s)}^{\alpha \sig_1,L}(u) = g_{(y,s)}^{\dir \sig_2,L}(u) \qquad \text{for all }u \in [s,t].
\]
Since $t$ was chosen to be minimal, Theorem~\ref{thm:g_basic_prop}\ref{itm:DL_SIG_mont_x} implies that the pair $g_{(x,s)}^{\alpha \sig_1,R}$, $g_{(y,s)}^{\alpha \sig_1,L}$ and the pair $g_{(x,s)}^{\dir \sig_2,R}$,  $g_{(y,s)}^{\dir \sig_2,L}$ both coalesce at $(z,t)$.
\end{proof}

\begin{proof}[Proof of Theorem~\ref{thm:DL_all_coal}]

\smallskip\noindent
\textbf{Item~\ref{itm:DL_allsigns_coal} (Coalescence):} Let $g_1$ and $g_2$ be $\dir \sigg$ Busemann geodesics from $(x,s)$ and $(y,t)$, respectively, and take $s \le t$ without loss of generality. Let $a = (g_1(t) \wedge y) - 1$ and $b = (g_1(t) \vee y) + 1$. By Theorem~\ref{thm:g_basic_prop}\ref{itm:DL_SIG_mont_x}, for all $u \ge t$,
\be \label{112}
g_{(a,t)}^{\dir \sig,R}(u) \le g_1(u) \wedge g_2(u) \le g_1(u)\vee g_2(u) \le g_{(b,t)}^{\dir \sig,L}(u).
\ee
By Theorem~\ref{thm:DL_Buse_summ}\ref{itm:DL_unif_Buse_stick}, there exists $\alpha$, sufficiently close to $\dir$, (from the left for $\sigg = -$ and from the right for $\sigg = +$) such that $\W_{\dir  \sig}(b,t;a,t) = \W_{\alpha \sig}(b,t;a,t)$. By Lemma~\ref{lem:DL_LR_coal}, $g_{(a,t)}^{\dir \sig,R}$ coalesces with $g_{(b,t)}^{\dir \sig,L}$. Then, for $u$ large enough, all inequalities in~\eqref{112} are equalities, and $g_1$ and $g_2$ coalesce.  

If the first point of intersection is not $(y,t)$, then $g_1(t) \neq y$, and the coalescence point of $g_1$ and $g_2$ is the first point of intersection by Theorem~\ref{thm:g_basic_prop}\ref{itm:DL_SIG_mont_x}.

\smallskip\noindent \textbf{Item~\ref{itm:DL_split_return} (Geodesics coalesce when they meet):} Let $(x,s) \in \NU_0^{\dir \sig}$, and let $g_1$ and $g_2$ be two distinct $\dir \sig$ Busemann geodesics from $(x,s)$. The set $\text{GNEQ} := \{t>s:g_1(t) \neq g_2(t)\}$ is therefore nonempty and infinite by continuity of $g_1$ and $g_2$. Assume, by way of contradiction, that $\text{GNEQ}$ is not an open interval. By continuity of geodesics, $\text{GNEQ}$ cannot be a closed or half-closed interval, so $\text{GNEQ}$ is not path connected. Thus, there exists $t_1 < t_2 < t_3$ so that 
\[
g_1(t_1) \neq g_2(t_1),\quad g_1(t_2) = g_2(t_2),\quad \text{ and}\quad  g_1(t_3) \neq g_2(t_3).
\]
 The geodesics $g_1|_{[t_1,\infty)}$ and $g_2|_{[t_1,\infty)}$ started from $(g_1(t_1),t_1)$ and $(g_2(t_1),t_1)$, respectively, are both Busemann geodesics by their construction in Theorem~\ref{thm:DL_SIG_cons_intro}. Since the geodesics $g_1|_{[t_1,\infty)}$ and $g_2|_{[t_1,\infty)}$  start at different spatial locations (namely $g_1(t_1)$ and $g_2(t_1)$) along the same time level $t_1$, they cannot intersect at either of their starting points.  By Item~\ref{itm:DL_allsigns_coal}, the two geodesics $g_1|_{[t_1,\infty)}$ and $g_2|_{[t_1,\infty)}$ must coalesce, and the first point of intersection is the coalescence point. Since $g_1(t_2)  = g_2(t_2)$, this implies that $g_1(t) = g_2(t)$ for all $t > t_2$, a contradiction to the existence of $t_3$.

\smallskip\noindent  \textbf{Item~\ref{itm:unif_coal} (Uniformity of coalescence):} Let $\dir \in \R$, $\sigg \in \{-,+\}$, and let the compact set $K$ be given. Let $S$ be the smallest integer greater than $\max\{s: (x,s) \in K\}$. Set 
\[
A := \inf\{g_{(x,s)}^{\dir \sig,L}(S): (x,s) \in K\},\qquad\text{and}\qquad B := \sup\{g_{(x,s)}^{\dir \sig,R}(S):(x,s) \in K\}.
\]
By Lemma~\ref{lem:bounded_maxes}, $-\infty < A \le B < \infty$.  Then, by Theorem~\ref{thm:g_basic_prop}\ref{itm:DL_SIG_mont_x}, whenever $g$ is a $\dir \sig$ geodesic starting from $(x,s) \in K$,
\[
g_{(A,S)}^{\dir \sig,L}(t) \le g(t) \le g_{(B,S)}^{\dir \sig,R}(t) \qquad\text{for all }t \ge S.
\]
To complete the proof, let $T$ be the time at which $g_{(A,S)}^{\dir \sig,L}$ and $g_{(B ,S)}^{\dir \sig,R}$ coalesce, which is guaranteed to be finite by Item~\ref{itm:DL_allsigns_coal}.
\end{proof}

For two initial points on a horizontal level, as $\dir$ varies, a constant  Busemann process corresponds to a  constant    coalescence point of the geodesics.
The non-uniqueness of geodesics  
requires us to be careful about the choice of left and right geodesic. 
\begin{definition} \label{def:coal_pt}
For $s \in \R$ and $x < y$, let $\mbf z^{\dir \sig}(y,s;x,s)$ be the coalescence point of $g_{(y,s)}^{\dir \sig,L}$ and $g_{(x,s)}^{\dir \sig,R}$.
\end{definition}

\begin{theorem} \label{thm:DL_eq_Buse_cpt_paths}
On a single event of probability one, for all reals $\alpha < \beta$, $s$, and $x < y$, the following are equivalent.
\begin{enumerate}[label=\rm(\roman{*}), ref=\rm(\roman{*})]  \itemsep=3pt
\item \label{itm:DL_buse_eq}$\W_{\alpha +}(y,s;x,s) = \W_{\beta -}(y,s;x,s)$.
\item \label{itm:DL_coal_pt_equal} $\mbf z^{\alpha +}(y,s;x,s) = \mbf z^{\beta -}(y,s;x,s)$.
\item \label{itm:DL_paths} There exist $t > s$ and $z \in \R$ such that there are paths $g_1:[s,t] \to \R$ {\rm(}connecting $(x,s)$ and $(z,t)${\rm)} and $g_2:[s,t] \to \R$ {\rm(}connecting $(y,s)$ to $(z,t)${\rm)} such that for all $\dir \in (\alpha,\beta)$, $\sigg \in \{-,+\}$, and $u \in [s,t)$,
\be \label{124}\begin{aligned} 
g_1(u) &= g_{(x,s)}^{\dir \sig,R}(u) = g_{(x,s)}^{\alpha +,R}(u) = g_{(x,s)}^{\beta -,R}(u) \\
&< g_2(u) =g_{(y,s)}^{\dir \sig,L}(u) = g_{(y,s)}^{\alpha +,L}(u) = g_{(y,s)}^{\beta -,L}(u).
\end{aligned} \ee
\end{enumerate}
\end{theorem}

\begin{proof}
 
\ref{itm:DL_buse_eq}$\Rightarrow$\ref{itm:DL_coal_pt_equal} follows from Lemma~\ref{lem:DL_LR_coal}. 

\smallskip \noindent \ref{itm:DL_coal_pt_equal}$\Rightarrow$\ref{itm:DL_buse_eq}: Assume $(z,t) := \mbf z^{\alpha +}(y,s;x,s) = \mbf z^{\beta -}(y,s;x,s)$. By additivity (Theorem~\ref{thm:DL_Buse_summ}\ref{itm:DL_Buse_add}) and Theorem~\ref{thm:DL_SIG_cons_intro}\ref{itm:DL_all_SIG},
\begin{align*}
\W_{\alpha +}(y,s;x,s) &= \W_{\alpha +}(y,s;z,t) - \W_{\alpha +}(x,s;z,t) \\
&= \Ll(y,s;z,t) - \Ll(x,s;z,t) 
\\ &= \W_{\beta -}(y,s;z,t) - \W_{\beta -}(x,s;z,t)
= \W_{\beta -}(y,s;x,s).
\end{align*}

\smallskip \noindent \ref{itm:DL_coal_pt_equal}$\Rightarrow$\ref{itm:DL_paths}: Let $(z,t)$ be as in the proof of \ref{itm:DL_coal_pt_equal}$\Rightarrow$\ref{itm:DL_buse_eq}. By Theorem~\ref{thm:DL_SIG_cons_intro}\ref{itm:DL_LRmost_geod}, the restriction of $g_{(x,s)}^{\alpha +,R}$ and $g_{(x,s)}^{\beta -,R}$ to the domain $[s,t]$ are both rightmost geodesics between $(x,s)$ and $(z,t)$, and therefore they agree on this restricted domain. Similarly, $g_{(y,s)}^{\alpha +,L}$ and $g_{(y,s)}^{\beta -,L}$ agree on the domain $[s,t]$. By the monotonicity of Theorem~\ref{thm:g_basic_prop}\ref{itm:DL_mont_dir}, and since $(z,t)$ is the common coalescence point,~\eqref{124} holds for $u \in [s,t)$, as desired. 

\smallskip \noindent \ref{itm:DL_paths}$\Rightarrow$\ref{itm:DL_coal_pt_equal} is immediate.
\end{proof}

\begin{theorem} \label{thm:Buse_pm_equiv}
On a single event of probability one, for all reals $s,\dir \in \R$, and $x < y$, the following are equivalent. 
\begin{enumerate} [label=\rm(\roman{*}), ref=\rm(\roman{*})]  \itemsep=3pt
    \item \label{itm:DL_pm_Buse_eq} $\W_{\dir -}(y,s;x,s) = \W_{\dir +}(y,s;x,s).$
    \item \label{itm:DL_pm_coal_pt} $\mbf z^{\dir -}(y,s;x,s) = \mbf z^{\dir +}(y,s;x,s)$.
    \item \label{itm:DL_disjoint_paths} $g_{(x,s)}^{\dir-,R}(t) = g_{(y,s)}^{\dir +,L}(t)$ for some $t > s$, i.e., the paths $g_{(x,s)}^{\dir -,R}$ and $g_{(y,s)}^{\dir +,L}$  intersect.
\end{enumerate}
\end{theorem}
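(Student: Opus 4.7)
The strategy is to reduce the claim to Theorem~\ref{thm:DL_eq_Buse_cpt_paths} by sandwiching $\dir$ between directions $\alpha \in (\dir-\ve,\dir)$ and $\beta \in (\dir,\dir+\ve)$ chosen small enough that Busemann values, Busemann geodesics on a long enough time interval, and the corresponding coalescence points are locally constant. Fix $s, \dir \in \R$ and $x < y$, and work on the intersection of the full-probability events of Theorems~\ref{thm:DL_Buse_summ},~\ref{thm:DL_SIG_cons_intro},~\ref{thm:g_basic_prop},~\ref{thm:DL_all_coal}, and~\ref{thm:DL_eq_Buse_cpt_paths}. Applying Theorem~\ref{thm:DL_all_coal}\ref{itm:unif_coal} to the compact initial set $\{(x,s),(y,s)\}$ with both signs, pick a random $T < \infty$ such that every $\dir-$ and every $\dir+$ Busemann geodesic from $(x,s)$ or $(y,s)$ has coalesced by time $T$; let $K \subseteq \R^4$ be a compact set containing $(y,s;x,s)$ together with all $(p;q)$ with $p \in \{(x,s),(y,s)\}$ and $q$ in the compact space-time range swept by these geodesics on $[s,T]$. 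Theorems~\ref{thm:DL_Buse_summ}\ref{itm:DL_unif_Buse_stick} and~\ref{thm:g_basic_prop}\ref{itm:DL_SIG_unif} then furnish $\ve > 0$ such that, for all $\alpha \in (\dir-\ve,\dir)$, $\beta \in (\dir,\dir+\ve)$, $\sigg \in \{-,+\}$, and $S \in \{L,R\}$, the identities $\W_{\alpha\sig} \equiv \W_{\dir-}$ and $\W_{\beta\sig} \equiv \W_{\dir+}$ hold on $K$, while $g_{(x,s)}^{\alpha\sig,S} \equiv g_{(x,s)}^{\dir-,S}$ and $g_{(x,s)}^{\beta\sig,S} \equiv g_{(x,s)}^{\dir+,S}$ hold on $[s,T]$ (and similarly from $(y,s)$).

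Applying Theorem~\ref{thm:DL_eq_Buse_cpt_paths} at $(\alpha,\beta)$, its condition~\ref{itm:DL_buse_eq} reads $\W_{\dir-}(y,s;x,s) = \W_{\dir+}(y,s;x,s)$, i.e.\ \ref{itm:DL_pm_Buse_eq} of the present theorem. Since the relevant $(\alpha+)$ and $(\beta-)$ Busemann geodesics coincide with the $(\dir-)$ and $(\dir+)$ ones on $[s,T]$ and have coalesced by time $T$, their coalescence points literally agree, so condition~\ref{itm:DL_coal_pt_equal} becomes $\mbf z^{\dir-}(y,s;x,s) = \mbf z^{\dir+}(y,s;x,s)$, i.e.\ \ref{itm:DL_pm_coal_pt}. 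Hence \ref{itm:DL_pm_Buse_eq}$\iff$\ref{itm:DL_pm_coal_pt}. For \ref{itm:DL_pm_Buse_eq}$\Rightarrow$\ref{itm:DL_disjoint_paths}, condition~\ref{itm:DL_paths} of Theorem~\ref{thm:DL_eq_Buse_cpt_paths} produces $t \in (s,T]$, $z \in \R$, and paths $g_1,g_2$ through $(z,t)$ with $g_1 \equiv g_{(x,s)}^{\alpha+,R}$ and $g_2 \equiv g_{(y,s)}^{\beta-,L}$ on $[s,t)$; continuity forces $g_{(x,s)}^{\alpha+,R}(t) = z = g_{(y,s)}^{\beta-,L}(t)$, and local constancy on $[s,T]$ then gives $g_{(x,s)}^{\dir-,R}(t) = g_{(y,s)}^{\dir+,L}(t) = z$, which is~\ref{itm:DL_disjoint_paths}.

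The converse implication \ref{itm:DL_disjoint_paths}$\Rightarrow$\ref{itm:DL_pm_Buse_eq} requires a direct argument since condition~\ref{itm:DL_paths} of Theorem~\ref{thm:DL_eq_Buse_cpt_paths} is strictly stronger than~\ref{itm:DL_disjoint_paths}. Suppose $g_{(x,s)}^{\dir-,R}(t) = g_{(y,s)}^{\dir+,L}(t) =: z^*$ for some $t > s$. Direction monotonicity (Theorem~\ref{thm:g_basic_prop}\ref{itm:DL_mont_dir}) combined with horizontal monotonicity (Theorem~\ref{thm:g_basic_prop}\ref{itm:DL_SIG_mont_x}) yields the chain
\[
z^* = g_{(x,s)}^{\dir-,R}(t) \le g_{(x,s)}^{\dir+,R}(t) \le g_{(y,s)}^{\dir+,L}(t) = z^*,
\]
together with the symmetric inequalities forcing $g_{(y,s)}^{\dir-,L}(t) = z^*$, so all four Busemann geodesics $g_{(x,s)}^{\dir\pm,R}$ and $g_{(y,s)}^{\dir\pm,L}$ pass through $(z^*,t)$. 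The geodesic-weight identity of Theorem~\ref{thm:DL_SIG_cons_intro}\ref{itm:DL_all_SIG} gives $\W_{\dir\sig}(x,s;z^*,t) = \Ll(x,s;z^*,t)$ and $\W_{\dir\sig}(y,s;z^*,t) = \Ll(y,s;z^*,t)$ for both $\sigg \in \{-,+\}$, and additivity (Theorem~\ref{thm:DL_Buse_summ}\ref{itm:DL_Buse_add}) concludes
\[
\W_{\dir\sig}(y,s;x,s) = \W_{\dir\sig}(y,s;z^*,t) - \W_{\dir\sig}(x,s;z^*,t) = \Ll(y,s;z^*,t) - \Ll(x,s;z^*,t),
\]
which is independent of $\sigg$, yielding~\ref{itm:DL_pm_Buse_eq}. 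The main delicacy is the compatible choice of $T$ and $K$ in the setup, so that the uniform-coalescence horizon and the two local-constancy windows are simultaneously available; once that is arranged, the remainder is a mechanical transfer through Theorem~\ref{thm:DL_eq_Buse_cpt_paths} together with the direct additivity argument just given.
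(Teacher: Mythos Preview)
Your proposal is correct. The implication \ref{itm:DL_disjoint_paths}$\Rightarrow$\ref{itm:DL_pm_Buse_eq} is argued exactly as the paper does its \ref{itm:DL_disjoint_paths}$\Rightarrow$\ref{itm:DL_pm_coal_pt} (the sandwich via Theorem~\ref{thm:g_basic_prop}\ref{itm:DL_mont_dir},\ref{itm:DL_SIG_mont_x} followed by the weight identity and additivity). The difference lies in how you handle the remaining equivalences. You invest in a uniform coalescence horizon $T$ and geodesic local constancy (Theorem~\ref{thm:g_basic_prop}\ref{itm:DL_SIG_unif}) so as to identify $\mbf z^{\alpha+}$ with $\mbf z^{\dir-}$ and $\mbf z^{\beta-}$ with $\mbf z^{\dir+}$, then transfer \ref{itm:DL_buse_eq}$\iff$\ref{itm:DL_coal_pt_equal} of Theorem~\ref{thm:DL_eq_Buse_cpt_paths} wholesale. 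The paper is more economical: it only invokes Busemann local constancy at the single point $(y,s;x,s)$ to obtain $\W_{\alpha+}(y,s;x,s)=\W_{\beta-}(y,s;x,s)$, then reads off the $\dir\pm$ geodesic equalities directly from the displayed formula \eqref{124} in condition~\ref{itm:DL_paths} (which already asserts $g_{(x,s)}^{\dir'\sig,R}=g_1$ and $g_{(y,s)}^{\dir'\sig,L}=g_2$ for \emph{every} $\dir'\in(\alpha,\beta)$ and both signs, including your specific $\dir$). This bypasses the need for $T$, for $K$, and for Theorem~\ref{thm:g_basic_prop}\ref{itm:DL_SIG_unif}.

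One small wrinkle: your claim that the time $t$ produced by condition~\ref{itm:DL_paths} lies in $(s,T]$ is not part of the \emph{statement} of Theorem~\ref{thm:DL_eq_Buse_cpt_paths}; it comes from knowing that in its proof $(z,t)$ is taken to be the common coalescence point. You can avoid relying on this by instead observing, as above, that condition~\ref{itm:DL_paths} already gives $g_{(x,s)}^{\dir-,R}(u)=g_1(u)$ and $g_{(y,s)}^{\dir+,L}(u)=g_2(u)$ on $[s,t)$, so continuity at $t$ yields $g_{(x,s)}^{\dir-,R}(t)=z=g_{(y,s)}^{\dir+,L}(t)$ with no reference to $T$.
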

\begin{remark}
In Item~\ref{itm:DL_disjoint_paths}, if $\dir \in \DLBusedc$, then despite intersecting, the geodesics   $g_{(x,s)}^{\dir -,R}$ and $g_{(y,s)}^{\dir +,L}$ cannot coalesce. This follows from Theorem~\ref{thm:DL_good_dir_classification}, which gives  a full classification of the directions in which all semi-infinite geodesics coalesce. 
\end{remark}
\begin{proof}[Proof of Theorem~\ref{thm:Buse_pm_equiv}]
\ref{itm:DL_pm_Buse_eq}$\Rightarrow$\ref{itm:DL_pm_coal_pt}: If $\W_{\dir-}(y,s;x,s) = \W_{\dir+}(y,s;x,s)$, then Theorem~\ref{thm:DL_Buse_summ}\ref{itm:DL_unif_Buse_stick} implies that for some $\alpha < \dir < \beta$, $\W_{\alpha +}(y,s;x,s) = \W_{\beta -}(y,s;x,s)$. Then, we apply \ref{itm:DL_buse_eq}$\Rightarrow$\ref{itm:DL_paths} of Theorem~\ref{thm:DL_eq_Buse_cpt_paths} to conclude that for some $t > s$ and $z \in \R$, 
\[
g_{(x,s)}^{\dir -,R}(u) = g_{(x,s)}^{\dir +,R}(u) < g_{(y,s)}^{\dir -,L}(u) = g_{(y,s)}^{\dir +,L}(u),\qquad\text{for }u \in [s,t),
\]
whereas for $u = t$, all terms above equal some common value $z$. Therefore, $(z,t) = \mbf z^{\dir -}(y,s;x,s) = \mbf z^{\dir +}(y,s;x,s)$.

\smallskip \noindent \ref{itm:DL_pm_coal_pt}$\Rightarrow$\ref{itm:DL_pm_Buse_eq}: 
Similarly as in the proof of \ref{itm:DL_coal_pt_equal}$\Rightarrow$\ref{itm:DL_buse_eq} of Theorem~\ref{thm:DL_eq_Buse_cpt_paths}, if $(z,t) = \mbf z^{\dir -}(y,s;x,s) = \mbf z^{\dir +}(y,s;x,s)$, then
$
\W_{\dir -}(y,s;x,s) = 
\Ll(y,s;z,t) - \Ll(x,s;z,t) 
= \W_{\dir +}(y,s;x,s).
$

\smallskip \noindent \ref{itm:DL_pm_coal_pt}$\Rightarrow$\ref{itm:DL_disjoint_paths}: Assume $(z,t) = \mbf z^{\dir -}(y,s;x,s) = \mbf z^{\dir +}(y,s;x,s)$. Then, $g_{(x,s)}^{\dir -,R}(t) = z = g_{(y,s)}^{\dir +,L}(t)$. 

\smallskip \noindent \ref{itm:DL_disjoint_paths}$\Rightarrow$\ref{itm:DL_pm_coal_pt}: 
 Assume that $g_{(x,s)}^{\dir-,R}(t) = g_{(y,s)}^{\dir +,L}(t)$ for some $t > s$. Let $t$ be the minimal such time, and let $(z,t)$ be the point where the geodesics first intersect. By Theorem~\ref{thm:g_basic_prop}, Items~\ref{itm:DL_mont_dir} and~\ref{itm:DL_SIG_mont_x}, for $u > s$,
 \be \label{406}
 g_{(x,s)}^{\dir-,R}(u) \le g_{(x,s)}^{\dir +,R}(u) \wedge g_{(y,s)}^{\dir -,L}(u) \le  g_{(x,s)}^{\dir +,R}(u) \vee g_{(y,s)}^{\dir -,L}(u)  \le  g_{(y,s)}^{\dir +,L}(u).
 \ee
 In particular, when $u = t$, all inequalities in~\eqref{406} are equalities. Further, since  $g_{(x,s)}^{\dir -,R}$,$g_{(x,s)}^{\dir +,R}$ are rightmost geodesics between $(x,s)$ and $(z,t)$ (Theorem~\ref{thm:DL_SIG_cons_intro}\ref{itm:DL_LRmost_geod}), $g_{(x,s)}^{\dir -,R}(u) = g_{(x,s)}^{\dir +,R}(u)$ for $u \in [s,t]$. Similarly, $g_{(y,s)}^{\dir -,L}(u) = g_{(y,s)}^{\dir +,L}(u)$ for $u \in [s,t]$. Since $t$ was chosen minimally for $g_{(x,s)}^{\dir-,R}(t) = g_{(y,s)}^{\dir +,L}(t)$, we have $(z,t) = \mbf z^{\dir -}(y,s;x,s) = \mbf z^{\dir +}(y,s;x,s)$. 
\end{proof}

\begin{proof}[Proof of Theorem~\ref{thm:DL_good_dir_classification} (Classification of coalescence directions)]
\ref{itm:DL_good_dir}$\Rightarrow$\ref{itm:DL_LR_all_agree}: If $\dir \notin \DLBusedc$, then $W_{\dir -} = \W_{\dir +}$, so \ref{itm:DL_LR_all_agree} follows by the construction of the Busemann geodesics from the Busemann functions. 

\smallskip \noindent \ref{itm:DL_LR_all_agree}$\Rightarrow$\ref{itm:DL_good_dir_coal}: Since a geodesic in direction $\dir$ from $(x,s)$ must pass through each horizontal level $t > s$, it is sufficient to show that, for $s \in \R$ and $x < y$, whenever $g_1$ is a semi-infinite  geodesic from $(x,s)$ in direction $\dir$ and $g_2$ is a semi-infinite geodesic from $(y,s)$ in direction $\dir$, $g_1$ and $g_2$ coalesce. Assuming~\ref{itm:DL_LR_all_agree} and using Theorem~\ref{thm:all_SIG_thm_intro}\ref{itm:DL_LRmost_SIG}, for all $t > s$,
\[
g_{(x,s)}^{\dir +,L}(t) = g_{(x,s)}^{\dir -,L}(t) \le g_1(t) \wedge g_2(t) \le g_1(t) \vee g_2(t) \le g_{(y,s)}^{\dir +,R}(t).
\]
By Theorem~\ref{thm:DL_all_coal}\ref{itm:DL_allsigns_coal}, $g_{(x,s)}^{\dir +,L}$ and $g_{(y,s)}^{\dir +,R}$ coalesce, so all inequalities above are equalities for large $t$, and $g_1$ and $g_2$ coalesce. 

\smallskip \noindent \ref{itm:DL_good_dir_coal}$\Rightarrow$\ref{itm:DL_good_dir}: We prove the contrapositive. If $\dir \in \DLBusedc$, then by Theorem~\ref{thm:DL_Buse_summ}\ref{itm:DL_Buse_gen_mont}-\ref{itm:Buse_KPZ_description}, \\$\W_{\dir -}(y,s;x,s) < \W_{\dir +}(y,s;x,s)$ for some $x < y$ and $s \in \R$. By \ref{itm:DL_pm_Buse_eq}$\Leftrightarrow$\ref{itm:DL_disjoint_paths} of Theorem~\ref{thm:Buse_pm_equiv}, $g_{(x,s)}^{\dir-,R}(t) < g_{(y,s)}^{\dir +,L}(t)$ for all $t > s$. In particular, $g_{(x,s)}^{\dir-,R}$ and $g_{(y,s)}^{\dir +,L}$ do not coalesce. 

\smallskip \noindent \ref{itm:DL_LR_all_agree}$\Rightarrow$\ref{itm:DL_good_dir_unique_geod}: By definition of $\NU_0$, whenever $p\notin \NU_0$, $g_p^{\dir \sig,L} = g_{p}^{\dir \sig,R}$ for $\dir \in \R$ and $\sigg \in \{-,+\}$. Hence, assuming $p \notin \NU_0$ and $g_{p}^{\dir -,R} = g_{p}^{\dir +,R}$, we also have $g_{p}^{\dir-,L} = g_{p}^{\dir +,R}$, so there is a unique  geodesic from $p$ in direction $\dir$ by Theorem~\ref{thm:all_SIG_thm_intro}\ref{itm:DL_LRmost_SIG}.

\smallskip \noindent \ref{itm:DL_good_dir_unique_geod}$\Rightarrow$\ref{itm:DL_good_dir_pt_unique}: By Theorem~\ref{thm:DLNU}\ref{itm:DL_NU_p0}, on the event $\Omega_2$, $\NU_0$ contains no points of $\Q^2$, and therefore, $\NU_0$ is not all of $\R^2$.

\smallskip \noindent \ref{itm:DL_good_dir_pt_unique}$\Rightarrow$\ref{itm:DL_good_dir_L_unique} and \ref{itm:DL_good_dir_pt_unique}$\Rightarrow$\ref{itm:DL_good_dir_R_unique} are direct consequences of Theorem~\ref{thm:all_SIG_thm_intro}\ref{itm:DL_LRmost_SIG}: If there is a unique semi-infinite geodesic in direction $\dir$ from a point $p \in \R^2$, then $g_{p}^{\dir -,L} = g_{p}^{\dir +,L} = g_{p}^{\dir -,R} = g_{p}^{\dir +,R}$.

\smallskip \noindent \ref{itm:DL_good_dir_L_unique}$\Rightarrow$\ref{itm:DL_LR_all_agree}: Let $p$ be a point from which $g_{p}^{\dir-,L} = g_{p}^{\dir +,L}$, and call this common geodesic $g$. Let $q$ be an arbitrary point in $\R^2$. By Theorem~\ref{thm:DL_all_coal}\ref{itm:DL_allsigns_coal}, $g_{q}^{\dir -,L}, g_{q}^{\dir +,L}, g_{q}^{\dir - ,R}$, and $g_{q}^{\dir +,R}$ each coalesce with $g$, so $g_{q}^{\dir -,L}$ and $g_{q}^{\dir +,L}$ coalesce. Since both geodesics are the leftmost geodesics between their points by Theorem~\ref{thm:DL_SIG_cons_intro}\ref{itm:DL_LRmost_geod}, they must be the same. Similarly, $g_{q}^{\dir-,R} = g_{q}^{\dir +,R}$. 

\smallskip \noindent \ref{itm:DL_good_dir_R_unique}$\Rightarrow$\ref{itm:DL_LR_all_agree}: follows by the same proof.

\smallskip \noindent \textbf{Item~\ref{itm:DL_allBuse}:} Let $\dir \in \R \setminus \DLBusedc$, and let $g$ be a  semi-infinite geodesic in direction $\dir$, starting from a point $(x,s) \in \R^2$. By Lemma~\ref{lem:L_and_Buse_ineq} and Theorem~\ref{thm:DL_SIG_cons_intro}\ref{itm:arb_geod_cons}, it is sufficient to show that for sufficiently large $t$, 
\be \label{647}
\Ll(x,s;g(t),t) = \W_{\dir}(x,s;g(t),t).
\ee
(we dropped the $\pm$ distinction since $\W_{\dir -} = \W_{\dir +}$). By Item~\ref{itm:DL_good_dir_coal}, $g$ coalesces with $g_{(x,s)}^{\dir,R}$. Then, for sufficiently large $t$, $g(t) = g_{(x,s)}^{\dir,R}(t)$ and by Theorem~\ref{thm:DL_SIG_cons_intro}\ref{itm:DL_all_SIG},~\eqref{647} holds. 
\end{proof}

\subsection{Remaining proofs from Section~\ref{sec:Buse_geod_results} and Proof of Theorem~\ref{thm:DLSIG_main}} \label{sec:Buseextraproofs}
We complete some unfinished business. 
\begin{proof}[Proof of Items~\ref{itm:BuseLim1}--\ref{itm:global_attract} of Theorem~\ref{thm:DL_Buse_summ} and the mixing in Theorem~\ref{thm:Buse_dist_intro}\ref{itm:stationarity}]
We continue to work on the event $\Omega_2$.

\smallskip \noindent \textbf{Item~\ref{itm:BuseLim1} of Theorem~\ref{thm:DL_Buse_summ} (Busemann limits I):}  By Theorem~\ref{thm:DL_good_dir_classification}\ref{itm:DL_allBuse}, if $\dir \notin \DLBusedc$, all $\dir$-directed semi-infinite geodesics are Busemann geodesics, and they all coalesce.  By Theorem~\ref{thm:DL_all_coal}\ref{itm:unif_coal}, there exists a level $T$ such that all geodesics from points starting in the compact set $K$ have coalesced by time $T$. Let $(Z,T)$ denote the location of the point of the common geodesics at time $T$. Let $r_t = (z_t,u_t)_{t \in \R_{\ge 0}}$ be any net with $u_t \to \infty$ and $z_t/u_t \to \dir$. By Theorem~\ref{thm:all_SIG_thm_intro}\ref{itm:finite_geod_stick}, for all sufficiently large $t$ and $p \in K$, all geodesics from $p$ to $r_t$ pass through $(Z,T)$. Then, for $p,q \in K$, 
\[
\Ll(p;r_t) - \Ll(q;r_t) = \Ll(p;Z,T) + \Ll(Z,T;r_t) - (\Ll(q;Z,T) + \Ll(Z,T;r_t)).
\]
By Theorems~\ref{thm:DL_SIG_cons_intro}\ref{itm:arb_geod_cons}\ref{itm:weight_of_geod} and~\ref{thm:DL_Buse_summ}\ref{itm:DL_Buse_add}, the right-hand side is equal to 
\[
\W_\dir(p;Z,T) - \W_\dir(q;Z,T) = \W_\dir(p;q). 
\]

\smallskip\noindent
\textbf{Item~\ref{itm:BuseLim2} of Theorem~\ref{thm:DL_Buse_summ} (Busemann limits II):} By Theorem~\ref{thm:DLBusedc_description}\ref{itm:DL_dc_set_count}, on the event $\Omega_2$, $\DLBusedc$ contains no rational directions. Then, for arbitrary $\dir \in \R$, $s \in \R$, $x < y \in \R$, $\alpha,\beta \in \Q$ with $\alpha < \dir < \beta$, and a net $(z_r,u_r)$ with $u_r \to \infty$ and $z_r/u_r \to \dir$, for sufficiently large $r$, $\alpha u_r < z_r < \beta u_r$. Theorem~\ref{thm:DL_Buse_summ}\ref{itm:BuseLim1} gives the existence of the limits in the first and last lines below, while the monotonicity of Lemma~\ref{lem:DL_crossing_facts}\ref{itm:DL_crossing_lemma} justifies the first and last inequalities:
\begin{align*}
\W_\alpha(y,s;x,s) &= \lim_{r \to \infty} \Ll(y,s;\alpha u_r,u_r) - \Ll(x,s;\alpha u_r,u_r) \\
&\le \liminf_{r \to \infty} \Ll(y,s;z_r,u_r) - \Ll(x,s;z_r,u_r) \\ 
&\le \limsup_{r \to \infty} \Ll(y,s;z_r,u_r) - \Ll(x,s;z_r,u_r)\\
&\le \lim_{r \to \infty} \Ll(y,s;\beta u_r,u_r) -\Ll(x,s;\beta u_r,u_r)= \W_{\beta}(y,s;x,s).
\end{align*}
Sending $\Q \ni \alpha \nearrow \dir$ and $\Q \ni \beta \searrow \dir$ and using Item~\ref{itm:DL_unif_Buse_stick} completes the proof.

\smallskip\noindent \textbf{Item~\ref{itm:global_attract} of Theorem~\ref{thm:DL_Buse_summ} (Global attractiveness):}   We follow a similar proof to the attractiveness in Theorem~\ref{thm:invariance_of_SH}. Let $\dir \notin \DLBusedc$ and assume $\h \in \UC$ is a function satisfying the drift condition~\eqref{eqn:drift_assumptions}. Recall that we define
\be \label{hst}
h_{s,t}(x) = \sup_{z \in \R}\{\Ll(x,s;z,t) + \h(z)\}.
\ee
For $a > 0$ and $s < t$, Theorems~\ref{thm:DL_Buse_summ}\ref{itm:DL_unif_Buse_stick} and~\ref{thm:DLBusedc_description}\ref{itm:DL_dc_set_count} allows us to choose $\ve = \ve(\dir) > 0$ small enough  so that $\dir \pm 2\ve \in \Q $ (and thus $\dir \pm 2\ve \notin \DLBusedc$),  and so for all $x \in [-a,a]$,
\be \label{pmeq}
\W_{\dir \pm 2\ve}(x,s;0,s) = \W_\dir(x,s;0,s).
\ee
By Theorem~\ref{thm:DL_all_coal}\ref{itm:unif_coal}, there exists a random $T = T(a,\dir \pm \ve)$ such that all $\dir - 2\ve$ Busemann geodesics have coalesced by time $T$ and all $\dir + 2\ve$ Busemann geodesics have coalesced by time $T$. For $t > T$, let $g^{\dir \pm 2\ve}(t)$ be locations of these two common geodesics at time $t$. By Theorem~\ref{thm:DL_SIG_cons_intro}\ref{itm:arb_geod_cons}\ref{itm:geo_dir}, $g^{\dir \pm 2\ve}(t)/t \to \dir \pm 2\ve$. By the reflected version of Equation~\eqref{downexit} in Lemma~\ref{lem:unq}, there exists $t_0(a,\ve(\dir),s)$ so that for $t > t_0$, whenever $x \in [-a,a]$ and $z$ is a maximizer in~\eqref{hst},
$
 g^{\dir - 2\ve}(t) < z < g^{\dir + 2\ve}(t).
$
Then, by Lemma~\ref{lem:DL_crossing_facts}\ref{itm:KPZ_crossing_lemma}, for such large $t$,
\[
\W_{\dir - 2\ve}(x,s;0,s) \le  h_{s,t}(x) - h_{s,t}(0) \le \W_{\dir + 2\ve}(x,s;0,s),
\]
while for $-a \le x \le 0$, the equalities reverse. Combined with~\eqref{pmeq}, this completes the proof. 

\smallskip\noindent \textbf{Item~\ref{itm:stationarity} of Theorem~\ref{thm:Buse_dist_intro} (Mixing):}  This proof follows a similar idea as that in Lemma 7.5 of~\cite{Bakhtin-Cator-Konstantin-2014}, and the key is that, within a compact set, the Busemann functions are equal to differences of the directed landscape for large enough $t$. Then, we use Lemma~\ref{lm:horiz_shift_mix}, which states that, as a projection of $\{\Ll,\W\}$, the directed landscape $\Ll$ is mixing under the transformation $T_{z;a,b}$.  
Set $r_z = (az,bz)$.  By a standard $\pi-\lambda$ argument, it suffices to show that for $\dir_1,\ldots\dir_k \in \R$ (ignoring the sign $\sigg$ since $\dir_i \notin \DLBusedc$ a.s.), all compact sets $K := K_1 \times K_2^k \subseteq \Rup \times (\R^4)^k$, and all Borel sets $A,B \in C(K,\R)$,
\begin{align*}
&\lim_{z \to \infty} \Pp\Bigl(\{\Ll, \W_{\dir_{1:k}}\}\big|_K \in A, \{T_{z;a,b} \Ll, T_{z;a,b}\W_{\dir_{1:k}}\}\big|_K \in B\Bigr) \\
&\qquad\qquad= \Pp\bigl( \{\Ll, \W_{\dir_{1:k}}\}\big|_K \in A\bigr) \Pp\bigl(\{\Ll, \W_{\dir_{1:k}}\}\big|_K \in B \bigr),
\end{align*}
where we use the shorthand notation 
\[
\{\Ll, \W_{\dir_{1:k}}\}\big|_K := \{\Ll(v), \W_{\dir_i}(p;q):1 \le i \le k,(v,p,q) \in K\},
\]
and $T_{z;a,b}$ acts on $\Ll$ and $\W$ as projections of $\{\Ll,\W\}$.
By Theorem~\ref{thm:DL_Buse_summ}\ref{itm:BuseLim1}, we may choose $t > 0$ sufficiently large so that 
\be \label{busc1}
\Pp(\W_{\dir_i}(p;q) = \Ll(p;(t\dir,t)) - \Ll(q;(t\dir,t)) \;\forall (p,q) \in K_2, 1 \le i \le k) \ge 1 - \ve. 
\ee
By stationarity of the process under space-time shifts, we also have that for such large $t$ and all $z \in \R$, 
\be \label{busc2}
\Pp(T_{z;a,b} \W_{\dir_i}(p;q) = T_{z;a,b} [\Ll(p ;(t\dir,t)) - \Ll(q;(t\dir,t))] \;\forall (p,q) \in K_2, 1 \le i \le k) \ge 1 - \ve
\ee
Let $C_{z,t}$ be the intersection of the events in~\eqref{busc1} over $1 \le i \le k$ with the event~\eqref{busc2}.  Then for large enough $t$, $\Pp(C_{z,t}) \ge 1 - 2\ve$, and
\begin{align*}
    &\Bigl|\Pp\Bigl(\{\Ll, \W_{\dir_{1:k}}\}|_K \in A, \{T_{z;a,b} \Ll, T_{z;a,b}\W_{\dir_{1:k}}\}|_K \in B\Bigr)  \\
    &\qquad- \Pp\Bigl( \{\Ll, \W_{\dir_{1:k}}\}|_K \in A\Bigr) \Pp\Bigl(\{\Ll, \W_{\dir_{1:k}}\}|_K \in B \Bigr) \Bigr|  \\
    &\le \Bigl|\Pp\Bigl(\{\Ll, \W_{\dir_{1:k}}\}|_K \in A, \{T_{z;a,b} \Ll, T_{z;a,b}\W_{\dir_{1:k}}\}|_K \in B, C_{z,t}\Bigr) \\
    &\qquad -\Pp\Bigl( \{\Ll, \W_{\dir_{1:k}}\}|_K \in A,C_{z,t}\Bigr) \Pp\Bigl(\{\Ll, \W_{\dir_{1:k}}\}|_K \in B,C_{z,t} \Bigr) \Bigr| + C\ve \\
    &= \Bigl|\Pp\Bigl(\{\Ll(v), \Ll(p;(t\dir_{1:k},t)) - \Ll(q;(t\dir_{1:k},t))\}|_K \in A, \\
    &\qquad\qquad \{T_{z;a,b} \Ll(v), T_{z;a,b}[\Ll(p;(t\dir_{1:k},t)) - \Ll(q;(t\dir_{1:k},t))]\}|_K \in B, C_{z,t}\Bigr) \\
    &\qquad -\Pp\Bigl( \{\Ll(v), \Ll(p;(t\dir_{1:k},t)) - \Ll(q;(t\dir_{1:k},t))\}|_K \in A,C_{z,t}\Bigr) \\ &\qquad\qquad\times \Pp\Bigl(\{\Ll(v), \Ll(p;(t\dir_{1:k},t)) - \Ll(q;(t\dir_{1:k},t))\}|_K \in B,C_{z,t} \Bigr) \Bigr| + C\ve  \\
    &\le \Bigl|\Pp\Bigl(\{\Ll(v), \Ll(p;(t\dir_{1:k},t)) - \Ll(q;(t\dir_{1:k},t))\}|_K \in A, \\
    &\qquad\qquad \{T_{z;a,b} \Ll(v), T_{z;a,b}[\Ll(p;(t\dir_{1:k},t)) - \Ll(q;(t\dir_{1:k},t))]\}|_K \in B\Bigr) \\
    &\qquad -\Pp\Bigl( \{\Ll(v), \Ll(p;(t\dir_{1:k},t)) - \Ll(q;(t\dir_{1:k},t))\}|_K \in A\Bigr) \\ &\qquad\qquad\times \Pp\Bigl(\{\Ll(v), \Ll(p;(t\dir_{1:k},t)) - \Ll(q;(t\dir_{1:k},t))\}|_K \in B \Bigr) \Bigr| + C'\ve,
\end{align*}
where the constants $C,C'$ came as the cost of adding and removing the high probability event $C_{z,t}$. The proof is complete by sending $z \to \infty$ and using the mixing of $\Ll$ under the shift $T_{z;a,b}$ (Lemma \ref{lm:horiz_shift_mix}).
\end{proof}

\begin{proof}[Proof of Theorem~\ref{thm:DLSIG_main}]
\textbf{Item~\ref{itm:all_dir} (All geodesics have a direction):} First, we show that, on $\Omega_2$, if $g$ is a semi-infinite geodesic starting from $(x,s)$, then
\be \label{594}
-\infty < \liminf_{t \to \infty} t^{-1}{g(t)} \le \limsup_{t \to \infty} t^{-1}{g(t)} < \infty.
\ee
We show the rightmost inequality, the leftmost being analogous. Assume, by way of contradiction, that $\limsup_{t \to \infty} g(t)/t = \infty$. By the directedness of Theorem~\ref{thm:DL_SIG_cons_intro}\ref{itm:DL_all_SIG}, $\forall \dir \in \R$ there exists an infinite sequence $t_i \to \infty$ such that  $g(t_i) > g_{(x,s)}^{\dir +,L}(t_i)$ for all $i$. Since $g_{(x,s)}^{\dir +,L}$ is the leftmost geodesic between any two of its points (Theorem~\ref{thm:DL_SIG_cons_intro}\ref{itm:DL_LRmost_geod}), we must have $g(t) \ge g_{(x,s)}^{\dir+,L}(t)$ $\forall \dir \in \R$ and  $t \in \R$. By Theorem~\ref{thm:g_basic_prop}\ref{itm:limits_to_inf}, $g(t) = \infty$ $\forall t > s$, a contradiction. 

Having established~\eqref{594},   assume by way of contradiction that
\[
\liminf_{t \to \infty} t^{-1} {g(t)} < \limsup_{t \to \infty} t^{-1}{g(t)}.
\]
Choose some $\dir$ strictly between the two values above. By the directedness of Theorem~\ref{thm:DL_SIG_cons_intro}\ref{itm:DL_all_SIG}, there exists a sequence $t_i \to \infty$ such that $g_{(x,s)}^{\dir +,R}(t_i) < g(t_i)$ for $i$ even and $g_{(x,s)}^{\dir +,R}(t_i) > g(t_i)$ for $i$ odd. This cannot occur since  $g_{(x,s)}^{\dir +,R}$ is the rightmost geodesic between any two of its points.

By  Theorem~\ref{thm:DL_SIG_cons_intro}\ref{itm:DL_all_SIG}, for each $\dir \in \R$ and $(x,s) \in \R^2$, $g_{(x,s)}^{\dir +,R}$, for example, is a semi-infinite geodesic from $(x,s)$ in direction $\dir$, justifying the claim that there is at least one semi-infinite geodesic from each point and in every direction. 

\smallskip\noindent \textbf{Item~\ref{itm:good_dir_coal} (Coalescence):} The first statement  follows from the equivalences \ref{itm:DL_good_dir}$\Leftrightarrow$\ref{itm:DL_good_dir_coal}$\Leftrightarrow$\ref{itm:DL_good_dir_unique_geod} of Theorem~\ref{thm:DL_good_dir_classification}. 
By Theorem~\ref{thm:DLNU}\ref{itm:DL_NU_p0}, $\Pp(p \in \NU_0)=0$ $\forall p \in \R^2$. This and Fubini's theorem imply that the set $\NU_0$ almost surely has planar Lebesgue measure zero. 


\smallskip\noindent \textbf{Item~\ref{itm:bad_dir_split} (Non-uniqueness in exceptional directions):} This follows from Remark~\ref{rmk:split_from_all_p}. 
\end{proof}

\section{Random measures and their supports}\label{sec:meas_supp}
This section studies further the   points with disjoint geodesics in the same direction, discussed in Theorem~\ref{thm:Split_pts} and Remark~\ref{rmk:supports}. Recall the  functions $f_{s,\dir}(x) = \W_{\dir +}(x,s;0,s) - \W_{\dir -}(x,s;0,s)$ defined in~\eqref{fsdir} and the sets $\Split_{s,\dir}$ from \eqref{Split_sdir}:
\be\label{eqn:gen_split_set4}\begin{aligned}
    \Split_{s,\dir} &:= \{x \in \R:  \exists \text{ 
    \textbf{disjoint}}   \text{   }\text{semi-infinite  geodesics from  }(x,s) \text{ in direction }\dir\}\\
    \Split &:= \bigcup_{s \tspb\in\tspb \R, \, \dir\tspb \in\tspb \DLBusedc} \Split_{s,\dir} \times \{s\}. 
    \end{aligned}\ee

Each  $\dir \in \R$ is a direction of discontinuity with probability zero.  Conditioning   on   $\dir \in \DLBusedc$ is done through the  Palm kernel  from the theory of random measures  (see   \cite{Kallenberg-book} for background).  The next theorem is proved   in Section~\ref{sec:Palm}, together with a study of the random point process $\{(\tau_\dir, \dir)\}_{\dir\tspb\in \tspb\DLBusedc}$. The Palm conditioning is made precise in Theorems \ref{thm:Lac} and \ref{thm:indep_loc}.

\begin{theorem} \label{thm:BusePalm}
For $\dir \in \R$   consider the random function $f_\dir := f_{0,\dir}$ from~\eqref{fsdir}. Let
\[
\tau_\dir = \inf\{x > 0: f_{\dir}(x) > 0 \}\qquad\text{and}\qquad \bck{{\tau}_\dir} = \inf\{x > 0: -f_{\dir}(-x) > 0\}
\]
denote the points to the right and left of the origin beyond which $\W_{\dir +}(\aabullet,0;0,0)$ and $\W_{\dir -}(\aabullet,0;0,0)$ separate, if ever. Then,  conditionally on $\dir \in \DLBusedc$ in the appropriate Palm sense, the restarted functions 
\[
x\mapsto f_{\dir}(x + \tau_\dir) -f_{\dir}(\tau_\dir)
\quad\text{ and } \quad 
x \mapsto -f_{\dir}(-x - \bck{\tau_{\dir}}) + f_{\dir}(-\bck{\tau_\dir}),\quad x \in\R_{\ge0},  
\]
 are equal in distribution to two independent running maximums of Brownian motion with diffusivity $2$ and zero drift. In particular, they are equal in distribution to two independent appropriately normalized versions of Brownian local time. See Figure~\ref{fig:loc_time}.
\end{theorem}

\begin{figure}[t]
\centering
\includegraphics[height = 3in]{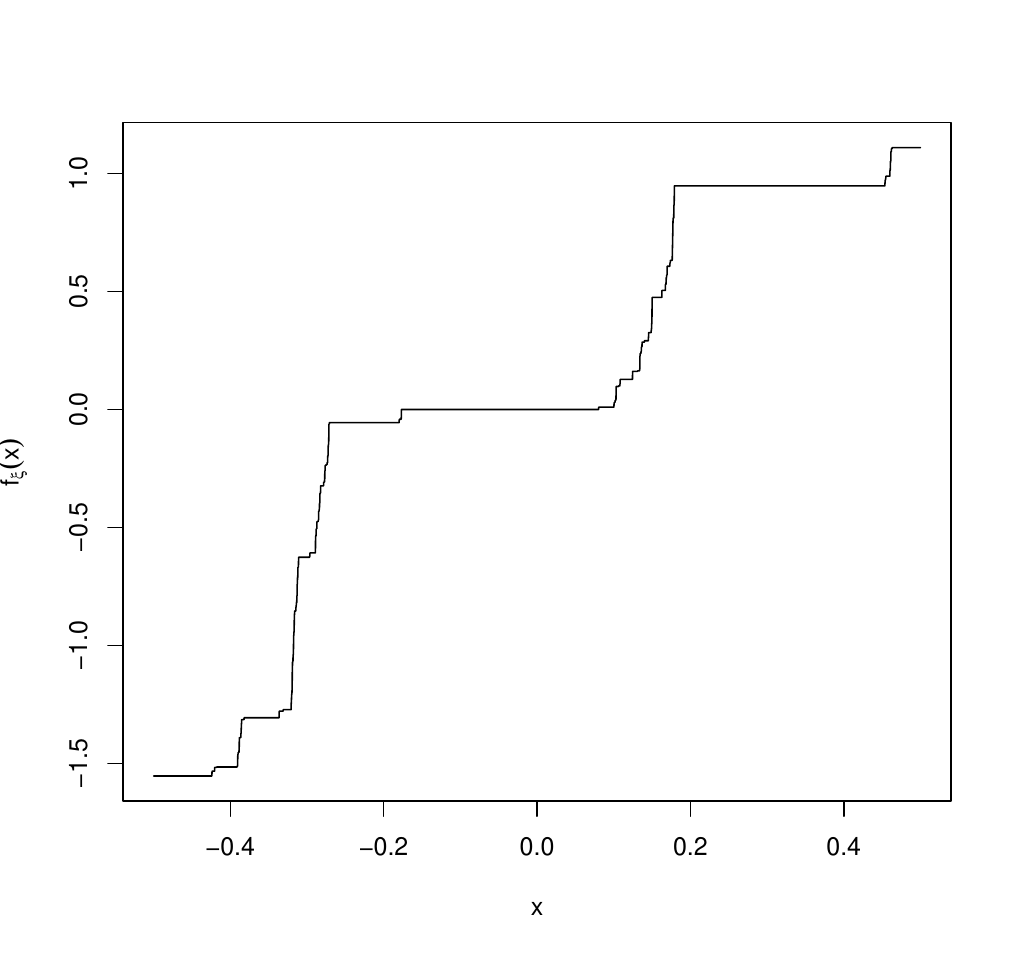}
\caption{\small The Busemann difference profile $f_\dir(x)$. The function vanishes   in a nondegenerate  random neighborhood 
of $x = 0$ and evolves as two independent Brownian local times to the left and right {\rm(}Theorem \ref{thm:BusePalm}{\rm)}.} 
\label{fig:loc_time}
\end{figure}

As described in the next  theorem, $\Split_{s,\dir}$ is the support of a random measure, up to the removal of an at most  countable set.

\begin{theorem} \label{thm:random_supp}

On a single event of full probability, the function $f_{s,\dir}$ is nondecreasing simultaneously for all $s \in \R$ and $\dir \in \DLBusedc$.  Denote the set of local variation of $f_{s,\dir}$  by  
\be \label{Dsdir}  \D_{s,\dir} =  \{ x\in\R:  f_{s,\dir}(x - \ve) < f_{s,\dir}(x + \ve)\; \forall \ve > 0 \}.
\ee
Then, on a single event of full probability, simultaneously for each $s \in \R$ and $\dir \in \DLBusedc$,
\be \label{eqn:supp_set}
\D_{s,\dir} = \Split_{s,\dir}^L \cup \Split_{s,\dir}^R \;\subseteq  \;\Split_{s,\dir},
\ee
where for $S \in \{L,R\}$,
\be \label{eqn:split_LR_sdir}
\Split_{s,\dir}^S := \{x \in \R: g_{(x,s)}^{\dir -,S} \text{ and } g_{(x,s)}^{\dir +,S} \text{ are disjoint}\}.
\ee
 $(\Split_{s,\dir} \setminus \D_{s,\dir}) \times \{s\}$ is contained in the at most countable set $\NU_1^{\dir -} \cap \tspb\NU_1^{\dir +} \cap \,\Hh_s$. 
\end{theorem}
\begin{remark} \label{rmk:NUsupp}
Presently, we do not know if $\D_{s,\dir}$ equals $\Split_{s,\dir}$.  Since  $\NU_1^{\dir -} \cap \NU_1^{\dir +} \subseteq \NU_1$ and  $\NU_1 \cap \,\Hh_s$ is at most countable (Theorem~\ref{thm:DLNU}\ref{itm:DL_NU_count}), $\Split_{s,\dir}$ and $\D_{s,\dir}$ have the same Hausdorff dimension for all $s \in \R$ and $\dir \in \DLBusedc$. 
In Section~\ref{sec:last_proofs} we prove that this Hausdorff dimension is $\f{1}{2}$ on an $s$-dependent probability one event (as Theorem~\ref{thm:Split_pts}\ref{itm:Hasudorff1/2}). 
\end{remark}

The remainder of this section develops the theory needed to prove Theorems~\ref{thm:BusePalm} and~\ref{thm:random_supp} and ultimately Theorem~\ref{thm:Split_pts}. Sections~\ref{sec:Palm} and~\ref{sec:decoup} develop the Palm kernel theory necessary for Theorem~\ref{thm:BusePalm}. The proofs of Theorems~\ref{thm:BusePalm},~\ref{thm:random_supp},~\ref{thm:Split_pts} are in Section~\ref{sec:last_proofs}, along with the unfinished business of Theorem~\ref{thm:DLBusedc_description}\ref{itm:Busedc_t}.


\subsection{Random measures and Palm kernels} \label{sec:Palm}
 To study Palm conditioning, we represent the Busemann process $\{\W_{\dir+}(\aabullet,0,0,0)\}_{\dir\in\R}$ by the stationary horizon $\{G_\dir(\aabullet)\}_{\dir\in\R}$, as permitted by Theorem \ref{thm:Buse_dist_intro}\ref{itm:SH_Buse_process}. 
Define the process of jumps 
\[
	\shdif := \{\shdif_\dir\}_{\dir\tsp\in\tsp\R} = \{G_\dir-G_{\dir-}\}_{\dir\tsp\in\tsp\R}
\]
where $G_{\dir -} = \lim_{\alpha \nearrow \dir} G_\alpha$.  Either  
$\shdif_\dir$ vanishes identically or $\shdif_\dir$ is a   nondecreasing continuous function that vanishes in a nondegenerate (random) neighborhood of the origin.  By a combination of Theorem \ref{thm:SH10}\ref{itm:SH_sc}--\ref{itm:SH_sc2}, 
\be\label{H65}
\{\shdif_{\dir+\eta}(y+x)-\shdif_{\dir+\eta}(y): x \in \R\}_{\dir \in \R}\;\deq\;\{\shdif_{\dir}(x):x \in \R\}_{\dir \in \R}  \quad \forall\tsp y, \eta\in\R.  
\ee
We study the functions $\shdif_\dir(x)$ first for $x\ge0$.  Approximate $\shdif$ by a process $\shdif^{N}$ defined on dyadic rational $\dir$. For  $N\in \Z_{>0}$ let 
\begin{equation}\label{Hd} 
	\shdif^{N}_{\dir_i}=G_{\dir_i}-G_{\dir_{i-1}} \qquad\text{for } \  \dir_i=\dir^N_i=i2^{-N} \ \text{ and } \  i\in \Z.
\end{equation}For $i \in \Z$, let
\begin{equation}\label{taui}
	\tau_{\dir_i}^{N}= \inf\{x > 0: \shdif^{N}_{\dir_i}(x) > 0\}.
\end{equation}
Since the $G_{\dir_i}$ have different drifts for different values of $i$, $\tau_{\dir_i}^{N}<\infty$ almost surely. For $f\in C(\R)$ and $\tau \in \R$,   let 
$[f]^{\tau}\in C(\R_{\ge0})$ denote the restarted function
\begin{equation}
[f]^{\tau}(x) =f(\tau+x) -f(\tau) \  \text{ for }  x\in [0,\infty) .
\end{equation}

Denote by $\mathcal{D}^{\alpha}$
the distribution on $C(\R_{\ge0})$  of  the running maximum of a Brownian motion with drift $\alpha\in\R$ and diffusivity $2$. That is, if $X$ denotes standard Brownian motion, then 
\[   \mathcal{D}^{\alpha}(A)  =  \Pp\bigl\{  \bigl[ \sup_{0\leq u\leq s} 2X(u)+\alpha u\bigr]_{s\in[0,\infty)} \in A  \bigr\}  
\]
for Borel sets $A\subset C(\R_{\ge0})$.  When the drift vanishes  ($\alpha=0$)  we  abbreviate  $\mathcal{D}=\mathcal{D}^0$.

\begin{lemma} \label{lem:WBSM}
Let $B^\alpha = \{B^\alpha(x): x \ge 0\}$ be a Brownian motion with drift $\alpha$ and diffusivity $2$. Let $W$ be an almost surely negative random variable independent of $B^\alpha$. Let 
\[\theta = \inf\{x > 0:  W+B^{\alpha}(x) \ge 0 \}.
\]
Then, for all $x > 0$,
\be \label{condD}
\Pp\Big(\Big[\sup_{0\leq s\leq \theta+u} W+B^{\alpha}(s)\Big]^+_{u\in[0,\infty)}\in \aabullet \,\Big|\,\theta=x\Big) = \D^\alpha(\aabullet).
\ee
In particular, 
\be \label{uncondD}
\Pp\Big(\Big[\sup_{0\leq s\leq \theta+u} W+B^{\alpha}(s)\Big]^+_{u\in[0,\infty)}\in \aabullet\Big) = \D^\alpha(\aabullet)
\ee
\end{lemma}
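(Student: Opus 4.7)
The plan is a direct application of the strong Markov property of $B^\alpha$ at the stopping time $\theta$. First observe that $\theta > 0$ almost surely, since $W < 0$ and $B^\alpha(0) = 0$ together with continuity of $B^\alpha$ force $W + B^\alpha(s) < 0$ on some right-neighborhood of $0$. On the event $\{\theta < \infty\}$, continuity of $s \mapsto W + B^\alpha(s)$ combined with the definition of $\theta$ as the first passage time of this path to $[0,\infty)$ forces $W + B^\alpha(\theta) = 0$. In particular, $\sup_{0 \le s \le \theta}(W + B^\alpha(s)) = 0$, attained at $s = \theta$ and strictly negative on $[0,\theta)$.

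Next, enlarge the natural filtration of $B^\alpha$ by $\sigma(W)$. Since $W$ is independent of $B^\alpha$, the process $B^\alpha$ remains a Brownian motion with drift $\alpha$ and diffusivity $2$ with respect to this enlarged filtration, and $\theta$ is a stopping time for it. The strong Markov property then yields that the shifted process
\[
\tilde B(u) := B^\alpha(\theta + u) - B^\alpha(\theta), \qquad u \ge 0,
\]
is again a Brownian motion with drift $\alpha$ and diffusivity $2$ and is independent of $\F_\theta$. In particular $\tilde B$ is independent of the pair $(W,\theta)$.

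The identification of the process then follows by splitting the supremum at $\theta$. For $u \ge 0$ on $\{\theta < \infty\}$,
\begin{align*}
\sup_{0 \le s \le \theta + u}(W + B^\alpha(s))
&= \max\Big(\sup_{0 \le s \le \theta}(W + B^\alpha(s)),\ \sup_{\theta \le s \le \theta + u}(W + B^\alpha(s))\Big) \\
&= \max\Big(0,\ \sup_{0 \le v \le u} \tilde B(v)\Big)
= \sup_{0 \le v \le u} \tilde B(v),
\end{align*}
where the middle equality uses $W + B^\alpha(\theta) = 0$ to rewrite $W + B^\alpha(\theta + v) = \tilde B(v)$, and the last equality uses $\tilde B(0) = 0$, which forces $\sup_{0 \le v \le u} \tilde B(v) \ge 0$ so that the outer $\max$ with $0$ is redundant. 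The positive part in the statement of the lemma is therefore redundant on $\{\theta < \infty\}$, and
\[
\Big[\sup_{0 \le s \le \theta + u}(W + B^\alpha(s))\Big]^+_{u \in [0,\infty)}
= \Big[\sup_{0 \le v \le u} \tilde B(v)\Big]_{u \in [0,\infty)}.
\]

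By definition of $\D^\alpha$, the process on the right-hand side has law $\D^\alpha$. Since $\tilde B$ is independent of $\theta$, its conditional law given $\theta = x$ is its unconditional law, which proves \eqref{condD}; then \eqref{uncondD} follows either by integrating against the distribution of $\theta$, or directly from the same independence. The only delicate point is ensuring that the strong Markov property is applied in a filtration to which $B^\alpha$ is adapted and in which it remains a Brownian motion with the correct drift and diffusivity; the independence of $W$ and $B^\alpha$ makes this routine. The rest is the elementary observation that at the hitting time $\theta$, the process $W + B^\alpha$ restarts from $0$, so its running maximum past $\theta$ is the running maximum of a fresh Brownian motion started at $0$.
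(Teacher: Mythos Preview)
Your proof is correct and follows essentially the same approach as the paper: both enlarge the filtration of $B^\alpha$ by $\sigma(W)$, observe that $\theta$ is a stopping time for this filtration with $W+B^\alpha(\theta)=0$, and apply the strong Markov property to identify the post-$\theta$ running maximum with that of a fresh Brownian motion independent of $\theta$. Your write-up is in fact slightly more explicit about why the positive part is redundant and why the enlarged filtration still supports the strong Markov property, but the argument is the same.
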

\begin{proof}

Let $A\in\mathcal B(C(\R_{\ge0}))$ and  $\theta>0$.  Below, notice that  $B^{\alpha}(\theta)=-W$. Then, noting that $\theta$ is a stopping time with respect to the filtration $\mathcal{F}_y=\sigma\big(W,\{B^{\alpha}(x)\}_{x\in[0,y]}\big)$, we use   the strong Markov property to restart at time $\theta$. 
	\begin{equation}\label{Oh2}
	\begin{aligned}
	&\quad\; \Pp\Big(\Big[\sup_{0\leq s\leq \theta+u} W+B^{\alpha}(s)\Big]^+_{u\in[0,\infty)}\in A \,\Big|\,\theta=x\Big)\\
	&=\Pp\Big( \Bigl[\,\sup_{\theta\leq s\leq \theta+u} W+B^{\alpha}(s)\Bigr]_{u\in[0,\infty)} \in A \,\Big|\,\theta=x\Big)\\
	&=\Pp\Big( \Bigl[\,\sup_{0\leq s\leq u} B^{\alpha}(\theta+s)- B^{\alpha}(\theta)\Bigr]_{u\in[0,\infty)} \in A \,\Big|\,\theta=x\Big)\\
	&=\Pp\Big( \Bigl[\,\sup_{0 \leq s\leq u} B^{\alpha}(s)\Bigr]_{u\in[0,\infty)} \in A\Big)=\mathcal{D}^{\alpha}(A).
	\end{aligned}
	\end{equation}
	The claim of~\eqref{condD} has now been verified. Equation~\eqref{uncondD} follows. 
\end{proof}

\begin{corollary} \label{cor:discrete_restart}
	Let  $\alpha_N=2^{-N+1}$. Then for all $i\in \Z$ and  $x>0$,  
	\begin{equation}\label{eq4}
	\Pp\big(\big[\shdif^{N}_{\dir_i}\big]^{\tau^{N}_{\dir_i}}\in \aabullet\,\,\big|\,\tau^{N}_{\dir_i}=x\big)= \mathcal{D}^{\alpha_N}(\aabullet) .
	\end{equation}
\end{corollary}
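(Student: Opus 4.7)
My plan is to reduce the corollary to Lemma~\ref{lem:WBSM} via a pathwise representation of $\shdif^N_{\dir_i}$ on $[0,\infty)$. Specifically, I would show that on the same probability space (possibly after enlarging) there exist an almost surely strictly negative random variable $W$ and a one-sided Brownian motion $B^{\alpha_N}$ starting at $0$ with drift $\alpha_N = 2^{-N+1}$ and diffusivity $2$, with $W$ independent of $B^{\alpha_N}$, such that
\[
\shdif^N_{\dir_i}(x) = \Bigl[\sup_{0\le s\le x}\bigl(W + B^{\alpha_N}(s)\bigr)\Bigr]^+ \qquad\text{for all } x \ge 0.
\]
Given this, the hitting time $\theta$ of Lemma~\ref{lem:WBSM} coincides almost surely with $\tau^N_{\dir_i}$ (by path-continuity of BM, the first $x>0$ at which $W+B^{\alpha_N}(x)\ge 0$ agrees with the first at which it is strictly positive), and because $\shdif^N_{\dir_i}(\tau^N_{\dir_i})=0$ we have $[\shdif^N_{\dir_i}]^{\tau^N_{\dir_i}}(u)=\shdif^N_{\dir_i}(\tau^N_{\dir_i}+u)$, which is exactly the process appearing in \eqref{condD} of Lemma~\ref{lem:WBSM}. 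Identity \eqref{eq4} then follows with $\alpha=\alpha_N$.

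To obtain the representation, I would invoke the queuing-map construction of the joint distribution of $(G_{\dir_{i-1}},G_{\dir_i})$ from Appendix~\ref{sec:stat_horiz} and \cite{Busani-2021}. There, this pair is realized as the image of two independent two-sided Brownian motions $(\beta_{i-1},\beta_i)$ of drifts $(2\dir_{i-1},2\dir_i)$ and common diffusivity $\sqrt 2$ under a Brownian queuing transformation. By independence of the drivers, $B^{\alpha_N}(x):=(\beta_i-\beta_{i-1})(x)-(\beta_i-\beta_{i-1})(0)$ is a one-sided Brownian motion of drift $\alpha_N = 2(\dir_i-\dir_{i-1})$ and diffusivity $2$. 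Under the queuing dynamics, the forward evolution of $G_{\dir_i}-G_{\dir_{i-1}}$ on $[0,\infty)$ is the Skorokhod reflection of $B^{\alpha_N}$ at zero, started from the past ``queue content'' random variable $-W\ge 0$ produced by the data at $s\le 0$. The Burke-type output theorem for Brownian queues (cf.\ \cite{O'Connell-2003}) yields the required independence of $W$ from the forward increments of $B^{\alpha_N}$. Strict negativity of $W$ is equivalent to $\tau^N_{\dir_i}>0$ almost surely, which holds because the drift gap $\alpha_N>0$ makes the stationary queue content strictly positive with probability one. The translation invariance \eqref{H65} confirms that the distribution produced is independent of $i$.

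The main obstacle is packaging the two-species queuing construction of SH to extract this representation cleanly, most delicately the independence $W\perp B^{\alpha_N}$. Everything downstream is a direct invocation of Lemma~\ref{lem:WBSM}. An equivalent route that bypasses an explicit queuing representation would use the strong Markov property of the bivariate process $x\mapsto (G_{\dir_{i-1}}(x),G_{\dir_i}(x))$ at $\tau^N_{\dir_i}$: at that instant the two components coincide, and the two-species structure of SH forces the forward evolution to be that of a fresh pair of Brownian motions of drifts $2\dir_{i-1},2\dir_i$ coupled so that the nondecreasing difference $\shdif^N_{\dir_i}(\tau^N_{\dir_i}+u)$ equals the running maximum of a fresh Brownian motion of drift $\alpha_N$ and diffusivity $2$, which is precisely $\mathcal{D}^{\alpha_N}$.
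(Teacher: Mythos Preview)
Your proposal is correct and follows the same approach as the paper: represent $\shdif^N_{\dir_i}|_{[0,\infty)}$ as $\bigl[W + \sup_{0\le s\le \cdot} B^{\alpha_N}(s)\bigr]^+$ with $W<0$ independent of $B^{\alpha_N}$, then apply Lemma~\ref{lem:WBSM}. The paper obtains this representation directly from Definition~\ref{def:SH} and \eqref{Phialt}, which give $\shdif^N_{\dir_i}(y)=\sup_{x\le y}B(x)-\sup_{x\le 0}B(x)$ for the two-sided Brownian motion $B=f_2-f_1$ of drift $\alpha_N$ and diffusivity $2$; the independence $W\perp B^{\alpha_N}$ is then immediate from the independence of $B|_{(-\infty,0]}$ and $B|_{[0,\infty)}$, so your appeal to a Burke-type output theorem is unnecessary.
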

\begin{proof}
	From the definition of the stationary horizon (Definition \ref{def:SH}) one can deduce that,   for each $i\in\Z$, the process $\shdif^{N}_{\dir_i}$  
	has the same distribution as the process
	\begin{equation} \label{W+B}
	\wt{J}^{N}(y)=\Big[\sup_{0\leq x\leq y} W+B^{\alpha_N}(x)\Big]^+ 
	\end{equation}
	where $B^{\alpha_N}$ is a Brownian motion with drift $\alpha_N=2^{-N+1}$ and diffusivity $2$, and  $W$ is an almost surely  negative random variable   independent of $B^{\alpha_N}$. Define
	\begin{equation}\label{thetaN}
	\theta^N = \inf\{x > 0: \wt J^{N}(x) > 0 \}= \inf\{x > 0:  W+B^{\alpha_N}(x) \ge 0 \}.
	\end{equation}
	Hence, now $(\shdif^{N}_{\dir_i},\tau^{N}_{\dir_i})\deq (\wt{J}^{N},\theta^N)$, and the result follows from Lemma~\ref{lem:WBSM}.
\end{proof}

For $\dir\in \R$ let  
\begin{equation}\label{eq9}
\tau_\dir=\inf\{x\ge 0:\shdif_\dir(x) > 0\}. 
\end{equation}
The connection with the discrete counterpart in \eqref{taui} is 
\be\label{tau45}  
\tau^N_{\dir_i}= \min\{ \tau_\dir:  \dir\in(\dir_{i-1}, \dir_i]\}.  
\ee
On the space  $\R_{\ge0}\times \R$ define the random  point measure and its  mean measure  
\begin{equation}\label{SHpp}  
	\SHpp=\sum_{(\tau_\dir,\dir):\tau_{\dir}<\infty} \delta_{(\tau_\dir,\dir)}
	\quad\text{ and } \quad 
	\lambda_{\SHpp}(\aabullet):=
	\E[ \tspb{\SHpp}(\aabullet)\tspb]. 
\end{equation}
The point process $\SHpp$ records the jump  directions $\dir$  and the points $\tau_\dir$ where $G_\dir$ and  $G_{\dir-}$ separate on $\R_{\ge0}$.   Theorem~\ref{thm:SH10}\ref{itm:SH_j} ensures that $\SHpp$ and $\lambda_{\SHpp}$ are locally finite.  
It will cause no confusion to use the same  symbol $\Gamma$ to denote the random set:  \[ 
	\SHpp=\{(\tau_\dir,\dir): \dir\in\R, \tau_{\dir}<\infty\} . 
\] 
Then also $\lambda_{\SHpp}(\aabullet)=
	\E(\tspb|\SHpp\cap \aabullet|\tspb)$ where $| \aabullet |$ denotes cardinality.  
The counterparts for the approximating process are 
\begin{equation} \label{SHppdis}
\SHpp^{(N)}=\{(\tau^{N}_{\dir_i},\dir_i):i\in \Z, \tau^{N}_{\dir_i}<\infty\}
\quad\text{and}\quad 
\lambda_{\SHpp}^{(N)}(\aabullet):=\mathbb{E}(|\SHpp^{(N)}\cap  \aabullet|).
\end{equation}

The dyadic partition in \eqref{Hd} imposes a certain monotonicity as $N$ increases: $\tau_\xi$ values can be added but not removed. The $\xi$-coordinates that are not dyadic rationals  move as the partition refines.  So we have 
\be\label{tau67} 
\{  \tau^{N}_{\dir_i} : (\tau^{N}_{\dir_i},\dir_i) \in \SHpp^{(N)}\} 
\subset 
\{  \tau^{N+1}_{\dir_i} : (\tau^{N+1}_{\dir_i},\dir_i) \in \SHpp^{(N+1)}\}
\subset 
\{  \tau_{\dir} : (\tau_{\dir},\dir) \in \SHpp\}.  
\ee


\begin{lemma}\label{lm:ac}
	The measure $\lambda_{\SHpp}$ and Lebesgue measure $m$ are mutually absolutely continuous on $\R_{>0}\times\R$.  
	The Radon-Nikodym derivative is given by 
	\be\label{RN128} 
	\f{d\lambda_{\SHpp}}{dm}(\tau,\dir) = \sqrt{\f{2}{\pi \tau}} \qquad \text{for } \ (\tau,\dir)\in\R_{>0}\times\R .
	\ee
\end{lemma}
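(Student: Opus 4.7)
The plan is to combine translation invariance of $\SHpp$ in the $\dir$-direction with the dyadic approximation $\SHpp^{(N)}$, reducing the lemma to a one-parameter limit that is then evaluated in closed form using the Brownian-queue interpretation of SH.

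\emph{Reduction and dyadic formula.} Setting $y=0$ in \eqref{H65} shows that the law of $(\shdif_\dir)_{\dir\in\R}$ is translation invariant in $\dir$, hence $\SHpp$ and $\lambda_\SHpp$ are translation invariant in their second coordinate. Therefore $\lambda_\SHpp(d\tau\,d\dir)=\mu(d\tau)\,d\dir$ for a $\sigma$-finite measure $\mu$ on $\R_{>0}$, and it suffices to show $\mu(d\tau)=\sqrt{2/(\pi\tau)}\,d\tau$. Fix $t>0$ and let $S_N:=\sum_{i=1}^{2^N}\mathbf 1(\tau^N_{\dir^N_i}\le t)$. By \eqref{tau45}, each indicator counts whether $(\dir^N_{i-1},\dir^N_i]$ contains some $\dir$ with $\tau_\dir\le t$, and by Theorem \ref{thm:SH10}\ref{itm:SH_j} only finitely many such $\dir$ lie in $(0,1]$, so $S_N$ is monotone nondecreasing in $N$ with pointwise limit $\#\{\dir\in(0,1]:\tau_\dir\le t\}$. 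Using $\tau^N_{\dir^N_i}\deq\theta^N$ (from the proof of Corollary \ref{cor:discrete_restart}) and monotone convergence,
\[
\mu((0,t])=\E\#\{\dir\in(0,1]:\tau_\dir\le t\}=\lim_{N\to\infty} 2^N\Pp(\theta^N\le t).
\]

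\emph{Evaluating the limit.} From Definition \ref{def:SH} and the Brownian-queue structure of SH recorded in Appendix \ref{sec:stat_horiz}, the random variable $|W^{(N)}|$ in \eqref{W+B} is the stationary queue-length marginal of a Brownian queue whose drift gap is $\alpha_N=2^{-N+1}$ and whose associated difference process has diffusivity $2$; consequently $|W^{(N)}|\sim\Exp(\alpha_N/2)$. Setting $M^{(N)}:=\sup_{0\le s\le t}B^{\alpha_N}(s)$ and conditioning on $M^{(N)}$,
\[
\Pp(\theta^N\le t)=\Pp\bigl(|W^{(N)}|\le M^{(N)}\bigr)=\E\!\left[1-\exp\bigl(-\alpha_N M^{(N)}/2\bigr)\right].
\]
Couple $B^{\alpha_N}(s)=2\tilde B(s)+\alpha_N s$ for a fixed standard Brownian motion $\tilde B$, so that $M^{(N)}\to M^{(0)}:=\sup_{0\le s\le t}2\tilde B(s)$ almost surely as $N\to\infty$. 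Since $1-e^{-y}\le y$ for $y\ge0$, we have $(2/\alpha_N)(1-\exp(-\alpha_N M^{(N)}/2))\le M^{(N)}\le M^{(0)}+t$, which is integrable; dominated convergence together with $2^N=2/\alpha_N$ yields
\[
\lim_{N\to\infty} 2^N\Pp(\theta^N\le t)=\E[M^{(0)}]=2\,\E|\tilde B(t)|=2\sqrt{2t/\pi},
\]
using the reflection principle and $\E|\tilde B(t)|=\sqrt{2t/\pi}$. Hence $\mu((0,t])=2\sqrt{2t/\pi}=\int_0^t\sqrt{2/(\pi s)}\,ds$, identifying the Radon--Nikodym derivative as $\sqrt{2/(\pi\tau)}$.

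\emph{Main obstacle.} The only nontrivial input beyond the material displayed in the excerpt is the exponential distribution of $|W^{(N)}|$, which is a direct consequence of the Burke-type Brownian-queue construction of SH recorded in Appendix \ref{sec:stat_horiz}. Everything else is first-passage bookkeeping combined with monotone and dominated convergence.
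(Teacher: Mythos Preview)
Your proof is correct but takes a genuinely different route from the paper. The paper's argument is essentially a one-liner: it invokes the already-established expected count formula from Theorem~\ref{thm:SH10}\ref{itm:exp}, namely
\[
\E[\#\{\dir \in (\alpha,\beta): G_{\dir-}(0,y) < G_{\dir+}(0,y)\}] = 2\sqrt{2/\pi}\,(\beta-\alpha)\sqrt{y},
\]
observes that the event $G_{\dir-}(0,y)<G_{\dir+}(0,y)$ is exactly $\tau_\dir\le y$, and then simply checks that the resulting rectangle measures match the integral of $\sqrt{2/(\pi\tau)}$. No limiting procedure is needed.

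Your argument, by contrast, bypasses Theorem~\ref{thm:SH10}\ref{itm:exp} entirely and re-derives the mean count from first principles: you use the dyadic approximation $\SHpp^{(N)}$, the identification of $|W^{(N)}|$ as an $\Exp(\alpha_N/2)$ variable (which indeed follows from \eqref{Phialt} and the standard formula for the maximum of Brownian motion with negative drift), and a dominated-convergence computation to evaluate $\lim_N 2^N\Pp(\theta^N\le t)=2\sqrt{2t/\pi}$. This is more work, but it is self-contained within the framework the paper has already set up for the Palm analysis, and does not require importing the expectation formula from the earlier papers \cite{Busani-2021,Seppalainen-Sorensen-21b}. In effect you have given an alternative proof of the $(x,y)=(0,y)$ case of Theorem~\ref{thm:SH10}\ref{itm:exp} along the way.
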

\begin{proof}
From Theorem~\ref{thm:SH10}\ref{itm:exp}, for $\dir \in \R, \tau > 0$, and $\delta > 0$, 
\[ 
\lambda_{\SHpp}\big((\tau,\tau+\delta]\times [\xi-\delta,\xi+\delta]\big) = 4\sqrt{\f{2}{\pi}}\tspb\delta\tspb\bigl(\sqrt{\tau + \delta} - \sqrt{\tau}\tspc\bigr) = \int_{\dir - \delta}^{\dir + \delta} \int_{\tau}^{\tau + \delta} \sqrt{\f{2}{\pi x}}\,dx \,d\alpha.  \qedhere
 \] 
\end{proof}


By \eqref{RN128},  $\lambda_{\SHpp}$ does not have a finite marginal on the $\dir$-component, as expected since the jump directions are dense. Hence, below we do Palm conditioning on the pair $(\tau_{\dir},\dir)\in\R_{>0}\times\XiSH$ and not on the jump directions $\dir\in\XiSH$ alone.

\begin{lemma}\label{lm:SH5}  Let  $A \subseteq C(\R_{\ge0})$ be a Borel set. Then for any open rectangle $R=(a,b)\times(c,d)\subseteq \R_{\ge0}\times\R$, 
	\begin{equation}\label{Omr2}
	\mathbb{E}\Bigl[ \; \sum_{(\tau,\,\dir)\,\in\,  \SHpp}\ind_{A}([\shdif_\dir]^\tau)\tspb\ind_{R}(\tau,\dir)\Bigr] =\lambda_{\SHpp}(R)\tspb\mathcal{D}(A). 
	\end{equation}
\end{lemma}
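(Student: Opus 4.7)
The plan is to establish the identity at the dyadic level via Corollary~\ref{cor:discrete_restart} and then pass to the limit $N\to\infty$. By a standard monotone-class / Portmanteau argument it suffices to prove the functional version
\[
\E\Bigl[\sum_{(\tau,\,\dir)\,\in\,\SHpp} f([\shdif_\dir]^\tau)\tspb\ind_R(\tau,\dir)\Bigr]=\lambda_{\SHpp}(R)\int f\,d\mathcal D
\]
for every bounded continuous $f:C(\R_{\ge 0})\to\R$. At the dyadic level, Corollary~\ref{cor:discrete_restart} gives, for each $i$ with $\dir_i\in(c,d)$,
\[
\E\bigl[f([\shdif^{N}_{\dir_i}]^{\tau^{N}_{\dir_i}})\tspb\ind_{(a,b)}(\tau^{N}_{\dir_i})\bigr]=\Bigl(\int f\,d\mathcal D^{\alpha_N}\Bigr)\Pp(\tau^{N}_{\dir_i}\in(a,b)),
\]
with $\alpha_N=2^{-N+1}$, and summing over such $i$ yields the discrete identity
\[
\E\Bigl[\sum_{(\tau,\,\dir)\,\in\,\SHpp^{(N)}} f([\shdif^{N}_\dir]^\tau)\tspb\ind_R(\tau,\dir)\Bigr]=\Bigl(\int f\,d\mathcal D^{\alpha_N}\Bigr)\lambda_{\SHpp}^{(N)}(R).
\]

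The passage to the limit splits into three convergences. First, $\int f\,d\mathcal D^{\alpha_N}\to\int f\,d\mathcal D$ is immediate from weak convergence of Brownian motion with drift $\alpha_N\to 0$ composed with the continuous running-maximum map. Second, $\lambda_{\SHpp}^{(N)}(R)\to\lambda_{\SHpp}(R)$ follows from the nesting \eqref{tau67} together with Lemma~\ref{lm:ac}, which supplies both the absolute continuity and the local finiteness needed.

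The principal work lies in the convergence of the left-hand side. By local finiteness of the jumps of $\shdif$ (Theorem~\ref{thm:SH10}\ref{itm:SH_j}), almost surely, for every $N$ large enough each dyadic interval $(\dir_{i-1},\dir_i]$ meeting a neighborhood of $[c,d]$ contains at most one jump of $\shdif$. When this holds, $\shdif^{N}_{\dir_i}=\shdif_\dir$ on any fixed compact $x$-set for the unique jump $\dir$ in the interval, so $\tau^{N}_{\dir_i}=\tau_\dir$ and $[\shdif^{N}_{\dir_i}]^{\tau^{N}_{\dir_i}}=[\shdif_\dir]^{\tau_\dir}$. Thus the $N$-sum converges pointwise to the $\SHpp$-sum, modulo spurious contributions from intervals straddling $\partial R$. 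These boundary contributions are controlled by sandwiching $R$ between slightly smaller and larger open rectangles $R^-\subset R\subset R^+$ with $\lambda_{\SHpp}(R^+\setminus R^-)$ arbitrarily small (using the explicit density in Lemma~\ref{lm:ac}), and applying dominated convergence with the integrable dominator $\|f\|_\infty\cdot|\SHpp\cap R^+|$.

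The main obstacle is precisely this boundary analysis together with the construction of a uniform-in-$N$ integrable dominator, both of which ultimately rest on the continuity and local finiteness of $\lambda_{\SHpp}$ guaranteed by Lemma~\ref{lm:ac}.
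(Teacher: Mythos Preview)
Your proposal is correct and follows essentially the same approach as the paper: establish the identity at the dyadic level via Corollary~\ref{cor:discrete_restart}, then pass to the limit using (i) weak convergence $\mathcal D^{\alpha_N}\Rightarrow\mathcal D$, (ii) convergence $\lambda_{\SHpp}^{(N)}(R)\to\lambda_{\SHpp}(R)$, and (iii) a.s.\ eventual matching of the dyadic and true jump data combined with an integrable dominator. The paper works with indicator functions of $\mathcal D$-continuity sets of the cylinder form $A=\{f:f|_{[0,k]}\in A_k\}$ rather than bounded continuous $f$, which lets it make the dominated-convergence step slightly more concrete via the events $\mathcal U_N^k$ (exact matching on $[0,k]$), and it takes the explicit dominator $|\SHpp\cap((a,b)\times(c-1,d))|$ rather than sandwiching by rectangles---but these are minor stylistic differences, and your Portmanteau reduction is equally valid.
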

\begin{proof}  It suffices to prove \eqref{Omr2} for  continuity sets $A$ of the distribution $\mathcal{D}$ of the type $ A = \{f\in C(\R_{\ge0}): f|_{[0,k]} \in A_k\}$
for $k > 0$ and Borel $A_k \subseteq C[0,k]$.  
Such sets form a $\pi$-system that generates the Borel $\sigma$-algebra of $C(\R_{\ge0})$.


We prove \eqref{Omr2} for $\shdif^{N}$. Below the values $\dir_i=i2^{-N}$ are not random and hence can come outside the expectation. 	Condition on $\tau^N_{\dir_i}$ and  use \eqref{eq4}:  
	\begin{equation}\label{Omr3}
	\begin{aligned}
	&\E\Big(\sum_{(\tau^N_{\dir_i}\!,\,\dir_i)\in R\cap\SHpp^{(N)}} \ind_A\big([\shdif^{N}_{\dir_i}]^{\tau^{N}_{\dir_i}}\big)\Big)
	=\E\Big(\sum_{\dir_i\in(c,d)} \ind_A\big([\shdif^{N}_{\dir_i}]^{\tau^{N}_{\dir_i}}\big)\ind_{(a,b)}\big(\tau^{N}_{\dir_i}\big)\Big)\\
	&=\sum_{\dir_i\in(c,d)} \E\bigg(\ind_{(a,b)}\big(\tau^{N}_{\dir_i}\big)\, \E\Big[\ind_A\big([\shdif^{N}_{\dir_i}]^{\tau^{N}_{\dir_i}}\big)\tspb\Big|\tspb\tau^N_{\dir_i}\Big]\bigg) \\&
	\stackrel{\eqref{eq4}}{=}\sum_{\dir_i\in(c,d)}\Pp\big(\tau^{N}_{\dir_i}\in (a,b)\big)\mathcal{D}^{\alpha_N}(A)
	=\mathcal{D}^{\alpha_N}(A)\tspb \lambda_{\SHpp}^{(N)}(R).
	\end{aligned}
	\end{equation}
	
	To conclude the proof, we check that \eqref{Omr2} arises as we  let $N\to\infty$ in the first and last member of the string of equalities above.  $\mathcal{D}^{\alpha_N}(A)\to\mathcal{D}(A)$ by the  continuity of $\alpha\mapsto \mathcal{D}^\alpha$ in the weak topology  and the assumption that $A$ is a continuity set.

	As an intermediate step, we verify that  $\forall k > 0$, 
	$\ind_{\mathcal{U}_N^k}\to 1$ almost surely 
	for the events  
	\begin{align}\label{Uset}  
	    \mathcal{U}_N^k&=\bigl\{\,|\SHpp^{(N)}\cap R| = |\SHpp\cap R| \text{ \,and for every $(\tau,\dir)\in \SHpp\cap R$  there is a unique} \\
	    &\qquad 
	    \text{  $(\tau^N_{\dir_i},\dir_i)\in \SHpp^{(N)}\cap R$
	    such that  $[\shdif^{N}_{\dir_i}]^{\tau^{N}_{\dir_i}}\big|_{[0,k]}=[\shdif_{\dir}]^{\tau_{\dir}}\big|_{[0,k]}$}\bigr\}. \nonumber
	\end{align}
	Almost surely,  $\SHpp\cap R$ is finite and none of its points lie on the boundary of $R$.  For any such realization, the condition in braces holds when (i) all points $(\tau_\xi, \xi)\in\SHpp\cap R$ lie in distinct rectangles $(a,b)\times(\xi_{i-1}, \xi_i]\subset(a,b)\times(c,d)$, (ii) when no point $(\tau^N_{\dir_i},\dir_i)\in \SHpp^{(N)}\cap R$ is generated by a point $(\tau_\xi, \xi)\in\SHpp$ outside $R$, and (iii) when $N$ is large enough so that for the unique $i$ with $\dir_i < \dir \le \dir_{i + 1}$, $G_{\dir-}(x) = G_{\dir_i}(x)$ and $G_{\dir+}(x) = G_{\dir_{i + 1}}(x)$ for all $x \in [0,\tau_\dir + k]$. By Theorem~\ref{thm:SH10}\ref{itm:SH_j}, this happens for all the finitely many $(\tau,\dir) \in \Gamma \cap R$ when the mesh $2^{-N}$ is fine enough.  Thus, for each  $k > 0$, almost every realization lies eventually in $\mathcal{U}_N^k$.

\smallskip

We prove that 
	$\lambda_{\SHpp}^{(N)}(R)\to 		\lambda_{\SHpp}(R)$.   The paragraph above gave $|\SHpp^{(N)}\cap R| \to  |\SHpp\cap R|$ almost surely. We also have  $|\SHpp^{(N)}\cap R| \le   |\SHpp\cap ((a,b)\times(c-1,d))|$ because \eqref{tau45} shows that each  point $(\tau^N_{\dir_i},\dir_i)$ that is not matched to a unique point $(\tau_\dir, \dir)\in\SHpp\cap  R$ must be generated by some point $(\tau_\dir, \dir)\in \SHpp\cap  ((a,b)\times(c-1,d))$.  The limit $\lambda_{\SHpp}^{(N)}(R)\to 		\lambda_{\SHpp}(R)$ comes now from dominated convergence.

	\smallskip
	
	It remains  to show that 
\[ 
	\E\Big(\sum_{(\tau^N_{\dir_i}\!,\,\dir_i)\in R\cap\SHpp^{(N)}} \ind_{ A}([\shdif^{N}_{\dir_i}]^{\tau^{N}_{\dir_i}})\Big)
	\underset{N\to\infty}\longrightarrow
	\E\Big(\sum_{(\tau_{\dir},\dir)\in R\cap\SHpp} \ind_{ A}([\shdif_\dir]^{\tau_\dir})\Big).
\] 
This follows by choosing $k > 0$ so that $A$ depends only on the domain $[0,k]$. Then, the difference in absolute values in the display below vanishes on $\mathcal{U}_N^k$.
\[
\begin{aligned}
	    &\lim_{N\to\infty} \E\bigg[ \,\Big|\sum_{(\tau^N_{\dir_i}\!,\,\dir_i)\in R\cap\SHpp^{(N)}} \ind_{ A}\bigl([\shdif^{N}_{\dir_i}]^{\tau^{N}_{\dir_i}}\bigr)-\sum_{(\tau_{\dir},\dir)\in R\cap\SHpp} \ind_{ A}\bigl([\shdif_{\dir}]^{\tau_{\dir}}\bigr)\Big|
	    \cdot (\ind_{\mathcal{U}_N^k}+\ind_{(\mathcal{U}_N^k)^c})\bigg]    \\
&\qquad \qquad \le  \lim_{N\to\infty}  
2\tspb \E\bigl[ \tspb |\SHpp\cap ((a,b)\times(c-1,d))| \cdot \ind_{(\mathcal{U}^k_N)^c} \bigr] =0,	 
	    \end{aligned}
	    \]
     and the last equality follows by dominated convergence.
\end{proof}

\smallskip 

To capture the distribution of $[\shdif_\xi]^{\tau_\xi}$, we augment the point measure  $\SHpp$ of \eqref{SHpp} to a point measure on    the space  $\R_{\ge0}\times \R\times C(\R_{\ge0})$:   
\begin{equation}\label{SHHpp}  
	\SHHpp=\sum_{(\tau_\dir,\,\dir)\, \in \,\SHpp} \delta_{(\tau_\dir,\,\dir,\, [\shdif_\xi]^{{\scaleobj{1.5}{\tau}}_{\!\!\xi}})}. 
\end{equation}
 The \textit{Palm kernel} of $[\shdif_\xi]^{\tau_\xi}$  with respect to $\SHpp$ is the stochastic kernel $Q$  from  $\R_{\ge0}\times \R$ into  $C(\R_{\ge0})$ that satisfies the following identity:  for every bounded Borel function $\Psi$ on $\R_{\ge0}\times \R\times C(\R_{\ge0})$ that is supported on  $B\times C(\R_{\ge0})$ for some bounded Borel set $B\subset \R_{\ge0}\times \R$,  
\begin{equation}\label{Opalm}\begin{aligned} 
\E \sum_{(\tau_\dir,\,\dir)\, \in \, B\tspa \cap\tspa\SHpp} \Psi\bigl(\tau_\dir,\,\dir,\, [\shdif_\xi]^{\tau_\xi}\bigr)     
&\; = \; \E\!\!\int\limits_{\R_{\ge0}\times \R\times C(\R_{\ge0})} \!\! \Psi(\tau, \xi, h) \,\SHHpp(d\tau, d\xi, dh) \\
&= \;  \int\limits_{\R_{\ge0}\times\mathbb{R}} \lambda_{\SHpp}(d\tau, d\xi)
\int\limits_{C(\R_{\ge0})}  Q(\tau, \xi, dh)\, \Psi(\tau, \xi, h)  .
\end{aligned} \end{equation}
The first equality above is a restatement of the definition of $\SHHpp$ and included to make the next proof transparent.   
The key result of this section is this  characterization of $Q$.  

\begin{theorem}\label{thm:Lac}
For 
Lebesgue-almost every $(\tau, \xi)$,  $Q(\tau, \xi, \aabullet)=\mathcal{D}(\aabullet)$,   the distribution of the running maximum of a Brownian motion with diffusivity 2. 
 \end{theorem}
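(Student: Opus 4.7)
The plan is to recognize that Lemma~\ref{lm:SH5} already encodes all of the probabilistic content, so Theorem~\ref{thm:Lac} reduces to a uniqueness-type statement for Palm kernels combined with Lemma~\ref{lm:ac}. First I would specialize the defining identity \eqref{Opalm} of $Q$ to $\Psi(\tau,\xi,h)=\ind_R(\tau,\xi)\ind_A(h)$ for an open rectangle $R=(a,b)\times(c,d)\subseteq\R_{\ge0}\times\R$ and a Borel set $A\subseteq C(\R_{\ge0})$. The left-hand side becomes $\E\sum_{(\tau_\xi,\xi)\in R\cap\SHpp}\ind_A([\shdif_\xi]^{\tau_\xi})$, which by Lemma~\ref{lm:SH5} equals $\lambda_{\SHpp}(R)\mathcal{D}(A)=\int_R \mathcal{D}(A)\,\lambda_{\SHpp}(d\tau,d\xi)$, while the right-hand side is $\int_R Q(\tau,\xi,A)\,\lambda_{\SHpp}(d\tau,d\xi)$. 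Since bounded rectangles form a $\pi$-system generating $\mathcal{B}(\R_{\ge0}\times\R)$ and $\lambda_{\SHpp}$ is $\sigma$-finite, for each fixed Borel $A$ this forces $Q(\tau,\xi,A)=\mathcal{D}(A)$ for $\lambda_{\SHpp}$-a.e.\ $(\tau,\xi)$; call the exceptional set $N_A$.

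Second, I would upgrade the pointwise-in-$A$ equality to the equality of the full probability measures $Q(\tau,\xi,\aabullet)=\mathcal{D}$. Since $C(\R_{\ge0})$ is Polish, its Borel $\sigma$-algebra is generated by a countable $\pi$-system $\mathcal{A}_0$ — for instance finite intersections of basic cylinder sets $\{f:f(t)\le a\}$ with $t\in\Q_{\ge0}$ and $a\in\Q$. Setting $N=\bigcup_{A\in\mathcal{A}_0}N_A$ yields a single $\lambda_{\SHpp}$-null set off of which $Q(\tau,\xi,\aabullet)$ and $\mathcal{D}$ are two probability measures that agree on $\mathcal{A}_0$; Dynkin's $\pi$-$\lambda$ theorem then extends the agreement to all of $\mathcal{B}(C(\R_{\ge0}))$.

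Finally, Lemma~\ref{lm:ac} supplies the strictly positive density \eqref{RN128} of $\lambda_{\SHpp}$ against Lebesgue measure on $\R_{>0}\times\R$, so $N\cap(\R_{>0}\times\R)$ is Lebesgue null, and the slice $\{0\}\times\R$ is itself Lebesgue null in $\R_{\ge0}\times\R$. Hence $Q(\tau,\xi,\aabullet)=\mathcal{D}$ for Lebesgue-a.e.\ $(\tau,\xi)\in\R_{\ge0}\times\R$, which is the claim.

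The main obstacle is not in this argument at all but in Lemma~\ref{lm:SH5}: one must verify that the restart operation $h\mapsto[h]^{\tau}$ is compatible with the dyadic refinement $\shdif^N\to\shdif$, which is the role played by the events $\mathcal{U}_N^k$ in \eqref{Uset} together with the inclusions \eqref{tau67}. Once Lemma~\ref{lm:SH5} and Lemma~\ref{lm:ac} are in hand, Theorem~\ref{thm:Lac} is a routine invocation of Palm-kernel uniqueness, and I do not anticipate any further technical difficulty beyond the measure-theoretic bookkeeping described above.
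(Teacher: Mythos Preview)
Your proposal is correct and follows essentially the same route as the paper: specialize \eqref{Opalm} to indicator functions $\Psi=\ind_R\ind_A$, invoke Lemma~\ref{lm:SH5} to identify the left-hand side with $\lambda_{\SHpp}(R)\mathcal{D}(A)$, and then use Lemma~\ref{lm:ac} to pass from $\lambda_{\SHpp}$-a.e.\ to Lebesgue-a.e. You have simply spelled out the $\pi$-system/countable-generator step that the paper leaves implicit, and correctly located the real work in Lemma~\ref{lm:SH5}.
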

 
 \begin{proof} This  comes from Lemma \ref{lm:SH5}: 
  take  $\Psi(\tau, \xi, h)=\ind_R(\tau, \xi)\ind_A(h)$  in \eqref{Opalm} and note that the left-hand side of \eqref{Omr2} is exactly the left-hand side of \eqref{Opalm}. Lemma \ref{lm:ac} turns   $\lambda_{\SHpp}$-almost everywhere into Lebesgue-almost everywhere. 
 \end{proof}

Denote the set of directions $\dir$ for which $G_\dir$ and $G_{\dir-}$ separate on $\R_{\ge0}$   by  
\begin{equation}\label{XiSHdef} 
	\begin{aligned}
		\XiSH&=\{\dir\in\R:\tau_\dir < \infty\}.
	\end{aligned}
\end{equation}
\begin{theorem}\label{thm:Lac5}
Let $A\subseteq C(\R_{\ge0})$ be a Borel  set such that  $\mathcal{D}(A)=0$. Then
\begin{equation}\label{Oeq}
    \Pp\big(\exists \dir\in \XiSH :  [\shdif_\dir]^{\tau_\dir}\in A\big)=0.
\end{equation}
\end{theorem}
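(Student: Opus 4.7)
The plan is to dominate the existence probability by an expected count of atoms of $\SHpp$ whose associated restarted profile lies in $A$, and then apply the defining identity \eqref{Opalm} of the Palm kernel together with the characterization $Q(\tau,\dir,\aabullet)=\mathcal{D}(\aabullet)$ from Theorem~\ref{thm:Lac}. Since $\shdif_\dir$ vanishes in a nondegenerate neighborhood of the origin whenever $\dir\in\XiSH$, every atom of $\SHpp$ has $\tau$-coordinate in $(0,\infty)$, so the whole argument takes place on $\R_{>0}\times\R$ where Lemma~\ref{lm:ac} guarantees $\lambda_{\SHpp}\ll m$.

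First I would cover $\R_{>0}\times\R$ by a countable family of bounded Borel rectangles $\{B_k\}_{k\ge 1}$ and use Markov's inequality to write
\[
\Pp\big(\exists \dir\in\XiSH:[\shdif_\dir]^{\tau_\dir}\in A\big)\;\le\;\E\Bigl[\,\sum_{(\tau_\dir,\dir)\in\SHpp}\ind_A\bigl([\shdif_\dir]^{\tau_\dir}\bigr)\Bigr]\;=\;\sum_{k\ge1}\E\Bigl[\,\sum_{(\tau_\dir,\dir)\in B_k\cap\SHpp}\ind_A\bigl([\shdif_\dir]^{\tau_\dir}\bigr)\Bigr].
\]
This truncation by bounded sets is needed because the global mean measure $\lambda_{\SHpp}$ is not finite, and the identity \eqref{Opalm} is stated for test functions supported on bounded sets.

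Next I would invoke \eqref{Opalm} for each $k$ with the test function $\Psi(\tau,\dir,h)=\ind_{B_k}(\tau,\dir)\ind_A(h)$ to obtain
\[
\E\Bigl[\,\sum_{(\tau_\dir,\dir)\in B_k\cap\SHpp}\ind_A\bigl([\shdif_\dir]^{\tau_\dir}\bigr)\Bigr]\;=\;\int_{B_k} Q(\tau,\dir,A)\,\lambda_{\SHpp}(d\tau,d\dir).
\]
By Theorem~\ref{thm:Lac}, $Q(\tau,\dir,A)=\mathcal{D}(A)=0$ for Lebesgue-almost every $(\tau,\dir)$, and Lemma~\ref{lm:ac} upgrades this to $\lambda_{\SHpp}$-almost every $(\tau,\dir)$ on $B_k\subseteq\R_{>0}\times\R$. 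Hence every summand vanishes, the bound reduces to $0$, and the claim \eqref{Oeq} follows.

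There is no real obstacle once Theorem~\ref{thm:Lac} is in hand; the argument is essentially a ``first-moment'' unpacking of the Palm formula. The only minor subtlety is the need to exhaust the (non-finite) intensity $\lambda_{\SHpp}$ by bounded pieces before applying \eqref{Opalm}, which is handled by the countable decomposition described above.
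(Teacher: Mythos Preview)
The proposal is correct and takes essentially the same first-moment approach as the paper. The paper applies Lemma~\ref{lm:SH5} (equation~\eqref{Omr2}) directly with exhausting rectangles $R_N=(0,N)\times(-N,N)$, whereas you route through the Palm identity~\eqref{Opalm} together with Theorem~\ref{thm:Lac} and Lemma~\ref{lm:ac}; since Theorem~\ref{thm:Lac} is itself derived from~\eqref{Omr2} via~\eqref{Opalm}, the two arguments are equivalent.
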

\begin{proof}  Let   $R_N=(0,N)\times(-N,N)$.  Since $\xi\in\XiSH$ means that $\tau_\xi<\infty$, we have 
 \begin{align*} 
    &\Pp\big(\exists \dir\in \XiSH :  [\shdif_\dir]^{\tau_\dir}\in A\big)
    =  \lim_{N\to\infty}  \Pp\big(\exists \dir\in \XiSH : (\tau_\dir,\dir)\in R_N,  [\shdif_\dir]^{\tau_\dir}\in A\big) \\
& \qquad\quad    \leq  \lim_{N\to\infty}  \mathbb{E}\sum_{(\tau,\,\dir)\in  \SHpp}\ind_{A}([\shdif_\dir]^\tau) \tspb\ind_{R_N}(\tau, \xi)   
\overset{\eqref{Omr2}}=  \lim_{N\to\infty}  \lambda_{\SHpp}(R_N) \tspb \mathcal{D}(A) =0.
\qquad \ \qedhere  \end{align*} 
\end{proof}

We show that \eqref{XiSHdef} captures all   $\xi$ at which a jump happens  on the real line. 

\begin{corollary} \label{cor:dcLR}
With probability one, $\XiSH = \{\dir \in \R: \shdif_\dir(x) \neq 0 \text{ for some }x \in \R\}$. Furthermore, for each $\dir \in \XiSH$, $\lim_{x \to \pm \infty} \shdif_\dir(x) = \pm \infty$.
\end{corollary}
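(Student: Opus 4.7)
The inclusion $\XiSH \subseteq \{\dir:\shdif_\dir\neq 0\}$ is immediate: if $\tau_\dir<\infty$ then $\shdif_\dir(x)>0$ for $x$ slightly greater than $\tau_\dir$. The two nontrivial statements to establish are (a) the reverse inclusion, and (b) divergence of $\shdif_\dir$ to $\pm\infty$ at $\pm\infty$ for every $\dir\in\XiSH$.

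My plan for the forward limit $\lim_{x\to\infty}\shdif_\dir(x)=\infty$ is to apply Theorem~\ref{thm:Lac5} directly with the Borel set $A=\{f\in C(\R_{\ge 0}):\sup_{x\ge 0} f(x)<\infty\}$. Since the running maximum of a Brownian motion with diffusivity $2$ and zero drift diverges a.s., we have $\mathcal{D}(A)=0$. Theorem~\ref{thm:Lac5} then yields that almost surely, for every $\dir\in\XiSH$, the restarted function $[\shdif_\dir]^{\tau_\dir}$ lies outside $A$, i.e.\ $\lim_{x\to\infty}\shdif_\dir(x)=\infty$.

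To obtain the reverse inclusion $\{\dir:\shdif_\dir\neq 0\}\subseteq \XiSH$, I would exploit the spatial stationarity of SH, equation~\eqref{H65}. For each $y\in\Q$, the shifted process $\{\wt\shdif_\dir(\cdot):=\shdif_\dir(y+\cdot)-\shdif_\dir(y)\}_{\dir\in\R}$ has the same joint distribution as $\{\shdif_\dir\}_{\dir\in\R}$. Applying the previous paragraph to $\wt\shdif$, almost surely every $\dir$ with $\wt\shdif_\dir(x)>0$ for some $x>0$ (equivalently, such that $\shdif_\dir$ strictly increases somewhere in $[y,\infty)$) satisfies $\lim_{x\to\infty}\shdif_\dir(x)=\infty$. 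Taking a countable union over $y\in\Q$ and using continuity and monotonicity of $\shdif_\dir$, every $\dir$ with $\shdif_\dir\not\equiv 0$ satisfies $\lim_{x\to\infty}\shdif_\dir(x)=\infty$, and in particular $\tau_\dir<\infty$, so $\dir\in\XiSH$.

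For the backward limit $\lim_{x\to-\infty}\shdif_\dir(x)=-\infty$, I would invoke a reflection symmetry of the stationary horizon: the process $\wh G_\dir(x):=-G_{-\dir-}(-x)$ has the same distribution as $G$ (this is the distributional counterpart of the backward analogue $\bck\tau_\dir$ appearing in Theorem~\ref{thm:BusePalm}, and follows from the queueing construction in Definition~\ref{def:SH} together with the symmetry properties recorded in Theorem~\ref{thm:SH10}). Applying the first two paragraphs of the argument to $\wh G$ and translating back yields $\lim_{x\to-\infty}\shdif_\dir(x)=-\infty$ for every $\dir$ with $\shdif_\dir\not\equiv 0$, which by the previous step includes every $\dir\in\XiSH$. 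The main obstacle in this approach is step three: Theorem~\ref{thm:Lac5} is inherently one-sided, so the backward divergence can only be extracted by appealing to the reflection symmetry of the SH rather than from the Palm calculations directly.
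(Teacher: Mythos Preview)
Your proposal is correct and follows essentially the same route as the paper: forward divergence via Theorem~\ref{thm:Lac5} applied to the set where the running maximum stays bounded, the reverse inclusion via spatial shift-stationarity \eqref{H65} (the paper shifts by negative integers, you by rationals, but the logic is identical), and backward divergence via reflection symmetry of the stationary horizon. One small correction: the reflection invariance you invoke is not contained in Theorem~\ref{thm:SH10} nor does it follow directly from the queueing construction in Definition~\ref{def:SH}; in the paper it is recorded separately as Corollary~\ref{cor:SH_reflect} and is deduced from the uniqueness part of Theorem~\ref{thm:invariance_of_SH}, so you should cite that instead.
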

\begin{proof}
By Theorem~\ref{thm:Lac5} and the associated fact for the running max of a Brownian motion, 
\be \label{XIshto+inf}
\Pp(\forall \dir \in \XiSH, \lim_{x \to +\infty} \shdif_\dir(x) = +\infty) = 1.
\ee
By definition, $\XiSH = \{\dir \in \R: \shdif_\dir(x) \neq 0 \text{ for some }x >0\}$. 
Now, we show that if $\shdif_\dir(x) \neq 0$ for some $x < 0$, then $\shdif_\dir(x) \neq 0$ for some $x > 0$. If not, then there exist $\dir \in \R$ and $m \in \Z_{<0}$ such that $[\shdif_\dir]^{m}|_{[0,\infty)} \neq 0$, but $[\shdif_\dir]^m|_{[-m,\infty)}$ is constant. In particular, $[\shdif_\dir]^{m}|_{[0,\infty)}$ is bounded. Let $\tau^m_\dir = \inf \{x > 0: [\shdif_\dir]^m(x) > 0\}$.  Then, $[\shdif_\dir]^{m}|_{[0,\infty)} \neq 0$ iff  $\tau^m_\dir<\infty$, and we have
\be \label{XiLR}
\begin{aligned}
    & \Pp\bigl(\XiSH \neq \{\dir \in \R: \shdif_\dir(x) \neq 0 \text{ for some }x \in \R\}\bigr) \\
    &\quad 
    \le \sum_{m \in \Z_{<0}}\Pp\bigl(\exists \dir \in \R:  \; \tau^m_\dir<\infty  
    \text{ but }  [\shdif_\dir]^m|_{[0,\infty)} \text{ is bounded} \bigr) = 0. 
\end{aligned}
\ee
The probability equals zero by~\eqref{XIshto+inf} because by shift invariance~\eqref{H65}, $[\shdif]^m\deq \shdif$.
To finish,~\eqref{XIshto+inf} proves the limits for $x \to +\infty$. The limits as $x \to -\infty$ then follow from~\eqref{XIshto+inf} and the reflection invariance of Corollary~\ref{cor:SH_reflect}.
\end{proof}

Let $\nu_f$ denote the Lebesgue-Stieltjes measure of a non-decreasing function $f$ on $\R$.   Denote the  support of $\nu_f$ by $\text{supp}(\nu_f)$. The Hausdorff dimension of a set $A$ is denoted by $\dim_H(A)$.


\begin{corollary} \label{cor:SHHaus1/2} Consider the Lebesgue-Stieltjes measure $\nu_{\shdif_\dir}$ for $\dir\in \XiSH$ on the entire real line.  Then we have 
\begin{equation}\label{eq8}
	\Pp\big\{\forall \dir\in \XiSH :  \dim_H \big(\text{\rm supp}(\nu_{\shdif_\dir})\big)=1/2\big\}=1.
\end{equation}
\end{corollary}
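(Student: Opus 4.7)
The plan is to transfer the classical fact that the Lebesgue--Stieltjes measure of the running maximum of a Brownian motion has support of Hausdorff dimension $1/2$ to every $\dir\in\XiSH$ simultaneously, using Theorem~\ref{thm:Lac5} as a null-event transfer principle. Concretely, if $X$ is a standard Brownian motion and $M(t)=\sup_{0\le s\le t} 2X(s)$, then $\nu_M$ is carried by the record set $\{t\ge 0: 2X(t)=M(t)\}$. By L\'evy's identity $M-2X\stackrel{d}{=}|2X|$, so the record set is distributed as the zero set of a Brownian motion, a closed set of Hausdorff dimension $1/2$ almost surely. Therefore the set
\[
A:=\bigl\{f\in C(\R_{\ge0}):\dim_H\bigl(\text{supp}(\nu_f)\bigr)\ne 1/2\bigr\}
\]
has $\mathcal{D}$-measure zero.

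Next, I would apply Theorem~\ref{thm:Lac5} to this null set $A$, which yields almost surely that for \emph{every} $\dir\in\XiSH$, the restarted function $[\shdif_\dir]^{\tau_\dir}\in C(\R_{\ge 0})$ lies outside $A$, i.e.\ its Lebesgue--Stieltjes measure has support of Hausdorff dimension $1/2$. Since translating by $\tau_\dir$ preserves Hausdorff dimension, this gives
\[
\dim_H\bigl(\text{supp}(\nu_{\shdif_\dir})\cap [\tau_\dir,\infty)\bigr)=1/2 \qquad\forall\,\dir\in\XiSH.
\]

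To handle the negative half-axis I would invoke the reflection symmetry of SH (Corollary~\ref{cor:SH_reflect}): the process $\{-\shdif_{-\dir}(-\aabullet)\}_{\dir\in\R}$ is equal in distribution to $\{\shdif_{\dir}(\aabullet)\}_{\dir\in\R}$, so the Palm argument above applies verbatim to the reflected process, with $\tau_\dir$ replaced by $\bck\tau_\dir:=\inf\{x>0:\shdif_\dir(-x)>0\}$ and $\XiSH$ invariant under $\dir\mapsto-\dir$ (by Corollary~\ref{cor:dcLR} combined with the reflection). This yields $\dim_H(\text{supp}(\nu_{\shdif_\dir})\cap (-\infty,-\bck\tau_\dir])=1/2$ simultaneously for all $\dir\in\XiSH$. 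By the definitions of $\tau_\dir$ and $\bck\tau_\dir$, $\shdif_\dir$ is constant on $[-\bck\tau_\dir,\tau_\dir]$, so that interval contributes nothing to $\text{supp}(\nu_{\shdif_\dir})$. Thus the full support is a union of two sets each of Hausdorff dimension $1/2$, and the stability of Hausdorff dimension under finite unions delivers \eqref{eq8}.

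The main obstacle is the measurability bookkeeping: one must check that $A$ is a genuine Borel subset of $C(\R_{\ge 0})$ so that Theorem~\ref{thm:Lac5} applies. I would dispatch this by writing $\text{supp}(\nu_f)=\{x\ge 0: f(x-\ve)<f(x+\ve)\ \forall\ve>0\}$ for nondecreasing continuous $f$, so the support is determined by countably many rational increments, and then expressing the condition $\dim_H(\text{supp}(\nu_f))=1/2$ through the standard countable formulation of Hausdorff $s$-measure via dyadic covers for rational $s$. All other inputs --- the Palm identification in Theorem~\ref{thm:Lac5}, the reflection symmetry in Corollary~\ref{cor:SH_reflect}, and Taylor's classical dimension result for the Brownian zero set --- are already available.
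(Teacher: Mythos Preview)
Your proof is correct and follows the same essential strategy as the paper: both identify the Borel null set $A\subseteq C(\R_{\ge0})$ on which the running-maximum measure fails to have support of Hausdorff dimension $1/2$, and both invoke Theorem~\ref{thm:Lac5} to transfer $\mathcal D(A)=0$ to the simultaneous statement for all $\dir\in\XiSH$ on $[0,\infty)$. The one difference is in extending from $[0,\infty)$ to all of $\R$: the paper covers $\R$ by $\bigcup_{m\in\Z_{\le 0}}[m,\infty)$ and uses the translation invariance \eqref{H65} to reduce each piece to the $m=0$ case, whereas you handle the negative half-line in one stroke via the reflection symmetry of Corollary~\ref{cor:SH_reflect}. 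Both reductions are valid; the paper's is slightly more economical since the translation invariance \eqref{H65} of $\shdif$ is immediate, while your route requires checking that the reflection of $G$ induces the claimed distributional identity for the jump process $\shdif$. One small correction: your definition of $\bck\tau_\dir$ should read $\inf\{x>0:-\shdif_\dir(-x)>0\}$, since $\shdif_\dir$ is nonpositive on the negative half-line.
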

\begin{proof}
	First, note that 
	\[ 
		\big\{\exists \dir\in \XiSH : \text{dim}_H\big(\text{supp}(\nu_{\shdif_\dir})\big)\neq \tfrac12\big\}\subseteq\!\bigcup_{m\in\Z_{\le0}}\!\!\big\{\exists \dir\in \XiSH : \text{dim}_H\big(\text{supp}(\nu_{\shdif_\dir})\cap[m,\infty)\big)\neq \tfrac12\big\}.
	\]  
By \eqref{H65}, it is enough to take $m=0$ and show that 
 \[
 \Pp\big(\exists \dir\in \XiSH :\text{dim}_H\big(\text{supp}(\nu_{\shdif_\dir})\cap[0,\infty)\big)\neq 1/2\big)=0.
 \]
 This last claim follows from  Theorem \ref{thm:Lac5} because the event in question has zero probability for the running maximum of Brownian motion 
 (\cite{taylor_1955}, see also \cite{morters_peres_2010}, Theorem 4.24 and Exercise 4.12). 
\end{proof}

\begin{remark}
Representation of the  difference of  Busemann functions   as the running maximum of random walk 
goes back to \cite{bala-busa-sepp-20}. It was used  in \cite{busa-ferr-20} to capture the local universality  of geodesics. 
The representation of the difference profile as the running maximum of  Brownian motion in the point-to-point setup emerges from the Pitman transform   \cite{Ganguly-Hegde-2021,Dauvergne-22}.  Theorem 1 and Corollary 2 in~\cite{Ganguly-Hegde-2021} are   point-to-point analogues of  our Theorem \ref{thm:Lac} and Corollary \ref{cor:SHHaus1/2}. Their proof is different from ours. Although an analogue of the Pitman transform exists in the stationary case \cite[Section 3]{Busani-2021}, comparing the running maximum of a Brownian motion to the profile requires different tools in the two settings.  
\end{remark}

\subsection{Decoupling} \label{sec:decoup}  	By Corollary~\ref{cor:dcLR},  whenever $\dir$ is a jump direction, the difference profiles for both positive and negative $x$ are nontrivial.  
We extend   Theorem~\ref{thm:Lac} to show that    these two  difference profiles  are independent and equal in distribution.  
We spell out only the   modifications needed in  the arguments of the previous section.    For the difference profile on the left, define for $x\ge0$
\[ 
		\bck{\shdif}_{\xi}(x):=-\shdif_{\xi}(-x) 
\quad\text{and}\quad 
		\bck{\tau_\xi}:=\inf \{x >  0:\bck{\shdif}_{\xi} > 0\}.
\] 
For $N\in \Z_{>0}$ and  $\xi_i$ as in \eqref{Hd}, the discrete approximations are 
\[ 
	\bck{\shdif}^N_{\xi_i}(x):=-\shdif^N_{\xi_i}(-x)  
	\quad\text{and}\quad 
	\bck{\tau_{\xi_i}}^N:=\inf\{x >  0:\bck{\shdif}^N_{\xi_i}(x) > 0\}.
\] 
The measures $\bck{\SHpp}$, $\lambda_{\bck{\SHpp}}$,  $\bck{\SHpp}^{(N)}$, and $\lambda_{\bck{\SHpp}}^{(N)}$ are defined  as in~\eqref{SHpp} and~\eqref{SHppdis}, but now with   $(\bck{\tau_\dir},\dir)$ and $(\bck{\tau_{\dir_i}},\dir_i)$. 
	Extend the measure $\SHHpp$ of \eqref{SHHpp} with a component for the left profile: 
\[
\SHHpp'=\sum_{(\bck{\tau_\dir},\,\dir)\, \in \,\bck{\SHpp}} \delta_{(\bck{\tau_\dir},\,\dir,\; [\shdif_\xi]^{{\scaleobj{1.5}{\tau}}_{\!\!\dir}}, \; [\bck{\shdif}_\xi]^{\bck{\scaleobj{1.5}{\tau}}_{\!\!\dir}}) }. 
\]
Since  $\tau_\dir < \infty$ if and only if $\bck{\tau_\dir} < \infty$ (Corollary~\ref{cor:dcLR}),  it is immaterial whether we sum over  $({\tau}_\dir,\dir)$ or  $(\bck{\tau_\dir},\dir)$.  The latter is more convenient for the next calculations.

	 The \textit{Palm kernel} of $\big([\shdif_\xi]^{\tau_\xi},[\bck{\shdif}_\xi]^{\bck{\tau_\dir}}\big)$  with respect to $\bck{\SHpp}$ is the stochastic kernel $Q^2$  from  $\R_{\ge0}\times \R$ into  $C(\R_{\ge0})\times C(\R_{\ge0})$ that satisfies the following identity:  for every bounded Borel function $\Psi$ on $\R_{\ge0}\times \R\times C(\R_{\ge0})\times C(\R_{\ge0})$ that is supported on  $B\times C(\R_{\ge0})\times C(\R_{\ge0})$ for some bounded Borel set $B\subset \R_{\ge0}\times \R$,  
	\begin{equation}\label{Opalm2}
	\begin{aligned} 
	&\quad \,\E \Bigl[ \; \sum_{(\bck{\tau_\dir},\,\dir)\, \in \, B\tspa \cap\tspa\bck{\SHpp}} \Psi\bigl(\bck{\tau_\dir},\,\dir,\, [\shdif_\xi]^{\tau_\xi},[\bck{\shdif}_\xi]^{\bck{\tau_\dir}}\bigr)  \Bigr]  \\ 
			&= \int\limits_{\R_{\ge 0} \times \R} \lambda_{\bck{\SHpp}}(d\bck{\tau}, d\xi)
			\int\limits_{C(\R_{\ge0})\times C(\R_{\ge 0})}  Q^2(\bck{\tau}, \xi, dh^1,dh^2)\, \Psi(\bck{\tau}, \xi, h^1,h^2).  
	\end{aligned} 
	\end{equation}

	\begin{theorem} \label{thm:indep_loc}
	For 
	Lebesgue-almost every $(\tau, \xi)$,  $Q^2(\tau, \xi, \aabullet)=(\mathcal{D} \otimes \mathcal{D})(\aabullet)$,   the product of the distribution of the running maximum of a Brownian motion with diffusivity 2. In particular, for any Borel set $A \subseteq C(\R_{\ge 0}) \times C(\R_{\ge 0})$ such that $(\D \otimes \D)(A) = 0$,
	\[
        \Pp\bigl\{ \exists \dir \in \XiSH: \bigl([\shdif_\xi]^{\tau_\xi},[\bck{\shdif}_\xi]^{\bck{\tau_\dir}}\bigr)   \in A\bigr\} = 0.	
	\]
\end{theorem}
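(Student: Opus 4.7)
The plan is to mirror the proof of Theorem~\ref{thm:Lac}, upgrading the one-sided Palm calculation to a joint one. The central step is a two-sided analogue of Corollary~\ref{cor:discrete_restart}: for each $N, i$, conditionally on the pair $(\tau^{N}_{\dir_i}, \bck{\tau}^{N}_{\dir_i})$, the pair of restarted profiles $\bigl([\shdif^{N}_{\dir_i}]^{\tau^{N}_{\dir_i}}, [\bck{\shdif}^{N}_{\dir_i}]^{\bck{\tau}^{N}_{\dir_i}}\bigr)$ is distributed as $\mathcal{D}^{\alpha_N} \otimes \mathcal{D}^{\alpha_N}$. Once this is in hand, the remainder of the argument copies Lemma~\ref{lm:SH5} and Theorem~\ref{thm:Lac} almost verbatim, now on the product space $C(\R_{\ge 0}) \times C(\R_{\ge 0})$, and the concluding null-set statement follows exactly as in Theorem~\ref{thm:Lac5} with $R_N = (0, N) \times (-N, N)$.

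To establish the two-sided discrete restart property, I would exploit the fact that the stationary horizon at the pair of levels $\dir_{i-1} < \dir_i$ can be built from two-sided Brownian inputs whose restrictions to $[0, \infty)$ and to $(-\infty, 0]$ are independent. Concretely, using Definition~\ref{def:SH} together with the independent-past-and-future property of two-sided Brownian motion, one should be able to write $\shdif^{N}_{\dir_i}(y) = [\sup_{0 \le s \le y} W^+ + B^+(s)]^+$ for $y \ge 0$, where $B^+$ is a Brownian motion with drift $\alpha_N$ and diffusivity $2$ and $W^+ < 0$ is independent of $B^+$, and simultaneously $\bck{\shdif}^{N}_{\dir_i}(y) = [\sup_{0 \le s \le y} W^- + B^-(s)]^+$ for $y \ge 0$ with an independent Brownian motion $B^-$ of the same law and some $W^- < 0$ independent of $B^-$. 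The key requirement is the independence of the driving noises $B^+$ and $B^-$; any coupling between $W^+$ and $W^-$ is absorbed by the restart times and is irrelevant to the Palm calculation. With this representation, applying Lemma~\ref{lem:WBSM} separately to the forward and backward systems---i.e.\ the strong Markov property of $B^+$ at $\tau^{N}_{\dir_i}$ and of $B^-$ at $\bck{\tau}^{N}_{\dir_i}$---yields the desired tensor-product conditional law.

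Given the joint discrete restart identity, I would then prove the two-sided counterpart of \eqref{Omr2}: for any open rectangle $R \subseteq \R_{\ge 0} \times \R$ and any continuity sets $A_1, A_2 \subseteq C(\R_{\ge 0})$ of $\mathcal{D}$,
\begin{equation*}
\E\Bigl[\sum_{(\tau,\,\dir) \in \SHpp} \ind_{A_1}\bigl([\shdif_\dir]^{\tau_\dir}\bigr)\, \ind_{A_2}\bigl([\bck{\shdif}_\dir]^{\bck{\tau}_\dir}\bigr)\, \ind_R(\tau, \dir)\Bigr] = \lambda_{\SHpp}(R)\, \mathcal{D}(A_1)\, \mathcal{D}(A_2).
\end{equation*}
The computation proceeds exactly as in \eqref{Omr3} applied to the joint indicator, and the passage to the limit $N \to \infty$ uses the same good events $\mathcal{U}_N^k$ from \eqref{Uset}, now enlarged to force $\bck{\tau}^{N}_{\dir_i}$ to match $\bck{\tau}_\dir$ and $[\bck{\shdif}^{N}]^{\bck{\tau}^{N}}$ to agree with $[\bck{\shdif}]^{\bck{\tau}}$ on $[0, k]$. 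Dominated convergence and a standard $\pi$-system argument then identify $Q^2(\tau, \dir, \aabullet) = \mathcal{D} \otimes \mathcal{D}$ Lebesgue-almost everywhere.

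The hard part will be proving the independence $B^+ \perp B^-$ in the two-sided representation. The marginal equality in distribution of $\shdif^{N}_{\dir_i}|_{[0,\infty)}$ and $\bck{\shdif}^{N}_{\dir_i}|_{[0,\infty)}$ follows quickly from the reflection invariance of the stationary horizon (Corollary~\ref{cor:SH_reflect}), but joint independence is genuinely stronger and must come from the explicit construction of SH as a functional of two-sided Brownian noise whose positive and negative halves are independent. Should a direct verification prove intractable, a fallback strategy is to reduce independence to a measure-theoretic claim: by Theorem~\ref{thm:Lac} applied in the forward direction and, via Corollary~\ref{cor:SH_reflect}, in the backward direction, each coordinate of the joint Palm kernel $Q^2$ already has marginal $\mathcal{D}$, and it would suffice to verify that $Q^2$ factors against a generating $\pi$-system of rectangles through a conditional independence argument built on the vanishing of $\shdif_\dir$ on the deterministic window $[-\bck{\tau}_\dir, \tau_\dir]$.
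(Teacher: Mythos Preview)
Your overall strategy—establish a joint discrete restart identity at mesh $2^{-N}$, repeat the Lemma~\ref{lm:SH5} computation on the product space, pass to the limit via enlarged good events $\mathcal{\wt U}_N^k$, and conclude as in Theorem~\ref{thm:Lac5}—is exactly the paper's. The difference lies in how the joint restart is obtained, and there your proposed backward representation is the gap.

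The representation $\bck{\shdif}^{N}_{\dir_i}(y) = [W^- + \sup_{0 \le s \le y} B^-(s)]^+$ with $W^- \perp B^-$ and $B^+ \perp B^-$ does not arise in the coupling coming from \eqref{supdif}. There $\shdif^N_{\dir_i}(y) = M_y - M_0$ with $M_t = \sup_{x \le t} B^{\alpha_N}(x)$ for a single two-sided Brownian motion; the forward side does factor as $[W^+ + \sup_{0 \le s \le y} B^+(s)]^+$ with $W^+ = -M_0 \in \F_- := \sigma(B^{\alpha_N}(x): x \le 0)$ and $B^+ = B^{\alpha_N}|_{[0,\infty)} \in \F_+$, but the backward side $\bck{\shdif}^N_{\dir_i}(z) = M_0 - M_{-z}$ is entirely $\F_-$-measurable and does not split into a constant-plus-independent-Brownian form (the reflected half of $B^{\alpha_N}$ has drift $-\alpha_N$, not $\alpha_N$). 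So the ``hard part'' is not proving $B^+ \perp B^-$; it is that the claimed backward decomposition does not exist in this coupling.

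The paper's fix is to condition on the whole of $\F_-$ rather than on the pair of hitting times. Given $\F_-$, the variable $W^+$ becomes a constant while $B^+$ remains Brownian, so Lemma~\ref{lem:WBSM} with deterministic $W$ gives $\Pp\bigl([\shdif^N_{\dir_i}]^{\tau^N_{\dir_i}} \in \cdot \mid \F_-\bigr) = \D^{\alpha_N}(\cdot)$. Separately, $\bck{\shdif}^N_{\dir_i} \deq \shdif^N_{\dir_i}$ as processes (via Corollary~\ref{cor:SH_reflect} and a direction shift), so Corollary~\ref{cor:discrete_restart} applied to the backward marginal gives $\Pp\bigl([\bck{\shdif}^N_{\dir_i}]^{\bck{\tau}^N_{\dir_i}} \in \cdot \mid \bck{\tau}^N_{\dir_i}\bigr) = \D^{\alpha_N}(\cdot)$. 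The calculation \eqref{Omr4} nests these: condition first on $\F_-$ to peel off $\D^{\alpha_N}(A_1)$ (the backward data and the rectangle indicator being $\F_-$-measurable), then on $\bck{\tau}^N_{\dir_i}$ to peel off $\D^{\alpha_N}(A_2)$. For this nesting to work the Palm kernel is taken with respect to $\bck{\SHpp}$—the rectangle constrains $(\bck{\tau}_\dir, \dir)$ rather than $(\tau_\dir, \dir)$—so that the indicator $\ind_{(a,b)}(\bck{\tau}^N_{\dir_i})$ lies in $\F_-$. Your displayed identity with $\SHpp$ would instead require that $[\bck{\shdif}^N_{\dir_i}]^{\bck{\tau}^N_{\dir_i}}$ be independent of $W^+ = -M_0$, an additional Williams-type decomposition that the paper's approach avoids entirely.
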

\begin{proof}
By definition of the stationary horizon (Definition~\ref{def:SH}), as functions in $C(\R)$,
\be \label{supdif}
\shdif^N_{\dir_i}(y) \deq  \sup_{-\infty < x \le y}\{B^{\alpha_N}(x)\} - \sup_{-\infty < x \le 0}\{B^{\alpha_N}(x)\},
\ee
where $B^{\alpha_N}$ is a two-sided Brownian motion with drift $\alpha_N$ and diffusivity $2$, with $B^{\alpha_N}(0) = 0$. By adjusting our probability space if needed, we will assume that such a process $B^{\alpha_N}$ exists on our space and $\shdif_{\dir_i}$ is given as~\eqref{supdif}. 
Define two independent $\sigma$-algebras
\[
\F_- = \sigma(B^{\alpha_N}(x): x \le 0),\qquad\text{and}\qquad \F_+ = \sigma(B^{\alpha_N}(x): x \ge 0).
\]

When $y > 0$, we may write
\be \label{shdif2}
\shdif_{\dir_i}^N(y) = \Bigl[W + \sup_{0 \le x \le y} B^{\alpha_N}(x)\Bigr]^+,
\ee
where $W = -\sup_{-\infty < x \le 0}\{B^{\alpha_N}(x)\} \in \F_-$, and $\sup_{0 \le x \le y} B^{\alpha_N}(x) \in \F_+$. Then, conditional on $\F_-$, $W$ is constant while the law of $B^{\alpha_N}(x)$ for $x \ge 0$ is unchanged. Then, by~\eqref{shdif2} and Equation~\eqref{uncondD} of Lemma~\ref{lem:WBSM} in the special case where $W$ is constant (using the exact same reasoning as in the proof of Corollary~\ref{cor:discrete_restart}),
\be \label{F-cond}
\Pp(\big[\shdif^{N}_{\dir_i}\big]^{\tau^{N}_{\dir_i}} \in \aabullet\,|\,\F_-) = \D^{\alpha_N}(\aabullet).
\ee

 For a fixed $i$,  $\bck{\shdif}^N_{\xi_i}$ and  $\shdif^N_{\xi_i}$ have the same distribution as functions on $\R$. This comes  by   first applying  Corollary~\ref{cor:SH_reflect} and then \eqref{H65}, shifting the directions by $\xi_{i-1}+\xi_i$: 
\begin{align*}
 \bck{\shdif}^N_{\xi_i}(x) &= -{\shdif}^N_{\xi_i}(-x) = - G_{\xi_i}(-x) +   G_{\xi_{i-1}}(-x) 
 \deq - G_{-\xi_i}(x) +   G_{-\xi_{i-1}}(x)\\
 &\deq  -  G_{\xi_{i-1}}(x) + G_{\xi_i}(x)  
 ={\shdif}^N_{\xi_i}(x) . 
\end{align*}
	By~\eqref{supdif}, $(\bck{ \shdif}_{\dir_i}^N,\bck{\tau}_{\dir_i}^N) \in \F_-$.  We mimic the calculation in \eqref{Omr3}, for two Borel sets $A_1,A_2\subseteq C(\R_{\ge 0})$ and an open rectangle $R=(a,b)\times(c,d)\subseteq \R_{\ge0}\times\R$: 
		\begin{align}\label{Omr4}
			&\quad\;\E\Big(\sum_{(\bck{\tau}^N_{\!\!\dir_i},\;\dir_i)\in R\cap\bck{\SHpp}^{(N)}} \ind_{A_1}\big([\shdif^{N}_{\dir_i}]^{\tau^{N}_{\dir_i}}\big)\ind_{A_2}\big([\bck{\shdif}^{N}_{\dir_i}]^{\bck{\tau}^{N}_{\dir_i}}\big)\Big) \\
			&= \sum_{\dir_i\in(c,d)} \E\bigg(\ind_{A_2}\big([\bck{\shdif}^{N}_{\dir_i}]^{\bck{\tau}^{N}_{\dir_i}}\big)\;\ind_{(a,b)}\big(\bck{\tau}^{N}_{\dir_i}\big) \E\Big[\big(\ind_{A_1}\big([\shdif^{N}_{\dir_i}]^{\tau^{N}_{\dir_i}}\big)\tspb\Big|\F_-\Big]\bigg) \nonumber\\
			&\stackrel{\eqref{F-cond}}{=} \sum_{\dir_i\in(c,d)}  \E\bigg( \E\Big[\ind_{A_2}\big([\bck{\shdif}^{N}_{\dir_i}]^{\bck{\tau}^{N}_{\dir_i}}\big)\;\ind_{(a,b)}\big(\bck{\tau}^{N}_{\dir_i}\big)\tspb\Big|\bck{\tau}_{\dir_i}^N\Big] \bigg)\D^{\alpha_N}(A_1) \nonumber\\
			&= \sum_{\dir_i\in(c,d)}  \E\bigg(\ind_{(a,b)}\big(\bck{\tau}^{N}_{\dir_i}\big) \E\Big[\ind_{A_2}\big([\bck{\shdif}^{N}_{\dir_i}]^{\bck{\tau}^{N}_{\dir_i}}\big)\;\tspb\Big|\bck{\tau}_{\dir_i}^N\Big] \bigg)\D^{\alpha_N}(A_1) \nonumber \\
			&\stackrel{\eqref{eq4}}{=} \sum_{\dir_i\in(c,d)} \Pp\big(\bck{\tau}^{N}_{\dir_i}\in (a,b)\big)\D^{\alpha_N}(A_1)\D^{\alpha_N}(A_2) \nonumber
					\\&
			=\mathcal{D}^{\alpha_N}(A_1)\mathcal{D}^{\alpha_N}(A_2)\tspb \lambda_{\bck{\SHpp}}^{(N)}(R).\nonumber
	\end{align}
As in the proof of Lemma \ref{lm:SH5}, we derive from  the above that 
\begin{equation}\label{Q2}
	\E\Big(\sum_{(\bck{\tau}_{\dir},\dir)\in R\cap\bck{\SHpp}} \ind_{A_1}\big([\shdif_{\dir}]^{\tau_{\dir}}\big)\ind_{A_2}\big([\bck{\shdif}_{\dir}]^{\bck{\tau}_{\dir}}\big)\Big)=\mathcal{D}(A_1)\mathcal{D}(A_2)\tspb \lambda_{\bck{\SHpp}}(R), 
\end{equation}
through the convergence of line~\eqref{Omr4} to the left-hand side of \eqref{Q2}. Instead of the events $\mathcal{U}_N^k$ in \eqref{Uset}, consider 
\begin{align*}  
	\mathcal{\wt U}_N^k&=\bigl\{\,|\bck{\SHpp}^{(N)}\cap R| = |\bck{\SHpp}\cap R|, \text{ \,and   $\forall \tspb(\bck{\tau},\dir)\in \bck{\SHpp}\cap R$, \  $\exists$  unique $(\tau^N_{\dir_i},\dir_i)\in \bck{\SHpp}^{(N)}\cap R$
		} \\
		&\qquad\qquad\text{ such that  $[\shdif^{N}_{\dir_i}]^{\tau^{N}_{\dir_i}}\big|_{[0,k]}=[\shdif_{\dir}]^{\tau_{\dir}}\big|_{[0,k]}$ and $[\bck{\shdif}^{N}_{\dir_i}]^{\bck{\tau}^{N}_{\dir_i}}\big|_{[0,k]}=[\bck{\shdif}_{\dir}]^{\bck{\tau}_{\dir}}\big|_{[0,k]}$}\bigr\}. 
\end{align*}
For each $k>0$,   $\ind_{\mathcal{\wt U}_N^k}\to 1$ almost surely, as it did for \eqref{Uset}.   Indeed, there are finitely many pairs $(\bck{\tau},\dir) \in \bck{\SHpp} \cap R$, and each has a finite forward splitting time $\tau$. All these can be confined  in a common compact rectangle.
From here, the proof 
continues as for  Lemma \ref{lm:SH5} and Theorem~\ref{thm:Lac5}. 
\end{proof}

\subsection{Remaining proofs} \label{sec:last_proofs}
It remains to prove Theorems~\ref{thm:DLBusedc_description}\ref{itm:Busedc_t},~\ref{thm:BusePalm}, and~\ref{thm:Split_pts}. Recall the definition of the function from~\eqref{fsdir}: 
$
f_{s,\dir}(x) = \W_{\dir+}(x,s;0,s) - \W_{\dir -}(x,s;0,s). 
$
\be \label{omega3} \begin{aligned} 
&\text{Let $\Omega_3$ be the subset of $\Omega_2$ on which the following holds: for each $T \in \Z$,}\\ 
&\text{whenever  $\dir \in \R$ is such that $f_{T,\,\dir} \neq 0$, then}   
\lim_{x \to \pm \infty} f_{T,\,\dir}(x) = \pm \infty.
\end{aligned} \ee
By Theorem~\ref{thm:Buse_dist_intro}\ref{itm:SH_Buse_process} and Corollary~\ref{cor:dcLR}, $\Pp(\Omega_3) = 1$. 
\begin{proof}[Proof of Theorem~\ref{thm:DLBusedc_description}\ref{itm:Busedc_t}] We work on the full-probability event  $\Omega_3$.   The statement \eqref{bad_ub} to be proved  is  $\dir \in \DLBusedc \iff \forall s\in\R:\lim_{x \to \pm \infty} f_{s,\dir}(x) = \pm \infty$.
 If, for \textit{any} $s$, $f_{s,\dir} \to \pm \infty$ as $x \to \pm \infty$, then $\W_{\dir-}(x,s;0,s) \neq \W_{\dir +}(x,s;0,s)$ for $|x|$ sufficiently large, and $\dir \in \DLBusedc$. It remains to prove the converse statement.
From~\eqref{881}, 
\[
\DLBusedc = \bigcup_{T \in \Z} \{\dir \in \R: \W_{\dir - }(x,T;0,T) \neq \W_{\dir +}(x,T;0,T) \text{ for some }x \in \R\}.
\]
To finish the proof of~\eqref{bad_ub}, by definition of $\Omega_3$, it suffices to show these two statements:
\begin{enumerate}[label=\rm(\roman{*}), ref=\rm(\roman{*})]  \itemsep=3pt
    \item If $f_{s,\dir} \neq 0$ for some $s,\dir \in \R$ then $f_{T,\,\dir} \neq 0$ for all $T > s$. 
    \item For $T \in \Z, \dir \in \R$, if $f_{T,\,\dir} \neq 0$, then for all $s < T$, $\lim_{x \to \pm \infty} f_{s,\dir}(x) = \pm \infty$.
\end{enumerate}

Part  (i) follows from the equality below.   By~\eqref{880}, for $s < T$,
\be \label{883}\begin{aligned} 
\W_{\dir \sig}(x,s;0,s) &= \sup_{z \in \R}\{\Ll(x,s;z,T) + \W_{\dir \sig}(z,T;0,T)\}\\[-3pt]
&\qquad\qquad 
- \sup_{z \in \R} \{\Ll(0,s;z,T) + \W_{\dir \sig}(z,T;0,T)\}. 
\end{aligned} \ee

To prove (ii), we show the limits as $x \to +\infty$, and the limits as $x \to -\infty$ follow analogously. Let $T \in \Z,\dir \in \R$ be such that $f_{T,\,\dir} \neq 0$, and let $R > 0$. By definition of the event $\Omega_3$, we may choose $Z > 0$ sufficiently large so that $\inf_{z \ge Z}\{f_{T,\,\dir}(z)\} \ge R$. Then, by Equation~\eqref{373} of Theorem~\ref{thm:g_basic_prop}\ref{itm:DL_SIG_conv_x}, for all sufficiently large $x$ and $\sigg \in \{-,+\}$,
\[
\sup_{z \in \R}\{\Ll(x,s;z,T) + \W_{\dir \sig}(z,T;0,T)\} = \sup_{z \ge Z}\{\Ll(x,s;z,T) + \W_{\dir \sig}(z,T;0,T)\}.
\]
Let 
\[
A := \sup_{z \in \R}\{\Ll(0,s;z,T) + \W_{\dir +}(z,T;0,T)\}- \sup_{z \in \R}\{\Ll(0,s;z,T) + \W_{\dir -}(z,T;0,T)\},
\]
and note that this does not depend on $x$. Then,
by~\eqref{883},
\begin{align*}
    -f_{s,\dir}(x) &= \sup_{z \ge Z}\{\Ll(x,s;z,T) + \W_{\dir -}(z,T;0,T)\} \\
    &\qquad\qquad 
    - \sup_{z \ge Z}\{\Ll(x,s;z,T) + \W_{\dir +}(z,T;0,T)\} + A \\
    &\le \sup_{z \ge Z}\{\W_{\dir-}(z,T;0,T) - \W_{\dir +}(z,T;0,T) \} +A \\&
    = -\inf_{z \ge Z}\{f_{T,\,\dir}(z)\} + A \le -R + A,
\end{align*}
so that $f_{s,\dir}(x) \ge R - A$. Since $A$ is constant in $x$ and $R$ is arbitrary, the desired result follows. 

Note that~\eqref{bad_ub} immediately proves~\eqref{eqn:dcset_union1} in the case $x = 0$. The general case follows from additivity of the Busemann functions (Theorem~\ref{thm:DL_Buse_summ}\ref{itm:DL_Buse_add}) and~\eqref{bad_ub}.
\end{proof}

\begin{proof}[Proof of Theorem~\ref{thm:BusePalm} (Local time description of the difference profile)]
This comes by Theorem~\ref{thm:indep_loc} since    $\{\W_\dir(\abullet,0;0,0)\}_{\dir \in \R}$  $\deq G$ (Theorem~\ref{thm:Buse_dist_intro}\ref{itm:SH_Buse_process}), with probability one $\dir \in \DLBusedc$ iff $\tau_{\dir} < \infty$ iff $\bck{\tau_\dir} < \infty$ (Theorem~\ref{thm:DLBusedc_description}\ref{itm:Busedc_t}, Corollary~\ref{cor:dcLR}), and  the  running maximum  process  and  the local time process of a Brownian motion are equal in  distribution (L\'evy~\cite{Levy_book}). 
\end{proof}

For the convenience of the reader, we repeat definitions  \eqref{Split_sdir}--\eqref{eqn:gen_split_set} and~\eqref{Dsdir},\eqref{eqn:split_LR_sdir}.  As before, $S \in \{L,R\}$. 
\begin{align*} 
        \Split_{s,\dir} &= \{x \in \R: \text{there exist disjoint semi-infinite  geodesics from  }(x,s) \text{ in direction }\dir\}, \\
        \Split &= \bigcup_{s \in \R, \dir \in \DLBusedc} \Split_{s,\dir} \times \{s\},
        \qquad
        \Split_{s,\dir}^{S} = \{x \in \R: g_{(x,s)}^{\dir -,S} \text{ and } g_{(x,s)}^{\dir +,S} \text{ are disjoint}\},  \\
        \Split^S &= \bigcup_{\dir \in \DLBusedc, s \in \R} \Split_{s,\dir}^S \times \{s\},   \qquad\text{and}\qquad 
         \D_{s,\dir} = \{ x\in\R:  f_{s,\dir}(x - \ve) < f_{s,\dir}(x + \ve)\; \forall \ve > 0 \}. 
    \end{align*}
\begin{remark} \label{rmk:splitsetseq}
In contrast with  $\Split$ in~\eqref{eqn:gen_split_set4},   the sets $\Split^S$ are concerned only with leftmost ($S=L$) and rightmost ($S=R$) Busemann geodesics.
In   BLPP, the analogues of $\Split^L$ and $\Split^R$ are both equal to the set of initial points from which some geodesic travels initially vertically (Theorems 2.10 and 4.30 in~\cite{Seppalainen-Sorensen-21b}). Furthermore, in BLPP,  the analogue of this set contains $\NU_0$. We do not presently know whether either is true in DL. 
\end{remark}

\begin{proof}[Proof of Theorem~\ref{thm:random_supp}]
The full-probability event is $\Omega_2$ in~\eqref{omega2}.  The monotonicity of the function $f_{s,\dir}$ follows from~\eqref{801}.  We now prove that $\D_{s,\dir} = \Split_{s,\dir}^L \cup \Split_{s,\dir}^R$. Assume that $y \notin \D_{s,\dir}$. Then, there exist $a < y < b$ such that $f_{s,\dir}$ is constant on $[a,b]$. Hence, for $a \le x < y$,
\[
\W_{\dir +}(x,s;0,s) - \W_{\dir -}(x,s;0,s) = \W_{\dir +}(y,s;0,s) - \W_{\dir -}(y,s;0,s),
\]
and by additivity (Theorem~\ref{thm:DL_Buse_summ}\ref{itm:DL_Buse_add}), $\W_{\dir-}(y,s;x,s) = \W_{\dir +}(y,s;x,s)$. Choose $t > s$ sufficiently small so that $g_{(x,s)}^{\dir +,R}(t) < g_{(y,s)}^{\dir -,L}(t)$. By Lemma~\ref{lem:Buse_equality_coal}, $g_{(y,s)}^{\dir -,L}(u) = g_{(y,s)}^{\dir +,L}(u)$ for $u \in [s,t]$.  By a symmetric argument, instead choosing a point $x > y$, $g_{(y,s)}^{\dir -,R}$ and $g_{(y,s)}^{\dir +,R}$ agree near the starting point $(y,s)$. Hence, $y \notin \Split_{s,\dir}^L \cup \Split_{s,\dir}^R$.

Next, assume that $y \in \D_{s,\dir}$. Then, for all $x < y < z$,
\[
\W_{\dir +}(x,s;0,s) - \W_{\dir -}(x,s;0,s) < \W_{\dir +}(z,s;0,s) - \W_{\dir -}(z,s;0,s)
\]
and hence 
either (i)  $\W_{\dir -}(y,s;x,s) < \W_{\dir +}(y,s;x,s)$    for all $x < y$  or (ii) $\W_{\dir -}(z,s;y,s) < \W_{\dir +}(z,s;y,s)$  for all $z > y.$

We show that $g_{(y,s)}^{\dir -,L}$ and $g_{(y,s)}^{\dir +,L}$ are disjoint in the first case.     A symmetric proof shows that  $g_{(y,s)}^{\dir -,R}$ and $g_{(y,s)}^{\dir +,R}$ are disjoint in the second case. So 
assume $\W_{\dir -}(y,s;x,s) < \W_{\dir +}(y,s;x,s)$ for all $x < y$.  Sending $x \nearrow y$, $g_{(x,s)}^{\dir -,R}$ converges to $g_{(y,s)}^{\dir -,L}$ by Theorem~\ref{thm:g_basic_prop}\ref{itm:DL_SIG_conv_x}.   Assume, by way of contradiction, that $g_{(y,s)}^{\dir -,L}(u) = g_{(y,s)}^{\dir +,L}(u)$ for some $u > s$. This implies then $g_{(y,s)}^{\dir -,L}(t) = g_{(y,s)}^{\dir +,L}(t)$ for all $t \in [s,u]$ since both paths are the leftmost geodesic between any two of their points (Theorem~\ref{thm:DL_SIG_cons_intro}\ref{itm:DL_LRmost_geod}). For $t \ge s$, the convergence $g_{(x,s)}^{\dir -,R}(t) \to g_{(y,s)}^{\dir -,L}(t)$ is monotone by Theorem~\ref{thm:g_basic_prop}\ref{itm:DL_SIG_mont_x}. Since geodesics are continuous paths, Dini's theorem implies that, as $x \nearrow y$, $g_{(x,s)}^{\dir -,R}(t)$ converges to $g_{(y,s)}^{\dir -,L}(t) = g_{(y,s)}^{\dir + ,L}(t)$ uniformly in $t \in [s,u]$. Lemma~\ref{lm:BGH_disj} implies that, for sufficiently close $x < y$, $g_{(x,s)}^{\dir -,R}$ and $g_{(y,s)}^{\dir +,L}$ are not disjoint. This contradicts \ref{itm:DL_pm_Buse_eq}$\Leftrightarrow$\ref{itm:DL_paths} of Theorem~\ref{thm:Buse_pm_equiv} since we assumed $\W_{\dir -}(y,s;x,s) < \W_{\dir +}(y,s;x,s)$ for all $x < y$.

Lastly, we show that $(\Split_{s,\dir} \setminus \D_{s,\dir}) \times \{s\} \subseteq \NU_1^{\dir - } \cap \NU_1^{\dir +} \cap\, \Hh_s$. Let $x \in \Split_{s,\dir} \setminus \D_{s,\dir}$. By Theorem~\ref{thm:all_SIG_thm_intro}\ref{itm:DL_LRmost_SIG}, $g_{(x,s)}^{\dir -,L}$ is the leftmost $\dir$-directed geodesic from $(x,s)$, and $g_{(x,s)}^{\dir +,R}$ is the rightmost. Since $x \in \Split_{s,\dir}$, these two geodesics must be disjoint. Since $x \notin \D_{s,\dir}$, $g_{(x,s)}^{\dir -,L}$ and $g_{(x,s)}^{\dir +,L}$ are not disjoint, and $g_{(x,s)}^{\dir-,R}$ and $g_{(x,s)}^{\dir +,R}$ are not disjoint. Since the leftmost/rightmost semi-infinite geodesics are leftmost/rightmost geodesics between their points (Theorem~\ref{thm:DL_SIG_cons_intro}\ref{itm:DL_LRmost_geod}), there exists $\ve > 0$ such that for $t \in (s,s + \ve)$,
\[
g_{(x,s)}^{\dir -,L}(t) = g_{(x,s)}^{\dir +,L}(t) < g_{(x,s)}^{\dir -,R}(t) = g_{(x,s)}^{\dir +,R}(t),
\]
so, recalling the definition~\eqref{NU1}, $(x,s) \in \NU_1^{\dir -} \cap \NU_1^{\dir +} \cap\, \Hh_s$. 
\end{proof}

\begin{lemma} \label{lem:rm_geod}
Given $\omega \in \Omega_2$ and  $(x,s;y,u) \in \Rup$, let $g:[s,u] \to \R$ be the leftmost {\rm(}resp.\ rightmost{\rm)} geodesic between $(x,s)$ and $(y,u)$. Then, $(g(t),t) \in \Split^L$ ${\rm(} \text{resp.\ } \Split^R {\rm)}$ for some $t \in [s,u)$. Furthermore, among the directions $\dir$ for which $g_{(x,s)}^{\dir-,L}$ and $g_{(x,s)}^{\dir+,L}$ separate at some $t \in [s,u)$, there is a unique direction $\wh \dir$ such that 
\[
g_{(x,s)}^{\wh \dir-,L}(u) \le y < g_{(x,s)}^{\wh \dir+,L}(u).
\]
The same  holds with $L$ replaced by $R$ and the strict and weak inequalities swapped.
\end{lemma}
\begin{proof}
We prove the statement for leftmost geodesics.  The proof  for rightmost geodesics is analogous. Set
\be \label{468}
\wh \dir := \sup \{\dir \in \R: g_{(x,s)}^{\dir \sig,L}(u) \le y\} = \inf \{\dir \in \R: g_{(x,s)}^{\dir \sig,L}(u) > y\}.
\ee
The monotonicity of Theorem~\ref{thm:g_basic_prop}\ref{itm:DL_mont_dir} guarantees that the second equality holds, and that the definition is independent of the choice of $\sigg \in \{-,+\}$. Theorem~\ref{thm:g_basic_prop}\ref{itm:limits_to_inf} guarantees that $\wh \dir \in \R$. By definition of $\wh \dir$ and the monotonicity of Theorem~\ref{thm:g_basic_prop}\ref{itm:DL_mont_dir}, $g_{(x,s)}^{\alpha \sig,L}(u) \le y = g(u) < g_{(x,s)}^{\beta \sig,L}(u)$  whenever $\alpha < \wh \dir < \beta$ and $\sigg \in \{-,+\}$.  
But by Theorem~\ref{thm:g_basic_prop}\ref{itm:DL_SIG_unif}, the $\beta\sigg$ and $\wh\xi+$ geodesics agree locally when  $\beta$ is close enough to $\wh\xi$. We can conclude  that 
\be \label{423}
g_{(x,s)}^{\wh \dir -,L}(u) \le y = g(u) < g_{(x,s)}^{\wh \dir +,L}(u).
\ee
Since all three are leftmost geodesics (recall Theorem~\ref{thm:DL_SIG_cons_intro}\ref{itm:DL_LRmost_geod} for the Busemann geodesics), 
\be \label{523}
g_{(x,s)}^{\wh \dir -,L}(t)\le g(t) \le g_{(x,s)}^{\wh \dir +,L}(t)\qquad\text{for } t \in [s,u].
\ee
By~\eqref{423} the paths $g_{(x,s)}^{\wh \dir -,L}$ and $g_{(x,s)}^{\wh \dir +,L}$ must separate at some time $t \in [s,u)$. Furthermore, once $g_{(x,s)}^{\wh \dir -,L}$ splits from $g_{(x,s)}^{\wh \dir +,L}$ at a point $(z_1,t_1)$, the geodesics must stay apart. Otherwise, they would meet again at a point $(z_2,t_2)$, and Theorem~\ref{thm:DL_SIG_cons_intro}\ref{itm:DL_LRmost_geod} implies that both paths are the leftmost geodesic between $(z_1,t_1)$ and $(z_2,t_2)$. See Figure~\ref{fig:splitting}.  Set $\hat t = \inf\{t > s: g_{(x,s)}^{\wh \dir-,L}(t) < g_{(x,s)}^{\wh \dir +,L}(t)\}$. Then, $g_{(x,s)}^{\wh \dir-,L}(t) < g_{(x,s)}^{\wh \dir +,L}(t)$ for all $t > \hat t$. By~\eqref{523} and continuity of geodesics, $g_{(x,s)}^{\wh \dir-,L}(t) = g(t) = g_{(x,s)}^{\wh \dir +,L}(t)$ for $t \in [s,\hat t\tspa]$, and so $(g(\hat t),\hat t\tspa) \in \Split^L$. 
\end{proof}

\begin{figure}[t]
    \centering    \includegraphics[height = 1.5 in]{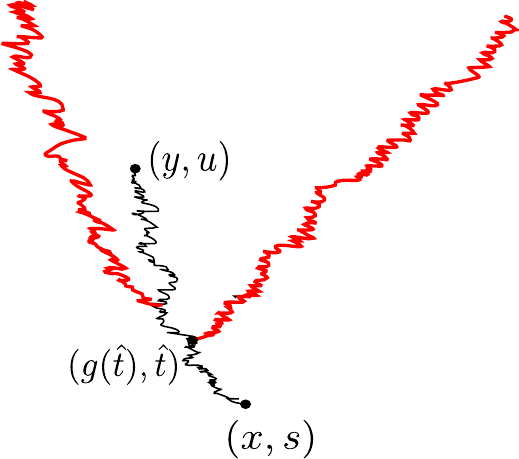}
    \caption{\small The black/thin path is the path $g$. The red/thick paths are the semi-infinite geodesics $g_{(x,s)}^{\wh \dir-,L}$ and $g_{(x,s)}^{\wh \dir+,L}$ after they split from $g$. Once the red paths split, they cannot return, or else there would be two leftmost geodesics from $(g(\hat t),\hat t)$ to the point where they come back together.}
    \label{fig:splitting}
\end{figure}

\begin{proof}[Proof of Theorem~\ref{thm:Split_pts}]

\smallskip\noindent \textbf{Item~\ref{itm:split_dense} ($\Split$ is dense):} Work on the full-probability event   $\Omega_2$. Since $\Split \supseteq \Split^L \cup \Split^R$, it suffices to show that for $(x,s) \in \R^2$   there is  a sequence $(y_n,t_n) \in \Split^L$ converging to $(x,s)$. Let $g$ be the leftmost geodesic from $(x,s)$ to $(x,s + 1)$. Then $\forall n \ge 1$, $g|_{[s,s + n^{-1}]}$ is the leftmost geodesic from $(x,s)$ to $(x,s + n^{-1})$. By Lemma~\ref{lem:rm_geod},  $\forall n \in \Z_{> 0}$  $\exists (x_n,t_n) \in \Split^L$ such that $x_n = g(t_n)$ and $s \le t_n \le s + n^{-1}$. The proof is complete by continuity of geodesics.

\smallskip\noindent \textbf{Item~\ref{itm:splitp0} ($\Pp(p \in \Split) = 0$ for all $p \in \R^2$):} If there exist disjoint semi-infinite geodesics from $(x,s)$, then for each level $t > s$, there exist disjoint geodesics from $(x,s)$ to some points $(y_1,t),(y_2,t)$. For each fixed $(x,s)$, with probability one, this occurs for no such points by~\cite[Remark 1.12]{Bates-Ganguly-Hammond-22}.

\smallskip\noindent \textbf{Item~\ref{itm:Hasudorff1/2} (Hausdorff dimension of $\Split_{s,\dir})$:} Since $s$ is fixed, it suffices to take $s = 0$. By Theorem~\ref{thm:Buse_dist_intro}\ref{itm:SH_Buse_process}, $\{\W_{\dir +}(\abullet,0;0,0)\}\deq G$, and by Theorem~\ref{thm:DLBusedc_description}\ref{itm:Busedc_t}, $\dir \in \DLBusedc$ if and only if $f_{0,\dir} \neq 0$. Therefore, Corollary~\ref{cor:SHHaus1/2} implies that, with probability one, $\dim_H(\D_{0,\dir}) = \f{1}{2}$ for all $\dir \in \DLBusedc$. By Remark~\ref{rmk:NUsupp}, $\Pp(\dim_H(\Split_{0,\dir}) = \f{1}{2} \;\forall \dir \in \DLBusedc) = 1$.

\smallskip\noindent \textbf{Item~\ref{itm:nonempty} ($\Split_{s,\dir}$ is nonempty and unbounded for all $s$):} 
By Theorem~\ref{thm:DLBusedc_description}\ref{itm:Busedc_t}, on the event $\Omega_3$, whenever $\dir \in \DLBusedc$, for all $s \in \R$, $f_{s,\dir}(x) \to \pm \infty$ as $x \to \pm \infty$. Since $f_{s,\dir}$ is continuous (Theorem~\ref{thm:DL_Buse_summ}\ref{itm:general_cts}), the set $\D_{s,\dir}$ is unbounded in both directions. The proof is complete since $\D_{s,\dir} \subseteq \Split_{s,\dir}$ by definition.
\end{proof}

\section{Open problems} \label{sec:op}
We enumerate  open problems that arise from this paper and mention solutions that have appeared since this paper was first posted. 
\begin{enumerate}  [label=\rm(\roman{*}), ref=\rm(\roman{*})]  \itemsep=3pt
    \item Prove convergence to SH for the Busemann process of some model other than   exponential LPP \cite{Busani-2021} and  BLPP \cite{Seppalainen-Sorensen-21b} (For BLPP, convergence has been shown only for  finite-dimensional distributions). In our work~\cite{Busa-Sepp-Sore-22b} that came after the first version of this paper, we show convergence of the TASEP speed process from~\cite{Amir_Angel_Valko11} to the SH. In this particle systems context, there are no Busemann functions, but there is a notion of coupled invariant measures.  In the long term, a true statement for KPZ universality should include convergence of its coupled invariant measures to the stationary horizon. 
    \item Recall definitions  \eqref{Split_sdir}--\eqref{eqn:gen_split_set} and  Remark~\ref{rmk:supports}. Can one describe the size of the sets $\Split_{s,\dir}$ globally instead of just on a fixed horizontal line, as in Theorem~\ref{thm:Split_pts}? Does  $\Split_{s,\dir}$ have Hausdorff dimension $\f{1}{2}$ simultaneously for all $s \in \R$ and $\dir \in \DLBusedc$?   The support of the Airy difference profile along a vertical line was recently studied in~\cite{Ganguly-Zhang-2022a}. What properties does  the set   $\Split$ have along a vertical line? 
    
    \item Are all semi-infinite geodesics Busemann geodesics?  (Theorem~\ref{thm:DL_good_dir_classification}\ref{itm:DL_allBuse} covers the case  $\dir \notin \DLBusedc$.) Equivalently,   does  every   semi-infinite geodesic in direction $\dir \in \DLBusedc$ coalesce with a $\dir-$ or $\dir+$ geodesic? 
    \item For $\dir \in \R$ and $\sigg \in \{-,+\}$, is  $\NU_1^{\dir \sig}$ a strict subset of $\NU_0^{\dir \sig}$? (Recall definitions \eqref{NU0}-~\eqref{NU1}.)  That is, are there $\dir \sig$ geodesics that stick together for some time, separate, then come back together, or must they separate immediately? See Figure~\ref{fig:NU}. After the posting of the first version of this paper, it was shown in two independent works \cite{Bhatia-23,Dauvergne-23} that the two sets are equal.
    \item The set $\NU_0$   is  countably infinite   on each horizontal line and hence globally  uncountable  (Theorem~\ref{thm:DLNU}). What is the Hausdorff dimension of $\NU_0$? It has since been shown in~\cite{Bhatia-23} that for fixed direction $\dir \in \R$, $\NU_0^\dir$ almost surely has Hausdorff dimension $\f{4}{3}$. By Theorem~\ref{thm:DLNU}, the full set $\NU_0$ also has Hausdorff dimension $\f{4}{3}$. 
    \item In  BLPP,   the analogue of the inclusion $\NU_0 \subseteq \Split$ holds \cite{Seppalainen-Sorensen-21b}. The reason is that  in BLPP, the analogue of the set $\Split$ is   the set of initial points from  which some finite geodesic begins with a vertical step.   We do not have such a description in DL. Does the    inclusion still hold? 
    \item Are the sets $\Split^L$ and $\Split^R$ defined in~\eqref{eqn:split_LR_sdir} equal, as is the case for the analogous sets in BLPP? See Remark~\ref{rmk:splitsetseq}. 
\end{enumerate}

\begin{appendix}
\section{Maximizers of continuous functions}
For a function $f:\R \to \R$, we denote its increments by $f(x,y) := f(y) - f(x).$ For two functions $f,g:\R \to \R$, we say that $f \li g$ if $f(x,y) \le g(x,y)$ for all $x < y$ in $\R$.
\begin{lemma} \label{lemma:max_monotonicity}
Let $f,g:\R \to \R$ be continuous functions satisfying $f(x)\vee g(x) \to -\infty$ as $x \to \pm \infty$ and $f \li g$. Let $x_f^L$ and $x_f^R$ be the leftmost and rightmost maximizers of $f$ over $\R$, and similarly defined for $g$. Then,  $x_f^L \le x_g^L$ and $x_f^R \le x_g^R$.
\end{lemma}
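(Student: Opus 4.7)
The plan is to translate the increment condition $f \li g$ into the statement that the difference $h := g - f$ is nondecreasing, and then run a short contradiction argument in each of the two cases. First I note that the hypothesis $f(x)\vee g(x) \to -\infty$ as $x \to \pm\infty$, together with continuity, guarantees that both $f$ and $g$ attain their suprema on $\R$, so the four extremal maximizers $x_f^L, x_f^R, x_g^L, x_g^R$ are well-defined real numbers (with $x_f^L \le x_f^R$ and $x_g^L \le x_g^R$).

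For the leftmost inequality, I would argue by contradiction: suppose $x_g^L < x_f^L$. Applying $f \li g$ to the pair $x_g^L < x_f^L$ gives
\[
g(x_f^L) - g(x_g^L) \;\ge\; f(x_f^L) - f(x_g^L).
\]
The right-hand side is $\ge 0$ because $x_f^L$ maximizes $f$, while the left-hand side is $\le 0$ because $x_g^L$ maximizes $g$. Hence both sides equal $0$, which in particular forces $f(x_g^L) = f(x_f^L)$. But then $x_g^L$ is also a maximizer of $f$, contradicting the assumption that $x_f^L$ is the \emph{leftmost} maximizer of $f$.

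For the rightmost inequality I would run the symmetric argument: if $x_f^R > x_g^R$, apply $f \li g$ to the pair $x_g^R < x_f^R$ to get
\[
g(x_f^R) - g(x_g^R) \;\ge\; f(x_f^R) - f(x_g^R),
\]
where now the right-hand side is $\ge 0$ by maximality of $x_f^R$ for $f$ and the left-hand side is $\le 0$ by maximality of $x_g^R$ for $g$. Equality on both sides forces $g(x_f^R) = g(x_g^R)$, so $x_f^R$ is a maximizer of $g$ lying strictly to the right of $x_g^R$, contradicting the choice of $x_g^R$ as the rightmost such maximizer.

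There is essentially no main obstacle here; the only things to be careful about are (i) noting that the $\pm\infty$ decay hypothesis is used solely to ensure that the maximizers exist and are finite, and (ii) applying the increment comparison at the correct ordered pair, namely $(x_g^L, x_f^L)$ in the first case and $(x_g^R, x_f^R)$ in the second, so that the sign of each side is pinned down by a different maximality statement and the two forced equalities contradict the ``leftmost'' or ``rightmost'' clauses, respectively.
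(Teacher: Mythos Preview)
Your proof is correct and uses essentially the same idea as the paper: apply the increment comparison $f \li g$ between a maximizer of $f$ and a maximizer of $g$, and combine the two resulting sign constraints. The only cosmetic differences are that the paper argues directly (for all $x > x_g^R$ one has $f(x_g^R,x) \le g(x_g^R,x) < 0$, so no maximizer of $f$ lies there) rather than by contradiction, and handles the leftmost case via the reflection $x\mapsto -x$ instead of a separate symmetric argument.
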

\begin{proof}
By the definition of  $x_g^R$   and by the assumption $f \li g$, for all $x > x_g^R$
\[
f(x_g^R,x) \le g(x_g^R,x) < 0.
\]
Hence, the rightmost maximizer of $f$ must be less than or equal to $x_g^R$. We get the statement for leftmost maximizers by considering the functions $x \mapsto f(-x)$ and $g \mapsto g(-x)$.
\end{proof}

\begin{lemma} \label{lem:ext_mont}
Assume that $f,g:\R \to \R$ satisfy $f \li g$. Then, for $a \le x \le y \le b$,
\[
0 \le g(x,y) - f(x,y) \le g(a,b) - f(a,b).
\]
\end{lemma}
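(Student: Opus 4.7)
The plan is to use the telescoping identity for increments together with the hypothesis $f \li g$ applied to the three sub-intervals $[a,x]$, $[x,y]$, $[y,b]$. Since both $f$ and $g$ satisfy the additive increment relation $\phi(p,r) = \phi(p,q) + \phi(q,r)$ for any $p \le q \le r$ (directly from the definition $\phi(p,r) = \phi(r) - \phi(p)$), so does the difference $h := g - f$.

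First, writing $h(p,q) = g(p,q) - f(p,q)$, the hypothesis $f \li g$ says exactly that $h(p,q) \ge 0$ whenever $p < q$, which gives the lower bound $0 \le g(x,y) - f(x,y)$ in the statement (and $h(p,q) = 0$ trivially when $p = q$). Next, decompose
\[
h(a,b) = h(a,x) + h(x,y) + h(y,b).
\]
Since $a \le x \le y \le b$, the two outer terms $h(a,x)$ and $h(y,b)$ are non-negative by the observation just made. Hence
\[
h(x,y) \le h(a,x) + h(x,y) + h(y,b) = h(a,b),
\]
which is precisely $g(x,y) - f(x,y) \le g(a,b) - f(a,b)$. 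There is no obstacle: the argument is a one-line telescoping estimate, and this is really only a packaging lemma for the monotonicity arguments applied earlier (for example to control the Busemann increments in~\eqref{156} and~\eqref{801}).
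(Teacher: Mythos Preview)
Your proof is correct and is essentially the same as the paper's: the paper writes the telescoping identity as $f(a,b) - f(x,y) = f(a,x) + f(y,b) \le g(a,x) + g(y,b) = g(a,b) - g(x,y)$ and rearranges, which is exactly your decomposition $h(a,b) = h(a,x) + h(x,y) + h(y,b)$ with $h = g - f$, just without introducing the auxiliary function.
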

\begin{proof}
The first inequality follows immediately from the assumption $f \li g$. The second follows from the inequality
\[
f(a,b) - f(x,y) = f(a,x) + f(y,b) \le g(a,x) + g(y,b) = g(a,b) - g(x,y). \qedhere
\]
\end{proof}

\begin{lemma} \label{lemma:convergence of maximizers from converging functions}
Let $S \subseteq \R^n$, and let  $f_n:S \rightarrow \R$ be a sequence of continuous functions, converging uniformly to the function $f:S \rightarrow \R$. Assume that there exists a sequence $\{c_n\}$, of maximizers of $f_n$, converging to some $c \in S$. Then, $c$ is a maximizer of $f$. 
\end{lemma}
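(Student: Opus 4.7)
The plan is to prove the standard fact that pointwise-plus-uniform control propagates the maximizer property through the limit. The statement only asserts that $c$ is a maximizer of $f$, not that every maximizer of $f$ arises in this way, so no compactness or tightness argument is needed; the sequence $\{c_n\}$ is handed to us with a limit in $S$.

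First I would fix an arbitrary point $x \in S$ and use the defining property of $c_n$ as a maximizer of $f_n$ to write the inequality $f_n(c_n) \ge f_n(x)$ for every $n$. The goal is then to pass this inequality to the limit and read off $f(c) \ge f(x)$. On the right-hand side, uniform convergence (in fact pointwise convergence suffices) immediately gives $f_n(x) \to f(x)$. On the left-hand side, I would decompose
\[
|f_n(c_n) - f(c)| \le |f_n(c_n) - f(c_n)| + |f(c_n) - f(c)|.
\]
The first term is bounded by $\sup_{y \in S}|f_n(y) - f(y)|$, which tends to $0$ by hypothesis of uniform convergence. The second term tends to $0$ by continuity of $f$ at $c$; note that $f$ is automatically continuous because it is the uniform limit of the continuous functions $f_n$. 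Therefore $f_n(c_n) \to f(c)$.

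Combining these two limits in the inequality $f_n(c_n) \ge f_n(x)$ yields $f(c) \ge f(x)$. Since $x \in S$ was arbitrary, $c$ is a global maximizer of $f$ on $S$, completing the proof.

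There is no real obstacle here; the only point requiring a moment of care is distinguishing $f_n(c_n)$ from $f(c_n)$, which is precisely where uniform (as opposed to merely pointwise) convergence is used, combined with continuity of the limit $f$ to handle $f(c_n) \to f(c)$.
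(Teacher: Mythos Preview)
Your proof is correct and follows essentially the same approach as the paper: both start from $f_n(c_n) \ge f_n(x)$, reduce to showing $f_n(c_n) \to f(c)$, and establish this via the identical triangle-inequality decomposition $|f_n(c_n) - f(c)| \le |f_n(c_n) - f(c_n)| + |f(c_n) - f(c)|$, invoking uniform convergence for the first term and continuity of $f$ for the second.
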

\begin{proof}
$f_n(c_n) \ge f_n(x)$ for all $x \in S$, so  it suffices to show that $f_n(c_n) \rightarrow f(c)$. This follows from the uniform convergence of $f_n$ to $f$, the continuity of $f$, and
\[
|f_n(c_n) - f(c)| \le |f_n(c_n) - f(c_n)| +|f(c_n) - f(c)|. \qedhere
\]
\end{proof}


\section{Directed landscape and the KPZ fixed point} \label{sec:DL_KPZ_appendix}
The directed landscape satisfies the following symmetries.
\begin{lemma}[\cite{Directed_Landscape}, Lemma 10.2 and \cite{Dauvergne-Virag-21}, Proposition 1.23] \label{lm:landscape_symm}
As a random continuous function of $(x,s;y,t) \in \Rup$, the directed landscape $\Ll$ satisfies the following distributional symmetries, for all  $r,c \in \R$ and $q > 0$.
\begin{enumerate} [label=\rm(\roman{*}), ref=\rm(\roman{*})]  \itemsep=4pt
    \item {\rm(Space-time stationarity)}  \label{itm:time_stat} \ \ $\Ll(x,s;y,t) \deq \Ll(x+c,s + r;y+c,t + r).
    $
    \item {\rm(Skew stationarity)} \label{itm:skew_stat}
    \ \ $
    \Ll(x,s;y,t) \deq \Ll(x + cs,s;y + ct,t) -2c(x - y) + (t- s)c^2.  
    $
    \item \label{itm:DL_reflect} {\rm(Spatial and temporal reflections)} 
    \ \ $
    \Ll(x,s;y,t) \deq \Ll(-x,s;-y,t) \deq \Ll(y,-t;x,-s).
    $
    \item \label{itm:DL_rescaling} {\rm(Rescaling)} 
    \ \ $
    \Ll(x,s;y,t) \deq q\Ll(q^{-2}x,q^{-3}s;q^{-2}y,q^{-3}t).
    $
\end{enumerate}
\end{lemma}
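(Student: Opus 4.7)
The plan is to deduce each of the four distributional symmetries either directly from the Brownian last-passage percolation construction of the directed landscape in \cite{Directed_Landscape}, or by combining the independence of $\Ll$ on disjoint time strips with the corresponding one-strip symmetries of the Airy sheet $(x,y)\mapsto\Ll(x,0;y,1)$. In every case the right-hand side is again a continuous random function on $\Rup$ that satisfies the metric composition law \eqref{eqn:metric_comp}, so it suffices to match joint laws on a single time strip and then promote the identity by iterating \eqref{eqn:metric_comp} over a partition and using independence across disjoint strips.

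For \ref{itm:time_stat}, I would first note that by construction $\Ll(\aabullet,s;\aabullet,t)$ depends on $(s,t)$ only through $t-s$, which gives temporal stationarity. The spatial translation invariance on a single strip is built into the Airy sheet, and it passes to all $(s,t)$ via \eqref{eqn:metric_comp} because each integrand $\Ll(x+c,s;z+c,u)+\Ll(z+c,u;y+c,t)$ differs from the untranslated one by the change of variables $z\mapsto z+c$, under which the supremum is invariant. For \ref{itm:skew_stat}, I would use the affine/parabolic invariance of the Airy sheet: at the prelimit BLPP level this is the Brownian invariance $B(x)\deq B(x+c)-B(c)$, and on a single strip it yields the identity with a quadratic correction matching $(t-s)c^2$. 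Iterating along a fine partition $s=t_0<\dotsb<t_n=t$ via \eqref{eqn:metric_comp}, the linear-in-$(x-y)$ corrections telescope and the quadratic corrections accumulate additively to $(t-s)c^2$.

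For \ref{itm:DL_reflect}, the spatial reflection $\Ll(x,s;y,t) \deq \Ll(-x,s;-y,t)$ is the Airy sheet symmetry $(x,y)\mapsto(-x,-y)$ promoted to all $(s,t)$ via \eqref{eqn:metric_comp} and temporal independence. The temporal reflection is more subtle: one uses that the Airy sheet is symmetric in its two arguments, $\Ll(x,0;y,1)\deq\Ll(y,0;x,1)$, together with the fact that time reversal $t\mapsto -t$ reverses the order of composition in \eqref{eqn:metric_comp}; combining these two reversals yields $\Ll(x,s;y,t) \deq \Ll(y,-t;x,-s)$. For \ref{itm:DL_rescaling}, the 1:2:3 KPZ scaling of BLPP is preserved under the limit procedure of \cite{Directed_Landscape}: on a single strip the Airy sheet satisfies $\Ll(x,0;y,1)\deq q\,\Ll(q^{-2}x,0;q^{-2}y,1)$ after the appropriate time rescaling, and \eqref{eqn:metric_comp} extends this to the joint identity $\Ll(x,s;y,t) \deq q\Ll(q^{-2}x,q^{-3}s;q^{-2}y,q^{-3}t)$ on all of $\Rup$.

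The main obstacle in carrying out this program from scratch is the careful bookkeeping of the parabolic/linear corrections when iterating \eqref{eqn:metric_comp} along a partition: the Airy sheet identities hold only modulo explicit parabolic drifts, and the accumulated drifts have to be shown to telescope precisely to the deterministic corrections appearing in \ref{itm:skew_stat} and \ref{itm:DL_rescaling}. Since this bookkeeping is already carried out in \cite[Lemma 10.2]{Directed_Landscape} and \cite[Proposition 1.23]{Dauvergne-Virag-21}, our proof would consist of invoking those results and translating them into the $(x,s;y,t)\in\Rup$ convention used here.
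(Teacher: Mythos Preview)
The paper does not give a proof of this lemma at all: it is stated as a citation of \cite[Lemma 10.2]{Directed_Landscape} and \cite[Proposition 1.23]{Dauvergne-Virag-21}, and the authors simply invoke those results. Your proposal is a reasonable sketch of how the arguments in those original references go (symmetries of the Airy sheet promoted to the full landscape via the metric composition law and independent increments), and indeed your final paragraph already acknowledges that you would ultimately just cite those papers---which is exactly what the present paper does.
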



\begin{lemma}[\cite{Directed_Landscape}, Corollary 10.7] \label{lem:Landscape_global_bound}
There exists a random constant $C$ such that for all $v = (x,s;y,t) \in \Rup$, we have 
\[
\Bigl|\Ll(x,s;y,t) + \f{(x - y)^2}{t - s}\Bigr| \le C (t - s)^{1/3} \log^{4/3} \Bigl(\f{2(\|v\| + 2)}{t - s}\Bigr)\log^{2/3}(\|v\| + 2),
\]
where $\|v\|$ is the Euclidean norm.
\end{lemma}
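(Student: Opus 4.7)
The plan is to reduce the claimed global bound to pointwise tail estimates using the symmetries of $\Ll$, and then promote the pointwise estimates to a uniform bound by a chaining argument over dyadic scales in $\Rup$.

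First I would normalize. Given $v = (x,s;y,t) \in \Rup$, set $u = t-s$ and $z = y-x$. By space-time stationarity and skew stationarity (Lemma~\ref{lm:landscape_symm}\ref{itm:time_stat}--\ref{itm:skew_stat}), together with rescaling \ref{itm:DL_rescaling} with $q = u^{1/3}$, one checks that
\[
\Ll(x,s;y,t) + \tfrac{(y-x)^2}{t-s} \;\deq\; u^{1/3}\bigl[\Ll(0,0;z u^{-2/3},1) + (z u^{-2/3})^2\bigr].
\]
The marginal $\Ll(0,0;w,1) + w^2$ is a centered GUE Tracy--Widom variable, for which classical tail bounds give $\Pp(|\Ll(0,0;w,1)+w^2| \ge r) \le C e^{-c r^{3/2}}$, uniformly in $w$. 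This provides the pointwise estimate
\be\label{eq:ptwise}
\Pp\bigl(\tspa|\Ll(v) + (y-x)^2/(t-s)| \ge r (t-s)^{1/3}\bigr) \le C e^{-c r^{3/2}}, \qquad r \ge 0.
\ee

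Second, I would discretize $\Rup$ into dyadic boxes matched to KPZ scaling ($1:2:3$ in space:space:time, repeated for $(x,s)$ and $(y,t)$) and located at distances $\le 2^N$ from the origin, for each spatial scale $2^{-n}$ and temporal scale $2^{-m}$. Applying \eqref{eq:ptwise} at the vertices of each box with threshold $r_{n,m,N} \asymp (m+n+N)^{2/3}$ and taking a union bound, the failure probabilities are summable in $(n,m,N)$ because $e^{-c r_{n,m,N}^{3/2}}$ beats the polynomial count of boxes. Within each box, the oscillation of $\Ll$ is controlled by a local modulus of continuity following from Kolmogorov--Chentsov applied to four-point moment bounds on increments of $\Ll$ (available from the integrable construction of $\Ll$ as the scaling limit of BLPP). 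Combining the vertex tail bound with the local oscillation bound and optimizing the scale cutoffs $n,m$ as a function of the target point $(x,s;y,t)$ produces a random constant $C$ and yields the bound with the stated right-hand side.

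Third, I would extract the explicit logarithmic form. The prefactor $(t-s)^{1/3}$ is from the normalization in Step 1. The factor $\log^{4/3}(2(\|v\|+2)/(t-s))$ arises because inverting the tail $e^{-c r^{3/2}} \asymp N^{-1}$ gives $r \asymp \log^{2/3} N$, and one collects two such logarithmic factors: one from the union bound over the $\asymp \|v\|/(t-s)$ candidate vertices and one from the temporal refinement to the scale $t-s$. The remaining $\log^{2/3}(\|v\|+2)$ is the extra premium needed so that the union bound over all dyadic cells at distance $\le \|v\|$ from the origin remains summable across the outer scale parameter $N$.

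The main obstacle will be the chaining step. Because $\Ll$ is a four-parameter field with anisotropic regularity (Hölder $\tfrac12^-$ in the two spatial variables and $\tfrac13^-$ in the two temporal variables) and stretched-exponential tails with exponent $\tfrac32$ rather than Gaussian, standard Dudley/Talagrand chaining does not apply off-the-shelf; one must instead run a bespoke multi-scale argument that respects the KPZ $1:2:3$ scaling simultaneously in $(x,s)$ and $(y,t)$, and carefully track how the log premium compounds across scales to recover the exact exponents $\tfrac43$ and $\tfrac23$ on the two logarithms. A secondary subtlety is that the pointwise tail \eqref{eq:ptwise} must be shown with constants independent of $w = zu^{-2/3}$, which requires uniform Tracy--Widom-type tails over the whole range of spatial separations, including the far-tail regime $|w|\to\infty$ where the relevant variable is on the edge of its support.
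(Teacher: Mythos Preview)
The paper does not prove this lemma: it is quoted verbatim from \cite{Directed_Landscape}, Corollary 10.7, and used as a black box throughout. So there is no ``paper's own proof'' to compare against.

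That said, your sketch is broadly aligned with how the result is actually established in \cite{Directed_Landscape}. There the argument also proceeds by (i) using the symmetries of $\Ll$ to reduce to uniform tail bounds on $\Ll(0,0;w,1)+w^2$, (ii) combining these with two-point modulus-of-continuity estimates derived from the Airy line ensemble/BLPP construction, and (iii) promoting to a global bound via a union bound over a KPZ-scaled dyadic mesh. Your identification of the exponents $4/3$ and $2/3$ as coming from inverting stretched-exponential tails with exponent $3/2$ against polynomial entropy counts is correct in spirit.

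One caution: your Step~2 is where the real work lies, and your description is too schematic to be a proof. The actual argument in \cite{Directed_Landscape} does not run an abstract chaining; it proves explicit quantitative increment bounds (their Proposition~10.5) with the right KPZ anisotropy baked in, and then does a careful union bound over a specific mesh. Your remark that ``standard Dudley/Talagrand chaining does not apply off-the-shelf'' is accurate, and the resolution is precisely those tailored increment estimates rather than a general-purpose chaining. Also, the uniform-in-$w$ Tracy--Widom tails you need in \eqref{eq:ptwise} are not automatic from the marginal law alone; in \cite{Directed_Landscape} they come from the Airy process construction, so you should cite or reprove that input rather than asserting it.
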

\begin{lemma}[\cite{Dauvergne-22}, Proposition 2.6] \label{lem:DL_erg_coup}
For every $i = 1,\ldots,k$ and $\ve > 0$, let 
\be \label{Kie}
K_{i,\ve} = \{(x,s;y,t) \in \Rup: s,t \in [0,\ve],x,y \in [i -1/4,i + 1/4]\}.
\ee
Then, there exists a coupling of $k + 1$ copies of the directed landscape $\Ll_0,\Ll_1,\ldots,\Ll_k$ so that $\Ll_1,\ldots,\Ll_k$ are independent, and almost surely, there exists a random $\ve > 0$ such that for $1 \le i \le k$, $\Ll_0|_{K_{i,\ve}} = \Ll_i|_{K_{i,\ve}}$.
\end{lemma}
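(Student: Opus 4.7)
The plan is to build the coupling by exploiting the spatial stationarity of the directed landscape together with an explicit spatial decomposition of the driving randomness. First, by Lemma~\ref{lm:landscape_symm}\ref{itm:time_stat} the shifted field $\wh \Ll_i(x,s;y,t) := \Ll_0(x + i, s; y + i, t)$ is itself distributed as a directed landscape, so the problem reduces to producing independent copies $\Ll_i$ that agree with $\wh\Ll_i$ on $K_{0,\ve}$ (equivalently with $\Ll_0$ on $K_{i,\ve}$) for some random $\ve > 0$.

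Second, use one of the known constructions of the directed landscape as a deterministic measurable functional of an underlying noise, for instance the Brownian LPP construction of \cite{Directed_Landscape,Dauvergne-Virag-21} in which the DL arises from a countable family of independent Brownian motions via an RSK-type procedure. Fix a large spatial radius $M > 1/2$ and split this noise into a ``local'' piece supported in $[-M,M]$ and a ``far'' piece supported outside. Enlarge the probability space to carry $k$ mutually independent ``far'' copies $\wh N_1,\ldots,\wh N_k$, and define $\Ll_i$ by feeding into the DL construction the local noise of $\wh\Ll_i$ (i.e.\ of $\Ll_0$ in a window of radius $M$ around $i$) together with the independent far noise $\wh N_i$. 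Because the local pieces used for distinct $i$ come from disjoint spatial windows of $\Ll_0$, they are independent under the DL law, and therefore the resulting $\Ll_1,\ldots,\Ll_k$ are independent, each distributed as a directed landscape.

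Third, verify the local agreement. This rests on a spatial locality estimate: when $\ve>0$ is small enough, the value of $\Ll_0$ on $K_{i,\ve}$ is determined by the noise in the window $[i-M,i+M]$. This follows from iterating the metric composition law~\eqref{eqn:metric_comp} together with the quadratic decay in Lemma~\ref{lem:Landscape_global_bound}, which forces geodesics of time length $\ve$ between points in the strip $[i-1/4,i+1/4]$ to have transversal fluctuations of order $\ve^{2/3}$. Hence, for $\ve$ below a random threshold $\ve(\omega,M)$, every such geodesic stays inside $[i-M,i+M]$, and on this event the construction above forces $\Ll_0|_{K_{i,\ve}}=\Ll_i|_{K_{i,\ve}}$ simultaneously for $1 \le i \le k$. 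The main obstacle lies in the second step: making the ``local vs.\ far'' decomposition of the noise genuinely support a measurable functional representation of the landscape, so that replacing the far piece yields a bona fide directed landscape. The RSK machinery intertwines noise from different spatial locations in a highly nonlocal way, and the transversal fluctuation bounds from Lemma~\ref{lem:Landscape_global_bound} (together with a continuity/stability statement for the RSK-to-landscape map) are precisely what compensate for this nonlocality and make the coupling work.
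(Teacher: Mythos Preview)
The paper does not prove this lemma; it is quoted directly from \cite{Dauvergne-22} (Proposition~2.6) and used as a black box in the proof of Lemma~\ref{lm:horiz_shift_mix}. So there is no ``paper's own proof'' to compare against here.

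That said, your sketch has the right high-level architecture (localize, then resample the complement independently), but it contains a concrete inconsistency and an unresolved structural gap. The inconsistency: you take windows $[i-M,i+M]$ with $M > 1/2$ and then assert that for distinct $i$ these windows are disjoint, hence the local noise pieces are independent. With centers at consecutive integers, disjointness requires $M < 1/2$. Since the transversal fluctuations of geodesics of time length $\ve$ are of order $\ve^{2/3}$, a small $M$ (say $M = 1/3$) is actually what you want, not a large one; the phrase ``large spatial radius $M > 1/2$'' should be reversed.

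The deeper issue is the one you yourself flag at the end: there is no known representation of the directed landscape as a measurable functional of a \emph{spatially local} noise in the sense your argument needs. The BLPP/RSK input consists of Brownian motions indexed by vertical levels, and RSK mixes spatial locations in a way that cannot be undone by a simple ``local vs.\ far'' cut of the real line. The actual proof in \cite{Dauvergne-22} does not attempt such a decomposition; it instead goes through the Airy line ensemble and exploits its Brownian Gibbs resampling property to produce, on each small interval, an absolutely continuous comparison with independent Brownian data, and then uses the local reconstruction of $\Ll$ from the ensemble. Your proposal would need to replace the hand-waved ``stability of the RSK-to-landscape map'' with that machinery (or something equivalent) to become a proof.
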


On a measure space $(\Omega,\F,\Pp)$, a measure-preserving transformation $T$ satisfies $T^{-1}E \in \F$ and $\Pp(T^{-1}E) = \Pp(E)$ for all $E \in \F$. Such a transformation is said to be \textit{ergodic} if $\Pp(E) \in\{0,1\}$ whenever $T^{-1}E = E$. The transformation $T$ is said to be \textit{mixing} if, for all $A,B \in \F$, $\Pp(A \cap T^{-k}B) \to \Pp(A) \Pp(B)$ as $k \to \infty$. By setting $A = B$, one sees that mixing implies ergodicity. 
\begin{lemma} \label{lm:horiz_shift_mix}
For $a,b \in \R$, not both $0$ and $z >0$, consider the shift operator $T_{z;a,b}$ acting on the directed landscape $\Ll$ as
\[
T_{z;a,b} \Ll(x,s;y,t) = \Ll(x + az,s + bz;y + az;t + bz),
\]
where both sides are understood as a process on $\Rup$.  Then, $\Ll$ is mixing under this transformation. That is, for all Borel subsets $A,B$ of the space $C(\Rup,\R)$,
\[
\Pp(\Ll \in A, T_{z;a,b}\Ll \in B) \overset{z \to \infty}{\longrightarrow} \Pp(\Ll \in A)\Pp(\Ll \in B).
\]
In words, the directed landscape is mixing (and therefore ergodic) under space-time shifts in any planar direction.
\end{lemma}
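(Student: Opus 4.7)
The plan is to reduce via a monotone-class argument to events depending on $\Ll$ through a fixed compact set, and then split into the two cases $b\neq 0$ and $b=0$. The Borel $\sigma$-algebra on $C(\Rup,\R)$ is generated by the $\pi$-system of cylinder events $\{\Ll|_K\in E\}$ with $K\subseteq\Rup$ compact and $E\subseteq C(K,\R)$ Borel. By the standard monotone-class argument it therefore suffices to show, for every such compact $K$ and every pair of bounded Borel $F_0,F_1:C(K,\R)\to\R$, that
\[
\E\bigl[F_0(\Ll|_K)\,F_1(T_{z;a,b}\Ll|_K)\bigr]\;\underset{z\to\infty}{\longrightarrow}\;\E\bigl[F_0(\Ll|_K)\bigr]\,\E\bigl[F_1(\Ll|_K)\bigr].
\]

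The case $b\ne 0$ is straightforward and does not require Lemma~\ref{lem:DL_erg_coup}. Assume $b>0$, and pick $t_0<t_1$ with $K\subseteq\{(x,s;y,t):s,t\in[t_0,t_1]\}$. For $z>(t_1-t_0)/b$ the shifted set $K+(az,bz;az,bz)$ lies in the time band $[t_0+bz,t_1+bz]$, which is disjoint from $[t_0,t_1]$. The independence of the directed landscape over disjoint time intervals then gives that $\Ll|_K$ and $T_{z;a,b}\Ll|_K=\Ll|_{K+(az,bz;az,bz)}$ are independent, and by space-time stationarity from Lemma~\ref{lm:landscape_symm}\ref{itm:time_stat} the desired factorization holds identically for all such $z$.

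The main obstacle is the pure spatial case $b=0$, say $a>0$, which requires Lemma~\ref{lem:DL_erg_coup} combined with the rescaling symmetry. By spatial and temporal stationarity we may assume $K\subseteq[-R_0,R_0]^2\times[0,T_0]$ (in the $(x,y,s,t)$ coordinates). Using Lemma~\ref{lm:landscape_symm}\ref{itm:DL_rescaling} with $q=\sqrt{az}$, together with the map $R_q(x,s;y,t)=(q^{-2}x,q^{-3}s;q^{-2}y,q^{-3}t)$, the joint law of $(\Ll|_K,\Ll|_{K+(az,0;az,0)})$ is a deterministic function of the pair $(\Ll|_{R_q K},\Ll|_{R_q K+(1,0;1,0)})$, since $q^{-2}(az)=1$. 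Because $R_q K$ has spatial diameter $2R_0/(az)$ and temporal diameter $T_0/(az)^{3/2}$, both of which tend to $0$ as $z\to\infty$, a further unit spatial shift via stationarity places the two restrictions inside the Dauvergne boxes $K_{1,\ve_0}$ and $K_{2,\ve_0}$ for any prescribed $\ve_0>0$, provided $z$ is large enough.

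With this reduction in place, Lemma~\ref{lem:DL_erg_coup} with $k=2$ yields a coupling $(\Ll,\Ll^{(1)},\Ll^{(2)})$ in which $\Ll^{(1)},\Ll^{(2)}$ are independent copies of $\Ll$ and $\Ll|_{K_{i,\ve}}=\Ll^{(i)}|_{K_{i,\ve}}$ for $i=1,2$ on an almost-sure event parametrized by a random $\ve>0$. Given $\delta>0$, choose $\ve_0$ so small that $\Pp(\ve<\ve_0)<\delta$; then on $\{\ve\ge\ve_0\}$ the restrictions of $\Ll$ to $K_{1,\ve_0}$ and $K_{2,\ve_0}$, and hence to the smaller subsets produced above, are independent. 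A bounded-difference estimate together with the stationarity-based identification of marginals yields
\[
\bigl|\E[F_0(\Ll|_K)\,F_1(T_{z;a,0}\Ll|_K)]-\E[F_0(\Ll|_K)]\,\E[F_1(\Ll|_K)]\bigr|\;\le\;2\|F_0\|_\infty\|F_1\|_\infty\,\delta
\]
for every $z$ large enough; sending $z\to\infty$ and then $\delta\to 0$ finishes the proof.
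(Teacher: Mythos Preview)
Your proof is correct and uses essentially the same ingredients as the paper: a $\pi$-system reduction to compact sets, the Dauvergne coupling of Lemma~\ref{lem:DL_erg_coup} combined with the rescaling symmetry for the spatial direction, and the independent temporal increments for the purely temporal direction. The only difference is the case split: the paper splits as $a\neq 0$ versus $a=0$ (applying the rescaling/coupling argument whenever $a\neq 0$, absorbing any temporal shift $bz$ into a term $bz^{-1/2}\to 0$ after rescaling), whereas you split as $b\neq 0$ versus $b=0$ and dispose of every case with a temporal component directly via independent increments. Your split is slightly cleaner, since it confines the more delicate rescaling/coupling argument to the one case where it is genuinely needed, but the two arguments are otherwise the same.
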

\begin{proof}
This key to the proof is Lemma~\ref{lem:DL_erg_coup}, and we thank Duncan Dauvergne for pointing this out to us. We prove the case $a \neq 0$, and the case $a = 0$ and $b \neq 0$ will be proven separately. Further, since we send $z \to \infty$, it suffices to show the result for $a = 1$, for then the result also holds for arbitrary $a$ and $b = ab$. With this simplification, we use the shorthand notation $T_{z;b} = T{z:1,b}$. By Lemma~\ref{lm:landscape_symm}\ref{itm:time_stat}, $\Ll$ is stationary under the shift $T_{z;b}$. By Dynkin's $\pi$-$\lambda$ theorem, it suffices to show that for a compact set $K\subseteq \Rup$ and Borel sets $A\subseteq C(K,\R)$ and $B \subseteq C(K,\R)$,  
\[
\Pp(\Ll|_{K} \in A,(T_{z;b}\Ll)|_{K} \in B) \overset{z \to \infty}{\longrightarrow} \Pp(\Ll|_{K} \in A)\Pp(\Ll|_{K} \in B).
\]
Further, by temporal stationarity, it suffices to assume that $\inf\{s: (x,s;y,t) \in K\} \ge 0$. Consider the coupling $\Ll_0,\Ll_1,\Ll_2$ of Lemma~\ref{lem:DL_erg_coup} with $k = 2$. Then, using the rescaling and spatial stationarity of Lemma~\ref{lm:landscape_symm}, 
\begin{align}
   &\quad\;\Pp(\Ll|_{K} \in A,(T_{z;b}\Ll)|_{K} \in B) \nonumber \\
   &= \Pp(\Ll_0(x,s;y,t)|_{K} \in A, \Ll_0(x + z,s + bz;y + z,t + bz)|_{K} \in B)\nonumber \\
   &= \Pp(z^{1/2}\Ll_0(z^{-1} x,z^{-3/2}s  ;z^{-1} y,z^{-3/2} t  )|_{K} \in A, \nonumber \\ 
   &\qquad\qquad z^{1/2}\Ll_0(z^{-1} x + 1,z^{-3/2}s + z^{-1/2} b;z^{-1} y + 1,z^{-3/2} t + z^{-1/2}b)|_{K} \in B) \nonumber \\
  &= \Pp(z^{1/2}\Ll_0(z^{-1} x + 1,z^{-3/2}s  ;z^{-1} y + 1 ,z^{-3/2} t )|_{K} \in A, \nonumber \\
  &\qquad\qquad z^{1/2}\Ll_0(z^{-1} x + 2,z^{-3/2}s + z^{-1/2}b  ;z^{-1} y + 2,z^{-3/2} t + z^{-1/2}b  )|_{K} \in B). \label{433}
\end{align}
Specifically, we used the rescaling property with $q = z^{1/2}$ in the second equality, and in the third equality, we shifted the entire process by $1$ in the spatial direction.
Above, the restrictions $|_{K_j}$ mean that $(x,s;y,t) \in K_j$.  
Since $K$ is compact and we assumed $s \ge 0$ for all $(x,s;y,t) \in K$, for any $\ve > 0$, there exists $Z > 0$ such that for $z > Z$,
\be \label{small_cpct}
\begin{aligned}
  &\{(z^{-1} x + 1,z^{-3/2}s;z^{-1} y + 1,z^{-3/2} t): (x,s;y,t) \in K\} \subseteq   K_{1,\ve}, \quad \text{and}\\
  &\{(z^{-1} x + 2,z^{-3/2}s + z^{-1/2}b;z^{-1} y + 2,z^{-3/2} t + z^{-1/2}b): (x,s;y,t) \in K\} \subseteq K_{2,\ve},
  \end{aligned}
\ee
where $K_{i,\ve}$ are defined in~\eqref{Kie}. Let $C_z$ be the event where both these containments hold for the random $\ve > 0$ in Lemma~\ref{lem:DL_erg_coup}, and let $D_z$ be the event in~\eqref{433}. Then, $\Pp(C_z) \to 1$ as $z \to +\infty$. Then, for any $\delta > 0$, whenever $z$ is sufficiently large so that $1 - \Pp(C_z) = \delta > 0$,
\begin{align*}
    &\quad \;\Big|\Pp(D_z) - \Pp(\Ll|_{K} \in A)\Pp(\Ll|_{K} \in B)\Big| \\
    &\le \Big|\Pp(D_z \cap C_z) -\Pp(\Ll|_{K} \in A)\Pp(\Ll|_{K} \in B)\Big| + \delta \\
    &=\Big|\Pp(z^{1/2}\Ll_1(z^{-1} x + 1,z^{-3/2}s;z^{-1} y + 1,z^{-3/2} t)|_{K} \in A,  \\
  &\qquad\qquad z^{1/2}\Ll_2(z^{-1} x + 2,z^{-3/2}s + z^{-1/2}b;z^{-1} y + 2,z^{-3/2} t + z^{-1/2}b)|_{K} \in B, C_z) \\
  &\qquad\qquad\qquad- \Pp(\Ll_1|_{K} \in A)\Pp(\Ll_1|_{K} \in B)\Big| + \delta \\
  &\le \Big|\Pp(z^{1/2}\Ll_0(z^{-1} x + 1,z^{-3/2}s;z^{-1} y + 1,z^{-3/2} t)|_{K} \in A,  \\
  &\qquad\qquad z^{1/2}\Ll_0(z^{-1} x + 2,z^{-3/2}s + z^{-1/2}b;z^{-1} y + 2,z^{-3/2} t + z^{-1/2}b)|_{K} \in B) \\
  &\qquad\qquad\qquad- \Pp(\Ll_1|_{K} \in A)\Pp(\Ll_2|_{K} \in B)\Big| + 2\delta \\
  &= \Big|\Pp(\Ll_1|_{K} \in A, \Ll_2|_{K} \in B) - \Pp(\Ll_1|_{K} \in A)\Pp(\Ll_2|_{K} \in B)\Big| + 2\delta = 2\delta,
\end{align*}
completing the proof since $\delta$ is arbitrary. Specifically, in the two inequalities, we added and removed the event $C_z$ at the cost of $\delta$. In the first equality above, we used the fact that the containments~\eqref{small_cpct} hold on $C_z$ and  $\Ll_0|_{K_i,\ve} = \Ll_i|_{K_i,\ve}$ for $i = 1,2$. In the last line, we reversed the application of the rescaling and spatial stationarity, then used the independence of $\Ll_1$ and $\Ll_2$ from Lemma~\ref{lem:DL_erg_coup}.

The statement for the vertical shift operator when $a = 0$ is simpler. For a compact set $K$, the processes $\Ll|_{K}$ and $T_{z;0,b} \Ll|_{K}$ are independent for sufficiently large $b$ by the independent increments property of the directed landscape, and the desired result follows. 
\end{proof}


Recall the definition of the state space $\UC$~\eqref{UCdef} for the KPZ fixed point. Recall that the KPZ fixed point $h_t(\abullet;\h)$ with initial data $\h$ at time $0$ can be represented as
\[
h_t(y;\h) = \sup_{x \in R}\{\h(x) + \Ll(x,0;y,t)\} \qquad\text{for }t > 0.
\]
The KPZ fixed point satisfies the semi-group property. That is, for $0 < s < t$,
\[
h_t(y;\h) = \sup_{x \in \R} \{h_s(x;\h) + \Ll(x,s;y,t) \}.
\]
In this sense, we may say that $h_t$ has initial data $\h$ sampled at time $s < t$, in which case, we write
\[
h_t(y;\h) = \sup_{x \in \R} \{\h(x) + \Ll(x,s;y,t) \}.
\]

\begin{lemma}[\cite{Directed_Landscape,Basu-Ganguly-Hammond-21,Ganguly-Hegde-2021,Pimentel-21b}] \label{lem:DL_crossing_facts}
Let $\Ll:\Rup \to \R$ be a continuous function satisfying the metric composition law~\eqref{eqn:metric_comp} and such that maximizers in~\eqref{eqn:metric_comp} exist. Then,
\begin{enumerate} [label=\rm(\roman{*}), ref=\rm(\roman{*})]  \itemsep=3pt
    \item \label{itm:DL_crossing_lemma} Whenever $s < t$, $x_1 < x_2$, $y_1 < y_2$,
\[
\Ll(x_2,s;y_1,t) - \Ll(x_1,s;y_1,t) \le \Ll(x_2,s;y_2,t) - \Ll(x_1,s;y_2,t).
\]
\end{enumerate} 
Let $\h^1,\h^2 \in \UC$, and For $i = 1,2$ and $t > 0$, set 
\be \label{992}
h_t(y;\h^i) = \sup_{x \in \R}\{\h^i(x) + \Ll(x,0;y,t)\}.
\ee 
Then, assuming that maximizers in~\eqref{992} exist, the following hold.
\begin{enumerate}[resume, label=\rm(\roman{*}), ref=\rm(\roman{*})]  \itemsep=3pt
    \item \label{itm:KPZ_attractiveness} If $\h^1 \li \h^2$, then $h_t(\aabullet;\h^1)\li h_t(\aabullet;\h^2)$ for all $t > 0$.
    \item \label{itm:KPZ_crossing_lemma} For $t > 0$ and $i = 1,2$, set 
\[
Z_t(y;\h^i) = \max \argmax_{x \in \R}\{\h^i(x) + \Ll(x,0;y,t)\}.
\]
Then, if $x < y$ and $Z_t(y;\h^1) \le Z_t(x;\h^2)$,
\[
h_t(y;\h^1) - h_t(x;\h^1) \le h_t(y;\h^2) - h_t(x;\h^2).
\]
\end{enumerate}
\end{lemma}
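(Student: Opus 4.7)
The plan is to treat part (i) as the foundational crossing inequality and then derive (ii) and (iii) from it by choosing appropriate maximizers. For part (i), I would use a path-swapping argument. Let $\gamma_1$ and $\gamma_2$ be paths (realizing the suprema in~\eqref{eqn:metric_comp} at every intermediate level) from $(x_1,s)$ to $(y_2,t)$ and from $(x_2,s)$ to $(y_1,t)$, respectively. Because $x_1 < x_2$ at the bottom and $y_1 < y_2$ at the top, $\gamma_1$ and $\gamma_2$ must share a common space-time point $(z,u)$ with $u \in (s,t)$. Decomposing along $(z,u)$,
\begin{align*}
\Ll(x_1,s;y_2,t) + \Ll(x_2,s;y_1,t) &= \bigl(\Ll(x_1,s;z,u) + \Ll(z,u;y_1,t)\bigr) + \bigl(\Ll(x_2,s;z,u) + \Ll(z,u;y_2,t)\bigr) \\
&\le \Ll(x_1,s;y_1,t) + \Ll(x_2,s;y_2,t),
\end{align*}
where the equality swaps the upper endpoints $y_1$ and $y_2$ at $(z,u)$ and uses optimality of $\gamma_1,\gamma_2$, and the inequality is the reverse triangle inequality applied to the swapped concatenations. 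Rearranging yields the claim.

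For part (iii), set $z_1 = Z_t(y;\h^1)$ and $z_2 = Z_t(x;\h^2)$, so $z_1 \le z_2$ by hypothesis. Apply part (i) with $(x_1,x_2)=(z_1,z_2)$ and $(y_1,y_2)=(x,y)$ (the case $z_1=z_2$ being trivial) to obtain $\Ll(z_1,0;y,t) - \Ll(z_1,0;x,t) \le \Ll(z_2,0;y,t) - \Ll(z_2,0;x,t)$. Combined with the identities $h_t(y;\h^1) = \h^1(z_1) + \Ll(z_1,0;y,t)$ and $h_t(x;\h^2) = \h^2(z_2) + \Ll(z_2,0;x,t)$ and with the candidate-point lower bounds $h_t(x;\h^1) \ge \h^1(z_1) + \Ll(z_1,0;x,t)$ and $h_t(y;\h^2) \ge \h^2(z_2) + \Ll(z_2,0;y,t)$, this gives the chain
\[
h_t(y;\h^1) - h_t(x;\h^1) \le \Ll(z_1,0;y,t) - \Ll(z_1,0;x,t) \le \Ll(z_2,0;y,t) - \Ll(z_2,0;x,t) \le h_t(y;\h^2) - h_t(x;\h^2).
\]

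For part (ii), fix $y_1 < y_2$ and set $z_1 = Z_t(y_1;\h^2)$, $z_2 = Z_t(y_2;\h^1)$. If $z_2 \le z_1$, part (i) with $(x_1,x_2)=(z_2,z_1)$ and the given $(y_1,y_2)$ yields $\Ll(z_2,0;y_2,t) + \Ll(z_1,0;y_1,t) \le \Ll(z_2,0;y_1,t) + \Ll(z_1,0;y_2,t)$; combined with the identities $h_t(y_1;\h^2) = \h^2(z_1) + \Ll(z_1,0;y_1,t)$, $h_t(y_2;\h^1) = \h^1(z_2) + \Ll(z_2,0;y_2,t)$ and the suboptimality bounds $h_t(y_1;\h^1) \ge \h^1(z_2) + \Ll(z_2,0;y_1,t)$, $h_t(y_2;\h^2) \ge \h^2(z_1) + \Ll(z_1,0;y_2,t)$, this delivers $h_t(y_2;\h^1) - h_t(y_1;\h^1) \le h_t(y_2;\h^2) - h_t(y_1;\h^2)$ without any appeal to $\h^1 \li \h^2$. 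If instead $z_1 \le z_2$, the crossing inequality points the wrong way, so I would use the candidate-point lower bounds $h_t(y_1;\h^1) \ge \h^1(z_1) + \Ll(z_1,0;y_1,t)$ and $h_t(y_2;\h^2) \ge \h^2(z_2) + \Ll(z_2,0;y_2,t)$, subtracted from the identities at the opposite endpoints, to get
\[
h_t(y_1;\h^1) + h_t(y_2;\h^2) - h_t(y_2;\h^1) - h_t(y_1;\h^2) \ge (\h^2 - \h^1)(z_2) - (\h^2 - \h^1)(z_1) \ge 0,
\]
using that $\h^1 \li \h^2$ is equivalent to $\h^2 - \h^1$ being nondecreasing and that $z_1 \le z_2$. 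The main technical care lies in the crossing step of part (i): one must ensure that the two optimal paths share a common space-time point. For the directed landscape this is routine because geodesics are continuous; in the present abstract setting, where only existence of levelwise maximizers is assumed, it is enforced by a compactness and intermediate-value argument on the levelwise optimal crossings.
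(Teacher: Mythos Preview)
Your proposal is correct and follows the same approach as the paper. For part (i) the paper invokes exactly the path-crossing argument you give (take geodesics from $(x_1,s)$ to $(y_2,t)$ and from $(x_2,s)$ to $(y_1,t)$, use the common crossing point $(z,u)$, swap, and apply the reverse triangle inequality), citing it as standard from \cite{Directed_Landscape,Basu-Ganguly-Hammond-21,Ganguly-Hegde-2021}; for parts (ii) and (iii) the paper simply defers to \cite{Pimentel-21b}, whereas you spell out the standard maximizer-swapping arguments, which are correct. Your closing caveat about the crossing point in the abstract setting is well taken, but in the intended application $\Ll$ is the directed landscape, where geodesics are continuous paths and the intermediate-value crossing is immediate.
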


We prove the following technical lemmas that are used in the proofs of the main theorems.  
\begin{lemma}\label{lem:unq}
	 Fix $\xi\in \R$ and $a>0$. Consider the KPZ fixed point starting at time $s$ from a function $\h \in \UC$. 
For $t > s$, let $Z_\h^{a,s,t}\in\R$ denote the set of exit points from the time horizon $\Hh_s$ of the geodesics associated with $\h$ and that terminate in $\{t\}\times [-a,a]$. That is,
	\be \label{exitpt}
		Z_\h^{a,s,t}=\bigcup_{y\in [-a,a]}\argmax_{x\in\R} \{\h(x)+\Ll(x,s;y,t)\}.
	\ee
	Then, on the full probability event of Lemma~\ref{lem:Landscape_global_bound}, whenever $\h \in \UC$ satisfies condition~\eqref{eqn:drift_assumptions}, and when  $\ve>0$, $a > 0$, and $s \in \R$, there exists  a random $t_0 = t_0(\ve,a,s) > s \vee 0$ such that for any $t> t_0$,
	\be \label{upexit}
	Z_\h^{a,s,t}\subset \big[(\xi-\ve)t,(\xi +\ve)t\big].
	\ee
	In particular, if $\h$ is a random function almost surely satisfying condition~\eqref{eqn:drift_assumptions}, then this random $t_0$ exists almost surely, and
\[
	\lim_{t \to \infty} \Pp\Big(Z_\h^{a,s,t}\subset \big[(\xi-\ve)t,(\xi +\ve)t\big]\Big) = 1.
\]  
Furthermore, an analogous statement holds on the same full-probability event if $t$ is held fixed and $s \to -\infty$. That is, there exists a random $s_0 = s_0(\ve,a,t)< t \wedge 0 $ such that for any $s < s_0$,
\be \label{downexit}
Z_\h^{a,s,t}\subset \big[-(\xi-\ve)s,-(\xi +\ve)s\big]
\ee
\end{lemma}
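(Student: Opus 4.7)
The plan is a comparison argument exploiting the quadratic decay of $\Ll$ from Lemma~\ref{lem:Landscape_global_bound} together with the asymptotic slopes of $\h$ guaranteed by \eqref{eqn:drift_assumptions}. The two assertions \eqref{upexit} and \eqref{downexit} follow from the same method: the essential large parameter in both is $t-s$, and the formal maximizer of $2\xi x - (y-x)^2/(t-s)$ lies at $x_\ast(y) = y + \xi(t-s)$, which is $\approx \xi t$ when $t\to\infty$ with $s$ fixed and $\approx -\xi s$ when $s\to -\infty$ with $t$ fixed. Assume first $\xi > 0$; the case $\xi < 0$ follows by $x \mapsto -x$, and $\xi = 0$ is handled at the end.

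First I would use Lemma~\ref{lem:Landscape_global_bound} to write $\Ll(x,s;y,t) = -(y-x)^2/(t-s) + \mathcal{E}(x,s;y,t)$ with $|\mathcal{E}| = o(t-s)$ uniformly for $y \in [-a,a]$ and $|x| \le M(t-s)$, for any fixed $M$. From \eqref{eqn:drift_assumptions}, for each $\eta > 0$ there exist $R, K$ so that $\h(x) \le (2\xi+\eta)x + K$ for $x \ge R$, $\h(x) \le (c'+\eta)x + K$ for $x \le -R$ with $c' := \liminf_{x\to -\infty} \h(x)/x \in (-2\xi,\infty)$, and $\h(x) \ge (2\xi-\eta)x - K$ for $x \ge R$; enlarging $K$ absorbs the continuous bounded $\h|_{[-R,R]}$. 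The test point $x_\circ = \xi(t-s)$ paired with $y=0$ then yields $\sup_{y,x}\bigl[\h(x) + \Ll(x,s;y,t)\bigr] \ge \xi^2(t-s) - O(\eta(t-s)) - o(t-s)$, whereas for $|x| \le M(t-s)$ and $y \in [-a,a]$, completing the square gives
\[
\h(x) + \Ll(x,s;y,t) \le \xi^2(t-s) + 2\xi a + \eta|x| + K + |\mathcal{E}| - \frac{(x - x_\ast(y))^2}{t-s}.
\]
For $x$ violating the target window ($|x - \xi t| > \ve t$ in \eqref{upexit}, or the analogous condition for \eqref{downexit}), the loss $(x - x_\ast(y))^2/(t-s)$ is at least $\ve^2(t-s)/5$ once $t-s$ is large relative to $a, s$; with $\eta$ chosen small in terms of $M$ and $\ve$, this loss dominates $\eta|x| + K + |\mathcal{E}|$, and such $x$ cannot be a maximizer. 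The far range $|x| > M(t-s)$ is excluded by first taking $M$ large enough that the pure quadratic decay $-x^2/(t-s)$ overwhelms the at-most-linear growth of $\h$. For $\xi = 0$ the one-sided slope clauses of \eqref{eqn:drift_assumptions} give $\h(x) \le \eta|x|+K$ outside a compact set, and the same comparison with $x_\ast(y) = y \in [-a,a]$ forces maximizers into $[-\ve t, \ve t]$ (resp.\ $[\ve s, -\ve s]$) for $t$ (resp.\ $|s|$) large.

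The main obstacle is the coordination of the two scales: $M$ must first be fixed in terms of $\xi$ and $a$, then $\eta$ chosen small in terms of $M$ and $\ve$, and finally $t_0$ (or $|s_0|$) taken large enough to absorb the error $\mathcal{E}$, to push $\h$ into its asymptotic regime on $\{|x| \ge R(\eta)\}$, and to make the quadratic loss exceed the cumulative perturbations. Once this ordering is respected, the conclusion holds pathwise for every continuous $\h$ satisfying \eqref{eqn:drift_assumptions} on the full-probability event of Lemma~\ref{lem:Landscape_global_bound}, so the random $t_0$ (or $s_0$) in the statement exists almost surely. The in-probability statement for random $\h$ that almost surely satisfies \eqref{eqn:drift_assumptions} is then an immediate consequence of dominated convergence applied to the indicator of the event in \eqref{upexit}.
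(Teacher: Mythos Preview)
Your proposal is correct and follows essentially the same strategy as the paper's proof: both bound $\h(x)+\Ll(x,s;y,t)$ above and below using Lemma~\ref{lem:Landscape_global_bound} together with the slope assumptions \eqref{eqn:drift_assumptions}, and then show that the quadratic loss $-(x-x_\ast(y))^2/(t-s)$ dominates the linear and sub-linear error terms outside the target window. The paper organizes the comparison slightly differently---it defines explicit envelope functions $M_U,M_L$ and analyzes the derivative of the logarithmic error term rather than splitting into near and far ranges---but the essential mechanism is identical.
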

\begin{proof}
We show~\eqref{upexit}, and~\eqref{downexit} follows by an analogous proof. The idea of the proof is that  $\h(x)+\Ll(x,0;y,t)$ is a noisy version of $2\xi x-\frac{x^2}{t}$ and  $\argmax_{x\in\R} (2\xi x-\frac{x^2}{t})=\xi t$, but the noise  cannot change the exit point by much when $t$ is large. The drift conditions~\eqref{eqn:drift_assumptions} are used in the proof to ensure that for $t$ large enough, all maximizers are positive for $\dir > 0$ and negative for $\dir < 0$. Below, we prove the result for $\xi>0$, and the proof for $\xi<0$ follows by symmetry. The case $\xi=0$ will be proven separately. Fix $\ve>0$. Suppressing the dependence on $a,s$, set
\begin{equation} \label{F}
	F(x;t)=C_{\text{DL}}(t-s)^{1/3}\log^2\big(2\sqrt{a^2+x^2+s^2 + t^2}+4\big),
\end{equation}
where $C_{\text{DL}}$ is the random positive constant from Lemma \ref{lem:Landscape_global_bound}. 
\begin{lemma} \label{lem:dFbd}
For $\ve > 0,a > 0,$ and $s < t \in \R$, there exists $t_1 = t_1(\ve,a,s) > s$ such that for all $t > t_0$,  $|F'(x;t)| < \ve$ uniformly for all $x \in \R$. In addition, for $t > t_0$, $F'(x;t) > 0$ for $x > 0$ while $F'(x;t) < 0$ for $x < 0$.
\end{lemma}
\begin{proof}
A quick computation shows that
\[
	F'(x;t)=\f{2 C_\text{DL}(t-s)^{1/3} x\log\big(2\sqrt{a^2+x^2+t^2 + s^2}+4 \big)}{(\sqrt{a^2+x^2+t^2 + s^2}+2)\sqrt{a^2+x^2+t^2 + s^2}}.
\]
We note that
$
|x|(a^2 + x^2 +t^2 + s^2)^{1/2} \le 1,
$
and that $\log(x)/x$ is decreasing for $x > e$. 
Hence, for all $a > 0,s < t,\ve > 0$, and $x \in \R$,
\[
|F'(x;t)| \le 2C(t - s)^{1/3} \f{\log(2|t| +4)}{|t| + 2} \overset{t \to \infty}{\longrightarrow} 0. \qedhere
\]
\end{proof}

Back to the main proof,  from the drift assumption~\eqref{eqn:drift_assumptions}, for each $\ve > 0$, there exists $R_\ve > 0$ so that for all $x \ge R_\ve$, $|\f{\h(x)}{x} - \dir| \le \f{\ve}{2}$. Let $C_\ve = \sup_{0 \le R_\ve} \h(x)$. By the global bound $\h(x) \le a + b|x|$ (assumed in the definition of $\UC$), $C_\ve < \infty$. Further, observe that $-\f{(x - y)^2}{t - s} = -\f{x^2}{t - s} + \f{2xy}{t - s} - \f{y^2}{t - s}$, and $|\f{2xy}{t - s} - \f{y^2}{t - s}| \le \ve + \f{\ve}{2}|x|$ for large $t$, uniformly for $y \in [-a,a]$. Using this and the bounds of Lemma~\ref{lem:Landscape_global_bound}, for $t  > s + 1$ sufficiently large (depending on $\ve,a$), for all $y \in [-a,a]$ and $x \ge 0$,
	\begin{equation}\label{ub1'}
		\begin{aligned}
		 &\h(x)+\Ll(x,s;y,t)\leq M_{U}(x;t):=  C_\ve + 2\xi x + \ve x-\frac{x^2}{t - s}+ \ve + F(x;t),
	\end{aligned}
	\end{equation}
 Further, for all $x\ge R_\ve$, and sufficiently large $t > s + 1$,
\be \label{ub2'}
 	 \h(x)+\Ll(x,s;y,t)\geq M_{L}(x;t):= 2\xi x - \ve x -\frac{x^2}{t - s} - \ve -F(x;t). 
\ee

By the assumption~\eqref{eqn:drift_assumptions}, we may choose $\gamma$ so that 
$-2\dir < 2\gamma < \liminf_{x \to -\infty} \f{\h(x)}{x} \le + \infty$. Then, applying a similar procedure as before and adjusting the constant $C_\ve$ if needed, for all $y \in [-a,a]$ and $x \le 0$,
	\be \label{ub3'}
	\h(x) + \Ll(x,s;y,t) \le  M_U^-(x;t) := C_\ve  + 2\gamma x - \f{x^2}{t - s} + F(x;t).
	\ee

We start by using these bounds to show that when $t > s$ is sufficiently large,
\be \label{max_right}
\sup_{x \ge 0}\{\h(x) + \Ll(x,s;y,t)\} > \sup_{x \le 0}\{\h(x) + \Ll(x,s;y,t)\}, \qquad\forall y \in [-a,a].
\ee
so that all maximizers of $\h(x) + \Ll(x,s;y,t)$ over $x \in \R$ are nonnegative. First, we observe that for $t$ large enough so that $\dir(t - s) \ge R_\ve$, for all $y \in [-a,a]$,
\begin{equation}\label{Mlm}
\begin{aligned}
	\quad \;\sup_{x \ge 0}\{\h(x) + \Ll(x,s;y,t)\}\ge M_L(\xi (t-s),t) 
	= (\dir^2 - \dir \ve)(t - s) + o(t).
	\end{aligned}
\end{equation}

Next, using~\eqref{ub3'}, we obtain
\begin{align*}
&\quad \;\sup_{x \le 0}\{\h(x) + \Ll(x,s;y,t) \} \\ &\le \sup_{x \le 0}\{2\gamma x - \f{x^2}{t-s}  + C_\ve + F(x;t)\} \\
&\le\sup_{x \le 0}\{2\gamma x - 2\ve x - \f{x^2}{t-s}\}+ \sup_{x \le 0}\{2\ve x + F(x;t) \} + C_\ve. =  (\gamma - \ve)^2(t - s) + o(t).
\end{align*}
To justify the last equality, we note that the first supremum on the RHS above is equal to  $(\gamma - \ve)^2(t - s)$, while for large enough $t$, Lemma~\ref{lem:dFbd} implies that the function inside the second supremum is increasing, so the maximum is achieved at $x = 0$. Note that $F(0;t) = o(t)$. Since $\gamma > -\dir$, by choosing $\ve > 0$ small enough, a comparison with~\eqref{Mlm} verifies~\eqref{max_right} for sufficiently large $t$.

Next, we find (approximately) where the maximizers of $M_{U}$ on $x\geq 0$ are. The function $M_U(x;t)$ has leading order $-(x - y)^2/(t-s)$, so maximizers exist on $x \ge 0$. A quick computation shows that $M_U(0;t) = o(t)$, so by~\eqref{Mlm}, all maximizers are strictly positive for sufficiently large $t$. Hence, for any maximizer $x$, $M_U'(x;t) = 0$. 
First, note that
\begin{equation}
	M'_{U}(x;t)=2(\xi+\ve)-\frac{2x}{t - s}+F'(x;t).
\end{equation}
By Lemma~\ref{lem:dFbd}, for sufficiently large $t > s$, $0 < F'(x;t) < \ve$ for all $x > 0$. Then, for such $t$, and $y \in [-a,a]$ 
\begin{equation}
	 \{x \ge 0 :M'_U(x;t)=0\}\subseteq \big((\xi + \ve) (t - s), (\xi+2\ve)(t - s)\big).
\end{equation}
and that
\begin{equation}\label{d}
	\begin{aligned}
		&M'_U(x;t)<0 \quad \forall x\geq (\xi + 2\ve)(t - s)\\
		& M'_U(x;t)>0 \quad \forall x\leq   (\xi + \ve) (t - s).
	\end{aligned}
\end{equation}
Next we consider the supremum of $x\mapsto M_U$ outside the interval 
\[
I_{\ve}:=[(\xi-2\sqrt{\xi \ve})(t - s),(\xi+2\sqrt{\xi\ve})(t - s)].
\]
By choosing $\ve > 0$ small enough, then for $t$ large enough (depending on $\ve,a$), 
\begin{equation}\label{sub}
 \big((\xi - \ve) (t - s),(\xi+2\ve)(t - s)\big) \subseteq I_{\ve}.
\end{equation}
 From \eqref{d} and \eqref{sub}, we see that to determine the supremum of $M_U$ outside $I_{\ve}$ it is enough to take the maximum of $M_U$ at the endpoints of $I_{\ve}$. Plugging the end points of the interval on the right-hand side of  \eqref{sub} in $M_U$, for small enough $\ve$,
\begin{equation}\label{sup1}
	\begin{aligned}
	&M_U((\xi-2\sqrt{\xi\ve})(t - s);t)=\big[\xi^2 -3\xi\ve - 2\dir^{1/2}\ve^{3/2}\big](t - s) + o(t), \qquad\text{and}\\
	&M_U( (\xi+2\sqrt{\xi}\ve^{1/2})(t -s);t)=\big[\xi^2-3\xi\ve+2\xi^{1/2}\ve^{3/2}\Big](t - s) + o(t).
	\end{aligned}
\end{equation}
 It follows that for $\ve$ small enough and $t>t_0(\ve,a,s,C_{\text{DL}},C_\ve)$,
\begin{equation}\label{Mum}
	\sup_{x\notin I_{\ve}} M_U(x;t)\leq \max\{M_U((\xi-2\sqrt{\xi\ve})(t-s)),M_U((\xi+2\sqrt{\xi\ve})(t -s))\}\leq (\xi^2-2\xi\ve)(t-s).
\end{equation}
From \eqref{ub1'},~\eqref{max_right}, and~\eqref{sub}, for sufficiently large $t$,
\begin{equation}\label{sub2}
	\begin{aligned}
		&\big\{\sup_{x\notin I_{\ve}}  M_U(x;t)<\sup_{x\in I_{\ve}} M_L(x;t)\big\}\\
	&\subseteq\big\{\sup_{x\notin I_{\ve}}  \h(x)+\Ll(x,s;y,t)<\sup_{x\in I_{\ve}} \h(x)+\Ll(x,s;y,t)\quad \forall y\in[-a,a]\big\}\subseteq\{Z_\h^{a,s,t}\subset I_{\ve}\}.
	\end{aligned}
\end{equation}
\eqref{Mlm} and \eqref{Mum}, imply that, almost surely, there is a random $t_0 = t_0(\ve,a,s) > 0$ so that for $t > t_0$,
\begin{equation}
	\sup_{x\notin I_{\ve}}  M_U(x;t)<\sup_{x\in I_{\ve}} M_L(x;t),
\end{equation}
so by replacing $\ve$ with $\ve^2/(4\dir)$, the inclusion~\eqref{sub2} completes the proof in the case $\dir > 0$.

Now we prove the separate $\xi = 0$ case. This time, we set  $I_{\ve}=[2\sqrt{\ve}\tspb (t-s), 2\sqrt{\ve}\tspb (t-s)]$. Fix a point $x^\star \in \R$ so that $\h(x^\star) > -\infty$. Then, for $t$ large enough $x^\star \in I_{\ve}$, and
\be \label{ub4'}
\sup_{x \in I_{\ve}} \h(x) + \Ll(x,s;y,t) \ge \h(x^\star) + \Ll(x^\star,s;y,t)\ge \h(x^\star) - \f{(x^\star - y)^2}{t - s} +F(x^\star,t) = o(t).
\ee
 
 By the assumption~\eqref{eqn:drift_assumptions} and upper semi-continuity, following a similar argument as in the previous case, for all $y \in [-a,a]$ and $x \in \R$,
\[
\begin{aligned}
\h(x)+\Ll(x,s;y,t)\leq M_{U}(x;t) &:= -\frac{x^2}{t-s}+\ve x+C_\ve+F(x;t)
\end{aligned}
\]
A similar proof as before shows that 
\[
\sup_{x \notin I_{\ve}} M_U(x;t) \le \max\{M_U(-2\sqrt \ve(t - s)),M_U(2\sqrt \ve (t - s))\} = -3\ve (t-s) + o(t),
\]
and comparison with~\eqref{ub4'} completes the proof. 
\end{proof}

We believe the following Lemma is well-known, but we do not have a precise reference. In particular,~\cite{KPZfixed} states that the KPZ fixed point preserves the space of linearly bounded continuous functions 
and gives regularity estimates for the KPZ fixed point.

\begin{lemma} \label{lem:max_restrict}
Let $\h \in \UC$ be initial data for the KPZ fixed point sampled at time $s \in \R$.  
For all $t > s$, and $y \in \R$, set
\be \label{KPZs}
h_t(y;\h) = \sup_{x \in \R}\{\h(x) + \Ll(x,s;y,t)\}.
\ee
Then, on the full-probability event of Lemma~\ref{lem:Landscape_global_bound}, the following hold.
\begin{enumerate}[label=\rm(\roman{*}), ref=\rm(\roman{*})]  \itemsep=3pt
    \item \label{itm:KPZcont} If $\h$ is continuous, then $(t,y) \mapsto h_t(y;\h)$ is continuous on $(s,\infty) \times \R$.
    \item \label{itm:KPZ_unif_line} For each compact set $K \subseteq \R_{> s}$, there exist constants $A = A(a,b,K)$ and $B = B(a,b,K)$ such that for all $t \in K$ and all $y \in \R$, $h_t(y;\h) \le A + B|y|$. If we assume that $\h(x) \ge -a - b|x|$ for some constants $a,b > 0$, then we also obtain the bound $h_t(y;\h) \ge -A - B|y|$ for all $t \in K$ and $y \in \R$ \rm{(}the upper bound $\h(x) \le a + b|x|$ is assumed in the definition of $\UC$\rm{)}.
    \item \label{itm:KPZrestrict}
    If there exists $a,b > 0$ so that $|\h(x)| \le a + b|x|$ for all $x$, then for any $t > s$, $\delta > 0$, there exists $Y = Y(t,\delta) > 0$ so that when $|y| \ge Y$, all maximizers of $\h(x) + \Ll(x,s;y,t)$ over $x \in \R$ lie in the interval $(y - |y|^{1/2 + \delta},y + |y|^{1/2+ \delta})$
\end{enumerate} 
\end{lemma}
\begin{proof}
\medskip \noindent \textbf{Item~\ref{itm:KPZcont}}
By definition of $\UC$, there exists constants $a,b > 0$ so that $\h(x) \le a + b|x|$ for all $x \in \R$. Combined with the bounds on the directed landscape in Lemma~\ref{lem:Landscape_global_bound}, this implies that when $(y,t)$ varies over a compact set, the supremum in~\eqref{KPZs} can be taken uniformly over a  common compact set. Then, continuity of $\h$ and $\Ll$ gives the continuity of $h$.

\medskip \noindent \textbf{Item~\ref{itm:KPZ_unif_line}:}
By Lemma~\ref{lem:Landscape_global_bound}, for each $x \in \R$,
\be \label{414}
\h(x) + \Ll(x,s;y,t) \le  a + b|x| -\tspc \f{(x - y)^2}{t - s} + C(t - s)^{1/3} \log^{2}\Bigl(\f{2\sqrt{x^2 + y^2 + s^2 + t^2} + 4}{(t - s)\wedge 1}\Bigr).
\ee
 The $\log$ term in~\eqref{414}.  can be bounded by an affine function uniformly for $t \in K$ and $x,y \in \R$. Then, for constants $a_1 = a_1(a,b,K), b_1 = b_1(a,b,K)$, and $b_2 = b_2(a,b,K)$,
\begin{align*}
    h_t(y;\h)
    &\le \sup_{x \in \R}\Bigl\{-\f{(x - y)^2}{t - s} + a_1 + b_1|x| + b_2|y| \Bigr\}  \\
    &\le \sup_{x \in \R}\Bigl\{-\f{(x - y)^2}{t - s} + a_1 + b_1x + b_2|y| \Bigr\}  \vee \Bigl\{-\f{(x - y)^2}{t - s} + a_1 - b_1 x + b_2|y| \Bigr\}\\
    &\le a_1 + b_2|y| + \Bigl(b_1 y +\f{b_1^2 t}{4}\Bigr) \vee \Bigl(-b_1y + \f{b_1^2t}{4}\Bigr),
\end{align*}
giving a linear bound, uniformly for $t \in K$.

The lower bound is simpler: By Lemma~\ref{lem:Landscape_global_bound} and the assumption $\h(x) \ge -a - b|x|$ for all $x \in \R$,
\be \label{413}
\begin{aligned}
h_t(y;\h) &= \sup_{x \in \R}\{\h(x) + \Ll(x,s;y,t)\} \\ &\ge \h(y) + \Ll(y,s;y,t) \ge -a - b|y| - C(t - s)^{1/3}\log^2\Bigl(\f{2\sqrt{2y^2 + t^2 + s^2} + 4}{(t - s)\wedge 1}\Bigr),
\end{aligned}
\ee
and again, the $\log$  term can be bounded by an affine function, uniformly for $t \in K$ and $y \in \R$.

\medskip \noindent \textbf{Item~\ref{itm:KPZrestrict}:} By comparing~\eqref{414} to~\eqref{413}, when $|y|$ is sufficiently large, for $x \notin (y - |y|^{1/2 + \delta},y + |y|^{1/2 + \delta})$,~\eqref{414} is strictly less than $h_t(y;\h)$, so maximizers cannot lie outside the interval $(y - |y|^{1/2 + \delta},y + |y|^{1/2 + \delta})$.

\end{proof}

\begin{lemma} \label{lem:KPZ_preserve_lim}
The following holds simultaneously for all initial data and all $t > s$ on the event of probability one from Lemma~\ref{lem:Landscape_global_bound}. Let $\h \in \UC$ be initial data for the KPZ fixed point, sampled at time $s$. 
For $t > s$, let $h_t$ be defined as in~\eqref{KPZs}.
Then, simultaneously for all $t > s$,
\be \label{hliminfbd1}
\liminf_{x \to +\infty} \f{h_t(x;\h)}{x} \ge \liminf_{x \to + \infty} \f{\h(x)}{x},\qquad\text{and}\qquad \limsup_{x \to - \infty} \f{h_t(x;\h)}{x} \le \limsup_{x \to -\infty} \f{\h(x)}{x}.
\ee

Furthermore, assuming that $\h:\R \to \R$ is \textbf{continuous} and satisfies
\be \label{liminfsupfinite}
\liminf_{x \to \pm \infty} \f{\h(x)}{x} > -\infty \qquad\text{and}\qquad \limsup_{x \to \pm \infty} \f{\h(x)}{x} < +\infty,
\ee
then also
\be \label{hliminfbd2}
\limsup_{x \to + \infty} \f{h_t(x;\h)}{x} \le \limsup_{x \to +\infty} \f{\h(x)}{x},\qquad\text{and}\qquad \liminf_{x \to -\infty} \f{h_t(x;\h)}{x} \ge \liminf_{x \to - \infty} \f{\h(x)}{x}.
\ee

In particular, for \textbf{continuous} initial data $\h$ satisfying~\eqref{liminfsupfinite}, if either \rm{(}or both\rm{)} of the limits
$
\lim_{x \to \pm \infty} \f{\h(x)}{x}
$
exist \rm{(}potentially with different limits on each side\rm{)}, then for $t > s$,
\[
\lim_{x \to \pm \infty} \f{h_t(x;\h)}{x} = \lim_{x \to \pm \infty} \f{\h(x)}{x}.
\]
\end{lemma}
\begin{proof}
We start with~\eqref{hliminfbd1} by proving the first inequality, and the other is analogous.
If $\liminf_{x \to +\infty} \f{\h(x)}{x} = -\infty$, there is nothing to show. Otherwise, let $\dir_1 \in \R$ be an arbitrary number less than $\liminf_{x \to +\infty} \f{\h(x)}{x}$. Let $y$ be sufficiently large and positive so that $\h(y)\ge \dir_1 y$. Then, using Lemma~\ref{lem:Landscape_global_bound}, for such sufficiently large positive $y$,
\be \label{105}
\begin{aligned}
\sup_{x \in \R}\{\h(x) + \Ll(x,s;y,t)\} \ge \h(y) + \Ll(y,s;y,t) \ge \dir_1 y - C(t-s)^{1/3} \log^{2}\Bigl(\f{2\sqrt{2y^2 + t^2 + s^2} + 4}{(t - s)\wedge 1}\Bigr),
\end{aligned}
\ee
where $C$ is a constant. Therefore,
$
\liminf_{y \to \infty} \f{h_t(y;\h)}{y} \ge \dir_1,
$
but this is true for all $\dir_1 < \liminf_{x \to +\infty} \f{\h(x)}{x}$, so 
\[
\liminf_{y \to \infty} \f{h_t(y;\h)}{y} \ge \liminf_{x \to +\infty} \f{\h(x)}{x}.
\]

Next, we turn to proving~\eqref{hliminfbd2}. Again, we prove the first inequality and the second follows analogously. Set $\dir_2 = \limsup_{x \to +\infty} \f{\h(x)}{x}$ and let $\ve > 0$. By continuity,  the assumption~\eqref{liminfsupfinite} on the asymptotics of $\h$ implies there exist constants $a,b > 0$ so that $|\h(x)| \le a + b|x|$ for all $x \in \R$.  Lemma~\ref{lem:max_restrict}\ref{itm:KPZrestrict} implies that for $\ve  > 0$ and sufficiently large $y > 0$, 
\begin{align*}
&\quad\; \sup_{x \in \R}\{\h(x) + \Ll(x,s;y,t)\} \\
&\le \sup_{x \in (y - y^{2/3},y + y^{2/3})}\Big\{(\dir_2 + \ve)x -\f{(x - y)^2}{t - s} + C(t - s)^{1/3} \log^{2}\Bigl(\f{2\sqrt{x^2 + y^2 + t^2 + s^2} + 4}{(t - s)\wedge 1}\Bigr) \Big\} \\
&\le \sup_{x \in (y - y^{2/3},y + y^{2/3})}\Big\{(\dir_2 + \ve)x-\f{(x - y)^2}{t - s } + \ve(x + y)  \Big\} \\
&= (\dir_2 + 3\ve)y + C(\ve,s,t,\dir_2),
\end{align*}
and so
\[
\limsup_{y \to \infty} \f{h_t(y;\h)}{y} \le \dir_2 + 3\ve. \qedhere
\]
\end{proof}

\subsection{Geodesics in the directed landscape}
 We start by citing some results from~\cite{Bates-Ganguly-Hammond-22} and~\cite{Dauvergne-Sarkar-Virag-2020}. 
\begin{lemma}[\cite{Bates-Ganguly-Hammond-22}, Theorem 1.18] \label{lm:BGH_disj}
There exists a single event of full probability on which, for any compact set $K \subseteq \Rup$, there is a random $\ve > 0$ such that the following holds. If $v_1 = (x,s;y,u) \in K$ and $v_2 = (z,s;w,u) \in K$ admit geodesics $\gamma_1$ and $\gamma_2$ satisfying $|\gamma_1(t) - \gamma_2(t)| \le \ve$ for all $t \in [s,u]$, then $\gamma_1$ and $\gamma_2$ are not disjoint, i.e., $\gamma_1(t) = \gamma_2(t)$ for some $t \in [s,u]$. 
\end{lemma}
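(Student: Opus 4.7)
The plan is to argue by contradiction, combining compactness of $K$ with the almost-sure uniqueness of finite geodesics at fixed space-time endpoints and the scale invariance of the directed landscape from Lemma~\ref{lm:landscape_symm}\ref{itm:DL_rescaling}. Assume the statement fails on some compact $K$. Then, on a set of positive probability, one can select $\ve_n \downarrow 0$ and pairs of quadruples $v_i^{(n)} \in K$ (for $i = 1,2$) admitting disjoint geodesics $\gamma_i^{(n)}$ with $\sup_t |\gamma_1^{(n)}(t) - \gamma_2^{(n)}(t)| \le \ve_n$. Compactness of $K$ yields, after passing to a subsequence, a common limit $v^\ast = (x^\ast,s^\ast;y^\ast,u^\ast) \in \overline{K}$ for both $v_1^{(n)}$ and $v_2^{(n)}$, and the disjointness forces a strict planar ordering, say $\gamma_1^{(n)}(t) < \gamma_2^{(n)}(t)$ throughout the open time interior.

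The next step is to use the stability of maximizers under uniform convergence of weight profiles (Lemma~\ref{lemma:convergence of maximizers from converging functions}), together with the global modulus bound on $\Ll$ from Lemma~\ref{lem:Landscape_global_bound}, to extract further subsequential uniform limits $\gamma_-, \gamma_+$ on $[s^\ast,u^\ast]$, each a geodesic between the endpoints of $v^\ast$, with $\gamma_- \le \gamma_+$. On a fixed full-probability event obtained by intersecting, over a countable dense collection of rational-coordinate endpoint pairs, the a.s.\ uniqueness of point-to-point geodesics with the global modulus events for $\Ll$, a density approximation forces $\gamma_- = \gamma_+$ to agree with the unique geodesic at $v^\ast$. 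So both sequences of disjoint geodesics share the same uniform limit, which is not yet in itself contradictory.

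The crux, and the main obstacle, is extracting a genuine contradiction from the coexistence of disjointness and vanishing separation in this limit. My plan is to invoke the scale invariance with $q_n = \ve_n^{-1/2}$: recentering the landscape at $v^\ast$ via Lemma~\ref{lm:landscape_symm}\ref{itm:time_stat} and then applying Lemma~\ref{lm:landscape_symm}\ref{itm:DL_rescaling}, the rescaled pair $(\tilde\gamma_1^{(n)}, \tilde\gamma_2^{(n)})$ becomes a pair of disjoint geodesics separated by at most unit distance over a temporal window of diverging length $\ve_n^{-3/2}(u_n - s_n)$, in an equal-in-distribution copy of the directed landscape. A Skorokhod coupling of these rescaled copies onto a single directed landscape, followed by a diagonal subsequential extraction, would produce in one realization a pair of disjoint semi-infinite geodesics of bounded transverse separation, directly contradicting the almost-sure coalescence of $\dir$-directed semi-infinite geodesics recorded in Theorem~\ref{thm:RV-SIG-thm}\ref{itm:fixed_coal}. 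The delicate step is arranging the distributional limit so that the conclusion transfers back to the original positive-probability event; this is typically achieved by running the compactness-and-rescaling argument along a countable dense family of scales and exploiting the monotonicity of the failure event in $\ve$.
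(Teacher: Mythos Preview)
This lemma is not proved in the present paper; it is quoted as an input from \cite{Bates-Ganguly-Hammond-22}, Theorem~1.18, so there is no ``paper's own proof'' to compare against. The actual proof in \cite{Bates-Ganguly-Hammond-22} relies on quantitative upper bounds on the probability that $k$ pairwise disjoint geodesics fit inside a thin parallelogram, bounds which ultimately trace back to Hammond's integrable analysis of Brownian LPP (see the Inputs discussion in Section~\ref{sec:org}). That is a genuinely different mechanism from anything in your sketch.

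Your proposal has a real gap at precisely the step you flag as ``delicate.'' The rescaling by $q_n=\ve_n^{-1/2}$ produces, for each $n$, a \emph{new} copy of the directed landscape equal in distribution to $\Ll$, in which there is a pair of disjoint geodesics at separation $\le 1$ over a time window of length of order $\ve_n^{-3/2}$. But these copies are different functions of the same underlying $\omega$, and there is no monotonicity linking the events across $n$: having disjoint geodesics between certain endpoints over $[0,T_n]$ says nothing about having them over $[0,T_{n+1}]$ with possibly different endpoints. A Skorokhod coupling of the rescaled copies does not preserve the fact that all of them came from one realization, so you cannot pass to a limit and land back inside a single $\omega$ with two disjoint semi-infinite geodesics. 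What you would need is a uniform-in-$T$ lower bound $\Pp(\exists\text{ disjoint geodesics at separation }\le 1\text{ over }[0,T])\ge p>0$ for a \emph{nested} family of events, and your construction does not give nestedness. Without that, the contradiction with Theorem~\ref{thm:RV-SIG-thm}\ref{itm:fixed_coal} never materializes. This is exactly why the Bates--Ganguly--Hammond argument instead proves a quantitative rarity estimate for disjoint geodesics in thin tubes and sums over scales.
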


Let $g$ be a geodesic from $(x,s)$ to $(y,u)$. Define the graph of this geodesic as
\[
\graph g := \{(g(t),t): t \in [s,u]\}.
\]

\begin{lemma}[\cite{Dauvergne-Sarkar-Virag-2020}, Lemma 3.1] \label{lem:precompact}
The following holds on a single event of full probability. Let $(p_n;q_n) \to (p,q) = (x,s;y,t) \in \Rup$, and let $g_n$ be any sequence of geodesics from $p_n$ to $q_n$. Then, the sequence of graphs $\graph g_n$ is precompact in the Hausdorff metric, and any subsequential limit of $\graph g_n$ is the graph of a geodesic from $p$ to $q$.  
\end{lemma}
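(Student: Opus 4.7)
The plan is to prove the lemma in two stages: first establish precompactness by confining the graphs to a common compact set, then identify any Hausdorff limit as the graph of a geodesic using the continuity of $\Ll$ and the metric composition law~\eqref{eqn:metric_comp}.

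For precompactness, I would first show that all geodesics $g_n$ take values in a compact planar region. Without loss of generality assume $(p_n,q_n)$ lies in a compact neighborhood $K\subseteq \Rup$ of $(p,q)$, with associated time intervals $[s_n,t_n]\subseteq [s-1,t+1]$. For any $u$ strictly between $s_n$ and $t_n$, the point $(g_n(u),u)$ is a maximizer of $z\mapsto \Ll(p_n;z,u)+\Ll(z,u;q_n)$, and by the quadratic-minus-log upper bound of Lemma~\ref{lem:Landscape_global_bound}, together with the lower bound obtained by plugging in any fixed $z$, these maximizers lie in a common compact interval depending only on $K$. This bound is uniform in $u$ ranging over any compact subset of $(s,t)$; extending continuously to the endpoints, each $\graph g_n$ is contained in a fixed compact set $\mathcal K\subseteq \R^2$. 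Blaschke's selection theorem then gives precompactness of $\{\graph g_n\}$ in the Hausdorff metric on closed subsets of $\mathcal K$.

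For the identification of the limit, suppose $\graph g_{n_k}\to F$ in the Hausdorff metric, where $F$ is a closed subset of $\mathcal K$. I would first show that $F$ is the graph of a continuous function $g:[s,t]\to \R$. Containment in $F$ of the endpoints $(x,s),(y,t)$ is immediate from $(p_{n_k},q_{n_k})\to(p,q)$. For each $u\in[s,t]$ the set $F\cap(\R\times\{u\})$ is nonempty because $(g_{n_k}(u_k),u_k)\in\graph g_{n_k}$ for suitable $u_k\to u$. To show this slice is a singleton, I would combine the uniform quadratic curvature bound from Lemma~\ref{lem:Landscape_global_bound} with the strict-concavity argument: if $(a,u),(b,u)\in F$ with $a<b$, approximate by $(g_{n_k}(u_k^1),u_k^1)\to(a,u)$ and $(g_{n_k}(u_k^2),u_k^2)\to(b,u)$; by the reverse triangle inequality applied to the broken path through $(a,u)$ vs $(b,u)$ and continuity of $\Ll$, both planar points must satisfy the endpoint optimization of~\eqref{eqn:metric_comp}, which combined with the strict concavity (via the quadratic bound) forces $a=b$ for $u\in(s,t)$. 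Continuity of $g$ follows because $F$ is closed and each horizontal slice is a singleton in the compact set $\mathcal K$.

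Finally, to verify $g$ is a geodesic from $p$ to $q$, I would pass to the limit in the equality $\sum_{i=1}^k \Ll(g_{n_k}(u_{i-1});g_{n_k}(u_i))=\Ll(p_{n_k};q_{n_k})$ for any partition $s=u_0<\cdots<u_k=t$, using joint continuity of $\Ll$ on $\Rup$ and the established pointwise convergence $g_{n_k}(u_i)\to g(u_i)$; by the reverse triangle inequality the same equality for $g$ forces $\Ll(g)=\Ll(p;q)$. The main obstacle will be the singleton-slice claim in the second paragraph, since nothing prevents a priori a Hausdorff limit of graphs from developing ``vertical'' segments; overcoming this is where the strict concavity encoded by the $-(y-x)^2/(t-s)$ leading order in Lemma~\ref{lem:Landscape_global_bound} plays the decisive role, ruling out two distinct optimizers at the same time level in the limit. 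The remaining steps are routine applications of the metric composition law and continuity.
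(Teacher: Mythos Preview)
The paper does not prove this lemma; it is quoted from \cite{Dauvergne-Sarkar-Virag-2020}. So there is no in-paper proof to compare against, and I evaluate your argument on its own merits.

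Your precompactness argument (step one) and the final passage to the limit in the geodesic identity (step three) are both sound. The genuine gap is in step two, exactly where you flag it: the singleton-slice claim. Your proposed mechanism---``strict concavity encoded by the $-(y-x)^2/(t-s)$ leading order''---does not work. Lemma~\ref{lem:Landscape_global_bound} gives only an additive error of polylogarithmic size around the parabola; it does not make $z\mapsto \Ll(p;z,u)+\Ll(z,u;q)$ strictly concave, and in fact this function can and does have multiple maximizers (this is precisely the event that $(p;q)$ admits more than one geodesic). So establishing that both $a$ and $b$ are maximizers does not force $a=b$, and nothing in your outline prevents the Hausdorff limit $F$ from being, for instance, the union of two distinct geodesic graphs between $p$ and $q$.

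What is actually needed is a uniform modulus of continuity for the family $\{g_n\}$---for example, the H\"older-$(2/3-\varepsilon)$ regularity of directed landscape geodesics, uniform over endpoints in a compact set (as established in \cite{Directed_Landscape,Dauvergne-Sarkar-Virag-2020}). Equicontinuity together with your uniform spatial bound yields Arzel\`a--Ascoli, so a subsequence of $(g_{n_k})$ converges \emph{uniformly} to a single continuous function $g$; the Hausdorff limit of the graphs is then automatically $\graph g$, and your step three finishes the proof. Without this modulus-of-continuity input, the argument is incomplete.
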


\begin{lemma}[\cite{Dauvergne-Sarkar-Virag-2020}, Lemma 3.3] \label{lem:overlap}
The following holds on a single event of full probability. Let $(p_n;q_n) = (x_n,s_n;y_n,u_n) \in \Rup \to (p;q) = (x,s;y,u) \in \Rup$, and let $g_n$ be any sequence of geodesics from $p_n$ to $q_n$. Suppose that either
\begin{enumerate} [label=\rm(\roman{*}), ref=\rm(\roman{*})]  \itemsep=3pt
    \item \label{uniqn} For all $n$, $g_n$ is the unique geodesic from $(x_n,s_n)$ to $(y_n,u_n)$ and $\graph g_n \to \graph g$ for some geodesic $g$ from $p$ to $q$, or
    \item \label{uniqueg} There is a unique geodesic $g$ from $p$ to $q$.
    \end{enumerate}
    Then, the \textbf{overlap}
    \[
    O(g_n,g) := \{t \in [s_n,u_n]\cap [s,u]: g_n(t) = g(t)\}    
    \]
    is an interval for all $n$ whose endpoints converge to $s$ and $u$. 
\end{lemma}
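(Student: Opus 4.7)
First I would observe that $O(g_n,g)$ is automatically a closed subset of $[s_n,u_n]\cap[s,u]$, being the zero set of the continuous map $t\mapsto g_n(t)-g(t)$. To show $O(g_n,g)$ is an interval, I would run a concatenation argument. Suppose $t_1<t_2$ both lie in $O(g_n,g)$. Then $g|_{[t_1,t_2]}$ and $g_n|_{[t_1,t_2]}$ are both geodesics between the common endpoints $(g(t_1),t_1)=(g_n(t_1),t_1)$ and $(g(t_2),t_2)=(g_n(t_2),t_2)$, since subpaths of geodesics are geodesics by the metric composition law~\eqref{eqn:metric_comp}. In case~\ref{uniqueg}, if these two middle geodesics were distinct, gluing each to $g|_{[s,t_1]}$ and $g|_{[t_2,u]}$ would produce two distinct geodesics from $p$ to $q$, contradicting uniqueness; hence $g_n=g$ on $[t_1,t_2]$. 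Case~\ref{uniqn} is symmetric: the two candidate middle geodesics, concatenated with $g_n|_{[s_n,t_1]}$ and $g_n|_{[t_2,u_n]}$, would give distinct geodesics from $p_n$ to $q_n$, contradicting uniqueness of $g_n$. Either way $[t_1,t_2]\subseteq O(g_n,g)$, so $O(g_n,g)$ is connected, and being closed it is an interval.

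Next I would establish that $\graph g_n\to\graph g$ in the Hausdorff metric. In case~\ref{uniqn} this is assumed, and in case~\ref{uniqueg} it follows from Lemma~\ref{lem:precompact}: the graphs form a precompact sequence and every subsequential Hausdorff limit is the graph of a geodesic from $p$ to $q$, which by uniqueness equals $\graph g$. Combined with continuity of $g$, this upgrades to uniform closeness of $g_n$ and $g$ on any compact subinterval of $(s,u)$ eventually contained in $[s_n,u_n]$. To get the endpoint convergence, I would pick an arbitrary $t^\ast\in(s,u)$ and small $\eta>0$ with $[t^\ast-\eta,t^\ast+\eta]\subset(s,u)$. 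For $n$ large, $[t^\ast-\eta,t^\ast+\eta]\subset[s_n,u_n]$, and the two restricted paths $g|_{[t^\ast-\eta,t^\ast+\eta]}$ and $g_n|_{[t^\ast-\eta,t^\ast+\eta]}$ are geodesics whose endpoint 4-tuples in $\Rup$ eventually lie in a common compact set $K$ (by graph convergence and local boundedness of $g$). Uniform convergence makes their sup-distance arbitrarily small, so for $n$ large enough we have $\sup_{t\in[t^\ast-\eta,t^\ast+\eta]}|g_n(t)-g(t)|<\ve(K)$, where $\ve(K)$ is the random threshold from Lemma~\ref{lm:BGH_disj}. That lemma then forces these two geodesics to intersect inside $[t^\ast-\eta,t^\ast+\eta]$, producing a point of $O(g_n,g)$ in that window. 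Letting $t^\ast\searrow s$ (resp.\ $t^\ast\nearrow u$) shows $\inf O(g_n,g)\to s$ and $\sup O(g_n,g)\to u$, using also the trivial lower bound $\inf O(g_n,g)\ge s_n\vee s\to s$ and its analogue on the right.

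The step I expect to be the main obstacle is the clean application of Lemma~\ref{lm:BGH_disj}. Hausdorff convergence $\graph g_n\to\graph g$ only says that each point of one graph is near some point of the other; to run the disjointness lemma I need same-time uniform closeness $|g_n(t)-g(t)|<\ve(K)$ on a fixed compact time window, which requires converting the Hausdorff statement using uniform continuity of $g$ on compacts and the fact that the two graphs are genuinely graphs of single-valued functions over overlapping time intervals. Once this bookkeeping is in place, Lemma~\ref{lm:BGH_disj} does the real geometric work by ruling out the pathology of two geodesics in the directed landscape staying $\ve$-close without ever meeting, and the conclusion that the overlap interval exhausts $[s,u]$ in the limit follows.
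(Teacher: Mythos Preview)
The paper does not prove this lemma; it is quoted from \cite{Dauvergne-Sarkar-Virag-2020} (with a cosmetic reformulation of hypothesis~\ref{uniqn} explained in the remark following the statement). There is therefore no in-paper proof to compare against.

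That said, your argument is correct. The concatenation argument for the interval property is standard and works in both cases exactly as you describe. The reduction to Hausdorff convergence of graphs via Lemma~\ref{lem:precompact} in case~\ref{uniqueg} is right, and the upgrade from Hausdorff closeness of graphs to same-time uniform closeness on compact subintervals of $(s,u)$ follows from uniform continuity of $g$ as you anticipate. Applying Lemma~\ref{lm:BGH_disj} on a short window $[t^\ast-\eta,t^\ast+\eta]$ (both restricted paths are geodesics over the same time interval, so the hypotheses match) forces an overlap point there for all large $n$, and sliding the window toward either endpoint gives the claimed convergence of $\inf O(g_n,g)$ and $\sup O(g_n,g)$.

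One point worth flagging: your key geometric input, Lemma~\ref{lm:BGH_disj}, comes from \cite{Bates-Ganguly-Hammond-22}, which postdates \cite{Dauvergne-Sarkar-Virag-2020}. The original proof necessarily proceeds differently --- through the almost-sure uniqueness of geodesics with rational endpoints (hence their original hypothesis $(x_n,s_n;y_n,u_n)\in\Q^4$ in place of your~\ref{uniqn}) rather than through a quantitative ``$\ve$-close geodesics must touch'' statement. Your route is a clean alternative given the toolbox assembled in the present paper; if you intend it as a self-contained replacement you should just verify there is no hidden circularity, i.e.\ that the proof of Theorem~1.18 in \cite{Bates-Ganguly-Hammond-22} does not itself invoke Lemma~3.3 of \cite{Dauvergne-Sarkar-Virag-2020}.
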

\begin{remark}
We note that condition~\ref{uniqn} is slightly different from that stated in~\cite{Dauvergne-Sarkar-Virag-2020}. There, it is assumed instead that $(x_n,s_n;y_n,u_n) \in \Q^4 \cap \Rup$ for all $n$. The only use of this requirement in the proof is to ensure that there is a unique geodesic from $(x_n,s_n)$ to $(y_n,u_n)$ for all $n$, so there is no additional justification needed for the statement we use here. 
\end{remark}

\begin{lemma} \label{lem:geod_pp}
On the intersection of the full probability events from Lemmas~\ref{lem:precompact} and~\ref{lem:overlap}, the following holds. For all ordered triples $s < t < u$ and compact sets $K \subseteq \R$, the set  
\be \label{distinct}
\{g(t): g \text{ is the unique geodesic between }(x,s) \text{ and }(y,u) \text{ for some } x \in K, y \in K\}
\ee
is finite. 
\end{lemma}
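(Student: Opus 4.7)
The plan is to argue by contradiction. Suppose that for some compact $K$ and some triple $s<t<u$, the set in \eqref{distinct} is infinite. Since any pair $(x,y) \in K \times K$ with a unique geodesic between $(x,s)$ and $(y,u)$ contributes at most one point to this set, I can extract a sequence of distinct pairs $(x_n,y_n) \in K \times K$ whose unique geodesics $g_n$ between $(x_n,s)$ and $(y_n,u)$ yield pairwise distinct values $z_n := g_n(t)$.

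By compactness of $K \times K$, pass to a subsequence along which $(x_n,y_n) \to (x_\infty,y_\infty) \in K\times K$. Lemma~\ref{lem:precompact} then says that the sequence of graphs $\graph g_n$ is precompact in the Hausdorff metric, and every subsequential limit is the graph of a geodesic from $(x_\infty,s)$ to $(y_\infty,u)$. Extract a further subsequence so that $\graph g_n \to \graph g_\infty$ for some geodesic $g_\infty$ from $(x_\infty,s)$ to $(y_\infty,u)$; in particular the $t$-slice gives $z_n = g_n(t) \to g_\infty(t)$.

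Next I would invoke Lemma~\ref{lem:overlap}\ref{uniqn}: each $g_n$ is the unique geodesic between its endpoints and $\graph g_n \to \graph g_\infty$, so the overlap $O(g_n,g_\infty)$ is an interval whose endpoints converge to $s$ and $u$. Since $t \in (s,u)$, for all sufficiently large $n$ the point $t$ lies in $O(g_n,g_\infty)$, forcing $z_n = g_n(t) = g_\infty(t)$. This contradicts the pairwise distinctness of the $z_n$, which allows the equality $z_n = g_\infty(t)$ for at most one index.

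I do not anticipate a serious obstacle: the heavy lifting is already packaged in Lemmas~\ref{lem:precompact} and~\ref{lem:overlap}. The only minor points meriting care are the initial reduction to distinct pairs $(x_n,y_n)$ (using that each pair in the indexing set admits a unique geodesic, so two distinct values of $z$ cannot come from the same pair) and the observation that $t$ is strictly interior to $(s,u)$, so that it is eventually contained in $O(g_n,g_\infty)$ once the endpoints have approached $s$ and $u$.
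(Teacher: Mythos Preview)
Your proposal is correct and follows essentially the same approach as the paper: a contradiction argument that extracts a convergent subsequence of endpoint pairs, applies Lemma~\ref{lem:precompact} to get graph convergence, and then uses Lemma~\ref{lem:overlap}\ref{uniqn} to force $g_n(t)=g_\infty(t)$ for large $n$. The paper adds a preliminary step (via planarity) to confine the set \eqref{distinct} to a compact interval and then argues it has no limit points, but your version bypasses this by working directly with the compactness of $K\times K$, which is a minor streamlining of the same idea.
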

Lemma~\ref{lem:geod_pp} is known. Its derivation from  Lemma~\ref{lm:BGH_disj} and some results of \cite{Dauvergne-Sarkar-Virag-2020} are  shown in~\cite{Busa-Sepp-Sore-22arXiv}.  Lemma 3.12 in \cite{Ganguly-Zhang-2022a} (posted after our first version)  provides a stronger quantitative statement, but we do not need it for our purposes. This stronger estimate can be traced back to the work of Basu, Hoffman, and Sly~\cite{SlyNonexistenceOB} using integrable methods in exponential LPP. 
\begin{proof}[Proof of Lemma~\ref{lem:geod_pp}]
Assume, without loss of generality, that $K$ is a closed interval $[a,b]$. We observe that by planarity, all geodesics from $(x,s)$ to $(y,u)$ for $x,y \in K$ lie between the leftmost geodesic from $(a,s)$ to $(a,u)$ and the rightmost geodesic from $(b,s)$ to $(b,u)$. Hence, the set~\eqref{distinct} is contained in a compact set, and it suffices to show that the set has no limit points.

Assume, to the contrary, that there exists a point $(\hat x,s;\hat y,u) \in (K \times \{s\}\times K \times \{u\}) \cap \Rup$ with unique geodesic $\hat g$ such that there exists a sequence of points $x_n,y_n \in K$ such that for all $n$, the geodesic $g_n$ from $(x_n,s)$ to $(y_n,t)$ is unique and so that $g_n(t) \to \hat g(t)$ but $g_n(t) \neq \hat g(t)$ for all $n$.
By compactness, there exists a convergent subsequence $(x_{n_k},y_{n_k}) \to (x,y)$. By Lemma~\ref{lem:precompact}, there exists a further subsequence $(x_{n_{k_\ell}},y_{n_{k_\ell}})$ such that the geodesic graphs $\G g_{n_{k_\ell}}$ converge to the graph of some geodesic $\G g$ from $(x,s)$ to $(y,u)$ in the Hausdorff metric. Since $g_n(t) \to \hat g(t)$, we have $g(t) = \hat g(t)$. By Lemma~\ref{lem:overlap}, the overlap $O(g_{n_{k_\ell}},g)$ is an interval whose endpoints converge to $s$ and $u$, so $g_{n_{k_\ell}}(t) = g(t) = \hat g(t)$ for sufficiently large $\ell$, contradicting the definition of the sequence $g_n$. 
\end{proof}

\section{Exponential last-passage percolation} \label{sec:LPP}
\subsection{Discrete last-passage percolation} \label{sec:LPP_bd_queue}
Let $\{Y_{\mbf x}\}_{\mbf x \in \Z^2}$ be a collection of nonnegative i.i.d random variables, each associated to a vertex on the integer lattice.  For $\mbf x \le \mbf y \in \Z \times \Z$, define the last-passage time as
\be\label{d100} 
d(\mbf x,\mbf y) = \sup_{\mbf x_\centerdot \in \Pi_{\mbf x,\mbf y}} \sum_{k = 0}^{|\mbf y - \mbf x|_1} Y_{\mbf x_k}, 
\ee
where $\Pi_{\mbf x, \mbf y}$ is the set of up-right paths $\{\mbf x_k\}_{k = 0}^{n}$ that satisfy $\mbf x_0 = \mbf x,\mbf x_{n} = \mbf y$, and $\mbf x_k - \mbf x_{k - 1} \in \{\mbf e_1,\mbf e_2\}$. A maximizing path is called a geodesic. We call this model discrete last-passage percolation (LPP). The most tractable case of discrete LPP is given when $Y_{\mbf x} \sim \operatorname{Exp}(1)$, and we refer to this model as the exponential corner growth model or CGM. We will consider this model specifically for the remainder of this appendix.

\subsection{Stationary LPP in the quadrant} \label{sec:LPP_quad}
Choose $\mbf x \in \Z^2$ and consider the quadrant $\mbf x + \Z_{\ge 0}^2$. Fix a parameter $\rho \in (0,1)$. 
Let $\{Y_{\mbf z}: \mbf z \in \mbf x + \Z_{> 0}^2\}$ be i.i.d.\ $\Exp(1)$ weights in the bulk of the quadrant, and let $\{I_{\mbf x + k\mbf e_1},J_{\mbf x + \ell \mbf e_2}: k,\ell \in \Z_{>0}\}$ be mutually independent boundary weights such that $I_{\mbf x + k\mbf e_1} \sim \Exp(\rho)$ and $J_{\mbf x + \ell \mbf e_2} \sim \Exp(1 - \rho)$. These weights are defined under a probability measure $\Pp^\rho$. We define the increment-stationary process $d_{\mbf x}^\rho$ as follows. First, on the boundary, $d_{\mbf x}^\rho(\mbf x) = 0$, and for $k,\ell \ge 1$, $d_{\mbf x}^\rho(\mbf x + k \mbf e_1) = \sum_{i = 1}^k I_{\mbf x + i\mbf e_1}$ and $d_{\mbf x}^\rho(\mbf x + \ell \mbf e_2) = \sum_{j = 1}^\ell J_{\mbf x + j \mbf e_2}$.  In the bulk, for $\mbf y = \mbf x + (m,n) \in \mbf x + \Z_{>0}^2$, 
\be \label{eqn:stat_LPP}
d_{\mbf x}^\rho(\mbf y) = \max_{1 \le k \le m}\Biggl\{ \Biggl(\sum_{i = 1}^k I_{\mbf x + i\mbf e_1}\Biggr) + d(\mbf x + k\mbf e_1 + \mbf e_2,\mbf y)\Biggr\}  \bigvee \max_{1 \le \ell \le n}\Biggl\{\Biggl(\sum_{j = 1}^\ell J_{\mbf x + j \mbf e_2}\Biggl) + d(\mbf x + \ell \mbf e_2 + \mbf e_1,\mbf y)\Biggr\}
\ee
In this model, we can define also define geodesics from $\mbf x$ to $\mbf y \in \mbf x + \Z_{\ge 0}^2$ that travel for some time along the boundary and then enter the bulk. Because exponential random variables have continuous distribution, the maximizing paths for both bulk LPP and stationary LPP are almost surely unique.  For $\mbf y \in  \mbf x + \Z_{>0}^2$, if the unique geodesic for the stationary model enters the bulk from the horizontal boundary, define $\tau_1^{\mbf x}(\mbf y)$ as the unique value $k$ that maximizes in~\eqref{eqn:stat_LPP}. Otherwise, define $\tau_1^{\mbf x}(\mbf y) = 0$. Similarly, define $\tau_{2}^{\mbf x}(\mbf y)$ as the exit location from the vertical boundary, or $0$ if the geodesic exits from the horizontal boundary.

This model is increment-stationary in the sense that, for any down-right path $\{\mbf y_i\}$ in $\mbf x + \Z_{\ge 0}^2$, the increments $d^\rho_{\mbf x,\mbf y_{i + 1}} - d_{\mbf x,\mbf y_i}^\rho$ are mutually independent, and for $\mbf y \in \mbf x + \Z_{>0} \times \Z_{\ge 0}$ and $\mbf z \in \mbf x + \Z_{\ge 0} \times \Z_{> 0}$,
\[
I_{\mbf y}^{\mbf x} := d_{\mbf x}^\rho(\mbf y)- d_{\mbf x}^\rho(\mbf y - \mbf e_1) \sim \Exp(\rho)\qquad\text{and}\qquad J_{\mbf z}^{\mbf x} = d_{\mbf x}^\rho(\mbf z) - d_{\mbf x}^{\rho}(\mbf z - \mbf e_2) \sim \Exp(1 - \rho).
\]
See Theorem 3.1 in~\cite{Sepp_lecture_notes} for a proof.

\subsection{LPP in the half-plane with boundary conditions and queues}
We also define the last-passage model in the upper half-plane with a horizontal boundary. Let $h = (h(k))_{k \in \Z}$ be a real sequence.   For $m \in \Z$ let $d^h(m,0) = h(m)$,  and for $n > 0$  
\be \label{eqn:LPP_bd}
d^h(m,n) = \sup_{-\infty < k \le m}\{h(k) + d((k,1),(m,n)) \}.
\ee
We assume that $h$ is such that the supremum is almost surely finite and achieved at a finite $k$. We define the \textit{exit point} $Z^h(m,n)$ as
\be \label{eqn:exit_pt}
Z^h(m,n) = \max\{k \in \Z:h(k) + d((k,1),(m,n)) = d^h(m,n) \}.
\ee
Geodesics in this model are defined as follows: the path consists of the backwards-infinite horizontal ray $\{(k,0): k \le Z := Z^h(m,n)\}$, an upward step from $(Z,0)$ to $(Z,1)$, and then the LPP path in the bulk from $(Z,1)$ to $(m,n)$.

The half-plane model with boundary satisfies superadditivity. That is, for $r \in \{1,2\}$, $\mbf x \in \Z \times \Z_{\ge 0}$ and $\mbf x + \mbf e_r \le \mbf z$ coordinate-wise,
\be \label{205}
d^h(\mbf z) \ge d^h(\mbf x) + d^h(\mbf x + \mbf e_r,\mbf z).
\ee

The model with boundary condition can be constructed from queuing mappings, which we now define. Let $I = (I_k)_{k \in \Z}$ and $\omega = (\omega_k)_{k \in \Z}$  be sequences of nonnegative numbers such that
\[
\lim_{m \to -\infty} \sum_{i = m}^0 (\omega_i - I_{i + 1}) = -\infty. 
\]
Let $F = (F_k)_{k \in \Z}$ be a function on $\Z$ satisfying $I_k = F_k - F_{k - 1}$. Define the output sequence $\wt F = (\wt F_\ell)_{\ell \in \Z}$ by
\be \label{202}
\wt F_\ell = \sup_{-\infty < k \le \ell} \Biggl\{F_k + \sum_{i = k}^\ell \omega_i \Biggr\},\qquad \ell \in \Z.
\ee
Now, define the sequences $\wt I = (\wt I_\ell)_{\ell \in \Z}$ and $J = (J_k)_{k \in \Z}$ by
\[
\wt I_\ell = \wt F_\ell - \wt F_{\ell - 1},\qquad \text{and}\qquad J_k = \wt F_k -  F_k.
\]
In queuing terms, $I_k$ is the time between the arrivals of customers $k - 1$ and $k$, $\omega_k$ is the service time of customer $k$, $\wt I_\ell$ is the interdeparture time between customers $\ell - 1$ and $\ell$, and $J_k$ is the sojourn time of customer $k$. We use the mappings $D$ and $S$ to describe this queuing process. That is, 
\[
\wt I = D(\omega,I),\qquad\text{and}\qquad J = S(\omega,I).
\]

Exponentially distributed arrival times are invariant for the queue with exponential service times. This is made precise is the following lemma.
\begin{lemma}[\cite{Fan-Seppalainen-20}, Lemma B.2] \label{lem:output_of_queue}
Let $0 < \rho < \tau$. Let $(I_k)_{k \in \Z}$ and $\{\omega_j\}_{j \in \Z}$ be mutually independent random variables with $I_k \sim \Exp(\rho)$ and $\omega_j \sim \Exp(\tau)$. Let $\wt I =D(\omega,I)$ and $J = S(\omega,I)$. Then, $\{\wt I_j\}_{j \in \Z}$ is an i.i.d. sequence of $\Exp(\rho)$ random variables, and for each $k \in \Z$, $\{\wt I_j\}_{j \le k}$ and $J_k$ are mutually independent with $J_k \sim \Exp(\tau - \rho)$.
\end{lemma}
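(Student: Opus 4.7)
The plan is to prove this via the classical Burke-theorem reversibility argument, rephrased in the two-sided setting via the Lindley recursion implicit in \eqref{202}.

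First, I would extract a one-step recursion from \eqref{202}. Splitting off the $k = \ell$ term in the supremum gives
\[
\wt F_\ell = \omega_\ell + \max\bigl\{F_\ell,\,\wt F_{\ell-1}\bigr\},
\]
which after subtracting $F_\ell$ yields the Lindley recursion for sojourn times
\[
J_\ell = \omega_\ell + (J_{\ell-1} - I_\ell)^+,\qquad \ell\in\Z,
\]
together with the output identity $\wt I_\ell = J_\ell - J_{\ell-1} + I_\ell$. The hypothesis $\rho<\tau$ plus the Borel--Cantelli argument on $\sum_{i=m}^0(\omega_i - I_{i+1})$ (whose drift is $\tau^{-1}-\rho^{-1}<0$ running backwards) guarantees that the supremum in \eqref{202} is a.s.\ attained at a finite index, so $J_\ell$ is a.s.\ finite and the recursion propagates forward.

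Second, I would establish the one-step Burke identity: if $J_{\ell-1}\sim\Exp(\tau-\rho)$ is independent of the pair $(I_\ell,\omega_\ell)$ with $I_\ell\sim\Exp(\rho)$ and $\omega_\ell\sim\Exp(\tau)$, then the triple $(\wt I_\ell,\,J_\ell)$ satisfies
\[
\wt I_\ell\sim\Exp(\rho),\qquad J_\ell\sim\Exp(\tau-\rho),\qquad \wt I_\ell\ \text{independent of}\ J_\ell.
\]
This is a direct calculation with the joint density: partitioning on the events $\{J_{\ell-1}\ge I_\ell\}$ and $\{J_{\ell-1}<I_\ell\}$, on the first event $(\wt I_\ell,J_\ell) = (\omega_\ell,\,\omega_\ell+J_{\ell-1}-I_\ell)$, and on the second $(\wt I_\ell,J_\ell) = (\omega_\ell+I_\ell-J_{\ell-1},\,\omega_\ell)$. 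Using the memoryless property of the exponentials on each event and the explicit densities, one verifies that the joint law of $(\wt I_\ell,J_\ell)$ factors as $\Exp(\rho)\otimes\Exp(\tau-\rho)$. Moreover, a further inspection shows $(\wt I_\ell,J_\ell)$ is independent of $J_{\ell-1}$ in this stationary setup --- this is the key \emph{reversibility} fact underlying Burke's theorem.

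Third, I would bootstrap to the two-sided statement. I first argue that in the doubly-infinite stationary regime of the lemma, $J_\ell\sim\Exp(\tau-\rho)$ for every $\ell$. One way: $J_\ell$ is the unique stationary solution of the forward Lindley recursion driven by i.i.d.\ increments $\omega_\ell-I_\ell$ with negative mean, and $\Exp(\tau-\rho)$ is a fixed point (direct check via Laplace transform). The negative drift plus a standard contractivity argument shows uniqueness of the stationary law, so $J_\ell\sim\Exp(\tau-\rho)$ and is independent of $(\omega_m,I_m)_{m>\ell}$. Having this at any single index, I then iterate the one-step identity forward: given $J_{k-1}\sim\Exp(\tau-\rho)$ independent of $\{(\omega_m,I_m)\}_{m\ge k}$, the one-step lemma yields $\wt I_k\sim\Exp(\rho)$, $J_k\sim\Exp(\tau-\rho)$, $\wt I_k\perp J_k$, and furthermore $(\wt I_k,J_k)$ is independent of the fresh inputs $\{(\omega_m,I_m)\}_{m>k}$ and of $J_{k-1}$, which by induction is independent of all previously produced $\wt I_j$ with $j<k$. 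Therefore $(\wt I_j)_{j\le k}$ is i.i.d.\ $\Exp(\rho)$ and independent of $J_k$, which is the conclusion.

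The main obstacle is the reversibility/independence step: verifying cleanly that, in the two-sided stationary regime, $J_{\ell-1}$ is independent of $(\wt I_\ell,J_\ell)$ (not just marginally exponential). I would handle this either by the explicit joint-density computation described above or, more slickly, by invoking reversibility of the $M/M/1$ sojourn-time Markov chain under its Exp($\tau-\rho$) stationary measure, which is the standard route to Burke's theorem.
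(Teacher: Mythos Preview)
The paper does not give its own proof of this lemma; it is quoted verbatim from \cite{Fan-Seppalainen-20}. So there is no ``paper's proof'' to compare against, and your Burke-theorem approach via the Lindley recursion is exactly the standard route.

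That said, your inductive step contains a false claim. You assert that ``a further inspection shows $(\wt I_\ell,J_\ell)$ is independent of $J_{\ell-1}$''. This is not true. Condition on $J_{\ell-1}=j$: the pair $(\wt I_\ell,J_\ell)$ then satisfies the constraint $J_\ell - \wt I_\ell \le j$ (since $J_\ell-\wt I_\ell = J_{\ell-1}-I_\ell \le J_{\ell-1}$), so its support depends on $j$. What \emph{is} true, and what your explicit density computation actually shows, is only that $\wt I_\ell$ and $J_\ell$ are independent of \emph{each other}, with the stated exponential marginals. Reversibility of the $(J_\ell)$ chain is a statement about transition kernels, not about $J_\ell\perp J_{\ell-1}$.

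The induction still goes through, but the logic is different from what you wrote. Use instead that $(J_{k-1},I_k,\omega_k)$ is jointly independent of $(\wt I_j)_{j\le k-1}$: the inductive hypothesis gives $J_{k-1}\perp (\wt I_j)_{j\le k-1}$, and both are measurable with respect to the inputs $(I_m,\omega_m)_{m\le k-1}$, hence jointly independent of $(I_k,\omega_k)$; combine to get three-way mutual independence. Since $(\wt I_k,J_k)$ is a function of $(J_{k-1},I_k,\omega_k)$, it follows that $(\wt I_k,J_k)\perp (\wt I_j)_{j\le k-1}$. Together with the one-step fact $\wt I_k\perp J_k$, this yields $J_k\perp (\wt I_j)_{j\le k}$ and the i.i.d.\ structure. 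No independence from $J_{k-1}$ is needed.
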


The following lemma shows how to construct the LPP model in the half-plane with boundary from the queuing mappings.
\begin{lemma} \label{lem:D_and_LPP_bd}
Consider last-passage percolation for the environment $\{ Y_{\mbf x}\}_{\mbf x \in \Z^2}$. For $n \ge 1$, let $Y^n = \{ Y_{m,n}\}_{m \in \Z}$ be the weights along the horizontal level $n$. Let $h$ be a function on $\Z$ that denotes initial data for the LPP model with boundary. Define the sequence $I^0 = (I^0_i)_{i \in \Z}$ by $I^0_i = h(i) - h(i - 1)$. Let $I^1 = D( Y^1,I^0)$, and for $n > 1$, let $I^n = D(Y^n,I^{n - 1})$ and $J^n = S(Y^n,I^{n - 1})$. Then, for each $n \ge 1$ and $m \in \Z$,
\be \label{203}
I_m^n = d^h(m,n) - d^h(m - 1,n),\qquad\text{and}\qquad J_m^n = d^h(m,n) - d^h(m,n - 1).
\ee
\end{lemma}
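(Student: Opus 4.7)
The plan is to reduce the claim to a single level-to-level recursion for $d^h$ and then to read off, line by line, that this recursion is exactly the queuing map $D$ applied to the sequence of horizontal increments. The natural inductive statement is that, for every $n \ge 0$, the function $F^n_m := d^h(m,n)$ equals the iterated $n$-fold output of the queuing map starting from $F^0_m = h(m)$ and using the service sequences $Y^1, Y^2, \dotsc, Y^n$. Taking successive increments in $m$ and in $n$ then produces the two identities in \eqref{203}.

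The core step is the LPP recursion
\[
d^h(m,n) \;=\; \sup_{\ell \le m}\Big\{ d^h(\ell, n-1) + \sum_{i=\ell}^{m} Y_{i,n}\Big\}, \qquad n \ge 1, m \in \Z.
\]
To establish this I would decompose any admissible up-right path from a boundary column to $(m,n)$ according to the unique column $\ell \in \{k, \dotsc, m\}$ where its last up-step into level $n$ occurs. Splitting the path at the vertex $(\ell,n-1)$ gives
\[
d\big((k,1),(m,n)\big) \;=\; \max_{k \le \ell \le m}\Big\{ d\big((k,1),(\ell,n-1)\big) + \sum_{i=\ell}^{m} Y_{i,n}\Big\},
\]
so that pulling the supremum over $k \le \ell$ inside the definition \eqref{eqn:LPP_bd} and rearranging yields the recursion. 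For $n=1$ the same argument works using the convention $d^h(m,0) = h(m)$ and interpreting the sum over the (empty) level $0$ trivially, so no separate base case is needed beyond $n=0$.

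Given the recursion, comparison with the definition \eqref{202} of the queuing map is immediate: with $F^{n-1}_k := d^h(k,n-1)$, the map $D(Y^n, I^{n-1})$ produces
\[
\widetilde{F}^n_\ell \;=\; \sup_{k \le \ell}\Big\{ F^{n-1}_k + \sum_{i=k}^{\ell} Y_{i,n}\Big\} \;=\; d^h(\ell, n),
\]
i.e., $\widetilde{F}^n = F^n$. Taking horizontal increments gives $I^n_m = F^n_m - F^n_{m-1} = d^h(m,n) - d^h(m-1,n)$, and the sojourn identity $J^n_m = \widetilde{F}^n_m - F^{n-1}_m = d^h(m,n) - d^h(m,n-1)$ is read off directly from the definition $J = S(\omega, I) = \widetilde{F} - F$. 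Induction on $n$ closes the argument.

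The only genuine subtlety will be verifying that the queuing map is well-defined at each level, so that the iteration is legitimate. The hypothesis on $h$ guarantees finiteness and attainment of the supremum defining $d^h(m,1)$; passing this regularity to $d^h(m,n)$ by induction is straightforward because the recursion above then forces the corresponding suprema at level $n$ to be attained whenever those at level $n-1$ are, and the summability condition $\lim_{m\to-\infty}\sum_{i=m}^0(Y_{i,n} - I^{n-1}_{i+1})=-\infty$ needed to apply $D$ at step $n$ translates, via the induction hypothesis, to the a.s.\ finiteness of $d^h(\ell, n-1) + \sum_{i=\ell}^m Y_{i,n}$ as $\ell \to -\infty$, which is exactly what makes the supremum in the LPP recursion attained. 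I do not expect this verification to be more than bookkeeping, but it is the only place where the hypotheses on $h$ and on the weights enter, so it deserves explicit mention.
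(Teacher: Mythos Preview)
Your proof is correct and follows essentially the same approach as the paper: both prove by induction on $n$ that the level-$n$ LPP values $d^h(\cdot,n)$ are obtained from $d^h(\cdot,n-1)$ via the queuing map \eqref{202}, with the paper invoking the ``dynamic programming principle'' where you spell out the path decomposition at the last up-step. Your presentation is somewhat more explicit in isolating the recursion $d^h(m,n) = \sup_{\ell \le m}\{d^h(\ell,n-1) + \sum_{i=\ell}^m Y_{i,n}\}$ before matching it to the queuing map, but the mathematical content is identical.
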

\begin{proof}
For $m \in \Z$,
\[
d^h(m,1) = \sup_{-\infty < k \le m} \{h(k) + d((k,1),(m,1))\} = \sup_{-\infty < k \le m}\Bigl\{h(k) + \sum_{i = k}^m Y_{i,1}\Bigr\},
\]
and so by~\eqref{202} and the definitions below, Equation~\eqref{202} holds for $n = 1$, follow from the definition. Now, assume that the statements hold for some $n \ge 1$. Then, $(d^h(m,n))_{m \in \Z}$ is a function whose increments are given by $I^n$. Then, by definition of $D$, for $m \in \Z$,
\begin{align*}
I_m^{n + 1} &= [D(Y^{n + 1},I^n)]_m = \sup_{-\infty < k \le m}\Bigl\{d^h(k,n) + \sum_{i = k}^m Y_{i,n + 1}\Bigr\}  - \sup_{-\infty < k \le m - 1}\Bigl\{d^h(k,n) + \sum_{i = k}^{m - 1} Y_{i,n + 1}\Bigr\} \\
&= \sup_{-\infty <\ell \le  k \le m} \Bigl\{h(\ell) + d((\ell,1),(m,n) +\sum_{i = k}^m Y_{i,n + 1}    \Bigr\} \\
&\qquad - \sup_{-\infty <\ell \le  k \le m - 1} \Bigl\{h(\ell) + d((\ell,1),(m,n) +\sum_{i = k}^m Y_{i,n + 1}    \Bigr\} \\
&= d^h(m,n + 1) - d^h(m - 1,n + 1).
\end{align*}
The last equality is the dynamic programming principle. Similarly,
\begin{align*}
    J_m^n &= [S(Y^{n + 1},I^n)]_m = \sup_{-\infty < k \le m}\Bigl\{d^h(k,n) + \sum_{i = k}^m Y_{i,n + 1}\Bigr\} - d^h(m,n) \\
    &=d^h(m,n + 1) - d^h(m,n). \qedhere
\end{align*}
\end{proof}

To construct the stationary boundary condition, fix a parameter $\rho \in (0,1)$, and let $h$ be defined so that $h(0) = 0$ and $ \{h(k) - h(k - 1)\}_{k \in \Z}$ is a sequence of i.i.d. $\Exp(\rho)$ random variables, independent of the i.i.d. $\Exp(1)$ bulk variables $\{Y_{\mbf x}\}_{\mbf x \in \Z \times \Z_{>0}}$. Let $\wh \Pp^\rho$ be the probability measure of these random variables. Abusing notation, we denote LPP in the half-plane with this initial data simply as $d^\rho$.

For $\mbf y \in \Z \times \Z_{\ge 0}$ and $\mbf z \in \Z \times \Z_{> 0}$, we define
\be \label{IJ}
I_{\mbf y} = d^\rho(\mbf y) - d^\rho(\mbf y - \mbf e_1),\qquad\text{and}\qquad J_{\mbf z} = d^\rho(\mbf z) - d^\rho(\mbf z - \mbf e_2).
\ee

The stationary model in the quadrant is simply a projection of the stationary model in the half-plane. This is made precise in the following lemma.
\begin{lemma} \label{lemma:LPP_coupling}
Let $I_{\mbf y}$ and $J_{\mbf z}$ be defined as in~\eqref{IJ}. Fix $\mbf x = (k,0)$, where $k \in \Z$. Then, $\{J_{\mbf x + j \mbf e_2}\}_{j \ge 1}$ is a sequence of i.i.d. $\Exp(1 - \rho)$ random variables, independent of the i.i.d. $\Exp(\rho)$ random variables $\{I_{\mbf x + i \mbf e_1}\}_{i \ge 1}$. With $I_{\mbf x + i \mbf e_1}$ and $J_{\mbf x + j \mbf e_2}$ defined, let the process $\{d_{\mbf x}^{\rho}(\mbf y): \mbf y \in \mbf x + \Z_{\ge 0}^2\}$ be defined as in~\eqref{eqn:stat_LPP}. Then, under $\wh \Pp^\rho$, for any $\mbf y \in \mbf x + \Z_{> 0}$, the portion of the almost surely unique geodesic to $\mbf y$ for the process $d^\rho$ that lies in $\mbf x + \Z_{> 0}^2$  coincides with the portion of the geodesic from $\mbf x$ to $\mbf y$ for the process $d_{\mbf x}^\rho$ that lies in $\mbf x + \Z_{> 0}^2$.   
\end{lemma}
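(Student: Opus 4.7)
The plan has three steps: derive the distributional claim by an induction on the height coordinate, establish a pointwise identity $d^\rho(\mbf y)=d^\rho(\mbf x)+d_{\mbf x}^\rho(\mbf y)$ on the shifted quadrant, then use a.s.\ uniqueness of LPP maximizers to transfer the identity to the level of geodesics. Throughout, I would write $I^n_m := d^\rho(m,n)-d^\rho(m-1,n)$ and $J^n_m := d^\rho(m,n)-d^\rho(m,n-1)$, so that $\{I^0_j\}_{j\in\Z}$ is iid $\Exp(\rho)$ independently of the iid $\Exp(1)$ bulk weights $\{Y_{\mbf x}\}$, and Lemma~\ref{lem:D_and_LPP_bd} supplies $I^n=D(Y^n,I^{n-1})$ and $J^n=S(Y^n,I^{n-1})$.

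For Step 1, I would prove by induction on $n\ge 1$ the statement: $\{I^n_j\}_{j\in\Z}$ is iid $\Exp(\rho)$, and $(J^1_k,\ldots,J^n_k)$ is iid $\Exp(1-\rho)$ and jointly independent of $\{I^n_j\}_{j\le k}$. The base case $n=1$ is exactly Lemma~\ref{lem:output_of_queue}. For the inductive step, observe that $(J^{n+1}_k,\{I^{n+1}_j\}_{j\le k})$ is a measurable function of $(\{I^n_i\}_{i\le k},\{Y^{n+1}_i\}_{i\le k})$ only, which by the inductive hypothesis and the independence of $Y^{n+1}$ from all lower levels is independent of $(J^1_k,\ldots,J^n_k)$; Lemma~\ref{lem:output_of_queue} applied to input $I^n$ and service $Y^{n+1}$ then simultaneously yields $J^{n+1}_k\sim\Exp(1-\rho)$ independent of $\{I^{n+1}_j\}_{j\le k}$ and the iid $\Exp(\rho)$ law of $\{I^{n+1}_j\}_{j\in\Z}$, closing the induction. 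Letting $n\to\infty$ gives the iid $\Exp(1-\rho)$ law of $\{J_{\mbf x+j\mbf e_2}\}_{j\ge 1}=\{J^j_k\}_{j\ge 1}$. Independence from $\{I_{\mbf x+i\mbf e_1}\}_{i\ge 1}=\{I^0_{k+i}\}_{i\ge 1}$ is then automatic, since every $J^j_k$ is built from $\{I^0_\ell\}_{\ell\le k}$ and bulk weights $\{Y^m_\ell:m\ge 1,\ \ell\le k\}$, which are collectively independent of $\{I^0_{k+i}\}_{i\ge 1}$.

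For Step 2, set $\tilde d(\mbf y):=d^\rho(\mbf y)-d^\rho(\mbf x)$ on $\mbf y\in\mbf x+\Z_{\ge 0}^2$. By construction $\tilde d(\mbf x)=0$; the $\mbf e_1$-increments of $\tilde d$ along row $0$ east of $\mbf x$ are exactly the $I_{\mbf x+i\mbf e_1}$, and the $\mbf e_2$-increments along column $k$ north of $\mbf x$ are exactly the $J_{\mbf x+j\mbf e_2}$ used to define $d_{\mbf x}^\rho$. In the bulk $\{m>k,\ n\ge 1\}$ both $\tilde d$ and $d_{\mbf x}^\rho$ satisfy the same LPP recursion $f(m,n)=\max\{f(m-1,n),f(m,n-1)\}+Y_{(m,n)}$, so a routine induction on $|\mbf y-\mbf x|_1$ gives $\tilde d\equiv d_{\mbf x}^\rho$ throughout $\mbf x+\Z_{\ge 0}^2$.

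For Step 3, note that a.s.\ all weights are distinct continuous random variables, so both models admit unique geodesics and unique maximizers in their variational formulas. Let $\mbf p$ denote the first lattice point in the open quadrant $\mbf x+\Z_{>0}^2$ visited by the $d^\rho$-geodesic to $\mbf y\in\mbf x+\Z_{>0}^2$; its sub-path from $\mbf p$ to $\mbf y$ is then the unique bulk geodesic realising $d(\mbf p,\mbf y)$. The point $\mbf p$ is entered either from row $0$ by a north step, in which case $\mbf p=(p_1,1)$ with $p_1>k$, or from column $k$ by an east step, in which case $\mbf p=(k+1,p_2)$. In the former case Step 2 shows the east-exit candidate in \eqref{eqn:stat_LPP} with $k'=p_1-k$ realises $d_{\mbf x}^\rho(\mbf y)$; in the latter case the north-exit candidate with $\ell'=p_2$ does so. A.s.\ uniqueness of the argmax in \eqref{eqn:stat_LPP} then forces the $d_{\mbf x}^\rho$-geodesic to enter the open quadrant at the same $\mbf p$ and to continue along the same bulk geodesic to $\mbf y$, which is the claimed coincidence. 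The main obstacle is the clean case analysis in Step 3 — reconciling a single $\Z$-indexed sup in the half-plane with a two-branch max in the quadrant — but once Step 2 is available this reconciliation reduces to a short uniqueness argument driven by the identity $\tilde d=d_{\mbf x}^\rho$.
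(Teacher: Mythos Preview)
Your proof is correct and follows essentially the same approach as the paper: both use an induction on the level via Lemma~\ref{lem:output_of_queue} for the distributional claim, and both identify the half-plane and quadrant geodesics by establishing $d^\rho(\mbf y)=d^\rho(\mbf x)+d_{\mbf x}^\rho(\mbf y)$ and then invoking uniqueness of maximizers. The only cosmetic difference is that you prove the identity $\tilde d\equiv d_{\mbf x}^\rho$ first by the local LPP recursion, whereas the paper obtains it as a byproduct of a two-sided sandwich $d^\rho(\mbf y)\le d^\rho(\mbf x)+d_{\mbf x}^\rho(\mbf y)\le d^\rho(\mbf y)$ built directly from the two geodesics; in Case~2 of your Step~3 you implicitly use that the sub-path of the $d^\rho$-geodesic up to $(k,p_2)$ is itself $d^\rho$-optimal, which is immediate from your Step~2 (or from superadditivity~\eqref{205}).
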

\begin{proof}
 Let $I^0 = \{I_{i \mbf e_1}\}$, and for $n \ge 1$, let $Y^n =\{Y_{m,n}\}_{m \in \Z}$, and $I^n = D(Y^n,I^{n - 1})$. By Lemma~\ref{lem:D_and_LPP_bd}, $J_{\mbf x + \mbf e_2} = [S(Y^1,I^0)]_k$. For $k$ fixed, we note that the sequence 
 \[
 F_\ell = \begin{cases}
 -\sum_{i = \ell + 1}^k I_{i \mbf e_1}, &\ell \le k \\
 \sum_{i = k + 1}^\ell I_{i\mbf e_1}, &\ell > k.
 \end{cases}
 \]
 satisfies $F_k = 0$ and $F_\ell - F_{\ell - 1} = I_{\ell\mbf e_1} = I^0_\ell$ for $\ell \in \Z$.
 Then, by definition of the mappings $S$ and $D$,
\[
J_{\mbf x + \mbf e_2} = \sup_{-\infty < j \le k}\Biggl\{-\sum_{i = j + 1}^k I_{i\mbf e_1}  + \sum_{i = j}^k Y^n_i\Biggr\},
\]
while, for $\ell \le k$,
\[
I^\ind_\ell = \sup_{-\infty < j \le \ell}\Biggl\{-\sum_{i = j + 1}^k I_{i\mbf e_1}  + \sum_{i = j}^\ell Y^n_i\Biggr\} - \sup_{-\infty < j \le \ell - 1} \Biggl\{-\sum_{i = j + 1}^k I_{i\mbf e_1}  + \sum_{i = j}^\ell Y^n_i\Biggr\}.
\]
Therefore, $\{(I^\ind_\ell)_{\ell \le k},J_{\mbf x + \mbf e_2}\}$ is a measurable function of $(I_{i \mbf e_1})_{i \le k}$ and $Y^1$, and is therefore independent of $\{(I_{\mbf x + i\mbf e_1})_{i \ge 1},Y^2,Y^3,\ldots\}$. By Lemma~\ref{lem:output_of_queue}, $I^1$ is an i.i.d. sequence of $\Exp(\rho)$ random variables, $J_{\mbf x + \mbf e_2} \sim \Exp(1 - \rho)$, and $(\{I^\ind_\ell\}_{\ell \le k},J_{\mbf x + \mbf e_2})$ are mutually independent. 

Now, assume by way of induction, that for some $n \ge 1$,
\be \label{204}
\{(I^n_\ell)_{\ell \le k}, J_{\mbf x + \mbf e_2},\ldots, J_{\mbf x + n\mbf e_2}, (I_{\mbf x + i\mbf e_1})_{i \ge 1},Y^{n + 1},Y^{n + 2},\ldots\}.
\ee
are mutually independent, and $I^n$ is an i.i.d. sequence of $\Exp(\rho)$ random variables. 
Using the same reasoning as in the base case via Lemmas~\ref{lem:D_and_LPP_bd} and~\ref{lemma:LPP_coupling}, $\{(I^{n + 1}_{\ell \le k},J_{\mbf x + (n + 1)\mbf e_1}\}$ is a measurable function of $(I_{i}^n)_{i \le k}$ and $Y^{n + 1}$. Thus, from~\eqref{204}, we have
\[
\{(I^{n + 1}_\ell)_{\ell \le k}, J_{\mbf x + \mbf e_2},\ldots, J_{\mbf x + (n + 1)\mbf e_2}, (I_{\mbf x + i\mbf e_1})_{i \ge 1},Y^{n + 2},Y^{n + 2},\ldots\}
\]
are mutually independent, $I^{n +1}$ is a sequence of i.i.d. $\Exp(\rho)$ random variables, and $J_{\mbf x + (n + 1)\mbf e_2} \sim \Exp(1 - \rho)$.

For the second part of the lemma, this follows the same reasoning as Lemma B.3 in~\cite{Balzs2019NonexistenceOB} and Lemma A.1 in~\cite{Sepp_lecture_notes}. Suppose that the geodesic to $\mbf y$ for $d^\rho$ enters the quadrant $\mbf x + \Z_{>0}^2$ through the edge $(\mbf w,\mbf z)$ with $\mbf w = \mbf x + \ell \mbf e_r$ for $t \in \{1,2\}$, and suppose that the geodesic from $\mbf x$ to $\mbf y$ for $d^\rho_{\mbf x}$ enters the boundary through $(\wt{\mbf w},\wt{\mbf z})$ with $\wt{\mbf w} = \mbf x + p \mbf e_s$ for $s \in \{1,2\}$. For $1 \le i \le \ell$,  set $\eta_i = I_{\mbf x + i \mbf e_1} = d^\rho(\mbf x + i\mbf e_1) - d^\rho(\mbf x + (i - 1)\mbf e_1)$ if $t = 1$, and $\eta_i = J_{\mbf x + i\mbf e_2} = d^\rho(\mbf x + j\mbf e_2) - d^\rho(\mbf x + (j - 1)\mbf e_2)$ if $t = 2$. For $1 \le j \le p$, set $\wt \eta_j = I_{\mbf x + j \mbf e_1}$ if $s = 1$ and $\wt \eta_j = J_{\mbf x + j \mbf e_2}$ if $s = 2$. Then, using~\eqref{205} in the last inequality below, 
\begin{align*}
d^\rho(\mbf y) &= d^\rho(\mbf w) + d(\mbf z,\mbf y) = d^\rho(\mbf x) + \sum_{i = 1}^\ell \eta_i + d(\mbf z,\mbf y) \\
&\le d^\rho(\mbf x) + d^\rho_{\mbf x}(\mbf y) = d^\rho(\mbf x) + \sum_{j = 1}^p \wt \eta_j + d(\wt{\mbf z},\mbf y) \\
&= d^\rho(\wt{\mbf w}) + d(\wt{\mbf z},\mbf y) \le d^\rho(\mbf y).
\end{align*}
Thus, all inequalities are equalities. Since geodesics are almost surely unique in both models, the desired conclusion follows. 
\end{proof}

\subsection{KPZ scaling of the exponential CGM}
The following lemma states that the exit point of stationary LPP obeys the KPZ wandering exponent $2/3$.   
\begin{lemma}[\cite{Emrah-Janjigian-Seppalainen-20}, Theorem 2.5 (See also~\cite{Bhatia-2020}, Theorem 2.5,~\cite{BasuSarkarSly_Coalescence}, Theorem 3,~\cite{Martin-Sly-Zhang-21}, Lemma 2.8 and~\cite{Seppalainen-Shen-2020}, Corollary 3.6 and Remark 2.5b) \label{lemma:quad_exit_pt}]
Recall the stationary LPP model in the quadrant from Section~\ref{sec:LPP_quad}, and Let $\tau_1^{\mbf x}$ and $\tau_2^{\mbf x}$ be the exit times from the boundary. Let $K = [a,b] \subseteq (0,1)$. Then, there exist positive constants $N_0,C$ depending only on $K$ such that for all $N \ge N_0$, $b > 0$, and $\rho \in K$,
\begin{align}
    \Pp^\rho\Biggl(\tau_{\mbf x}^1\Bigl(\mbf x + (\lfloor N \rho^2  - b N^{2/3}  \rfloor, \lfloor N(1 - \rho)^2 \rfloor) \Bigl) \ge 1\Biggl) &\le e^{-C b^3},\qquad\text{and} \label{eqn:h_exit_bd} \\
    \Pp^\rho\Biggl(\tau_{\mbf x}^2\Bigl(\mbf x + (\lfloor N \rho^2 + b N^{2/3} \rfloor, \lfloor N(1 - \rho)^2 \rfloor) \Bigl) \ge 1\Biggl) &\le e^{-C b^3}. \label{eqn:v_exit_bd}
\end{align}
\end{lemma}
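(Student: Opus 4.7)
The plan is to prove the first inequality; the second follows from the involution $(\rho,\mbf e_1,\mbf e_2)\leftrightarrow(1-\rho,\mbf e_2,\mbf e_1)$ that swaps the two boundary axes and maps the off-characteristic shift $-bN^{2/3}$ to $+bN^{2/3}$. The statement is the sharp cube-law exit-point bound for stationary exponential LPP originally derived in~\cite{Balazs-Cator-Seppalainen-2006} and made uniform in~\cite{Emrah-Janjigian-Seppalainen-20}, so my plan is to describe how the exponent $b^3$ arises for the specific off-characteristic event $\{\tau_1^{\mbf x}(\mbf y)\ge 1\}$ at the displaced endpoint $\mbf y = \mbf x + (m,n)$ with $m = \lfloor N\rho^2 - bN^{2/3}\rfloor$ and $n = \lfloor N(1-\rho)^2\rfloor$. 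Note that $\mbf y$ sits at an off-characteristic displacement of order $bN^{2/3}$ (in the vertical direction) relative to the characteristic direction $(\rho^2,(1-\rho)^2)$ associated with boundary parameter $\rho$, so that the geodesic should typically exit through the vertical boundary, and $\{\tau_1^{\mbf x}(\mbf y)\ge 1\}$ is the atypical event that it instead exits horizontally.

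The first input is the Burke/BCS variance identity
\[
\operatorname{Var}^\rho\bigl(d^\rho_{\mbf x}(\mbf y)\bigr) \;=\; \frac{n}{(1-\rho)^2} - \frac{m}{\rho^2} + \frac{2}{\rho}\,\E^\rho\bigl[\tau_1^{\mbf x}(\mbf y)\bigr],
\]
obtained by differentiating the mean formula $\E^\rho[d^\rho_{\mbf x}(\mbf y)] = m/\rho + n/(1-\rho)$ in the boundary parameter $\rho$ and using the independence structure given by the Burke output property (Lemma~\ref{lem:output_of_queue}). For the present endpoint $\mbf y$ the deterministic piece is $n/(1-\rho)^2 - m/\rho^2 = bN^{2/3}/\rho^2 + O(1)$, uniformly for $\rho\in K$, so controlling $\E^\rho[\tau_1^{\mbf x}(\mbf y)]$ reduces to an upper bound on the variance of the stationary last-passage time.

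The second and crucial input is the uniform moment-generating-function bound for stationary exponential LPP proved in~\cite{Emrah-Janjigian-Seppalainen-20}: for $\theta$ in an appropriate range,
\[
\E^\rho\exp\bigl(\theta N^{-1/3}[d^\rho_{\mbf x}(\mbf y)-m/\rho-n/(1-\rho)]\bigr) \;\le\; \exp\bigl(C_K(\theta^{3/2}+b\theta)\bigr),
\]
with constants depending only on the compact interval $K$. Combined with the variance identity via exponential Chebyshev—tilting by $\theta$ of order $b^{1/2}$ and optimizing—this converts the deterministic gap $bN^{2/3}/\rho^2$ into a cube-law tail on the nonnegative random variable $\tau_1^{\mbf x}(\mbf y)$, producing the stated bound $\Pp^\rho\{\tau_1^{\mbf x}(\mbf y)\ge 1\} \le \exp(-C_K b^3)$ for $N\ge N_0(K)$, $\rho\in K$, and $b>0$. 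The dependence of all constants only on compact bounds for $\rho$ and $1-\rho$ supplies the uniformity in $\rho\in K$.

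The principal obstacle is the second input. Uniform cubic-exponent MGF control for stationary exponential LPP cannot be obtained by soft probabilistic methods alone; it requires the explicit integrable representation of the Laplace transform of $d^\rho_{\mbf x}(\mbf y)$ (of Rains/Ledoux--Rider type), with simultaneous tracking of the dependence on $\rho$, on the off-characteristic scale $bN^{2/3}$, and on $N$. This is carried out in~\cite{Emrah-Janjigian-Seppalainen-20} and is what we invoke; once the uniform MGF bound is in hand, the passage from variance to cube-law tail via the Burke identity above is a standard exponential-Markov manipulation.
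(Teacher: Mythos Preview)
The paper does not prove this lemma; it is quoted directly from \cite{Emrah-Janjigian-Seppalainen-20} and \cite{Seppalainen-Shen-2020}, so there is no in-paper argument to compare against. I evaluate your sketch on its own terms.

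Your sketch names the right ingredients but the step that combines them has a genuine gap. The Burke/BCS identity ties $\operatorname{Var}^\rho(d^\rho_{\mbf x}(\mbf y))$ to $\E^\rho[\tau_1^{\mbf x}(\mbf y)]$, and an MGF bound on the centered $d^\rho$ controls that variance; together these bound the \emph{expectation} of $\tau_1$, not the probability $\Pp^\rho(\tau_1\ge 1)$. There is no exponential-Markov inequality available for $\tau_1$ from these two inputs alone. Separately, the MGF of the centered variable $N^{-1/3}(d^\rho-\E d^\rho)$ carries no linear $b\theta$ term (the off-characteristic shift $b$ enters only through the mean, which you have already subtracted), and your proposed tilt $\theta\asymp b^{1/2}$ does not produce the exponent $b^3$ under any of the standard KPZ-type MGF bounds.

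What is missing is the coupling with a perturbed boundary parameter, which is the actual engine in \cite{Emrah-Janjigian-Seppalainen-20}. One takes $\lambda\approx\rho - c\,bN^{-1/3}$ (the characteristic parameter for the shifted endpoint $\mbf y$) and couples $d^\rho,d^\lambda$ through common bulk weights. On the event $\{\tau_1^{\rho}(\mbf y)\ge 1\}$ this comparison forces a centered last-passage quantity to deviate from its mean by order $b^2N^{1/3}$; the $b^2$ is the quadratic cost of being off-characteristic, coming from the second-order expansion of $m/\rho+n/(1-\rho)$ versus $(\sqrt m+\sqrt n)^2$. The uniform upper tail $\Pp(|X|>t)\le e^{-ct^{3/2}}$ for that quantity, applied at $t\asymp b^2$ with optimal tilt $\theta\asymp b$, then gives $e^{-Cb^3}$. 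The variance identity you cite is a byproduct of this coupling structure, not a substitute for the comparison step.
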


\begin{lemma} \label{lemma:line_exit_pt}
For $N \ge 1$ and $\rho \in (0,1)$, Consider LPP with $\Exp(1)$ bulk weights and boundary conditions for the stationary model in the half-plane as defined above, where $|\rho_N - \rho| \le cN^{-1/3}$ and $c$ is some real-valued constant.  Assume these are all coupled together under some probability measure $\Pp$. Let $Z^{\rho_N}$ denote shorthand notation for the exit point defined in~\eqref{eqn:exit_pt} with initial profile given by sums of i.i.d. $\Exp(\rho_N)$ random variables. Then, for any compact set $K \subseteq \R$ and $t > 0$, there exists a constant $C_1,C_2 > 0$, depending on $x,t,\rho,K$ such that, for all $y \in K$, 
\[
\limsup_{N \to \infty} \Pp(|Z^{\rho_N}(\lfloor tN \rho^2 + yN^{2/3} \rfloor ,\lfloor tN (1-\rho)^2 \rfloor )| \ge MN^{2/3})  \le C_1e^{-C_2 M^3},\qquad\text{ for all } M > 0.
\]
\end{lemma}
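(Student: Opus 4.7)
The plan is to transfer the half-plane exit-point bound to an exit-time bound for the stationary \emph{quadrant} model via the coupling of Lemma~\ref{lemma:LPP_coupling}, and then invoke Lemma~\ref{lemma:quad_exit_pt}. Fix $M>0$ large and set $k_+ = \lceil MN^{2/3}\rceil$. Coupling the $\rho_N$-stationary half-plane to the $\rho_N$-stationary quadrant at $\mathbf{x}_+ = (k_+, 0)$ as in Lemma~\ref{lemma:LPP_coupling}, one observes that whenever the half-plane exit point is at least $k_+$, the half-plane geodesic to $\mathbf{y} := (m,n)$ is constrained to pass through $\mathbf{x}_+ + \Z_{>0}^2$ via a horizontal exit from $\mathbf{x}_+$. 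Using standard exit-point comparisons (as in~\cite[Section~3]{Sepp_lecture_notes} and~\cite{bala-busa-sepp-20}), this yields
\[
\bigl\{Z^{\rho_N}(m,n) \ge k_+\bigr\} \;\subseteq\; \bigl\{\tau_1^{\mathbf{x}_+}(\mathbf{y}) \ge 1\bigr\}
\]
up to a null event, where $\tau_1^{\mathbf{x}_+}$ is the horizontal-boundary exit time of the stationary quadrant. A symmetric argument at $\mathbf{x}_- = (-k_+, 0)$ using the vertical exit time $\tau_2^{\mathbf{x}_-}$ delivers $\bigl\{Z^{\rho_N}(m,n) \le -k_+\bigr\} \subseteq \bigl\{\tau_2^{\mathbf{x}_-}(\mathbf{y}) \ge 1\bigr\}$.

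Next I would match parameters to apply Lemma~\ref{lemma:quad_exit_pt}. Set $\tilde{N} = n/(1-\rho_N)^2$ so that $n = \lfloor\tilde{N}(1-\rho_N)^2\rfloor$; with $n = \lfloor tN\rfloor$ and $\rho_N = \tfrac12 + cN^{-1/3}$ this gives $\tilde{N} \sim 4tN$ and $\tilde{N}^{2/3} \sim (4t)^{2/3}N^{2/3}$. A direct expansion,
\[
\tilde{N}\rho_N^2 - n \;=\; \tilde{N}(2\rho_N - 1) \;=\; 2c\tspa \tilde{N}N^{-1/3} \;\sim\; 8ct\,N^{2/3},
\]
places the characteristic horizontal distance at vertical distance $n$ at $n + 8ctN^{2/3} + o(N^{2/3})$. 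The actual horizontal distance from $\mathbf{x}_+$ to $\mathbf{y}$ is $m - k_+ \sim n + N^{2/3}(y - M)$, so the target lies below the characteristic by $b\tilde{N}^{2/3}$ with $b \sim (4t)^{-2/3}(M + 8ct - y)$. For $M$ large compared to $|y|$ and $|c|t$, this $b$ is of order $M$, and Lemma~\ref{lemma:quad_exit_pt} gives
\[
\Pp\bigl(\tau_1^{\mathbf{x}_+}(\mathbf{y}) \ge 1\bigr) \;\le\; e^{-Cb^3} \;\le\; e^{-C'M^3}
\]
for some $C' = C'(c, t, y) > 0$, valid for $N$ large enough so that $\rho_N$ lies in a fixed compact $K \subset (0,1)$. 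The analogous calculation at $\mathbf{x}_-$, where the target now lies \emph{above} the characteristic by a comparable amount, yields $\Pp(\tau_2^{\mathbf{x}_-}(\mathbf{y}) \ge 1) \le e^{-C'M^3}$.

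Combining the two one-sided estimates gives $\Pp\bigl(|Z^{\rho_N}(m,n)| \ge MN^{2/3}\bigr) \le 2 e^{-C'M^3}$, which is the claim after relabeling the constant. The main obstacle is the first step: Lemma~\ref{lemma:LPP_coupling} only asserts agreement of the two geodesics on $\mathbf{x} + \Z_{>0}^2$, so one must carefully argue that a half-plane exit at position $\ge k_+$ forces the quadrant geodesic at $\mathbf{x}_+$ to leave the horizontal boundary (and symmetrically for the vertical exit from $\mathbf{x}_-$). The edge cases in which the two geodesics differ only near $\mathbf{x}_\pm$ are absorbed into a null set because the weights have continuous distributions; this is a technical but standard manipulation. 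Once the coupling inclusion is secured, the asymptotic computation of $\tilde{N}\rho_N^2 - n$ and the appeal to Lemma~\ref{lemma:quad_exit_pt} are routine.
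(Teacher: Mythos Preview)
Your proposal is correct and follows essentially the same approach as the paper: couple the half-plane stationary model to the quadrant stationary model at the basepoint $\mathbf{x}_\pm = (\pm\lfloor MN^{2/3}\rfloor,0)$ via Lemma~\ref{lemma:LPP_coupling}, translate the half-plane exit-point event into a quadrant boundary-exit event, and then match parameters to invoke Lemma~\ref{lemma:quad_exit_pt}. Your parameter computation $\tilde N\rho_N^2 - n \sim 8ctN^{2/3}$ and $b\sim (4t)^{-2/3}(M+8ct-y)$ is exactly what the paper carries out (in slightly terser notation).

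The one place where you are more tentative than necessary is the ``main obstacle'' you flag. The paper does not treat the inclusion $\{Z^{\rho_N}(m,n)\ge k_+\}\subseteq\{\tau_1^{\mathbf{x}_+}(\mathbf{y})\ge 1\}$ as delicate at all: it simply observes (with Figure~\ref{fig:exit_time_hp}) that, under the coupling of Lemma~\ref{lemma:LPP_coupling}, the half-plane exit point satisfies $Z^{\rho_N}(\mathbf{y})\le k$ \emph{if and only if} the quadrant geodesic from $\mathbf{x}=(k,0)$ exits the vertical boundary. This is an exact equivalence, not merely an inclusion, because the two geodesics coincide inside $\mathbf{x}+\Z_{>0}^2$; hence the half-plane geodesic enters that open quadrant from its left (vertical) side precisely when its exit point on level $0$ is $\le k$, and from the bottom (horizontal) side precisely when the exit point is $>k$. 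No null-set considerations or additional comparison arguments are needed.
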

\begin{proof}
\begin{figure}[t]
    \centering
    \includegraphics[height = 2in]{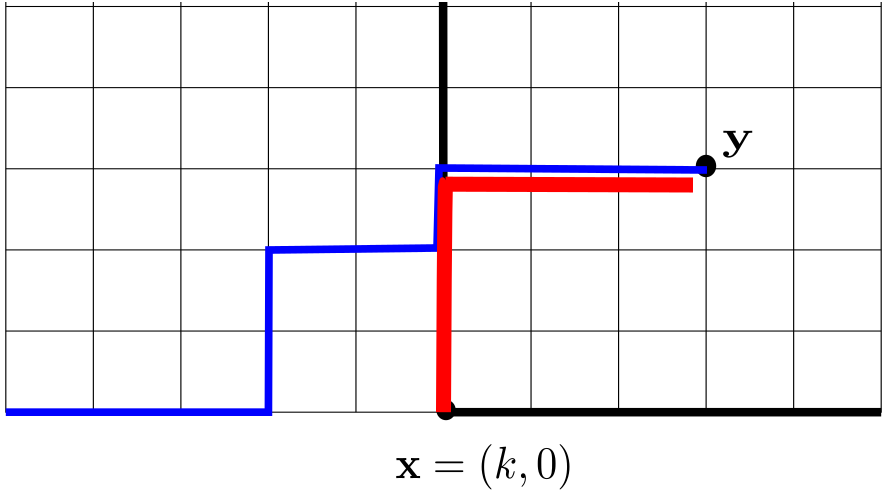}
    \caption{\small We couple the stationary model on the half-plane with the stationary model in the quadrant $\mbf x + \Z_{\ge 0}$. Inside the quadrant $\mbf x + \Z_{>0}^2$, the geodesics for the two models agree. The exit time from the initial horizontal line for the stationary geodesic  in the half-plane (blue/thin) is less than or equal to $k$ if and only if the stationary geodesic in the quadrant (red/thick) exits from the vertical boundary.}
    \label{fig:exit_time_hp}
\end{figure}

We first show that 
\[
\limsup_{N \to \infty} \Pp(Z^{\rho_N}(\lfloor tN \rho^2 + yN^{2/3} \rfloor ,\lfloor tN (1- \rho)^2 \rfloor) \le  -MN^{2/3})  \le C_1e^{-C_2M^3}.
\]
Fix $y \in \R$, and let $N$ be large enough so that $\lfloor tN\rho^2 + N^{2/3}y \rfloor  > \lfloor - MN^{2/3} \rfloor$. Set $\mbf x = (\lfloor -MN^{2/3} \rfloor ,0)$. With this choice of $\mbf x$, consider the coupling of $d^{\rho_N}$ and  
$d_{\mbf x}^{\rho_N}$ described in Lemma~\ref{lemma:LPP_coupling}, where geodesics for the two models coincide in the quadrant $\mbf x + \Z_{>0}^2$. In particular, under this coupling, $Z^{\rho_N}(\lfloor tN\rho^2 + N^{2/3}y \rfloor ,\lfloor tN(1-\rho)^2 \rfloor ) \le \lfloor -MN^{2/3} \rfloor$ if and only if $\tau_2^{\mbf x}(\lfloor tN\rho^2 + N^{2/3} y \rfloor, \lfloor tN(1-\rho)^2 \rfloor) \ge 1$. See Figure~\ref{fig:exit_time_hp}. Then, since $|\rho_N - \rho| \le cN^{-1/3}$, we have
\begin{align*}
tN(1-\rho)^2 &= \f{tN(1-\rho)^2}{(1 - \rho_N)^2}(1 - \rho_N)^2  =  O(N) ,\;\;\text{and}\;\; \\
tN\rho^2 + yN^{2/3}  &= -MN^{2/3} +  \f{tN \rho_N^2 (1- \rho_N)^2}{(1 - \rho_N)^2} + MN^{2/3} +  O(N^{2/3}),
\end{align*}
where the $O(N^{2/3})$ term depends only on the compact set $K$. 
Hence,~\eqref{eqn:v_exit_bd} of Lemma~\ref{lemma:quad_exit_pt} completes the proof.
The proof that
\[
\limsup_{N \to \infty} \Pp(Z^{\rho_N}(\lfloor tN\rho^2 + N^{2/3}y \rfloor ,\lfloor tN (1-\rho)^2\rfloor ) > MN^{2/3})  \le C_1e^{-C_2M^3}
\]
follows analogously, this time setting $\mbf x = (\lfloor MN^{2/3} \rfloor ,0)$, and replacing the use of~\eqref{eqn:v_exit_bd} from  Lemma~\ref{lemma:quad_exit_pt} with~\eqref{eqn:h_exit_bd}. 
\end{proof}

It was shown in~\cite{Dauvergne-Virag-21} that exponential last-passage percolation converges to the directed landscape. We cite their theorem here
\begin{theorem}[\cite{Dauvergne-Virag-21}, Theorem 1.7] \label{thm:conv_to_DL}
Let $d$ denote last-passage percolation \eqref{d100} with i.i.d.\ $\Exp(1)$ weights. Choose a parameter $\rho \in (0,\infty)$, and define the parameters $\chi,\alpha,\beta,\tau$ by the relations
\[
\chi^3 = \f{(\sqrt \rho + 1)^4}{\sqrt \rho},\quad \alpha = (\sqrt \rho + 1)^2,\; \quad \beta = 1 + \f{1}{\sqrt \rho},\quad \f{\chi}{\tau^2} = \f{1}{4\rho^{3/2}}.
\]
Let $v = (\rho,1)$ and $u = (\tau,0)$. Then, for any sequence $\sigma \to \infty$, there is a coupling of identically distributed  copies $d_\sigma$ of $d$ and the directed landscape $\Ll$ so that 
\[ 
    d_\sigma(x \sigma^2 u + s\sigma^3 v,y \sigma^2u + t\sigma^3v)
    =\alpha \sigma^3(t-s) +\beta \tau \sigma^2(y - x) + \chi \sigma(\Ll + o_\sigma)(x,s;y,t).
\] 
Above $d_\sigma$ is interpreted as an appropriately interpolated version of the LPP process, and $o_\sigma:\Rup \to \R$ is a random continuous function such that for every compact  $K\subset \Rup$, there exists a constant $c > 0$ such that 
\begin{align*}
	\sup_K |o_\sigma|\rightarrow 0 \ \text{ almost surely},\qquad\text{and}\qquad \E\big[c\sup_K (o_\sigma^-)^3+(o_\sigma^+)\big]\rightarrow 1.
\end{align*}
In particular, choosing $\rho = 1$ and setting $\sigma = N^{1/3}$,
there exists a coupling of the directed landscape $\Ll$ and identically distributed copies $d_N$, of $d$, such that 
\begin{multline*}
d_N\big((sN+2^{5/3}xN^{2/3},sN),(tN+2^{5/3}yN^{2/3},tN)\big) \\
=4N(t-s)+2^{8/3}N^{2/3}(y-x)+2^{4/3}N^{1/3}(\mathcal{L}+\wt o_N)(x,s;y,t),
\end{multline*}
where $\wt o_N := o_{N^{1/3}}$ as defined above. 
\end{theorem}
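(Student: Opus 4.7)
The plan is to exploit the integrable structure of exponential LPP to first establish convergence of fixed-time marginals, then bootstrap to the full spatio-temporal process via the metric composition law. Begin by using the RSK correspondence (equivalently, the Lindstr\"om--Gessel--Viennot lemma combined with the geometric RSK) to represent the LPP process as the top line of a discrete non-intersecting path ensemble with explicit Schur-process structure. Under the KPZ $1{:}2{:}3$ scaling encoded by $(\chi,\alpha,\beta,\tau)$, this ensemble converges to the parabolic Airy line ensemble, upgrading one-point Tracy--Widom asymptotics to ensemble-level convergence with the help of the Brownian Gibbs property of Corwin--Hammond. Projecting to the top curve yields, for each fixed pair of times $(s,t)$, convergence of the scaled two-point LPP field to the Airy sheet $\mathcal{S}_{t-s}$, i.e.\ the fixed-time marginal of the directed landscape $\Ll$.

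Next, extend from fixed times to the full four-parameter process. The directed landscape is characterized (cf.\ \cite{Directed_Landscape}) by three properties: (i) independence of temporal increments over disjoint time intervals, (ii) fixed-time marginals distributed as Airy sheets, and (iii) the metric composition law $\Ll(x,s;y,u)=\sup_z \{\Ll(x,s;z,t)+\Ll(z,t;y,u)\}$. In the prelimit, discrete LPP already satisfies the semi-group identity
\[
d(\mbf{p},\mbf{r})=\max_{\mbf{q}\text{ on a crossing line}}\{d(\mbf{p},\mbf{q})+d(\mbf{q}+\mbf{e}_1\text{ or }\mbf{e}_2,\mbf{r})\},
\]
which, after scaling, becomes precisely the metric composition law for the limit. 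Bulk-boundary decomposition of exponential LPP across time slices gives independence of temporal increments exactly, so properties (i) and (iii) hold exactly in the prelimit and pass to the limit. Combined with fixed-time convergence and the uniqueness-in-law afforded by the characterization, this upgrades marginal convergence to joint convergence as random continuous functions on $\Rup$.

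The main obstacle, and where most of the work lies, is controlling the supremum in the metric composition law uniformly so that the composition of the prelimit converges to the composition of the limit. This requires: (a) tightness of the rescaled LPP field on compact subsets of $\Rup$, proved via moderate deviations for the one-point LPP distribution (upper tail $e^{-ct^{3/2}}$ and lower tail $e^{-ct^3}$), transferred to modulus of continuity bounds through Kolmogorov-type criteria; (b) uniform control on exit points/maximizers in the composition, so that the $\sup_z$ can be restricted to a compact set in $z$ depending only on $K$; and (c) removal of the discreteness of the lattice by linear interpolation, with errors absorbed into $o_\sigma$.

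Finally, the quantitative error control $\sup_K|o_\sigma|\to 0$ almost surely and in the stated moment sense is obtained by coupling to improve convergence from distribution to a.s.\ (Skorokhod-type representation on a common space, made tight by the deviation bounds) and by integrating the cubic-exponential upper tail. The asymmetry $\E[c\sup_K(o_\sigma^-)^3+(o_\sigma^+)]\to 1$ in the claim simply reflects that $o_\sigma^-$ inherits the cubic-exponential lower tail of the top curve (finite third exponential moment), while $o_\sigma^+$ inherits only the $3/2$-exponential upper tail (only first exponential moment); both bounds follow from the same moderate-deviation inputs used for tightness.
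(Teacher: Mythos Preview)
This theorem is not proved in the paper at all: it is quoted verbatim as Theorem~1.7 of \cite{Dauvergne-Virag-21} and used as an input. There is therefore no ``paper's own proof'' to compare your proposal against.

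That said, your sketch is broadly aligned with how the result is actually established in \cite{Directed_Landscape,Dauvergne-Virag-21}: RSK gives the non-intersecting line ensemble, convergence to the parabolic Airy line ensemble is proved, the Airy sheet is extracted from the ensemble, and the directed landscape is then built from Airy-sheet increments via metric composition and independent temporal increments, with tightness supplied by moderate-deviation tail bounds. One point to flag: extracting the Airy sheet from the line ensemble is not a simple projection to the top curve; it requires the last-passage-across-the-ensemble description and a delicate ``spiked melon'' or novelty argument, which is the genuinely hard step in \cite{Directed_Landscape}. Your sketch glosses over this. Also, the almost-sure coupling with the stated moment control is not obtained by a generic Skorokhod representation but by building the coupling through the line-ensemble convergence and transferring quantitative tail bounds; a bare Skorokhod argument would not yield the uniform-over-$K$ almost-sure rate or the asymmetric moment statement.
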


\subsection{Existence and distribution of the Busemann process}
\label{sec:cgm-bus}
 In the case of the exponential CGM, Busemann functions are known to exist and are indexed by  direction vectors $\mbf u$. We index the direction in terms of a real parameter $\rho \in(0,1)$: 
\[
\mbf u(\rho) = \Bigg(\frac{\rho^2}{\rho^2 + (1 - \rho)^2} , \frac{(1 - \rho)^2}{\rho^2 + (1 - \rho)^2}\Bigg).
\]
Then for a fixed  $\rho \in (0,1)$ and $\mbf x,\mbf y \in \Z^2$, the following limit exists almost surely:
\[
B^\rho_{\mbf x,\mbf y} = \lim_{n \rightarrow \infty} d(-n\mbf u(\rho),\mbf y) - d(-n\mbf u(\rho),\mbf x). 
\]
The Busemann functions can be extended to right- and left-continuous processes defined for all directions, as \cite{Janjigian-Rassoul-Seppalainen-19}. Here, we notice that the geodesics are travelling asymptotically to the southwest, whereas the geodesics we construct in the present paper are travelling to the northeast. In distribution, we can obtain one formulation from the other by a simple reflection.  
The geodesics to the southwest give rise to queuing relations that are more natural in the discrete model, so we use this formulation here.  

Define the following state space of $n$-tuples of bi-infinite nonnegative sequences:
\begin{align*}
\Y^n  &= \Bigl\{(I^1,\ldots,I^n) \in (\R_{\ge0}^\Z)^n: \lim_{m \to -\infty} \f{1}{m} \sum_{i = m}^0 I_i^k < \lim_{m \to \infty} \f{1}{m}\sum_{i = m}^0 I_i^{k + 1},\,\,\text{ for } 1 \le k \le n - 1   \Bigr\}. 
\end{align*}
In the definitions above, the limits are assumed to exist. We extend the mapping $D$ to maps that take more than two sequences as inputs. For $k \ge 1$, define the maps $D^{(k)}: \Y^k \to \R_{\ge0}^\Z$ inductively as follows. Define $D^{(1)}(I^1) = I^1$, and for $k > 1$, 
\[
D^{(k)}(I^1,\ldots,I^k) = D(I^1,D^{(k - 1)}(I^2,I^3,\ldots,I^k)).
\]
Furthermore, define the map $\D^{(n)}:\Y^n \to \Y^n$ as 
\[
[\D^{(n)}(I^1,\ldots,I^n)]_i = D^{(i)}(I^1,\ldots,I^i).
\]
On the space $\Y^n$, we define the measure $\nu^{\boldsymbol \rho^n}$ as follows: $(I^1,\ldots,I^n) \sim \nu^{\boldsymbol \rho^n}$ if $(I^1,\ldots,I^n)$ are mutually independent, and for $1 \le i \le n$, $I^i$ is a sequence of i.i.d.\  exponential random variables with rate $\rho_i$.
We define the measure $\mu^{\boldsymbol \rho^n}$ as 
\be \label{mu_def}
\mu^{\boldsymbol \rho^n} = \nu^{\boldsymbol \rho^n} \circ (\D^{(n)})^{-1}.
\ee
We now cite two theorems.
\begin{theorem}[\cite{Fan-Seppalainen-20}, Theorem 5.4] \label{thm:mu_invariant}
Let $\boldsymbol \rho^n = (\rho_1,\ldots,\rho_n)$ with $1 > \rho_1 > \cdots > \rho_n > 0$ and assume  $(I^1,\ldots,I^n) \sim \mu^{\boldsymbol \rho^n}$. Let $I^0$ be a sequence of i.i.d. exponential random variables with rate $1$, independent of $(I^1,\ldots,I^n)$. Then,
\[
(D(I^0,I^1),\ldots,D(I^0,I^n)) \sim \mu^{\boldsymbol \rho^n}.
\]
\end{theorem}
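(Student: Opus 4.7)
The plan is to prove the theorem by induction on $n$, with Lemma~\ref{lem:output_of_queue} serving as both the base case and the workhorse of the inductive step. The base case $n=1$ is immediate: under $\mu^{\boldsymbol \rho^1} = \nu^{\boldsymbol \rho^1}$, $I^1$ is i.i.d.\ $\Exp(\rho_1)$, and since $\rho_1 < 1$, Lemma~\ref{lem:output_of_queue} gives $D(I^0,I^1) \sim \Exp(\rho_1)$ i.i.d., which is the law $\mu^{\boldsymbol \rho^1}$.

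The inductive step rests on a queue ``swap'' intertwining identity: there is a measurable map $(a,b) \mapsto (\hat a, \hat b)$, defined from $D$ and $S$ applied to the pair $(a,b)$, such that for any admissible third sequence $c$,
\[
D(a, D(b, c)) = D(\hat b, D(\hat a, c)),
\]
and such that if $a \sim \Exp(\tau)$ and $b \sim \Exp(\rho)$ are independent i.i.d.\ sequences with $\rho < \tau$, then $\hat a$ is i.i.d.\ $\Exp(\rho)$, $\hat b$ is i.i.d.\ $\Exp(\tau)$, they are mutually independent, and jointly independent of any further sequence $c$ that was independent of $(a,b)$. This is the classical Burke-type swap underlying the invariance of tandem queues; it is essentially a reformulation of Lemma~\ref{lem:output_of_queue} applied to the inner pair, combined with the algebraic identity above. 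With $(J^1,\ldots,J^n) \sim \nu^{\boldsymbol \rho^n}$ representing $(I^1,\ldots,I^n) = \D^{(n)}(J^1,\ldots,J^n)$, one writes
\[
D(I^0, I^n) = D(I^0, D(J^1, D(J^2, \ldots, D(J^{n-1}, J^n)))),
\]
then applies the swap identity $n-1$ times to successively move $I^0$ past each $J^j$. Using $1 > \rho_1 > \cdots > \rho_n$ at each step to guarantee stability, this produces sequences $\tilde J^1, \ldots, \tilde J^{n-1}, \tilde J^n$ (the last being $D(I^0_{(n-1)}, J^n)$, still i.i.d.\ $\Exp(\rho_n)$) that are mutually independent with $\tilde J^j \sim \Exp(\rho_j)$ i.i.d., and such that $D(I^0, I^n) = D^{(n)}(\tilde J^1, \ldots, \tilde J^n)$.

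The joint statement across all coordinates $i = 1, \ldots, n$ is the essential point and is the main obstacle. The key observation is that the swap operations producing $\tilde J^1, \ldots, \tilde J^{i-1}$ depend only on $I^0$ and $J^1, \ldots, J^{i-1}$, not on $i$; consequently, the truncation of the $n$-swap calculation to the first $i$ queues gives $D(I^0, I^i) = D^{(i)}(\tilde J^1, \ldots, \tilde J^i)$ with exactly the same $\tilde J^1, \ldots, \tilde J^{i-1}$ used for $D(I^0, I^n)$. Hence
\[
(D(I^0, I^1), \ldots, D(I^0, I^n)) = \D^{(n)}(\tilde J^1, \ldots, \tilde J^n),
\]
and since $(\tilde J^1, \ldots, \tilde J^n) \sim \nu^{\boldsymbol \rho^n}$, the left-hand side has law $\nu^{\boldsymbol \rho^n} \circ (\D^{(n)})^{-1} = \mu^{\boldsymbol \rho^n}$, completing the proof. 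The hard part is carefully verifying the joint independence and correct marginals of the tilded sequences after the iterated swaps, which will be done by an inner induction on the number of swaps already performed, with each step invoking Lemma~\ref{lem:output_of_queue} and the measurability of the swap.
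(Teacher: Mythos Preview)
This theorem is quoted from \cite{Fan-Seppalainen-20} and is not proved in the present paper, so there is no in-paper proof to compare against. Your outline is in fact the strategy of the original source: one rewrites $D(I^0,I^i)=D^{(i+1)}(I^0,J^1,\ldots,J^i)$ and then uses a tandem-queue intertwining to collapse this to $D^{(i)}(\tilde J^1,\ldots,\tilde J^i)$ with $(\tilde J^1,\ldots,\tilde J^n)\sim\nu^{\boldsymbol\rho^n}$.

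Two concrete issues, though. First, your distributional labels in the swap are reversed. With $a\sim\Exp(\tau)$, $b\sim\Exp(\rho)$, $\rho<\tau$, the correct conclusion is $\hat a\sim\Exp(\tau)$ and $\hat b\sim\Exp(\rho)$: concretely $\hat b=D(a,b)$ and $\hat a=R(a,b)$ for a ``reverse'' queue map $R$, so that $I^0_{(k)}$ stays $\Exp(1)$ throughout and $\tilde J^j=\hat b$ at step $j$ is $\Exp(\rho_j)$. With your stated labeling the induction would stall, since the migrating service sequence would pick up rate $\rho_1$ after one step and stability at the next queue could fail.

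Second, and more important, the swap identity is not a reformulation of Lemma~\ref{lem:output_of_queue}. That lemma gives only the marginal law of $D(\omega,I)$ together with independence of $\{D(\omega,I)_j\}_{j\le k}$ from the single sojourn time $S(\omega,I)_k$. What your argument needs is (i) the deterministic pathwise identity $D(a,D(b,c))=D\bigl(D(a,b),\,D(R(a,b),c)\bigr)$, valid for all admissible $c$, and (ii) the full-process independence of $D(a,b)$ and $R(a,b)$ with $R(a,b)$ i.i.d.\ $\Exp(\tau)$. Neither follows from Lemma~\ref{lem:output_of_queue}; in \cite{Fan-Seppalainen-20} these are separate algebraic and probabilistic lemmas. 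Once those two ingredients are supplied, your joint-truncation observation is correct and the induction closes exactly as you describe.
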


\begin{theorem}[\cite{Fan-Seppalainen-20}, Theorem 3.2] \label{thm:exp_Buse_dist}
For $\rho \in (0,1)$, define the sequence $I^\rho$ as $I^{\rho}_i = B_{(i -1)\mbf e_1,i\mbf e_1}^\rho$. Let $\boldsymbol \rho^n = (\rho_1,\ldots,\rho_n)$ with $1 > \rho_1 > \cdots > \rho_n > 0$. Then, 
\[
(I^{\rho_1},\ldots,I^{\rho_n}) \sim \mu^{\boldsymbol \rho^n}.
\]
\end{theorem}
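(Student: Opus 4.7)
My plan is to prove the theorem by induction on $n$, combining the Busemann limit definition with exit-point bounds and the queuing description of LPP with a boundary.

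For the base case $n=1$, I would identify the Busemann increments $I^\rho$ at row $0$ with the boundary of the quadrant-stationary process of Section~\ref{sec:LPP_quad}. Place that stationary model with base point $-N\mathbf{u}(\rho)$ and let $N \to \infty$; the boundary increments along the positive $x$-axis are i.i.d.\ $\Exp(\rho)$ by construction. Via Lemma~\ref{lemma:quad_exit_pt}, with high probability the geodesic from $-N\mathbf{u}(\rho)$ to any fixed point exits the horizontal boundary near the characteristic direction and does not recross, so $d(-N\mathbf{u}(\rho),\mathbf{y}) - d(-N\mathbf{u}(\rho),\mathbf{x})$ agrees with the quadrant-stationary model's increments between $\mathbf{x}$ and $\mathbf{y}$ for large $N$. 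Sending $N \to \infty$ gives $I^\rho$ i.i.d.\ $\Exp(\rho)$, which is $\mu^\rho$.

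For the inductive step, the core idea is to couple all $n$ Busemann processes in a single environment and propagate them row-by-row via queuing maps. Busemann additivity together with the $\max$-plus LPP recursion yields, for each fixed $\rho$ and each row $m$, the covariant update
\[
I^\rho\big|_{\text{row }m+1} = D\bigl(Y^{m+1},\, I^\rho\big|_{\text{row }m}\bigr),
\]
where $Y^{m+1}$ is the row of i.i.d.\ $\Exp(1)$ bulk weights at height $m+1$. Because the \emph{same} $Y^{m+1}$ drives the update for every parameter, the joint vector $(I^{\rho_1},\ldots,I^{\rho_n})$ at row $0$ is the pushforward of the row-$m$ joint vector under the coupled queuing map, for every $m \ge 0$. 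Theorem~\ref{thm:mu_invariant} says that $\mu^{\boldsymbol\rho^n}$ is a fixed point of this coupled map.

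To conclude, I would realize both laws on the same probability space: run an independent copy of the coupled queuing dynamics from row $-m$, distributed as $\mu^{\boldsymbol\rho^n}$ and driven by the \emph{same} bulk weights, and compare it with the Busemann vector at row $0$. By monotonicity of $D$ in both arguments, the strict ordering $\rho_1 > \cdots > \rho_n$, and the standard stability of $G/G/1$ queues with stationary inputs, the two configurations coalesce on any fixed finite window with probability tending to $1$ as $m \to \infty$. This forces the joint Busemann law at row $0$ to equal $\mu^{\boldsymbol\rho^n}$. The principal obstacle I anticipate is this coalescence step: one must show that two stationary joint input streams, coupled through identical services, synchronize in finite time through the whole queuing cascade. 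I would address this one level at a time, using that each application of $D$ inherits attraction from single-queue stability, then combining the $n$ coalescence times via a union bound.
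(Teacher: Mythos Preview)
The paper does not prove this theorem; it is quoted as Theorem~3.2 of \cite{Fan-Seppalainen-20} and used as a black-box input (see the list in the ``Inputs'' subsection and Appendix~\ref{sec:LPP}). There is no proof in the present paper to compare against.

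On the merits of your sketch: the overall strategy of showing that the Busemann vector is invariant for the coupled row-by-row queuing dynamics and then identifying it via uniqueness/attractiveness is coherent, and indeed parallels what this paper does for the directed landscape in Theorem~\ref{thm:invariance_of_SH}. It is, however, a genuinely different route from the original argument in \cite{Fan-Seppalainen-20}, which is more direct and constructs the $\D^{(n)}$ structure of the joint Busemann law from algebraic intertwining identities among the queuing maps rather than through any attractor argument.

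Your coalescence step has a real gap. Two input streams with the same marginal law but \emph{different realizations}, driven through the same service sequence, do not literally coalesce: after $m$ tandem queues the output still depends on the boundary through the exit point, and independent boundaries have different exit points. ``Single-queue stability plus a union bound'' does not force equality on a fixed window. If you look at how the attractiveness proof actually runs in this paper (proof of Theorem~\ref{thm:invariance_of_SH}), the mechanism is a \emph{sandwich}, not synchronization: one traps the unknown configuration between the Busemann processes at perturbed parameters $\rho_i\pm\delta$ via exit-point ordering and Lemma~\ref{lem:DL_crossing_facts}\ref{itm:KPZ_crossing_lemma}-type comparison, and then uses that the Busemann process is locally constant in the direction parameter on any fixed window to collapse the sandwich. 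Porting this to the lattice requires the analogous local-constancy fact for the discrete Busemann process (from \cite{Janjigian-Rassoul-Seppalainen-19} or \cite{Fan-Seppalainen-20}), which is a separate structural input your sketch does not invoke. Without it the argument does not close.
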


\section{Stationary horizon} \label{sec:stat_horiz}
To describe the stationary horizon, we introduce some notation from~\cite{Busani-2021}. The map $\Phi:C(\R) \times C(\R) \to C(\R)$ is defined as \[
\Phi(f,g)(y) = \begin{cases}
f(y) + \Big[W_0(f - g) + \inf_{0 \le x \le y} (f(x) - g(x))\Big]^{-} &y \ge 0 \\
f(y) - \Big[W_y(f - g) + \inf_{y < x \le 0} \Big(f(x) - f(y) - [g(x) - g(y)]\Big)\Big]^{-} &y < 0,
\end{cases}
\]
where 
\[
W_y(f) = \sup_{-\infty < x \le y}[f(y) - f(x)].
\]
We note that the map $\Phi$ is well-defined only on the appropriate space of functions where the supremums are all finite. By Lemma 9.2 in~\cite{Seppalainen-Sorensen-21b}, when $f(0) = g(0) = 0$, 
\be \label{Phialt}
\Phi(f,g)(y) = f(y) + \sup_{-\infty <x \le y }\{g(x) - f(x)\} - \sup_{-\infty < x \le 0}\{g(x) - f(x)\}
\ee
This map extends to maps $\Phi^k:C(\R)^k \to C(\R)^k$ as follows. \begin{enumerate}
    \item $\Phi^1(f_1)(x) = f_1(x)$. 
    \item $\Phi^2(f_1,f_2)(x) = [f_1(x),\Phi(f_1,f_2)(x)]$,\qquad\text{and for }$k \ge 3,$ 
    \item $\Phi^k(f_1,\ldots,f_k)(x) = [f_1(x),\Phi(f_1,[\Phi^{k - 1}(f_2,\ldots,f_k)]_1)(x),\ldots,\Phi(f_1,[\Phi^{k -1}(f_2,\ldots,f_k)]_{k - 1})(x)]$.
\end{enumerate}

\begin{definition} \label{def:SH}
The stationary horizon $\{G_\dir\}_{\dir \in \R}$ is a process with state space $C(\R)$ and with paths in the Skorokhod space $D(\R,C(\R))$ of right-continuous functions $\R \to C(\R)$ with left limits. $C(\R)$ has the Polish topology of uniform convergence on compact sets. The law of the stationary horizon is characterized as follows: For real numbers $\dir_1 < \cdots < \dir_k$, the $k$-tuple  $(G_{\dir_1},\ldots,G_{\dir_k})$ of continuous function  has the same law as $\Phi^k(f_1,\ldots,f_k)$, where $f_1,\ldots,f_k$ are independent two-sided Brownian motions with drifts $2\dir_1,\ldots,2\dir_k$, and each with diffusion coefficient $\sqrt 2$ (as defined in point \eqref{def:2BMcmu} in Section \ref{sec:notat}).   
\end{definition}
\begin{remark}
The transformation $\Phi^k$ is such that for each $\dir \in \R$, $G_{\dir}$ is also a two-sided Brownian motion with  diffusion coefficient $\sqrt 2$ and drift $2 \dir$.
\end{remark}

For $N \in \N$, we define $F_{\abullet}^N \in D(\R,C(\R))$ to be such that, for each $\dir \in \R$, $F_\dir^N$ is the linear interpolation of the  function $\Z\ni i \mapsto B_{0,i\mbf e_1}^{1/2 - 2^{-4/3}\dir N^{-1/3}}$, where $B$ is defined in Section~\ref{sec:cgm-bus}. Then, for $\dir \in \R$, we define
\be \label{GN}
G_{\dir}^N(x) = 2^{-4/3}N^{-1/3}\Bigl[F_{\dir}^N(2^{5/3} N^{2/3}x ) - 2^{8/3}N^{2/3} x\Bigr].
\ee
\begin{remark}
The parameterization here is different from the one used in~\cite{Busani-2021}, because in the present paper, $G_\dir$ is a Brownian motion with diffusivity $\sqrt 2$ and drift $2\dir$, while in~\cite{Busani-2021}, $G_{\mu}$ has diffusivity $2$ and drift $\mu$. We can check that we have the desired diffusivity and drift parameters directly. Let $\wt G$ be the version of the stationary horizon as defined in~\cite{Busani-2021}. It is constructed as follows: for each $\mu \in \R$, $\wt F_\mu$ is the linear interpolation of the function $\Z \ni i \mapsto B_{0,i \mbf e_1}^{1/2 - 4^{-1}\mu N^{-1/3}}$, and $\wt G_\mu(x)$ is the limit as $N \to \infty$ of
\[
N^{-1/3}\Bigl[\wt F_\mu^N(N^{2/3}x) - 2N^{2/3}x\Bigr],
\]
which is distributed as $2B(x) + \mu x$,
where $B$ is a standard Brownian motion. Then, after taking a limit from~\eqref{GN}, 
\begin{align*}
    G_{\dir}(x) = 2^{-4/3}\wt G_{4 \cdot 2^{-4/3} \dir}(2^{5/3} x) \deq 2^{-4/3}[2B(2^{5/3}x) + 2^{2/3} \dir (2^{5/3} x)  ] \deq 2^{1/2} B(x) + 2\dir x,
\end{align*}
where the last equality comes from Brownian scaling. Hence, the resulting object has diffusivity $\sqrt 2$ and drift $2\dir$, as desired. Furthermore, using the scaling relations of Theorem~\ref{thm:SH10}\ref{itm:SH_sc} below, $G_\dir(x) \deq \wt G_{4\dir}(x/2)$. 
\end{remark}



The main theorem of~\cite{Busani-2021} is the following:
\begin{theorem} \label{thm:conv_to_SH}
As $N \to \infty$, the process $G^N$ converges in distribution to $G$ on the path space $D(\R,C(\R))$. In particular, for any finite collection $\dir_1,\ldots,\dir_n$, 
\[
(G_{\dir_1}^N,\ldots,G_{\dir_n}^N) \Longrightarrow (G_{\dir_1},\ldots,G_{\dir_n}),
\]
where the convergence holds in distribution in the sense of uniform convergence on compact sets of functions in $C(\R)^n$.
\end{theorem}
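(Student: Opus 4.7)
The plan is a two-stage proof: first convergence of finite-dimensional distributions $(G^N_{\dir_1},\dots,G^N_{\dir_n})\Longrightarrow (G_{\dir_1},\dots,G_{\dir_n})$ for every $\dir_1<\cdots<\dir_n$, and then an upgrade to convergence in $D(\R,C(\R))$ via tightness.

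For the finite-dimensional step I would exploit the queueing representation on both sides. Fix $\dir_1<\cdots<\dir_n$ and set $\rho_i^N=\tfrac12-2^{-4/3}\dir_i N^{-1/3}$, so that $\rho_1^N>\cdots>\rho_n^N$ for large $N$. By Theorem~\ref{thm:exp_Buse_dist} the pre-limit $n$-tuple $(I^{\rho_1^N},\dots,I^{\rho_n^N})$ has law $\mu^{\boldsymbol\rho^N}$, which by~\eqref{mu_def} is the push-forward under the iterated discrete queueing operator $\D^{(n)}$ of $n$ \emph{independent} sequences of i.i.d.\ exponentials with rates $\rho_i^N$. Under the Donsker-type scaling~\eqref{GN} each input sequence converges, in $C(\R)$ with uniform convergence on compact sets, to a two-sided Brownian motion with diffusivity $\sqrt 2$ and drift $2\dir_i$; the $n$ limits are independent. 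Since by definition $G$ is precisely $\Phi^n$ applied to such a Brownian input, the whole matter reduces to showing that the rescaled $\D^{(n)}$ converges to $\Phi^n$ in the uniform-on-compacts topology. Both are built by iterating a single pairwise queueing map, and the pairwise discrete map $D$ and its Brownian analogue $\Phi$ are both sup-convolutions on $(-\infty,y]$ (compare~\eqref{202} and~\eqref{Phialt}). The step $D\to\Phi$ is then a continuous-mapping argument: when two inputs have strictly ordered drifts, the random-walk (respectively Brownian) process of their differences has a genuinely negative drift, so the location of the supremum is concentrated in a tight random window that can be controlled uniformly in $N$. A short induction on $n$ propagates this through the nested composition, giving finite-dimensional convergence.

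To upgrade to $D(\R,C(\R))$, I would establish tightness of $\{G^N\}$. Monotonicity of Busemann increments in the direction parameter (inherited from the CGM) implies that, restricted to a compact spatial window $[-M,M]$, the map $\dir\mapsto G^N_\dir|_{[-M,M]}$ is piecewise constant with nondecreasing increments in $\dir$. On any compact direction interval $[\alpha,\beta]$, uniform-in-$N$ control on the expected number of jumps and on the jump magnitudes, obtained from a direct second-moment computation on pairs $(I^{\rho_i^N},I^{\rho_j^N})$ of Busemann increments and matching the scaling dictated by Lemma~\ref{lm:ac}, gives tightness in $D([\alpha,\beta],C([-M,M]))$ under $J_1$; a diagonal argument over an exhaustion of $\R\times\R$ completes the tightness. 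The finite-dimensional identification from the previous step then pins down the limit.

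The main obstacle is the uniform-in-$N$ localization of the argmax in the iterated queueing map, compounded through $n$ nested compositions: a naive bound degrades with each composition, so one must use the full strict ordering $\dir_1<\cdots<\dir_n$ to show that at each level of the induction the effective difference-drift stays bounded away from zero, keeping the sup-convolutions continuous functionals at the Brownian input. The tightness step hinges on the same strict ordering to quantify how rapidly Busemann increments separate as the direction is perturbed — this is the scaling that matches the $1/\sqrt{\tau}$ density appearing in the intensity computation of Lemma~\ref{lm:ac}.
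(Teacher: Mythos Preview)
This theorem is not proved in the present paper: it is quoted as the main result of \cite{Busani-2021}, and the only proof description here is the one-line summary ``The first author~\cite{Busani-2021} first proved this finite-dimensional convergence and then showed tightness of the process to conclude the existence of a limit taking values in $D(\R,C(\R))$.'' Your two-stage plan (finite-dimensional convergence via the queueing representation and a continuous-mapping argument for $\D^{(n)}\to\Phi^n$, followed by tightness in $D(\R,C(\R))$) matches exactly this description and is the correct architecture of the proof in \cite{Busani-2021}.

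One small caution: your tightness sketch leans on Lemma~\ref{lm:ac}, but that lemma is a statement about the limiting object $G$, not about the prelimit $G^N$, so it cannot be used directly to control jump counts uniformly in $N$; in \cite{Busani-2021} the uniform jump/control estimates are obtained from the discrete Busemann process itself (via \cite{busa-ferr-20}-type bounds on $\Pp(G^N_{\dir}\ne G^N_{\dir'}$ on $[-M,M])$). Otherwise your outline is sound.
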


The first author~\cite{Busani-2021} first proved this finite-dimensional convergence and then showed tightness of the process to conclude the existence of a limit taking values in $D(\R,C(\R))$. The second and third authors~\cite{Seppalainen-Sorensen-21b} discovered that the stationary horizon is also the Busemann process of Brownian last-passage percolation, up to an appropriate scaling and reflection (see Theorem 5.3 in~\cite{Seppalainen-Sorensen-21b}).

The following collects several facts about the stationary horizon from these two papers. For notation, let $G_{\dir+} = G_\dir$, and let $G_{\dir -}$ be the limit of $G_{\alpha}$ as $\alpha \nearrow \dir$. 
\begin{theorem}[\cite{Busani-2021}, Theorem 1.2; \cite{Seppalainen-Sorensen-21b}, Theorems 3.9, 3.11, 3.15, 7.20 and Lemma 3.6] \label{thm:SH10} $ $  The following hold for the stationary horizon.
\begin{enumerate} [label=\rm(\roman{*}), ref=\rm(\roman{*})]  \itemsep=3pt
    \item\label{itm:SHpm} For each $\dir \in \R$, with probability one, $G_{\dir -} = G_{\dir +}$, and $G_\dir$ is a two-sided Brownian motion with diffusion coefficient $\sqrt 2$ and drift $2\dir$
    \item \label{itm:SH_sc} For $c > 0$ and $\nu \in \R$, \ 
    $ 
    \{cG_{c (\dir + \nu)}(c^{-2}x) - 2\nu x  : x\in \R\}_{\dir \in \R} \,\deq\, \{G_\dir(x): x \in \R\}_{\dir \in \R}.
    $ 
    \item\label{itm:SH_sc2} Spatial stationarity holds in the sense that,  for $y \in \R$, 
    \[\{G_{\dir}(x):x \in \R\}_{\dir \in \R} \deq \{G_{\dir}(y,x + y): x \in \R\}_{\dir \in \R}.\]
    \item Fix $x > 0$ , $\dir_0 \in \R$,  $\dir > 0$, and $z \ge 0$. Then, 
    \begin{align*}
    &\Pp\bigl(\sup_{a,b \in [-x,x]}|G_{\dir_0 + \dir}(a,b) - G_{\dir_0 }(a,b)| \le z\bigr) = \Pp\bigl(G_{\dir_0 + \dir}(-x,x) - G_{\dir_0}(-x,x) \le z\bigr) \\
    &\quad = \Phi\Bigl(\f{z - 4\dir x}{2\sqrt {2x}}\Bigr) + e^{\dir z}\biggl(\Bigl(1 + {\dir}z + 4\dir^2 x \Bigr)\Phi\Bigl(-\f{z + 4\dir x}{2 \sqrt{2 x}}\Bigr) - 2\dir\sqrt{{x}/{\pi}\tspa} \tspb e^{-\f{(z + 4\dir x)^2}{8\sqrt x}}\biggr)
    \end{align*}
     where $\Phi$ is the standard normal distribution function. 
    \item \label{itm:exp} For $x < y$ and $\alpha < \beta$, with $\#$ denoting the cardinality, 
    \[
    \E[\#\{\dir \in (\alpha,\beta): G_{\dir-}(x,y) < G_{\dir +}(x,y) \}] = 2\sqrt{{2}/{\pi}\tsp}
    (\beta - \alpha)\sqrt{y - x}.
    \]
\end{enumerate}
Furthermore, the following hold on a single event of full probability.
\begin{enumerate} [resume, label=\rm(\roman{*}), ref=\rm(\roman{*})]  \itemsep=3pt
    \item \label{itm:SH_j} For $x_0 > 0$   define  the process $G^{x_0} \in D(\R,C[-x_0,x_0])$   by restricting each function $G_\xi$ to $[-x_0,x_0]$: $G^{x_0}_\dir=G_\xi\vert_{[-x_0,x_0]}$. Then, $\xi\mapsto G^{x_0}_\xi$ is a $C[-x_0,x_0]$-valued  jump process with finitely many jumps in any compact interval, but countably infinitely many jumps in $\R$. The number of jumps in a compact interval has finite expectation given in item \ref{itm:exp} above, and each direction $\dir$ is a jump direction with probability $0$. In particular, for each $\dir \in \R$ and compact set $K$, there exists a random $\ve = \ve(\dir,K)>0$ such that for all $\dir - \ve < \alpha < \dir < \beta < \dir  + \ve$, $\sigg \in \{-,+\}$, and all $x \in K$, $G_{\dir -}(x) = G_{\alpha}(x)$ and $G_{\dir +}(x) = G_{\beta}(x)$.
    \item \label{itm:SH_mont} For $x_1 < x_2$, $\dir \mapsto G_{\dir}(x_1,x_2)$ is a non-decreasing jump process, converging to $\pm \infty$ as $\dir \to \pm \infty$.
    \item Let $\alpha < \beta$. The function $x\mapsto G_\beta(x)-G_\alpha(x)$ is nondecreasing.  There exist finite $S_1 = S_1(\alpha,\beta)$ and $S_2 = S_2(\alpha,\beta)$ with $S_1 < 0 < S_2$ such that $G_{\alpha }(x) = G_{\beta }(x)$ for $x \in [S_1,S_2]$ and $G_{\alpha }(x) \ne G_{\beta }(x)$ for $x \notin [S_1,S_2]$.
    \item \label{itm:bad_dir_contained} 
    Let $\alpha < \beta$,  $S_1 = S_1(\alpha,\beta)$ and $S_2 = S_2(\alpha,\beta)$.  Then $\exists\tspa \zeta, \eta\in[\alpha, \beta]$ such that,     \begin{align*} 
    &\text{$G_{\zeta -}(x) = G_{\zeta +}(x)$ for $x \in [-S_1,0]$, and $G_{\zeta -}(x) > G_{\zeta +}(x)$ for $x < S_1$, and } \\
    &\text{$G_{\eta -}(x) = G_{\eta +}(x)$ for $x \in [0,S_2]$, and $G_{\eta -}(x) < G_{\eta +}(x)$ for $x > S_2$.}
    \end{align*} 
        In particular, the set $\{\dir \in \R: G_{\dir} \neq G_{\dir-}\}$ is dense in $\R$.
\end{enumerate}
\end{theorem}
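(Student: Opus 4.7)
The plan is to split the theorem into two parts: the distributional identities (i)--(v), which follow from the $\Phi^k$-construction and explicit Brownian computations, and the pathwise statements (vi)--(ix), which need to be consolidated onto a single full-probability event. The inputs from \cite{Busani-2021} and \cite{Seppalainen-Sorensen-21b} would then be invoked as black boxes where possible, but I would re-derive the bookkeeping needed to place all pathwise conclusions on one $\Omega$-event.

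For (i)--(iii), I would work from Definition~\ref{def:SH}. Item (i) is immediate: for a single $\xi$, the construction selects the first coordinate of $\Phi^1(f_1) = f_1$, a two-sided Brownian motion with diffusivity $\sqrt2$ and drift $2\xi$; equality of the right- and left-continuous versions at a fixed $\xi$ then comes from the fact that both $G_{\xi\pm}$ have the same law and $\dir\mapsto G_\dir$ is monotone in the increment ordering. Item (ii) follows by applying Brownian scaling to the input functions of $\Phi^k$ and checking that $\Phi^k$ commutes with the rescaling $f\mapsto cf(c^{-2}\aabullet)$, while (iii) transfers the spatial translation invariance of the input Brownian motions through $\Phi^k$, which is built from differences and translation-invariant suprema as in \eqref{Phialt}. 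For (iv), I would reduce the two-parameter statement to computing the law of $G_{\dir_0+\dir}(\aabullet)-G_{\dir_0}(\aabullet)$, which by the construction is equal in distribution to the running supremum of a Brownian motion with drift $-2\dir$; the first equality of the probability formula comes from monotonicity of the difference, and the explicit density computation is the classical joint law of the running maximum and endpoint of drifted Brownian motion. For (v), the Palm-type intensity $2\sqrt{2/\pi}(\beta-\alpha)\sqrt{y-x}$ can be recovered by differentiating the probability from (iv) at $z=0$, or equivalently by noting that $\E[G_\beta(x,y)-G_\alpha(x,y)]=2(\beta-\alpha)(y-x)$ combined with the Palm formula expressing the expected sum of jumps; both approaches are already carried out in \cite{Busani-2021,Seppalainen-Sorensen-21b}.

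The harder work is to construct a \emph{single} full-probability event on which (vi)--(ix) hold simultaneously in all of $\dir,x_1,x_2,\alpha,\beta$. I would proceed as follows. First, for a countable dense set of rationals $x_1<x_2$ and $\alpha<\beta$, the monotonicity in (vii) on that set follows directly from the $\Phi^k$-construction: adding a larger-drift function on the right only raises the queuing increments. Extend to all $x_1<x_2$ by continuity of each $G_\dir\in C(\R)$, and to all $\alpha<\beta$ by taking right- and left-continuous limits in the Skorokhod space. Item (vi) then follows on the compactified interval $[-x_0,x_0]$ from item (v): the expected number of jump directions in $[-n,n]$ is finite, hence almost surely finite, and right-continuity together with left limits in $D(\R,C[-x_0,x_0])$ give the jump-process structure. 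Countable infinity of jumps in $\R$ is obtained by the strict monotonicity of drifts and the fact that $G_\dir$ has drift $2\dir$. For (viii), the central observation is that $G_\beta-G_\alpha$ is nondecreasing (by (vii)), nonnegative, continuous, and asymptotic to $(2\beta-2\alpha)|x|$ at $\pm\infty$ (since each $G_\dir$ has drift $2\dir$); hence $S_1=\sup\{x\le 0:(G_\beta-G_\alpha)(x)=0\}$ and $S_2=\inf\{x\ge 0:(G_\beta-G_\alpha)(x)>0\}$ are finite a.s., and the ``constant on $[S_1,S_2]$'' statement is exactly the definition. Finally, (ix) follows from (viii) applied to rational $\alpha,\beta$: for each such pair, the boundary directions $\zeta,\eta$ exist in $[\alpha,\beta]$ because the map $\xi\mapsto G_\xi\vert_{[S_1,S_2]}$ is a jump process taking finitely many values but must traverse from $G_\alpha\vert_{[S_1,S_2]}$ to $G_\beta\vert_{[S_1,S_2]}$, which are distinct outside the stickiness interval; density of jump directions in $\R$ then follows by varying the rational pair.

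The main obstacle is really the consolidation step for (vi)--(ix): each individual pathwise claim follows routinely from the distributional ingredients and the cited monotonicity, but placing them on one $\omega$-set requires choosing the right countable skeleton (rationals in $\xi$, $x_1$, $x_2$, $\alpha$, $\beta$) and verifying that all the continuity and right-continuity arguments pass through simultaneously. Once that skeleton is fixed, the conclusions extend by monotonicity and the Skorokhod regularity built into Definition~\ref{def:SH}.
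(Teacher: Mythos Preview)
The paper does not prove this theorem at all: it is stated in Appendix~\ref{sec:stat_horiz} as a compilation of results imported verbatim from \cite{Busani-2021} (Theorem~1.2) and \cite{Seppalainen-Sorensen-21b} (Theorems~3.9, 3.11, 3.15, 7.20 and Lemma~3.6), with no proof given. Your proposal therefore cannot be compared to the paper's own argument, because there is none; you are sketching what those cited references do.

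That said, your outline is broadly in the right spirit and tracks how the cited papers proceed, but a few details are off. In (iv), the difference $G_{\dir_0+\dir}-G_{\dir_0}$ is, via \eqref{Phialt}, the centered running supremum of $f_2-f_1$, a Brownian motion with diffusivity $2$ and drift $+2\dir$ (not $-2\dir$); the sign matters for the explicit formula. In (viii), $G_\beta-G_\alpha$ is nondecreasing with value $0$ at the origin, hence nonnegative on $[0,\infty)$ and \emph{nonpositive} on $(-\infty,0]$; its asymptotic slope is $2(\beta-\alpha)$ in both directions, so the function tends to $-\infty$ as $x\to-\infty$, not to $+\infty$ as your ``$|x|$'' suggests. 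This is exactly what makes $S_1<0$ finite. In (ix), your compactness-of-jump-values argument is the right mechanism, but you should be explicit that the existence of $\zeta,\eta$ comes from the finiteness of jumps of $G^{x_0}$ on $[\alpha,\beta]$ (item (vi)) together with the fact that $G_\alpha|_{[S_1,S_2]}\ne G_\beta|_{[S_1,S_2]}$ forces at least one jump whose left and right limits separate precisely at the endpoints $S_1,S_2$; as written the logical link is a bit loose.
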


Theorem~\ref{thm:invariance_of_SH} gives the following  previously unknown property of SH. 
\begin{corollary} \label{cor:SH_reflect}
The distribution of the stationary horizon on $D(\R,C(\R))$ satisfies the following reflection property:
\[
\{G_{(-\dir)-}(-\tspb\aabullet)\}_{\dir \in \R} \deq \{G_\dir(\aabullet)\}_{\dir \in \R}.
\]
\end{corollary}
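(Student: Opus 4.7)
The plan is to derive the reflection identity from the uniqueness of the invariant distribution in Theorem~\ref{thm:invariance_of_SH}, combined with the spatial reflection symmetry of the directed landscape from Lemma~\ref{lm:landscape_symm}\ref{itm:DL_reflect}. First, I would reduce to finite-dimensional distributions. The law of a cadlag process on $D(\R,C(\R))$ is determined by its finite-dimensional marginals, and by Theorem~\ref{thm:SH10}\ref{itm:SHpm} we have $G_{(-\dir)-} = G_{-\dir}$ almost surely for each fixed $\dir$. Hence it suffices to prove that for every $k\in\Z_{>0}$ and every $\dir_1 < \cdots < \dir_k$,
\[
\bigl(G_{-\dir_1}(-\aabullet),\, \ldots,\, G_{-\dir_k}(-\aabullet)\bigr) \;\deq\; \bigl(G_{\dir_1},\, \ldots,\, G_{\dir_k}\bigr).
\]

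Set $\wt \h^i(x) := G_{-\dir_i}(-x)$. Each $\wt \h^i$ is a two-sided Brownian motion of diffusivity $\sqrt 2$ and drift $2\dir_i$, so it satisfies~\eqref{eqn:drift_assumptions} with parameter $\dir_i$, and clearly $\wt\h^i(0)=0$. By the uniqueness part of Theorem~\ref{thm:invariance_of_SH}, the claim will follow once I verify that $(\wt\h^1,\ldots,\wt\h^k)$ is invariant under the KPZ fixed point. Let $\Ll$ be a directed landscape independent of $G$ and set $h_t(y;\wt\h^i) := \sup_{x\in\R}\{\wt\h^i(x) + \Ll(x,0;y,t)\}$. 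The substitution $z=-x$ gives
\[
h_t(y;\wt\h^i) \;=\; \sup_{z\in\R}\{G_{-\dir_i}(z) + \Ll(-z,0;y,t)\}.
\]
Lemma~\ref{lm:landscape_symm}\ref{itm:DL_reflect} says that $\wt\Ll(x,s;y,t):=\Ll(-x,s;-y,t)$ has the same law as $\Ll$, and since $\Ll$ is independent of $G$, the pairs $(G,\Ll)$ and $(G,\wt\Ll)$ have the same joint law. Therefore, jointly in $(y,i)$,
\[
h_t(y;\wt\h^i) \;\deq\; \sup_{z\in\R}\{G_{-\dir_i}(z) + \Ll(z,0;-y,t)\} \;=\; h_t(-y;G_{-\dir_i}),
\]
and after centering, $\{h_t(y;\wt\h^i) - h_t(0;\wt\h^i)\}_{i,y} \deq \{h_t(-y;G_{-\dir_i}) - h_t(0;G_{-\dir_i})\}_{i,y}$.

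Now I apply the invariance part of Theorem~\ref{thm:invariance_of_SH} to the $k$-tuple $(G_{-\dir_1},\ldots,G_{-\dir_k})$ (after reordering so the drifts are increasing), which yields
\[
\{h_t(y;G_{-\dir_i}) - h_t(0;G_{-\dir_i})\}_{i,y} \;\deq\; \{G_{-\dir_i}(y)\}_{i,y}.
\]
Evaluating at $-y$ and combining with the identity above gives $\{h_t(y;\wt\h^i) - h_t(0;\wt\h^i)\}_{i,y} \deq \{G_{-\dir_i}(-y)\}_{i,y} = \{\wt\h^i(y)\}_{i,y}$, which is precisely the invariance required by the uniqueness statement. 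The uniqueness then forces $(\wt\h^1,\ldots,\wt\h^k)\deq(G_{\dir_1},\ldots,G_{\dir_k})$, completing the proof.

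I expect the only delicate point to be bookkeeping: keeping straight the two independent reflections (one in the spatial variable $x$, one in the drift variable $\dir$), and confirming that the right-continuous cadlag version on the left-hand side of the statement is correctly identified with the pointwise-in-$\dir$ process $\dir\mapsto G_{-\dir}(-\aabullet)$ via Theorem~\ref{thm:SH10}\ref{itm:SHpm}. Everything else — the measurability of the KPZ fixed point map $\wt\h\mapsto h_t(\aabullet;\wt\h)$ needed to push the identity in distribution through the supremum, and the fact that finite-dimensional marginals determine the law on $D(\R,C(\R))$ — is standard and follows from Lemmas~\ref{lemma:convergence of maximizers from converging functions} and~\ref{lem:max_restrict}.
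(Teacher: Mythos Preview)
Your proof is correct and follows essentially the same approach as the paper: both use the spatial reflection symmetry of the directed landscape (Lemma~\ref{lm:landscape_symm}\ref{itm:DL_reflect}) to show that the reflected process is invariant for the KPZ fixed point with the correct drift parameters, and then invoke the uniqueness part of Theorem~\ref{thm:invariance_of_SH}. The paper's proof is a two-sentence version of this; you have simply filled in the substitution $z=-x$ and the invariance calculation explicitly, and reduced to finite-dimensional marginals via Theorem~\ref{thm:SH10}\ref{itm:SHpm}, which is a perfectly good way to handle the cadlag bookkeeping.
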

\begin{proof}
By the spatial reflection invariance of the directed landscape (Lemma~\ref{lm:landscape_symm}\ref{itm:DL_reflect}), $\{G_{(-\dir)-}(-\aabullet)\}_{\dir \in \R}$ is an invariant distribution for the KPZ fixed point such that each marginal satisfies the limit assumptions~\eqref{eqn:drift_assumptions}. The result follows from the uniqueness part of  Theorem~\ref{thm:invariance_of_SH}.
\end{proof}

\end{appendix}

\bibliographystyle{alpha}
\bibliography{references_file}
\end{document}